\newtheorem{theorem}{Theorem}
\newtheorem{lemma}{Lemma}
\newtheorem{proposition}[lemma]{Proposition}
\newtheorem{corollary}[lemma]{Corollary}
\numberwithin{lemma}{section}
\newcommand{\eps}{\epsilon}
\newcommand{\half}{\frac{1}{2}}
\newcommand{\D}{\partial}
\newcommand{\Elin}{E_{lin}}
\newcommand{\As}{{A^\sharp}}
\newcommand{\w}{{\mathfrak{w}}}
\newcommand{\W}{{\mathbf W}}
\renewcommand{\r}{{\mathfrak r}}
\newcommand{\q}{{\mathfrak{q}}}
\newcommand{\uu}{{\mathbf u}}
\newcommand{\qq}{{\mathbf q}}
\newcommand{\ww}{{\mathbf w}}
\renewcommand{\ggg}{{\mathbf g}}
\numberwithin{equation}{section}
\newcommand{\R}{{\mathbb R}}
\newcommand{\Z}{{\mathbb Z}}
\newcommand{\tW}{{\tilde W}}
\newcommand{\tQ}{{\tilde Q}}
\renewcommand{\H}{{\mathcal H }}
\renewcommand{\AA}{\mathbf A}
\newcommand{\sWH}{{\mathcal{WH}^\sharp }}
\newcommand{\tWH}{{\tilde{\mathcal{WH} }}}
\newcommand{\dH}{{\dot{\mathcal H} }}
\newcommand{\sgn}{\mathop{\mathrm{sgn}}}
\newcommand{\tG}{\tilde{G}}
\newcommand{\tK}{\tilde{K}}
\newcommand{\tS}{\tilde \S}
\newcommand{\tF}{\tilde F}
\newcommand{\bchi}{\bm{\chi}}
\renewcommand{\S}{{\mathbf S}}
\newcommand{\err}{\text{\bf err}}
\begin{document}
\normalem

\title{Two dimensional gravity waves at low regularity II: Global solutions}

\author{Albert Ai}
\address{Department of Mathematics, University of Wisconsin, Madison}
\email{aai@math.wisc.edu}
\author{Mihaela Ifrim}
 \address{Department of Mathematics, University of Wisconsin, Madison}
\email{ifrim@wisconsin.edu}
\author{ Daniel Tataru}
\address{Department of Mathematics, University of California at Berkeley}
\email{tataru@math.berkeley.edu}

\begin{abstract}
  This article represents the second installment of a series of papers concerned with low regularity solutions for the  water wave equations in two space dimensions. Our focus here is on global solutions for small
  and localized data. Such solutions have been proved to exist 
  earlier in \cite{ip,AD,HIT,IT-global} in much higher regularity.
  Our goal in this paper is to improve these results and prove
  global well-posedness under minimal regularity and decay assumptions for the initial data. One key ingredient here is represented by the \emph{balanced cubic estimates} in our first paper. Another is 
  the nonlinear vector field Sobolev inequalities, an idea first introduced by the last two authors in the context of the Benjamin-Ono equations \cite{BO}.
  
\end{abstract}

\keywords{water waves, global solutions, normal forms}
\subjclass{76B15, 35Q31}
\setcounter{tocdepth}{1}
\maketitle

\tableofcontents

\section{Introduction}
We consider the two dimensional water wave equations with infinite
depth, with gravity but without surface tension.  This is governed by
the incompressible Euler's equations with boundary conditions on the
water surface. Under the additional assumption that the flow is
irrotational, the fluid dynamics can be expressed in terms of a
one-dimensional evolution of the water surface coupled with the trace
of the velocity potential on the surface.  

The choice of the parametrization of the free
boundary plays an important role here, and can be viewed as a form of gauge freedom.  Historically there are three such choices of coordinates; the first two, namely  the Eulerian and Lagrangian coordinates, arise in the broader context  of fluid
dynamics. The third employs the so-called  conformal method, which is specific to two dimensional irrotational flows; this leads to what we call the holomorphic coordinates, which play a key role in the present paper. 

Our objective in this series of papers is to improve, 
streamline, and simplify the analysis of the two dimensional
gravity wave equations. This is a challenging quasilinear, nonlocal,
non-diagonal system. We aim to develop its analysis in multiple ways,
including:

\begin{enumerate}
\item  prove better, scale invariant energy estimates,

\item improve the existing results on long time solutions,

\item  refine the study of the dispersive properties and improve the low regularity theory.

\end{enumerate}

The first step of this program was carried out in \cite{AIT}, where 
we have developed a new class of estimates, which we called \emph{balanced 
energy estimates}, which led to drastic improvements in the study of the low 
regularity well-posedness for this problem.

In the present article we carry out the second step of this program,
and obtain an enlarged class of global solutions, with decaying initial data of minimal regularity. In a nutshell, our result reads as follows:

\begin{theorem}
Small and localized data leads to global solutions, which exhibit dispersive 
$t^{-\frac12}$ uniform decay.
\end{theorem}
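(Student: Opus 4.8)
The plan is to close a continuity/bootstrap argument for the water wave system in holomorphic coordinates, written in terms of the good unknowns $(W,R)$, simultaneously propagating three types of control. First, a \emph{low-regularity energy bound} $\|(W,R)\|_{\WH^s}\lesssim \eps\langle t\rangle^{C\eps^2}$ at the threshold regularity $s$ afforded by the balanced estimates of \cite{AIT}. Second, a \emph{vector field energy bound} $\|L(W,R)\|_{\WH^{s'}}\lesssim \eps\langle t\rangle^{C\eps^2}$ at a slightly lower exponent $s'$, where $L$ is the scaling vector field adapted to the gravity dispersion relation $\omega(\xi)=|\xi|^{\half}$, i.e.\ $L=t\partial_t+2x\partial_x$ up to normalization, reinterpreted through the equation so that only spatial derivatives appear. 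Third, a \emph{pointwise dispersive bound}: $\|(W,R)\|_{L^\infty}$, together with the weighted $L^\infty$ quantities that will appear in the energy estimates, is $\lesssim \eps\langle t\rangle^{-\half}$. The smallness and localization of the data, quantified as $\|(W,R)(0)\|_{\WH^s}+\|L(W,R)(0)\|_{\WH^{s'}}\lesssim \eps$, furnishes the base case.

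The first step is the energy estimate. Quadratic interactions are removed by a paradifferential normal form / modified energy correction compatible with the quasilinear structure (which requires checking that the relevant bilinear symbols have the right behaviour on the time-resonant set so that the correction is bounded at the available regularity, uniformly in frequency). After this reduction, the balanced cubic estimates of \cite{AIT} give $\tfrac{d}{dt}\|(W,R)\|_{\WH^s}^2\lesssim \mathcal{A}\,\|(W,R)\|_{\WH^s}^2$, where $\mathcal A$ is a scale-invariant pointwise functional bounded by the dispersive norm; the essential feature is that every cubic term is \emph{balanced}, carrying at least one factor measured in $\mathcal A$, so the $\langle t\rangle^{-1}$ arising from the third bootstrap hypothesis is integrable up to a logarithm and produces exactly the $\langle t\rangle^{C\eps^2}$ growth. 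The second step is to commute $L$ through the system and rerun this analysis for $L(W,R)$; here the balanced structure is what allows $L$ to land on the high-frequency factor while the remaining factors stay in $\mathcal A$, and the scale invariance of the estimates in \cite{AIT} is what prevents a derivative loss in the commutator $[L,\text{equation}]$.

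The third step, and the genuinely new one, is the passage from the two energy bounds to the pointwise bound. A linear vector field Sobolev inequality would give $\|u\|_{L^\infty}^2\lesssim \langle t\rangle^{-1}\|u\|_{L^2}\|Lu\|_{L^2}$, but the correct linear dynamics near the light cone is governed not by the free flow but by the linearization of the water wave system about the solution, so $L$ must be replaced by a \emph{nonlinear} vector field $L_{\mathrm{nl}}=L+(\text{solution-dependent lower order corrections})$, again reinterpreted as a spatial operator via the modified equation. Following the strategy the last two authors introduced for Benjamin--Ono in \cite{BO}, one proves a \emph{nonlinear vector field Sobolev inequality}: under the low-regularity energy control and the boundedness of the nonlinear modification, $\|(W,R)\|_{L^\infty}$ and the weighted $L^\infty$ quantities in $\mathcal A$ are dominated by $\langle t\rangle^{-\half}$ times $\|(W,R)\|_{\WH^s}+\|L_{\mathrm{nl}}(W,R)\|_{\WH^{s'}}$. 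Combined with the first two steps this closes the bootstrap, the $C\eps^2$ exponents producing at worst $\eps\langle t\rangle^{-\half+C\eps^2}$, which is then sharpened to the clean $\langle t\rangle^{-\half}$ rate (and the asymptotic behaviour extracted) by a finer wave-packet analysis deriving an ODE for the profile of $(W,R)$ along rays.

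The main obstacle is the tension between the low-regularity constraint and the vector field machinery: one has essentially no room in the Sobolev exponents, so the commutator of $L$ with the equation, and with the paradifferential normal form, must be handled with no derivative loss and in a fully balanced form — which in practice forces one to redevelop the normal form of \cite{AIT} so that it commutes cleanly with $L_{\mathrm{nl}}$ — and the nonlinear vector field Sobolev inequality must be established in precisely the function spaces dictated by these estimates, rather than in the comfortable high-regularity framework of \cite{ip,AD,HIT,IT-global}. Getting the definition of $L_{\mathrm{nl}}$ right, so that it both matches the linearized flow well enough for the Sobolev inequality and is propagated by the equation with controllable source terms, is the crux.
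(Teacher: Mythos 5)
Your high-level architecture is the right one: bootstrap simultaneously on energy, vector-field energy, and a pointwise $t^{-\frac12}$ decay norm; feed the pointwise bound into the balanced cubic estimates of \cite{AIT} to get $\langle t\rangle^{C\eps^2}$ energy growth; convert energy to decay via a nonlinear vector field Sobolev inequality in the spirit of \cite{BO}; and close with wave packet testing. But three concrete steps in your plan are either gaps or take a route the paper avoids for good reason, and in each case the missing device is load-bearing.

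First, the vector field energy bound. You propose to commute $L$ through the system and ``rerun the balanced energy analysis'' for $L(W,R)$. At the threshold regularity this does not close: the commutator $[L,\,\text{equation}]$ is not covered by any balanced estimate, and one would have to redevelop the whole machinery of \cite{AIT} for the commuted system. The paper sidesteps commutation entirely by using the structural fact (from \cite{HIT}) that the diagonalized scaling derivative $(\w,\r)=\AA\S(W,Q)$ solves the \emph{linearized} water wave equation, and then applies the separately established balanced cubic estimates for the linearized flow (Theorem~\ref{t:balancedenergy}). This is not a cosmetic difference: it is what makes the vector field bound available with no room to spare in the exponents.

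Second, the undifferentiated low norm. The \cite{AIT} estimates control $(\W,R)$ in $\dH^s$ for $s\geq 0$, hence $(W,Q)$ only above scaling; they say nothing about $\|(W,Q)\|_{\dH^{1/4}}$, which sits below the range of the differentiated estimates and is needed both for the data smallness assumption and for the Sobolev-to-pointwise step. Your plan leaves this bound open. The paper closes it by a separate device: introduce the one-parameter family of solutions with data $h(W_0,Q_0)$, observe that $\partial_h(W^h,Q^h)$ solves the linearized equation, apply the linearized balanced estimate at $\dH^{1/4}$, and integrate in $h$ (Proposition~\ref{p:ee-low}). This is the single place the $h$-family is used, and without something equivalent your bootstrap does not control all the quantities you wrote down.

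Third, defeating the $t^{C\eps^2}$ loss. You assert the wave packet ODE ``sharpens'' $\eps\langle t\rangle^{-1/2+C\eps^2}$ to $\eps\langle t\rangle^{-1/2}$. By itself it does not, because the ODE source term inherits the $t^{C\eps^2}$ loss and the initial condition for the profile $\gamma$ is also lossy. The paper's mechanism is a strictly stronger microlocal norm $X^\sharp$ coupled to an elliptic/hyperbolic decomposition: the nonlinear vector field Sobolev inequality produces a gain of $\min\{|v|^a,|v|^{-b}\}$ away from unit velocity, so that outside the time-dependent region $t^{-\delta}\lesssim |v|\lesssim t^\delta$ the loss is absorbed directly, and the wave packet ODE is only needed inside that region — where moreover the entry-time initial condition for $\gamma$ (with $|v|\approx t_0^{\pm 1/100}$) is already small enough for the ODE iteration to run. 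Your proposal has no analogue of this localization, and without it the ODE argument does not close.

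In short, the skeleton is right but the proof lives in exactly the three spots your sketch glosses over: using the linearized flow (rather than commutation) for the vector field bound, the $h$-family trick for the low norm, and the $X^\sharp$ elliptic/hyperbolic refinement of the vector field Sobolev inequality.
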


Compared to the prior work of the last two authors \cite{HIT}, \cite{IT-global},
in this paper we bring forth several key improvements:

\begin{enumerate}[label=\roman*)]
\item We lower the regularity requirements for the initial data 
both at low and at high frequency, to almost optimal levels. In other words,
our global well-posedness results are nearly scale invariant, at almost the 
same regularity level that would be required for an equivalent, semilinear,  
cubic NLS problem.

\item We use the sharp, cubic balanced energy estimates of \cite{AIT}, as well as 
the Alazard-Delort idea in \cite{AD} of performing a partial normal form transformation in order to 
further simplify and streamline the proof.

\item At a technical level, we develop in this context the idea of nonlinear 
paradifferential vector field Sobolev inequalities, which was first introduced by the last two authors
in the Benjamin-Ono context \cite{BO}.
\end{enumerate}

\subsection{Holomorphic coordinates}

It has been known since the work of Zakharov~\cite{zak} that under an irrotationality 
condition, the water wave equations can be viewed as a self contained system for the water surface
together with the trace of the velocity potential on the free surface. For a two dimensional fluid, this yields a fully nonlinear first order system in one space dimension.

In addition, one has the freedom of choosing the parametrization of the free surface 
in a favourable manner. Classical parametrizations rely on either the Eulerian or the Lagrangian coordinates. But in the two dimensional case, there is a better choice, that is
the holomorphic (conformal) coordinates, which are based on conformally representing 
the two dimensional fluid domain as a half-plane. These coordinates were independently introduced by 
Wu~\cite{wu} and Zakharov \& al. \cite{zakharov} in the study of the dynamical problem, though 
conformal coordinates of various types had been used before in the study of traveling and solitary waves.

In this article we will use the holomorphic coordinates, but in an alternative\footnote{This should be compared with the use of real 
valued functions in \cite{zakharov}, or with a second order evolution formulation in \cite{wu}.}
formulation developed in the last two authors prior work \cite{HIT}, jointly with Hunter. Denoting by $\alpha$
the variable on the real line and by $\alpha + i \beta$ the complex, conformal coordinates in the lower half space, the water wave equations are written as a system for a pair of complex valued functions $(W,Q)$ on the real line, as follows:
\begin{itemize}
    \item $\alpha \to \alpha +W(\alpha)$ represents the conformal parametrization of the fluid surface, which is a non-self-intersecting curve but not necessarily a graph. 
    
    \item $\alpha \to Q(\alpha)$ represents the complex velocity potential on the free surface, where the real part of $Q$ is the real velocity potential and its imaginary part is its harmonic conjugate, namely the stream 
    function. It is only defined modulo constants.
\end{itemize}

Here $(W,Q)$ are further restricted to the class of functions that by a slight abuse we call holomorphic, i.e. which admit holomorphic extensions to the lower half-space, with suitable decay conditions in depth. In the infinite 
depth case these are exactly the functions which are frequency localized to negative frequencies. 
One significant advantage of this choice is that this class of functions forms an algebra.

 With this choice of variables, following \cite{HIT}, the nonlinear water waves system takes the form
\begin{equation}\label{nlin}
\left\{
\begin{aligned}
&W_t + F(1 + W_\alpha) = 0, \\
&Q_t + FQ_\alpha - iW + P[\bar R R] = 0,
\end{aligned}
\right.
\end{equation}
\begin{equation*}
 F = P\left[\frac{Q_\alpha - \bar Q_\alpha}{J}\right], \qquad J = |1+W_\alpha|^2, \qquad R=\frac{Q_\alpha}{1+W_\alpha},
 \end{equation*}
where $P$ is the projector to negative frequencies. The factor $R$ above has an intrinsic meaning, namely it is the complex velocity on the water surface. Also note that $J$ represents the Jacobian of the conformal change of coordinates. We can also re-express $F$ in terms of $Y$, where the function $Y$, given by
\begin{equation}\label{Y-def}
Y := \frac{\W}{1+\W}, \qquad \mbox{and}\qquad  \W:=W_{\alpha},
\end{equation}
is introduced in order to avoid rational expressions above and in many
places in the sequel; then we have
\[
F=R+P\left[ \bar RY -R\bar Y\right].
\]
This system admits a conserved energy (Hamiltonian)
\begin{equation}\label{energy}
E(W, Q) = \int \frac12 |W|^2 + \frac{1}{2i}(Q\bar Q_\alpha - \bar Q Q_\alpha) - \frac14 (\bar W^2 W_\alpha + W^2 \bar W_\alpha) \, d\alpha.
\end{equation}

We also consider the system for the differentiated good variables $(\W, R)$,
which are what we call  \emph{the  diagonal variables} 
\[
(\W,R): = \left(W_\alpha, \frac{Q_\alpha}{1+W_\alpha}\right).
\]
Differentiating \eqref{nlin} yields a self-contained system in $(\W, R)$:
\begin{equation} \label{ww2d-diff}
\left\{
\begin{aligned}
 &D_t \W + \frac{(1+\W) R_\alpha}{1+\bar \W}   =  (1+\W)M,
\\
&D_t R = i\left(\frac{\W - a}{1+\W}\right),
\end{aligned}
\right.
\end{equation}
which is satisfied in full but is equivalent to its projected version onto the holomorphic class. 
Here $D_t = \partial_t + b\partial_\alpha$ plays the role of the material derivative, 
$b$ is the {\em advection velocity} and is given by
\begin{equation*}
b = P \left[\frac{R}{1+\bar \W}\right] +  \bar P\left[\frac{\bar R}{1+\W}\right],
\end{equation*}
and $1+a$ is the {\em Taylor coefficient}, which represents the normal derivative of the pressure on the free surface, and is given by
\begin{equation}\label{a-def}
a := i\left(\bar P \left[\bar{R} R_\alpha\right]- P\left[R\bar{R}_\alpha\right]\right).
\end{equation}
Finally the auxiliary function $M$, closely related to the material derivative of $a$,
has the expression
\begin{equation}\label{M-def}
M :=  \frac{R_\alpha}{1+\bar \W}  + \frac{\bar R_\alpha}{1+ \W} -  b_\alpha =
\bar P [\bar R Y_\alpha- R_\alpha \bar Y]  + P[R \bar Y_\alpha - \bar R_\alpha Y].
\end{equation}

To complete our description of the equations we also need to add the linearized equations
which are best seen not as an evolution for the linearized variables $(w,q)$ associated to $(W,Q)$,
but rather as an evolution for the \emph{good linearized variables}
\begin{equation}
(w,r) = \AA(w,q) = (w,q-Rw).    
\end{equation}
These equations have the form
\begin{equation}\label{lin(wr)}
\left\{
\begin{aligned}
& PD_t w + P \left[ \frac{1}{1+\bar \W} r_\alpha\right]
+  P \left[ \frac{R_{\alpha} }{1+\bar \W} w \right] = P \mathcal{G}_0(  w, r),
 \\
&PD_t r - i P\left[ \frac{1+a}{1+\W} w\right]  =
 P \mathcal{K}_0( w,r),
\end{aligned}
\right.
\end{equation}
where $(\mathcal{G}_0, \mathcal{K}_0)$ represent perturbative terms, see \cite{HIT}.

\subsection{Sobolev spaces and local well-posedness}
The well-posedness for the water wave system \eqref{nlin} is naturally considered
on a scale of Sobolev spaces inspired by the conserved energy in \eqref{energy}.
Its quadratic part corresponds to the Hilbert space $\H$ with norm
\[
\| (w,r)\|_{\H}^2 = \|w\|_{L^2}^2 + \|r\|_{\dot H^\frac12}^2.
\]
For higher regularity we use  the scale of Sobolev spaces $\H^s$,
which we recall from \cite{HIT} and \cite{AIT},
endowed with the norm 
\[
\| (w,r) \|_{\H^s} :=  \| \langle D \rangle^s (w, r)\|_{ L^2 \times \dot H^\frac12},
\]
where $s \in \R$. 

 Since many of the estimates in both this paper and its predecessor 
 \cite{AIT} are scale invariant,
to describe them it is very useful to also have homogeneous versions 
of the above spaces, namely the spaces $\dH^s$ endowed with the norm 
\[
\| (w,r) \|_{\dH^s} :=  \| |D|^s (w, r)\|_{ L^2 \times \dot H^\frac12}.
\]
We caution the reader that, in order to streamline the exposition here, our 
notation for the energy spaces differs slightly from the notation used in \cite{HIT}.

For the local well-posedness problem, it suffices to work with the 
differentiated system \eqref{ww2d-diff}.  For this we have the following result:

\begin{theorem}[\cite{AIT}]
The differentiated water wave system \eqref{ww2d-diff} is locally well-posed
in $\H^s$ for $s \geq \frac34$.
\end{theorem}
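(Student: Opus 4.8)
The plan is to follow the classical quasilinear scheme — \emph{a priori} energy bounds, construction of solutions, uniqueness, and continuous dependence on the data — with essentially all of the new difficulty concentrated in the energy estimates, which is precisely where the balanced cubic estimates of \cite{AIT} enter. For the energy estimate I would first paralinearize the differentiated system \eqref{ww2d-diff}: isolating the leading paradifferential part yields, for a diagonalized unknown, an evolution of the form $D_t + i T_{\mathfrak a}$, whose symbol $\mathfrak a$ encodes both the $|D|^{1/2}$ dispersion and the Taylor sign condition $1 + a > 0$ (here $a$ is small, being quadratic in $R$). One symmetrizes this operator by a bounded, invertible change of variable so that its principal part becomes skew-adjoint; the quadratic $\H^s$ energy of the new variable is then conserved to leading order, and it remains to control the cubic-and-higher error terms in $\tfrac{d}{dt}\|(\W,R)\|_{\H^s}^2$. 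This is done via the balanced cubic estimates of \cite{AIT}, which yield a bound of the schematic form $\tfrac{d}{dt}\|(\W,R)\|_{\H^s}^2 \lesssim C(\mathcal A)\,\|(\W,R)\|_{\H^s}^2$, where $\mathcal A$ is a scale-invariant control norm at regularity no higher than $\H^{3/4}$, hence dominated by $\|(\W,R)\|_{\H^s}$ via the Sobolev embedding $H^{3/4}(\R) \hookrightarrow L^\infty$ for $s \geq \tfrac34$; a Gronwall argument then closes the a priori bound on a time interval depending only on the $\H^s$ size of the data.

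\emph{Existence.} Since the system is genuinely quasilinear, Picard iteration directly in $\H^s$ is unavailable. Instead I would regularize — e.g.\ by a frequency truncation adapted to the holomorphic structure, replacing the data by $P_{<n}(\W_0,R_0)$ and solving a truncated, semilinear version of \eqref{ww2d-diff} by a fixed-point argument in an appropriate space — and then invoke the energy estimates above uniformly in $n$ to obtain a common existence time and uniform $\H^s$ bounds. Weak-$*$ compactness produces a limit, and interpolating the uniform $\H^s$ bound against strong $\H^{s'}$ convergence ($s' < s$, obtained from the difference estimates below) identifies it as a genuine solution. Alternatively, following the Alazard--Delort idea used later in this paper, one could first apply a partial normal form transformation rendering the problem semilinear modulo perturbative terms, after which a contraction mapping argument applies directly.

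\emph{Uniqueness and continuous dependence.} For the difference of two solutions I would run the energy estimate for the linearized system \eqref{lin(wr)} in the good variables $(w,r)$; the quasilinear derivative loss means this yields a bound only in a weaker space, say $\H^{s-1}$, which together with the a priori bounds gives uniqueness and Lipschitz dependence in that weak topology. Promoting this to continuous dependence in $\H^s$ itself calls for a Bona--Smith type argument: approximate the data by smooth data, and combine weak continuity of the flow with the uniform $\H^s$ bounds and a frequency-envelope / convergence-of-norms argument. Well-posedness of the linearized flow \eqref{lin(wr)} in $\H$, again through the balanced estimates, is the structural backbone here.

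\emph{Main obstacle.} The crux is carrying out the energy estimate at the low regularity $s = \tfrac34$, close to the scaling-critical exponent of \eqref{ww2d-diff}. At that level the cubic contributions to $\tfrac{d}{dt}\|(\W,R)\|_{\H^s}^2$ are at the edge of what energy methods can absorb: no derivative may be wasted, and one must exploit the precise algebraic structure of the nonlinearity — the role of the good variables $R$, $Y$, and the special holomorphic combinations in $F$, $M$, and $a$ — to distribute every derivative favorably and bound each cubic term by a scale-invariant control norm dominated by $\|(\W,R)\|_{\H^{3/4}}$. Making this work is exactly the content of the balanced cubic estimates of \cite{AIT}, which is why the theorem is stated and proved there.
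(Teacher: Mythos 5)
This statement is not proved in the present paper: it is quoted from the companion paper \cite{AIT}, as the bracketed citation in the theorem header indicates, and the text that follows it only reviews the history of the local theory and then recalls (in Theorems~\ref{t:ee-s} and~\ref{t:balancedenergy}) the balanced cubic energy estimates from \cite{AIT} on which it rests. There is therefore no proof here against which your argument can be checked in detail, and the honest assessment is that your sketch is a plausible outline of the standard quasilinear scaffolding (regularize, uniform a priori bounds, pass to the limit, Bona--Smith) with the entire substance delegated to the balanced estimates --- which is precisely what you yourself acknowledge in the last paragraph.

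Taking your sketch on its own terms, two details should be corrected to match what the framework actually provides. First, the system \eqref{ww2d-diff} is formulated in the already-diagonalized variables $(\W,R)$, and the linearized flow is taken in the good unknowns $(w,r)=(w,q-Rw)$ via the operator $\AA$; there is no separate symmetrizer step of the kind you describe, and the quadratic energy is built by the modified-energy method rather than by conjugating to a skew-adjoint operator. Second, and more importantly, the linearized equation is well-posed in the \emph{fixed} space $\dH^{1/4}$ (Theorem~\ref{t:balancedenergy}), not in a space like $\H^{s-1}$ lagging one derivative behind the data; at $s=3/4$ your $\H^{s-1}=\H^{-1/4}$ is not where the estimate lives. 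This is one of the essential gains of the balanced estimates, since uniqueness and the Bona--Smith argument at this threshold would not go through under a generic quasilinear loss of one derivative. Finally, the control norm governing the energy growth is $A_{1/4}$, which sits a quarter derivative above scaling and is controlled (via Besov, not plain $H^{3/4}\hookrightarrow L^\infty$, embedding) by the $\dH^{3/4}$ norm of $(\W,R)$; the implicit constants separately depend on the scale-invariant $A_0$, and keeping these two roles distinct is part of what the ``balanced'' structure means. For local well-posedness your cruder Gronwall form would do, but since you cannot prove the balanced estimates within this sketch, the argument as written is circular at exactly the point where the difficulty is concentrated --- which, again, is why the theorem is stated with a citation rather than a proof.
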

For reference one should compare from below with scaling which corresponds to $s_0=\frac12$. This result represents the current best result, following a succession of several other results. This started with  the work of Alazard-Burq-Zuily~\cite{abz}, who proved energy estimates and well-posedness  roughly for $s = 1+\delta$ with $\delta > 0$. Using the holomorphic
setting and further structural properties of the equations, the energy
estimates were improved by the last two authors together with Hunter \cite{HIT} to
the case $\delta = 0$. This is an important threshold as it is
where the Lipschitz property for the velocity is lost.  Further
improvements  were obtained in subsequent work of Alazard-Burq-Zuily~\cite{abz1, abz-c1},
who proved and used appropriate Strichartz
estimates for this system. Their result in $2$-d yields local well-posedness 
in  for $\delta = -1/24$. This was followed by the results of the first author,
who was able to further improve this first to $\delta = - 1/10$ in \cite{Ai1} and then to $\delta = - 1/8$ in \cite{Ai2}, and finally to the result above.

A family of energy estimates developed by the authors in \cite{AIT}, which we call \emph{balanced energy estimates}, played the key role in the proof of this result. The same estimates play an essential role in the present paper, as they are part of what allows us to reach the optimal regularity threshold. They are described in detail in Section~\ref{s:ee}.

\subsection{Global solutions and the main result}
In order to state our main result, we introduce appropriate weighted norms
which are based on the scaling symmetry of the problem. 
Precisely, the equations \eqref{nlin} are invariant with respect to the scaling law 
\[
(W(t, \alpha), Q(t, \alpha)) \rightarrow (\lambda^{-2} W(\lambda t, \lambda^2 \alpha), \lambda^{-3} Q(\lambda t, \lambda^2 \alpha)).
\]
The generator of this symmetry is the scaling operator
\[
\S(W, Q) = ((S - 2)W, (S - 3)Q),
\]
where we define the scaling vector field by
\[
S = t\D_t + 2\alpha \D_\alpha.
\]

Writing 
\begin{equation} \label{def:bwr}
(\w, \r) = \AA\S(W, Q),
\end{equation}
where $\AA$ represents the diagonalization operator 
\begin{equation}\label{def:AA}
\AA(\w, \q) := (\w, \q - R\w),\qquad R = \frac{Q_\alpha}{1 + W_\alpha},
\end{equation}
and $\sigma > 11/4$, we define the weighted energy norm
\begin{equation}
\|(W, Q)(t) \|_{\sWH} = \|(W, Q)(t)\|_{\dH^{\frac14}} + \|(\W, R)(t)\|_{\dH^{\sigma-1}} + \|(\w, \r)(t)\|_{\dH^{\frac14}}.
\end{equation}
We remark that at time $t = 0$ this simply becomes 
\begin{equation}
\|(W, Q)(0) \|_{\sWH} \approx \|(W, Q)(0)\|_{\dH^{\frac14}} + \|(\W, R)(0)\|_{\dH^{\sigma-1}} + \|\alpha (\W, R)(0)\|_{\dH^{\frac14}}.
\end{equation}

In order to track the uniform, dispersive decay of the solutions,
we will also use a  pointwise control norm, namely 
\[
\|(\W, R)(t)\|_{X} = \||D|^{-\frac12} \W\|_{L^\infty} + \|R\|_{L^\infty} + \| \W\|_{\dot B^\frac14_{\infty,2}} + \| R\|_{\dot B^\frac34_{\infty,2}},
\]
where the above homogeneous Besov norms are defined as
\[
\| u \|_{\dot B^s_{\infty,2}}^2 = 
\sum_{k \in \Z} 2^{2ks} \| P_k u \|_{L^\infty}^2
\]
with $P_k$ denoting the standard spatial Littlewood-Paley projectors at dyadic frequency $2^k$.

Given these definitions, our main result is as follows:

\begin{theorem}\label{t:global}
Assume that the initial data for the water wave system \eqref{nlin} satisfies
\begin{equation}\label{data}
\|(W, Q)(0) \|_{\sWH}   \leq \epsilon \ll 1.
\end{equation}

Then the solution $(W,Q)$ is global in time, and satisfies 
the global energy bounds
\begin{equation}\label{ee}
\|(W, Q)(t) \|_{\sWH}   \leq \epsilon  \langle t \rangle^{c\epsilon^2} 
\end{equation}
with a universal constant $c$,
as well as the global pointwise bounds
\begin{equation}\label{point}
    \|(\W, R)(t) \|_{X}   \lesssim \epsilon  \langle t \rangle^{-\frac12}.
\end{equation}    
\end{theorem}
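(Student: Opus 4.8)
I would run a continuity (bootstrap) argument on the maximal existence interval $[0,T^*)$ provided by the local well-posedness theorem of \cite{AIT}; since $\sigma-1>\tfrac74>\tfrac34$ the initial data lies in an admissible space $\H^s$, and the solution persists as long as its $\H^s$ norm, which is controlled by the $\sWH$ norm, stays finite. On a subinterval $[0,T]\subset[0,T^*)$ I would postulate the bootstrap bounds
\begin{equation*}
\|(W,Q)(t)\|_{\sWH}\leq 2\epsilon\langle t\rangle^{c\epsilon^2},\qquad \|(\W,R)(t)\|_{X}\leq 2A\epsilon\langle t\rangle^{-\frac12},
\end{equation*}
with $A$ a fixed absolute constant, and the goal would be to recover the same bounds with $2$ replaced by $1$ and $2A$ by $A$ (the universal exponent $c$ in \eqref{ee} being fixed once and for all by this closure). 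Since this can be carried out on every $[0,T]$, a standard continuation argument forces $T^*=\infty$ and yields \eqref{ee} and \eqref{point}.

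\emph{Energy estimates.} I would apply the balanced cubic energy estimates of \cite{AIT} (Section~\ref{s:ee}) simultaneously to three objects: the undifferentiated solution $(W,Q)$ in $\dH^{\frac14}$, the diagonal variables $(\W,R)$ in $\dH^{\sigma-1}$, and the scaling-differentiated good variables $(\w,\r)=\AA\S(W,Q)$ in $\dH^{\frac14}$. The first two are direct applications of the energy and high-regularity balanced estimates. For the third, the key point is that by the scaling symmetry of the equations $(\w,\r)$ solves the good linearized system \eqref{lin(wr)} (the commutators of $S$ with $\partial_\alpha$ and with the holomorphic projection being harmless), so that the balanced \emph{linearized} energy estimates apply; the very low regularity $\dH^{\frac14}$ here is exactly what those estimates can afford. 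In each case the balanced structure produces a right-hand side bounded by $\|(\W,R)\|_X^2$ times the energy being propagated, so that
\begin{equation*}
\frac{d}{dt}\|(W,Q)\|_{\sWH}^2\lesssim\|(\W,R)\|_X^2\,\|(W,Q)\|_{\sWH}^2\lesssim\epsilon^2\langle t\rangle^{-1}\|(W,Q)\|_{\sWH}^2
\end{equation*}
by the pointwise bootstrap, and Gronwall's inequality upgrades the energy to $\epsilon\langle t\rangle^{c\epsilon^2}$, closing the energy half of the bootstrap. One technical ingredient is an elliptic comparison, as in \cite{HIT}, between $S(W,Q)$, the naive linearized variables, and the good variables $(\w,\r)$, carried out using the algebra property of the holomorphic class together with the smallness of the solution.

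\emph{Dispersive decay.} This is the heart of the matter, and I would obtain it by combining a normal form reduction with a vector field Sobolev inequality. Following the Alazard--Delort idea, I would perform a partial, paradifferential normal form transformation on the diagonal system \eqref{ww2d-diff}, removing the quadratic nonlinearity while keeping the quasilinear paradifferential part in transport form $\partial_t+b\partial_\alpha$; the new variables $(\tilde W,\tilde R)$ differ from $(\W,R)$ by a quadratic correction that is bounded and invertible on the relevant low-regularity spaces, where the balanced bilinear estimates of \cite{AIT} and smallness are used to avoid the usual loss of derivatives. The transformed equation has the schematic form $\partial_t\tilde W+b\partial_\alpha\tilde W-i|D|^{\frac12}\tilde W=\text{cubic}$, with the cubic right-hand side bounded in the relevant norm by $\|(\W,R)\|_X^2\,\|(W,Q)\|_{\sWH}$, hence almost integrable in time. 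I would then invoke the nonlinear paradifferential vector field Sobolev inequality, the analogue for this system of the inequality of \cite{BO}, which converts control of a function together with its image under the scaling field $S$ into $\langle t\rangle^{-\frac12}$ pointwise decay at the regularity matching the norm $X$; feeding in the energy bounds on $(\W,R)$ and $(\w,\r)$ then gives the decay. To sharpen the $X$ bound to the clean $\langle t\rangle^{-\frac12}$ of \eqref{point}, with no residual slow growth, I would run a finer analysis on the profile of $\tilde W$ (after also conjugating out the $b$-transport): there the resonant part of the cubic nonlinearity is purely imaginary, so it only rotates the phase and leaves the amplitude $O(\epsilon)$, while the nonresonant part, treated by a further normal form / integration by parts in frequency, decays faster than $t^{-1}$. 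For the two-dimensional gravity problem, the absence of genuine quadratic resonances, established in this holomorphic setting, is precisely what makes the initial quadratic normal form legitimate.

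\emph{Main obstacle.} I expect the principal difficulty to be carrying out the vector field method at the minimal regularity: propagating $(\w,\r)$ in the very low space $\dH^{\frac14}$ — barely above the energy space, essentially at scaling — requires the full strength of the balanced linearized estimates of \cite{AIT}, and establishing the nonlinear paradifferential vector field Sobolev inequality at exactly this regularity, including the commutator errors between $S$, the holomorphic projection, and the nonlinear diagonalization $\AA$, is the main new technical step. By comparison, the low-regularity partial normal form, while delicate, should be essentially routine once the balanced bilinear estimates of \cite{AIT} are available.
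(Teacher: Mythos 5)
Your high-level strategy matches the paper — a bootstrap argument combining (a) balanced cubic energy estimates with $\langle t\rangle^{C\epsilon^2}$ growth, (b) a paradifferential partial normal form \`a la Alazard--Delort, (c) a nonlinear vector field Sobolev inequality, and (d) an asymptotic analysis exploiting that the resonant cubic coefficient is purely imaginary. However, there is one concrete error and one substantial gap.

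The error is the claim that the $\dH^{\frac14}$ energy bound for $(W,Q)$ is a ``direct application'' of the balanced estimates. It is not. The balanced estimates of \cite{AIT} for the differentiated system hold for $(\W,R)\in\dH^s$ with $s\geq 0$, whereas $(W,Q)\in\dH^{\frac14}$ corresponds to $(\W,R)\in\dH^{-\frac34}$, which is below that threshold. The paper must work around this by propagating an entire one-parameter family of solutions $(W^h,Q^h)$ with data $h(W_0,Q_0)$, $h\in[0,1]$: the $h$-derivative $(w,r)=\AA\,\frac{d}{dh}(W,Q)$ solves the linearized system, for which the $\dH^{\frac14}$ balanced energy estimate does hold, and the $\dH^{\frac14}$ bound for $(W,Q)$ is then recovered by integrating in $h$ together with an elliptic comparison between $(W,Q)$ and $(w,r)$ (Proposition~\ref{p:ee-low}). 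Without this idea — which the paper flags as the single place where the family in $h$ is genuinely used — this part of your argument has a real hole.

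The gap is in the decay step. You invoke a vector field Sobolev inequality to get $\langle t\rangle^{-\frac12+C\epsilon^2}$ decay, and then propose to remove the $\langle t\rangle^{C\epsilon^2}$ loss by ``a finer analysis on the profile,'' using that the resonant cubic is imaginary. The second ingredient is correct in spirit, but as stated it cannot close: the vector field Sobolev bound with the $\langle t\rangle^{C\epsilon^2}$ factor is all you have a priori, and without some additional structure you cannot initialize the profile ODE with the clean $O(\epsilon)$ bound that the phase-rotation argument needs. The paper's solution is twofold and neither half appears in your proposal. First, the Sobolev bound is upgraded to a stronger norm $X^\sharp$ that carries explicit gains $\min\{|v|^b,|v|^{-b}\}$ away from unit velocity $|v|=|\alpha|/t\approx 1$ (and away from frequency $\approx 1$); these gains defeat the $\langle t\rangle^{C\epsilon^2}$ loss outside a polynomially thin region $\Omega^\delta=\{t^{-\delta}\lesssim|v|\lesssim t^\delta\}$, and they also provide the decaying initialization when a ray $\alpha=vt$ first enters $\Omega^\delta$. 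Second, inside $\Omega^\delta$ the sharp decay is obtained by testing against wave packets concentrated along rays, which produces a scalar asymptotic profile $\gamma(t,v)$ satisfying an ODE $\dot\gamma=\frac{i}{2t(2v)^5}\gamma|\gamma|^2+e$ with integrable error $e$; the reality of the coefficient then gives $|\gamma|=O(\epsilon)$. You would need both the $X^\sharp$ gains and the wave-packet extraction of $\gamma$ for this to work.

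Finally, a structural difference: you apply the normal form at the level of the diagonal variables $(\W,R)$, whereas the paper works with $(W,Q)$ and defines $(\tW,\tQ)$ as a paradifferential correction of the undifferentiated unknowns. This is not obviously wrong, but the paper's choice is tied to the fact that the vector field energy controls $\AA\S(W,Q)$ (not $\S$ of $(\W,R)$), so that the fixed-time system $\tS(\tW,\tQ)=(G,K)$ governing the Sobolev inequality and the wave packet testing sit most naturally at that level. If you work at the differentiated level you would need to set up a matching fixed-time elliptic/hyperbolic decomposition, and it is not clear that the low-frequency part of $q$ (which lives in $\dot H^{\frac34}$, a non-localizable norm) can be handled as cleanly.
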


To place this result into context, one should start with Wu's almost global result \cite{wu2},
which was based on a mix of conformal and Lagrangian coordinates. Her work was further developed 
by Ionescu-Pusateri \cite{ip} to a global result. Independently, Alazard-Delort \cite{AD} obtained a different proof of the global result, based on a new idea which they called \emph{paradiagonalization},
which combines a partial normal form transformation with a microlocal diagonalization of the remaining 
system, which is done at the paradifferential level. Both of these results required extensive arguments, as well as very high regularity for the initial data.

Shortly afterward, the last two authors' work \cite{HIT}, \cite{IT-global}, the first also joint
with Hunter, brought a new perspective and a new proof of the global result for this problem, with shorter, simpler arguments at far lower regularity, which corresponds to $\H^6$ with the notations above. These advances were primarily due to two new ideas, implemented in the  context of holomorphic coordinates:

\begin{enumerate} [label=\roman*)]
    \item The \emph{modified energy method}, which asserts that, in quasilinear problems, it is 
    more efficient to construct normal form inspired modified energies which are accurate to quartic order, rather than trying to directly apply a normal form transformation.

   \item The \emph{wave packet testing}, which is an efficient way to capture asymptotic equations
   in a modified scattering scenario.
\end{enumerate}

Another key idea in \cite{HIT} was that the main estimate, and the bulk of the analysis, should be carried out at the level of the linearized equations rather than on the full equations. This 
contributed to both strengthen the results and to streamline the arguments.

 The aim of the present paper is to take advantage of further gains in understanding the 
 best ideas and methods  that can be applied to this class of problems, in order to obtain 
 a \emph{ near optimal} result. Compared to \cite{HIT}, \cite{IT-global}, there are four
 such improvements:
 
 \begin{enumerate}[label=\roman*)]
     \item In terms of energy estimates, we are able to replace the cubic energy estimates 
of \cite{HIT} with sharper ones, which we call \emph{balanced energy estimates}. These estimates, recently proved by the authors in \cite{AIT}, are still cubic, akin to \cite{HIT}, but have a better balance of regularity in the control norms, which allows us to lower the required data regularity 
in the result. Notably, these estimates hold both at the level of the full equation and at the level of the linearized equation.
     
    \item In terms of normal form analysis, we borrow an idea from Alazard-Delort \cite{AD}, 
    which is to ``prepare'' the problem with a partial normal form transformation. This allows 
    us to ultimately reduce a good portion of the analysis to a more favourable, paradifferential setting, without losing any regularity in the process.
    
    \item In order to convert vector field energy estimates to pointwise bounds, 
we use an idea inspired from the last two authors' work \cite{BO} on the Benjamin-Ono equation, and 
prove the pointwise bounds in a nonlinear, paradifferential setting, rather than in a linear setting as in \cite{IT-global}. This is important because the reduction to the linear setting inherently loses derivatives.

 \item The wave packet testing, which uses the same principle as in \cite{IT-global}, is now 
 also applied in the paradifferential setting rather than in a more NLS-like scenario, as in 
 \cite{IT-global}. This creates additional difficulties, but ultimately does not affect the 
 asymptotic equation.
 
 \end{enumerate}

Our refined analysis in this article allows us not only to relax the initial data regularity 
at high frequency to the nearly optimal level $\dH^\sigma$ with $\sigma > 11/4$, but also to relax the initial data regularity at low frequency to $\dH^\frac14$, which in particular allows for initial 
data with infinite energy. An improvement of this type has been previously obtained by Wang~\cite{MR3730012}, but only to $\dH^\frac15$.

\subsection{ On optimality}
Our goal here is to heuristically explain why our result is nearly optimal, 
by comparing it with its sharp counterpart for the cubic NLS problem.

We begin by recalling the optimal result for cubic NLS,
\[
i u_t - \Delta u = \pm u|u|^2, \qquad u(0) = u_0.
\]
Small and localized data for this problem leads to global solutions.
A good starting point here for instance is the the result of the 
last two authors in \cite{NLS}, which asserts that 
an appropriate smallness condition is 
\[
\| u_0\|_{L^2} + \| x u_0\|_{L^2} \ll 1.
\]
This is not scale invariant, but by scaling one can replace it with a 
scale invariant counterpart
\begin{equation}\label{nls-small}
\| u_0\|_{L^2} \| x u_0\|_{L^2} \ll 1,
\end{equation}
which roughly corresponds to 
\[
x^\frac12 u_0 \in L^2.
\]

On the other hand, for the water wave problem,  our smallness assumption for the initial data reads
\[
\| (\W,R)\|_{\dH^{\sigma-1}} + \| (W,Q)\|_{\dH^\frac14} + \| \alpha (\W,R)\|_{\dH^\frac14} \ll 1.
\]
Consider the limiting case $\sigma = \frac{11}4$. Then by scaling one can replace this 
smallness condition with 
\begin{equation}\label{ww-small}
\| (\W,R)\|_{\dH^{\sigma-1}} (\| (W,Q)\|_{\dH^\frac14} + \| \alpha (\W,R)\|_{\dH^\frac14}) \ll 1.
\end{equation}
Here the last two norms were kept together, as they have the same scaling and are in effect related via a Hardy type inequality at the linear level.

We will argue that, in a suitable interpretation, the two smallness relations \eqref{nls-small} and 
\eqref{ww-small} are essentially equivalent in a frequency localized setting. To see why this is so 
one should think in terms of NLS approximation results for water waves, for which we refer the reader 
to \cite{NLS-approx} and references therein, and also \cite{Dull}. In a nutshell, these results 
assert that water waves are well approximated by the (focusing) cubic NLS in well chosen regimes
as follows:
\begin{itemize}
    \item The frequency of the solutions is well localized near a given frequency $\xi_0$,
    around which the water waves linear dispersion relation $\tau = \pm \sqrt{|\xi|}$ is well approximated by its quadratic approximation.
    
    \item The water wave to NLS connection is given via a normal form transformation, which eliminates
    quadratic interactions and leaves only cubic interactions, as in the NLS case. 
\end{itemize}

For water waves, after diagonalization and normal form analysis 
we have a cubic nonlinearity, which for a diagonal variable
\[
v = \tilde W + i |D|^\frac12 \tilde Q
\]
has roughly the form
\begin{equation}\label{ww-cubic}
i v_t - |D|^\frac12 v = |D|^{\frac52}( v |v|^2).
\end{equation}
Here we neglect the exact placement of derivatives, only counting the total 
number, as this approximation is valid anyway only near a fixed frequency.
At this level, our smallness assumption \eqref{ww-small} becomes
\begin{equation}\label{v-small}
\| v(0)\|_{\dot H^{\sigma}}  \| \alpha v_\alpha(0) \|_{\dot H^\frac14} \ll 1.
\end{equation}

To relate this problem with the cubic NLS, we consider solutions $v$
at a fixed frequency $\lambda$. For the dispersion relation, 
we approximate our relation with a quadratic one, neglecting the constant and the linear part (as in Galilean invariance). At frequency $\lambda$
we have 
\[
\frac{\partial^2}{\partial \xi^2} |\xi|^{\frac12} \approx \lambda^{-\frac32}.
\]
Then our reduced equation \eqref{ww-cubic} should be compared with the NLS type problem
\[
i v_t - \lambda^{-\frac32} \Delta v = \lambda^{\frac52} v |v|^2.
\]
To eliminate the scaling parameters without changing the $v$ frequency $\lambda$ we substitute
\[
v(t,x) = \lambda^{-2} u(\lambda^{-\frac32} t,x).
\]
Now $u$ solves the cubic NLS.

It remains to compare the smallness assumptions. At frequency $\lambda$, the smallness
condition \eqref{ww-small} for $v$ reads
\[
\|\lambda^{\frac{11}4} v(0)\|_{L^2} \| \lambda^{\frac54} x v(0)\|_{L^2} \ll 1,
\]
or equivalently 
\[
\| v(0)\|_{L^2} \|  \alpha v(0)\|_{L^2} \ll \lambda^{-4}.
\]
Translated to $u$, we have arrived exactly at  \eqref{nls-small}.

\subsection*{Acknowledgements}
The first author was supported by the Henry Luce Foundation. The second author was supported by a Luce Associate Professorship, by the Sloan Foundation, and by an NSF CAREER grant DMS-1845037. The third author was supported by the NSF grant DMS-1800294 as well as by a Simons Investigator grant from the Simons Foundation.  The authors are very grateful 
to the anonymous referee for the careful reading of the paper, which led to many clarifications and improvements in exposition.

\section{An overview of the proof}

By our prior results in \cite{HIT}, \cite{AIT}, the water wave system \eqref{nlin}
expressed in holomorphic coordinates is locally well-posed in the space 
\[
(W,Q) \in \dH^\frac14, \qquad (\W,R) \in \dH^{\sigma-1}.
\]
The objective of the proof is to use a continuity argument to extend these 
local solutions to global in time solutions, by 
simultaneously tracking the Sobolev $\sWH$ norm and the uniform $X$ norm of the solutions.

Our energy estimates are based on \cite{AIT}, where we construct
cubic energy functionals equivalent to $\|  (\W,R) \|_{\dH^{s}}$ for
all  $s \geq 0$. Unfortunately, in \cite{AIT} there is no cubic energy
estimate at the level of $\|(W,Q)\|_{\dH^\frac14}$, so we need to do
this here. Our remedy is to use instead the cubic $\dH^\frac14$ energy estimates
proved in \cite{AIT} for the linearized equation.  To make such an
argument possible, we will work with a one parameter family of
solutions instead of a single solution. Precisely, for $h \in [0,1]$,
we consider the family of initial data
\[
(W^h_0,Q^h_0) = h (W_0,Q_0)
\]
and the corresponding solutions $(W^h,Q^h)$, and we will
simultaneously track the energy and the pointwise size for the entire
family of solutions. To avoid cumbersome notations, we will omit the index $h$
for the rest of the paper. The $h$ dependence will be important, and
indeed, critically used in a single place in the paper, namely in the
proof of Proposition~\ref{p:ee-low}.

In order for us to be able to provide a modular proof, it is convenient
to make in the beginning the following bootstrap assumption in a time interval $[0,T]$,
\begin{equation}\label{pointwise-bootstrap}
\|(\W, R)(t)\|_X \leq C\eps \langle t \rangle^{-1/2}, \qquad |t| \leq T.
\end{equation}
This will be assumed to hold uniformly for $h \in [0,1]$. Then the main steps of our argument are as follows:

\bigskip

\emph{1. Energy estimates.} Using the bootstrap assumption, as well as the balanced energy estimates of \cite{AIT} (recalled here in Theorems \ref{t:ee-s} and \ref{t:balancedenergy}) we obtain the energy estimates with a slight growth
\begin{equation}\label{ee-global}
\|(W, Q)(t) \|_{\sWH}   \lesssim \epsilon \langle t \rangle^{C\epsilon^2}.
\end{equation}
Here, the notation $\lesssim$ indicates a universal implicit constant, which in particular does not depend on $C$ in \eqref{pointwise-bootstrap}.
This is done in Section~\ref{s:ee}.

\bigskip

\emph{2. Normal form reduction.}  In Section~\ref{s:nf}, we apply a partial normal form reduction, whose primary goal is to eliminate the balanced quadratic interactions from the equations.   Using a partial normal form transformation, the variables $(W,Q)$ are replaced by normal form alternates $(\tW,\tQ)$, for which we obtain an equation with paradifferential quadratic terms and full cubic terms, modulo quartic error terms; see Proposition~\ref{p:gk-est}.
    
We  re-express the bounds \eqref{pointwise-bootstrap} and \eqref{ee-global} 
in a paradifferential fashion in terms of $(\tW,\tQ)$,
\begin{equation}\label{pointwise-bootstrap-nf}
\|(\tW_\alpha, \tQ_\alpha)(t)\|_X \approx \|(\W, R)(t)\|_X \lesssim C \eps \langle t \rangle^{-1/2}, \qquad |t| \leq T ,\end{equation}
respectively
\begin{equation}\label{ee-global-nf}
\|(\tW, \tQ)(t) \|_{\tWH}  \lesssim \|(W, Q)(t) \|_{\sWH} \lesssim \epsilon \langle t \rangle^{C\epsilon^2}.
\end{equation}
Here the nonlinear $\sWH$ energy functional is replaced
by a linear counterpart $\tWH$, defined later in \eqref{def-tWH}. This step is carried out in Section~\ref{s:nf}.
At the conclusion of this step,
the problem has been reduced to the study of the evolution of the normal form variables
$(\tW,\tQ)$, for which we need to improve the counterpart of the bootstrap assumption
\eqref{pointwise-bootstrap}, and show that
\begin{equation}\label{pointwise-nf}
\|(\tW_\alpha, \tQ_\alpha)(t)\|_X \lesssim  \eps \langle t \rangle^{-1/2}, \qquad |t| \leq T. \end{equation}

\bigskip

\emph{3. Nonlinear vector field Sobolev inequalities.} The goal in Section~\ref{s:wp} is to derive a preliminary pointwise bound 
for the normal form variables $(\tW,\tQ)$ starting from the weighted Sobolev bound $\tWH$ in \eqref{ee-global-nf}. Precisely, under the same bootstrap bound \eqref{ee-global-nf} we show that
\begin{equation}\label{KS-intro}
\|(\tW_\alpha, \tQ_\alpha)(t) \|_{X^\sharp}   \lesssim  \|(\tW, \tQ)(t) \|_{\tWH}.
\end{equation}
Here the $X^\sharp$ norm, defined later in \eqref{def:Xsharp}, is a microlocal improvement of the $X$ norm, which provides an additional frequency gain away from waves  of frequency $1$ which propagate with unit speed. Precisely, the $X^\sharp$ norm is stronger than the $X$ norm in two ways:
\begin{itemize}
    \item It has additional gains away from the frequency $1$.
    \item It has additional gains away from the hyperbolic region.
\end{itemize}
One could think of this akin to Sobolev embeddings, with the key caveat that 
the norm $\tWH$ is not a classical, elliptic norm, and instead has a ``hyperbolic''
component in a certain subset of the phase space.

We interpret the estimate \eqref{KS-intro} as a linear paradifferential estimate, which generalizes in a \emph{nonperturbative} fashion a corresponding linear vector field Sobolev bound in \cite{HIT}. We remark that the idea of replacing linear  bounds with 
more robust (though also more difficult to prove) nonlinear 
vector field Sobolev bounds
was first introduced by the last two authors in the Benjamin-Ono context 
in \cite{BO}.

\bigskip 

\emph{4. Pointwise bounds via wave packet testing.}  In this final step in Section~\ref{s:wp2}, we use the method 
of wave packet testing (see \cite{NLS}, \cite{IT-global}) to propagate sharp pointwise bounds along rays, in order to 
prove the desired pointwise bound \eqref{pointwise-nf} and close the argument. By virtue of the $X^\sharp$ bound, this is needed only in a time dependent range of frequencies around frequency $1$.

In a nutshell, the idea is to use well-chosen wave packets in order 
to define a good \emph{asymptotic profile} $\gamma(t,v)$ which describes
the leading order evolution of the solution at infinity along rays $x = vt$, and then to show that $\gamma$ is an approximate solution 
to an appropriate \emph{asymptotic equation}.

\section{The energy estimates} 
\label{s:ee}
Our goal here is to recall first the energy estimates of \cite{HIT} and \cite{AIT},
and then to use them to prove the bound \eqref{ee-global}, which contains the 
global energy bounds for our time dependent weighted norm $\sWH$.

The energy estimates for the solutions in both \cite{HIT} and \cite{AIT}
are described in terms of the (time dependent) uniform control norms.
The two control norms in \cite{HIT}, denoted by $A$ and $B$, and redenoted by 
$A_0$ and $A_{1/2}$ in \cite{AIT}, are
\begin{equation}\label{A-def}
A_0 = A := \|\W\|_{L^\infty}+\| Y\|_{L^\infty} + \||D|^\frac12 R\|_{L^\infty \cap B^{0}_{\infty, 2}},
\end{equation}
respectively
\begin{equation}\label{B-def}
A_{\frac12} =  B :=\||D|^\frac12 \W\|_{BMO} + \| R_\alpha\|_{BMO}.
\end{equation}
By contrast, in \cite{AIT} the leading role was played  by
an intermediate control norm interpolating between $A_0$
and $A_{\frac12}$,
\begin{equation}\label{A14-def}
A_{\frac14} :=\| \W\|_{\dot B^{\frac14}_{\infty,2}} + \| R \|_{\dot B^{\frac34}_{\infty,2}}.
\end{equation}
Here the subscript of $A$ represents the difference in terms of derivatives between our control norm and scaling.
In particular  $A_{s}$ corresponds to and is controlled by the homogeneous $\dH^{\frac12+s}$ norm of $(\W, R)$, and $A_0$ is a scale invariant quantity. Concerning $A_{\frac14}$, we note the following inequality, 
\begin{equation}\label{A14-def+}
\||D|^\frac14 \W\|_{BMO} + \| |D|^\frac34 R \|_{BMO} \lesssim A_{\frac14}.
\end{equation}

In addition to the pointwise scale invariant norm measured by $A$,
we will also need a  secondary stronger scale invariant Sobolev control norm $\As$ defined by 
\begin{equation}\label{Asharp-def}
\As :=\|D^\frac14 \W\|_{L^4} + \| D^\frac34 R\|_{L^4}.
\end{equation}
In \cite{AIT} this is used to control implicit constants in 
some of the energy estimates.

We now recall from \cite{AIT} the balanced cubic  energy estimates.
We begin with  the  full differentiated system \eqref{ww2d-diff}:

\begin{theorem}\label{t:ee-s}
For each $s \geq 0$ there exists an energy functional $E_s$
associated to the differentiated equation \eqref{ww2d-diff}
with the following two properties:

(i) Energy equivalence if $A \ll 1$:
\begin{equation}
E_s(\W,R) \approx \|(\W,R)\|_{\dH^s}^2   , 
\end{equation}

(ii) Balanced cubic energy bound:
\begin{equation}
\frac{d}{dt} E_s(\W,R) \lesssim_A A_{\frac14}^2 \|(\W,R)\|_{\dH^s}^2.  
\end{equation}
\end{theorem}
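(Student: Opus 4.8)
\emph{Proof strategy.} The plan is to run a modified-energy construction of the type introduced in \cite{HIT}: start from the quadratic energy dictated by the Hamiltonian \eqref{energy} and the structure of \eqref{ww2d-diff}, compute its time derivative along the flow, and then cancel the ``unbalanced'' part of the resulting cubic output by adding trilinear normal-form corrections, so that what survives is a genuinely balanced cubic expression plus quartic errors. Throughout one works paradifferentially, decomposing every multilinear contribution by Littlewood--Paley trichotomy.

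First I would fix $s \geq 0$ and take as the leading term a weighted quadratic form, schematically
\[
E_s^{(2)}(\W,R) \ := \ \int (1+\Re a)\,\big| |D|^s \W \big|^2 \, d\alpha \ + \ \int \big| |D|^{s+\half} R \big|^2 \, d\alpha,
\]
where the Taylor coefficient $1+a$ from \eqref{a-def} is inserted precisely so that the principal symbols are compatible with the coupling $i(\W-a)/(1+\W)$ in \eqref{ww2d-diff}. Since $a = O(A^2)$ pointwise and $\|a\|_{BMO} \lesssim \As^2$, one has $E_s^{(2)}(\W,R) \approx \|(\W,R)\|_{\dH^s}^2$ whenever $A \ll 1$, which is property (i). Differentiating in time, writing $\D_t = D_t - b\,\D_\alpha$ and substituting \eqref{ww2d-diff}, the quadratic part of $\tfrac{d}{dt}E_s^{(2)}$ vanishes, since the leading quadratic form is preserved by the linear part of the flow; one is then left with cubic integrals of three kinds: (a) the commutators $[|D|^s,\tfrac{1+\W}{1+\bar\W}]R_\alpha$ and $[|D|^{s+\half},\tfrac{1}{1+\W}]\W$; (b) the advection defect $\int b_\alpha \, \big| |D|^s\W \big|^2$ together with its $R$-counterpart; and (c) the $(1+\W)M$ and $\D_t a$ terms, with $M$ as in \eqref{M-def}. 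The high$\times$low$\times$high paradifferential pieces of (a)--(b), after exploiting self-adjointness and the fact that the leading advection contribution is a perfect derivative up to $b_\alpha$, are bounded directly by $A_{\frac14}^2\|(\W,R)\|_{\dH^s}^2$ using Bernstein and \eqref{A14-def+}.

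The terms that are not yet balanced are essentially the high$\times$high$\to$low interactions and the well-separated, non-resonant quadratic-type interactions, for which a crude estimate would cost $A_0 A_{\frac12}$ instead of $A_{\frac14}^2$. For these I would introduce a trilinear correction $E_s^{(3)}(\W,R)$, a sum of integrals $\int m(\xi_1,\xi_2,\xi_3)\,\widehat{(\W,R)}\,\widehat{(\W,R)}\,\widehat{(\W,R)}$ whose multipliers $m$ are chosen so that the quadratic part of \eqref{ww2d-diff} feeding $\tfrac{d}{dt}E_s^{(3)}$ cancels exactly the unbalanced cubic output above. On the relevant dyadic regions the underlying gravity-wave resonance function is non-degenerate --- here the one-sided (negative) frequency support of the holomorphic variables is crucial --- so $m$ is a bounded, correctly homogeneous symbol, $|E_s^{(3)}(\W,R)| \lesssim A\,\|(\W,R)\|_{\dH^s}^2$, and (i) is preserved when $A\ll 1$. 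The price is that inserting the cubic part of \eqref{ww2d-diff} into $\tfrac{d}{dt}E_s^{(3)}$ produces quartic terms, which are bounded by $A\,A_{\frac14}^2\|(\W,R)\|_{\dH^s}^2$ (with $\As$ absorbing implicit constants); this is admissible since $A\ll1$. Setting $E_s := E_s^{(2)}+E_s^{(3)}$ and collecting (a)--(c) and the quartic remainders yields (ii).

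The main obstacle is precisely the \emph{balanced} bookkeeping behind the last two steps: one must verify that, after the normal-form corrections, every surviving cubic and quartic expression admits an estimate in which the two non-principal factors each sit at the scale-$\tfrac14$-below-critical level measured by $A_{\frac14}$ --- equivalently in the anisotropic Besov norms $\dot B^{\frac14}_{\infty,2}$ of $\W$ and $\dot B^{\frac34}_{\infty,2}$ of $R$ --- rather than in the easier but decay-insufficient split into $A_0$ and $A_{\frac12}$, for which one only has $A_{\frac14}^2 \lesssim A_0 A_{\frac12}$. This forces careful tracking of frequency envelopes through the trichotomy, especially when the output frequency is much smaller than the two inputs and near the diagonal, and it is exactly there that the holomorphic sign restriction must be used both to keep the correction multipliers bounded and to extract the extra $\tfrac14$ derivative when two high frequencies nearly cancel. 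The low endpoint $s=0$ and the behaviour at low output frequency require a small separate treatment, since there one cannot shift derivatives onto the low-frequency factor.
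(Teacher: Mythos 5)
The present paper does not prove Theorem~\ref{t:ee-s}: it is recalled verbatim from the companion paper \cite{AIT}. The sentence immediately preceding the statement reads ``We now recall from \cite{AIT} the balanced cubic energy estimates,'' and no argument is supplied here. Consequently there is no proof within this paper against which to check your proposal; the correct comparison is with \cite{AIT} itself. For what it is worth, your outline does follow the general modified-energy philosophy that \cite{HIT} introduced and \cite{AIT} refined --- a symmetrized quadratic functional (with the Taylor coefficient $1+a$ as weight), time differentiation along \eqref{ww2d-diff}, trilinear normal-form corrections whose multipliers stay bounded thanks to the one-sided holomorphic frequency support, and quartic remainders absorbed by $\lesssim_A$ --- and you correctly locate the actual novelty of \cite{AIT} over \cite{HIT} in the \emph{balanced} bookkeeping that places both non-principal control factors at the $A_{\frac14}$ level rather than using the easier but lossy $A_0\cdot A_{\frac12}$ split. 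However, you explicitly defer that verification as the ``main obstacle'' without carrying it out: you do not identify the correction multipliers, check their boundedness on the resonant set, or show why each surviving cubic and quartic term really admits the two-sided $A_{\frac14}$ bound (including the delicate low-output-frequency and $s=0$ cases you flag). As written this is therefore a plan rather than a proof, and judging its soundness requires going to the argument in \cite{AIT}.
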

 Here the notation $\lesssim_A$ indicates that 
the implicit constant is allowed to depend on $A$;
this has no impact here, as $A$ will be shown to stay small for all solutions we work with.
We continue with the bounds for the linearized system \eqref{lin(wr)}, respectively:

\begin{theorem}\label{t:balancedenergy}
Assume $A \lesssim 1$ and $A_\frac14 \in L^2$. Then the linearized equation~\eqref{lin(wr)} is well-posed in $\dH^{\frac14}$. Furthermore, there 
exists an energy functional $\Elin^\frac14(w,r)$ so that we have

a) Norm equivalence:
\[
\Elin^\frac14(w,r) \approx_{\As} \| (w,r)\|_{\dH^{\frac14}}^2,
\]

b) Energy estimates:
\[
\frac{d}{dt} \Elin^\frac14(w,r) \lesssim_{\As} A_\frac14^2 \|(w,r)\|_{\dH^\frac14}^2.
\]
\end{theorem}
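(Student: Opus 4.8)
The plan is to prove Theorem~\ref{t:balancedenergy} by constructing a cubic modified energy functional at the $\dH^{1/4}$ level for the good linearized variables $(w,r)$, exactly paralleling the construction that yields Theorem~\ref{t:ee-s} for the full differentiated system, but now for the \emph{linear} system \eqref{lin(wr)}. The starting point is the quadratic energy $\Elin^{1/4,(2)}(w,r) = \||D|^{1/4} w\|_{L^2}^2 + \||D|^{1/4} r\|_{\dot H^{1/2}}^2 \approx \|(w,r)\|_{\dH^{1/4}}^2$. Differentiating this along \eqref{lin(wr)} and using the paradifferential structure, the leading terms cancel by the (approximate) self-adjointness of the spatial operators in \eqref{lin(wr)} — this is where the specific form of the good linearized variables $(w,r)=\AA(w,q)$ and the corresponding symmetrization is essential. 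What remains after this cancellation is a collection of quadratic-in-$(w,r)$ terms with coefficients that are linear in $(\W,R,Y)$ and its derivatives (the $\mathcal G_0, \mathcal K_0$ contributions, plus commutator terms from $D_t$, from $b_\alpha$, from $a$, etc.). These are the genuinely cubic contributions to $\frac{d}{dt}\Elin^{1/4}$.

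The key point — and the reason this is a \emph{balanced} estimate rather than the cruder cubic estimate of \cite{HIT} — is that each such cubic term must be redistributed so that the derivatives fall in a balanced way: one should end up bounding every term by $A_{1/4}^2 \|(w,r)\|_{\dH^{1/4}}^2$, where the two factors of $A_{1/4}$ (each being essentially a $\dot B^{1/4}_{\infty,2}$-type norm of $\W$ or $\dot B^{3/4}_{\infty,2}$ of $R$, i.e.\ half a derivative above scaling distributed symmetrically) and the $\dH^{1/4}$ factor together have the correct total number of derivatives. Concretely, for a trilinear expression $\int (\text{coeff in }\W,R)\cdot w \cdot \bar r$-type terms, one does a Littlewood–Paley trichotomy: in the low-high and high-low regimes one directly estimates using the Besov structure of $A_{1/4}$ and the $\dH^{1/4}$ norm on the high-frequency inputs; in the high-high-to-low regime one uses that the output frequency is low, so there is room to move derivatives, again landing on $A_{1/4}^2 \|(w,r)\|_{\dH^{1/4}}^2$. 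For the terms where the naive distribution of derivatives fails — typically the top-order terms involving $r_\alpha$ paired against $w$ with a coefficient at high frequency — one must correct the quadratic energy by adding a cubic term $\Elin^{1/4,(3)}$ whose time derivative, via the equation, produces a canceling quadratic-times-linear expression; the remaining uncancelled piece is then again balanced and quartic-acceptable. The norm equivalence in part a) holds because $\Elin^{1/4,(3)}$ is trilinear and controlled by $\As \|(w,r)\|_{\dH^{1/4}}^2$ (this is where the auxiliary norm $\As$, rather than just $A$, enters — the $L^4$-based $\As$ is what controls the implicit constants in these trilinear bounds via Hölder with two $L^4$'s and an $L^2$), so for $\As$ small it is a lower-order perturbation of the quadratic energy. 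Well-posedness in $\dH^{1/4}$ then follows from the a priori estimate b) together with the corresponding estimate for the adjoint/backwards problem and a standard approximation argument, given $A\lesssim 1$ and $A_{1/4}\in L^2$ (the latter making $\int_0^t A_{1/4}^2$ finite, hence Gronwall applicable).

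I expect the main obstacle to be the bookkeeping of the top-order cubic terms: identifying precisely which terms in $\frac{d}{dt}\Elin^{1/4,(2)}$ cannot be estimated by $A_{1/4}^2\|(w,r)\|_{\dH^{1/4}}^2$ after integration by parts alone, and designing the cubic correction $\Elin^{1/4,(3)}$ so that its contribution cancels exactly the obstruction and nothing more. This is delicate because the linearized system \eqref{lin(wr)} is non-diagonal and nonlocal (the projector $P$, the coefficients $1/(1+\W)$, $1/(1+\bar\W)$, $1+a$), so the symmetrization and the choice of correction interact with the holomorphic projection; one has to be careful that $P$-commutators are themselves perturbative (which they are, because commuting $P$ past a smooth-ish multiplier gains derivatives) and that the correction terms are holomorphic-compatible. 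A secondary technical point is that at the low regularity $\dH^{1/4}$ one is barely above scaling, so there is essentially no slack: every derivative must be accounted for, and the Besov (rather than $L^\infty$) structure of $A_{1/4}$ is used crucially in the high-high interactions to sum the dyadic pieces. Once the correction is in place, the remaining estimates are routine Littlewood–Paley paraproduct bounds of the type already developed in \cite{AIT}, so I would cite those wherever possible rather than redo them.
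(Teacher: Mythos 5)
The theorem you are asked to prove is not actually proved in this paper: the text introducing Theorems~\ref{t:ee-s} and~\ref{t:balancedenergy} says explicitly that ``we now recall from~\cite{AIT} the balanced cubic energy estimates,'' and no proof follows. It is imported as a black-box result from the first installment of the series, so there is no in-paper proof against which to compare your argument.

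With that caveat, your sketch does capture the broad strategy that the paper attributes to~\cite{AIT}: start from the quadratic $\dH^{1/4}$ energy, use the symmetric structure furnished by the good linearized variables $(w,r)=\AA(w,q)$ to cancel top-order terms, add a cubic correction $\Elin^{1/4,(3)}$ so that the remaining cubic contributions to $\tfrac{d}{dt}\Elin^{1/4}$ are \emph{balanced}, i.e.\ estimated by $A_{1/4}^2 \|(w,r)\|^2_{\dH^{1/4}}$ rather than by the cruder $A_0 A_{1/2}$ of~\cite{HIT}, and use $\As$ (an $L^4$-scaled control quantity) to control the implicit constants in the trilinear bounds and in the norm equivalence. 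You also correctly note that $A_{1/4}\in L^2$ with Gronwall is what closes the well-posedness argument and that $P$-commutators and the nonlocal coefficients must be shown to gain derivatives. Two caveats worth naming: first, the coefficients $1/(1+\W)$, $1/(1+\bar\W)$, $1+a$, and the advection velocity $b$ in~\eqref{lin(wr)} are genuinely nonlinear in $(\W,R,Y)$, so before the ``coefficients linear in $(\W,R)$'' picture you describe applies one must expand them and verify that the higher-order remainders are also balanced-acceptable; second, the statement $\approx_{\As}$ only records that the implicit constants depend on $\As$ — it does not by itself require $\As$ small, though smallness is in fact what is used downstream (see~\eqref{Asharp-small}). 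Whether the detailed bookkeeping in~\cite{AIT} matches your outline is something only that companion paper can settle.
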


In the present work,  we use both theorems above combined with the pointwise bootstrap assumption \eqref{pointwise-bootstrap} in order to establish the energy estimate \eqref{ee}.

\begin{theorem}\label{t:wh-ee}
Assume that in a time interval $[-T, T]$ we have a solution $(W, Q)$ to \eqref{nlin} with small energy
\[
\|(W, Q)(0)\|_{\sWH} \leq \eps \ll 1,
\]
and satisfying the pointwise bootstrap estimate \eqref{pointwise-bootstrap}. Then we have the energy estimate
\begin{equation}\label{pre-energy-est}
\|(W, Q)(t)\|_{\sWH} \lesssim \eps \langle t \rangle^{C_1 \eps^2}, \qquad t \in [-T, T]
\end{equation}
for some universal $C_1 \gg C^2$.
\end{theorem}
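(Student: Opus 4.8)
The plan is to decompose the weighted energy norm $\|(W,Q)(t)\|_{\sWH}$ into its three constituent pieces and bound each one using the appropriate balanced energy estimate from \cite{AIT}, all under the bootstrap assumption \eqref{pointwise-bootstrap}. First I observe that \eqref{pointwise-bootstrap} gives, in particular, control of the scale invariant pointwise norm $A = A_0 \lesssim C\eps \langle t\rangle^{-1/2} \ll 1$ (after shrinking $\eps$), so the energy equivalences in Theorems~\ref{t:ee-s} and \ref{t:balancedenergy} are valid throughout $[-T,T]$; moreover $A_{\frac14} \lesssim C\eps \langle t\rangle^{-1/2}$ so that $A_{\frac14}^2 \lesssim C^2\eps^2 \langle t\rangle^{-1}$ is integrable in time with logarithmic growth, which is exactly what will produce the $\langle t\rangle^{C_1\eps^2}$ factor. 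I also need to check that the stronger Sobolev control norm $\As$ from \eqref{Asharp-def}, which enters the implicit constants in Theorem~\ref{t:balancedenergy}, is controlled — but $\As$ is bounded by $\|(\W,R)\|_{\dH^{3/4}}$, which interpolates between $\dH^{1/4}$ and $\dH^{\sigma-1}$ (using $\sigma > 11/4 > 7/4$), hence stays small by the bootstrap on the energy itself; this will be part of a standard continuity argument.

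For the high-frequency piece $\|(\W,R)(t)\|_{\dH^{\sigma-1}}$, I apply Theorem~\ref{t:ee-s} with $s = \sigma - 1 \geq 0$: the functional $E_{\sigma-1}$ satisfies $E_{\sigma-1} \approx \|(\W,R)\|_{\dH^{\sigma-1}}^2$ and $\frac{d}{dt} E_{\sigma-1} \lesssim_A A_{\frac14}^2 \|(\W,R)\|_{\dH^{\sigma-1}}^2 \lesssim C^2\eps^2\langle t\rangle^{-1} E_{\sigma-1}$. Gronwall then yields $\|(\W,R)(t)\|_{\dH^{\sigma-1}} \lesssim \eps \langle t\rangle^{C_1\eps^2}$. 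For the low-frequency pieces $\|(W,Q)(t)\|_{\dH^{1/4}}$ and $\|(\w,\r)(t)\|_{\dH^{1/4}}$, I use the linearized estimate Theorem~\ref{t:balancedenergy}: the key point, already flagged in the excerpt's overview, is that there is no direct cubic $\dH^{1/4}$ energy estimate for the full equation, so one instead realizes $(W,Q)$ and $(\w,\r) = \AA\S(W,Q)$ as solutions (or near-solutions) of the \emph{linearized} system \eqref{lin(wr)}. For $(\w,\r)$ this is essentially immediate since the scaling vector field $S$ applied to a solution produces a linearized solution (the equation being scale-covariant), modulo the known correction from the $(S-2,S-3)$ weights and the perturbative terms $(\cG_0,\cK_0)$; for $(W,Q)$ itself one uses that the undifferentiated good variables also solve a linearized-type equation. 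Then $\Elin^{1/4}$ satisfies $\frac{d}{dt}\Elin^{1/4} \lesssim_{\As} A_{\frac14}^2 \|\cdot\|_{\dH^{1/4}}^2$, and Gronwall again gives the $\langle t\rangle^{C_1\eps^2}$ growth. Here one must be careful about inhomogeneous source terms: the right-hand sides $\cG_0, \cK_0$ and the commutator terms from $\S$ acting on the nonlinearity must be shown to be perturbative, i.e., controlled by $A_{\frac14}$ times the energy, so they can be absorbed into the Gronwall estimate; this uses the balanced estimates' structure and is presumably where the $h$-dependence of the family of solutions (mentioned in the overview, critically used in Proposition~\ref{p:ee-low}) enters.

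The main obstacle, I expect, is precisely this last point: closing the low-frequency $\dH^{1/4}$ estimate for $(W,Q)$ and $(\w,\r)$ through the linearized equation without losing derivatives and while genuinely bounding all the source/commutator terms by $A_{\frac14}^2$ times the energy (rather than by something only controlled by a higher norm). The subtlety is that $\S$ acting on the quadratic and cubic nonlinearities generates terms that naively look like they need more regularity than $\dH^{1/4}$, and one has to exploit the null/balanced structure — the same structure underlying the balanced energy estimates — to see that these are in fact perturbative. The parameter $h$ is there to allow an integration-in-$h$ argument (writing $(W,Q)$ as an integral over the linearized flow along the family $h\mapsto (W^h,Q^h)$) that trades the missing full-equation estimate for the available linearized one. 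Once all three pieces are bounded by $\eps\langle t\rangle^{C_1\eps^2}$ with $C_1$ absorbing the various universal constants and the factor $C^2$ from the bootstrap (hence the requirement $C_1 \gg C^2$), summing gives \eqref{pre-energy-est}.
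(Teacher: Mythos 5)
Your roadmap matches the paper's: decompose $\sWH$ into its three pieces, apply Theorem~\ref{t:ee-s} with $s=\sigma-1$ to bound $\|(\W,R)\|_{\dH^{\sigma-1}}$, apply Theorem~\ref{t:balancedenergy} to $(\w,\r)=\AA\S(W,Q)$ (which solves the linearized system), and handle $\|(W,Q)\|_{\dH^{1/4}}$ by integrating in the parameter $h$. But there are two genuine gaps. First, the $\As$ control. Sobolev embedding alone gives $\As\lesssim\|(\W,R)\|_{\dH^{1/2}}$ (you wrote $\dH^{3/4}$; $1/2$ is the right exponent, but this is minor), and the energy estimates then yield only $\As\lesssim\eps\langle t\rangle^{C_1\eps^2}$, which grows without bound as $t\to\infty$ --- no continuity argument can close a bound that diverges. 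The missing idea is to interpolate the $L^4$-based quantity $\As$ between the $L^\infty$-based bootstrap control $A$ and the $L^2$ energy: $\As\lesssim\bigl(A\,\|(\W,R)\|_{\dH^{1/2}}\bigr)^{1/2}\lesssim C^{1/2}\eps\,\langle t\rangle^{C_1\eps^2/2-1/4}$, so the $\langle t\rangle^{-1/4}$ decay supplied by \eqref{pointwise-bootstrap} dominates the slow energy growth and gives $\As\ll 1$ uniformly in $t$ once $\eps$ is small.

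Second, your intermediate suggestion that one ``uses that the undifferentiated good variables also solve a linearized-type equation'' is false and would fail: $(W,Q)$ solves the full nonlinear system, and there is no direct cubic $\dH^{1/4}$ estimate for it. Your later identification of the $h$-integration is the correct mechanism and is exactly the paper's Proposition~\ref{p:ee-low}, but you stop at naming it. The substance you are missing is the pair of estimates that make it work: (i) $\|(w,r)(0)\|_{\dH^{1/4}}\lesssim\|(W_0,Q_0)\|_{\dH^{1/4}}$, which requires a paraproduct/Coifman--Meyer bound on $R_0W_0$ in $\dot H^{3/4}$ together with the equivalence $\|R\|_{\dot H^{-1/4}}\approx\|Q_\alpha\|_{\dot H^{-1/4}}$; and, more delicately, (ii) the reconstruction $\sup_h\|(W,Q)(t)\|_{\dH^{1/4}}\lesssim\sup_h\|(w,r)(t)\|_{\dH^{1/4}}$. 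For (ii), integrating the naive relation $Q(h)=\int_0^h r + Rw\,dh_1$ loses a derivative through the $Rw$ term. The paper resolves this by instead estimating the para-corrected quantity $Q - T_RW$, whose $h$-derivative is a sum of paraproducts with coefficients supported at or below the frequency of $W$, so the $\dot H^{3/4}$ bound closes. This derivative-loss obstruction in the reconstruction step, not commutators of $\S$ with the nonlinearity, is where the argument is genuinely subtle.
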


\begin{proof}

a) For the $\dH^{\sigma - 1}$ bound for $(\W,R)$ we use the energy estimates in Theorem~\ref{t:ee-s} with $s = \sigma - 1$. The same energy estimates can be applied with $s=0$, which yields 
the bound 
\[
\|(\W,R)(t)\|_{\H^0} \lesssim \epsilon \langle t \rangle^{C_1 \epsilon^2}.
\]
Interpolating these energy estimates with the pointwise bootstrap bound \eqref{pointwise-bootstrap},
using suitably chosen intermediate norms in both cases, this gives a bound for the $A^\sharp$
control norm, e.g. by writing
\[
A^\sharp \lesssim 
(A \|(W,R)\|_{\dH^\frac12})^\frac12 \lesssim   \
C^\frac12 \epsilon \langle t \rangle^{C_1 \epsilon^2-\frac14}.
\]
For small enough $\epsilon$ this yields in particular the uniform in time smallness
\begin{equation}\label{Asharp-small}
A^\sharp \ll 1.     
\end{equation}
This will  be needed in parts (b), (c) below in order to control the implicit constants 
in the energy estimates for the linearized equation.

\bigskip

b) For the energy bounds in $\sWH$ on $(\w, \r) = \AA\S(W, Q)$, it suffices to use the balanced cubic energy estimates for the linearized equations in Theorem~\ref{t:balancedenergy}. Using the Gronwall inequality, the pointwise bootstrap assumption \eqref{pointwise-bootstrap}, together with \eqref{Asharp-small}, we have
\[
\|(\w, \r)(t) \|_{\dH^{1/4}} \lesssim e^{\int_0^t A_{1/4}^2 \, ds} \|(\w, \r)(0)\|_{\dH^{1/4}} \lesssim e^{\int_0^t C^2\eps^2\langle s \rangle^{-1} \, ds} \|(\w, \r)(0)\|_{\dH^{1/4}} \lesssim \eps e^{C^2 \eps^2 \log t}.
\]

\bigskip

c) The energy bounds in $\dH^\frac14$ do not follow from Theorem~\ref{t:ee-s}, 
 so we need to prove them here. We will show that
\begin{proposition}\label{p:ee-low}
Assume the bootstrap bound \eqref{pointwise-bootstrap} holds. Then we have the estimate
\begin{equation}
\| (W,Q)(t) \|_{\dH^\frac14} \lesssim \langle t \rangle^{C\epsilon^2} \|(W,Q)(0)\|_{\dH^\frac14}.   
\end{equation}
\end{proposition}

\begin{proof}
This is the only place in the article where we use the fact that we
work with a one parameter family of solutions depending on the
parameter $h \in [0,1]$. We will denote
\[
(w,q) = \frac{d}{dh}(W,Q), \qquad r = q - Rw. 
\]  
Then for each $h$, $(w,q)$ solves the linearized equation around
$(W,Q)$, and in particular we can apply the energy estimates
in Theorem~\ref{t:balancedenergy}, which, in view of our bootstrap
assumption \eqref{pointwise-bootstrap}, yield the estimate
\begin{equation}\label{ee-low}
\| (w,r)(t) \|_{\dH^\frac14} \lesssim \langle t \rangle^{C\epsilon^2}\|(w,r)(0)\|_{\dH^\frac14},
\qquad t \in [0,T],\quad h \in [0,1].
\end{equation}  

Our task is now to first estimate the initial data $(w,r)(0)$, and
show that
\begin{equation}\label{low-data}
  \|(w,r)(0)\|_{\dH^\frac14} \lesssim_A \|(W_0,Q_0)\|_{\dH^\frac14}.
\end{equation}
Secondly, we want the reverse estimate at times $t \in [0,T]$,
\begin{equation}\label{low-sln}
  \sup_{h \in [0,1]}  \|(W,Q)(t)\|_{\dH^\frac14} \lesssim_A \sup_{h
    \in [0,1]} \|(w,r)\|_{\dH^\frac14}.
\end{equation}
Together with \eqref{ee-low}, these two bounds imply the conclusion of
the proposition.

\bigskip
\emph{ Proof of \eqref{low-data}.} We have 
\[
(w,r)(0) = (W_0, Q_0- R_0 W_0).
\]
Hence the only nontrivial expression to estimate is $R_0 W_0$, for
which we use Coifman-Meyer type estimates to write
\[
\| R_0 W_0 \|_{\dot H^\frac34} \lesssim \| W_0\|_{L^4} \| |D|^\frac34
R_0 \|_{L^4} + \| R_0 \|_{L^3} \| |D|^ \frac34  W_0\|_{L^6}  ,
\]  
where the first term accounts for the high-low interactions, the
second for the low-high interactions, and the balanced interactions
can go either way. 

The first term is estimated directly using a Sobolev embedding,
\[
\| W_0\|_{L^4} \| |D|^\frac34 R_0 \|_{L^4}
\lesssim \| W_0\|_{\dot H^\frac14} \| |D|^\frac34 R_0 \|_{L^4} \lesssim A^\sharp \| (W_0,Q_0)\|_{\dH^\frac14}.
\]
For the second term we use interpolation instead,
\[
\|R_0\|_{L^3} \lesssim \|R_0\|_{\dot H^{-\frac14}}^\frac23
\| |D|^\frac12 R_0\|_{BMO}^\frac13
\lesssim \|R_0\|_{\dot H^{-\frac14}}^\frac23
\| |D|^\frac34 R_0\|_{L^4}^\frac13,
\]
respectively
\[
\| |D|^ \frac34  W_0\|_{L^6} \lesssim \|W\|_{\dot{H^\frac14}}^\frac13 \| \partial_\alpha W \|_{BMO}^\frac23
\lesssim \|W\|_{\dot{H^\frac14}}^\frac13 \| |D|^\frac14 \partial_\alpha W \|_{L^4}^\frac23.
\]

Combining the two and assuming the equivalence (to be proved shortly)
\begin{equation}\label{R-like-Q}
\|R\|_{\dot H^{-\frac14}} \approx_{A,A^\sharp} \|Q_\alpha\|_{\dot H^{-\frac14}},
\end{equation}
we obtain
\[
\| R_0 W_0 \|_{\dot H^\frac34} \lesssim A^\sharp \| (W_0,Q_0)\|_{\dH^\frac14},
\]
which suffices.

\bigskip

\emph{ Proof of \eqref{R-like-Q}.} We have the relations $R=(1-Y)Q_\alpha$ and 
$Q_\alpha = (1+W_\alpha) R$, where 
\[
\| W_\alpha\|_{W^{\frac14,4} \cap L^\infty} \lesssim A+\As \ll 1, 
\]
and, by the algebra property for the space $W^{\frac14,4} \cap L^\infty$,
\[
\| Y \|_{W^{\frac14,4} \cap L^\infty} \lesssim A+\As \ll 1.
\]
Hence, using also duality, it remains to show that we have a bound of the form
\[
\| Yf \|_{\dot H^\frac14} \lesssim \| Y \|_{W^{\frac14,4} \cap L^\infty} 
\| f\|_{\dot H^\frac14}.
\]
But this is a standard multiplicative estimate, which is left for the reader.

\bigskip
\emph{ Proof of \eqref{low-sln}.}
There is nothing to do for $W$, since it is the antiderivative of $w$,
\[
W(h) = \int_0^h w(h_1)\, dh_1.
\]  

It remains to consider $Q$, where we write
\[
Q(h) = \int_0^h r(h_1) + (Rw)(h_1)\, dh_1 .
 \] 
The first term is straightforward, but we still need to estimate the
second, where there is an apparent loss of derivatives. To rectify this
we replace $Q$ by $Q - T_R W$, which is akin to a good variable.
Computing
\[
\frac{d}{dh} R = (q_\alpha - w_\alpha R)(1-Y)= r_\alpha(1-Y) + w R_\alpha (1-Y)
 \]  
we see that
\[
(Q-T_R W)(h) = \int_0^h r + T_w R + \Pi(w,R) - T_{r_\alpha(1-Y) + w R_\alpha(1-Y)} W \,dh_1 .
\]
The second term on the left plays a perturbative role, in view of the bound
\[
\| T_R W \|_{\dH^\frac34} \lesssim_A A^\sharp \| Q\|_{\dot H^\frac34} \ll \| Q\|_{\dot H^\frac34},
\]  
where $R$ is related to $Q_\alpha$ via \eqref{R-like-Q}.

Hence it remains to estimate the nonlinear terms under the integral in $\dot H^\frac34$.
For the first two we have a Coifman-Meyer type bound
\[
\|T_w R\|_{\dH^\frac34} +\| \Pi(w,R)\|_{\dH^\frac34} \lesssim A^\sharp
\| w \|_{\dH^\frac14}.
\]  
This leaves us with the last one. There, all frequencies in the factors 
of the para-coefficient are negative so must be smaller in size than the frequency of $W$.
Hence we can bound the full expression as
 \[
 \begin{aligned}
\|T_{ r_\alpha(1-Y) + w R_\alpha(1-Y)}  W\|_{\dot H^\frac34}  &\lesssim  \ 
\| |D|^{\frac14} W_\alpha\|_{L^4}
\| |D|^{-\frac12} ( r_\alpha(1-Y) + w R_\alpha(1-Y))\|_{L^4})
\\
&\lesssim  \ A^\sharp (\| |D|^\frac12 r\|_{L^4} \|1-Y\|_{L^\infty} + \| w \|_{L^4} \||D|^\frac12 R\|_{L^\infty}\|1-Y\|_{L^\infty} )
\\
& \lesssim_A  \ A^\sharp \|(w,r)\|_{\dH^\frac14}.
\end{aligned}
 \]  
Here, in estimating the parafactor $r_\alpha(1-Y) + w R_\alpha(1-Y)$ we took advantage of the fact that all factors
 are holomorphic, so the $|D|^{-\frac12}$ operator always acts at the highest frequency.
 This concludes the proof of \eqref{low-sln} and thus the proof of the proposition.
 \end{proof}

Finally, Proposition~\ref{p:ee-low} completes the proof of Theorem~\ref{t:wh-ee}.

\end{proof}

\section{The paradifferential normal form}
\label{s:nf}

We begin by recalling from \cite{HIT} the classical normal form variables,
\begin{equation}\label{classicalNF}
\left\{
\begin{aligned}
\tW &= W - P[2\Re W \cdot W_\alpha], \\
\tQ &= Q - P[2\Re W \cdot R],
\end{aligned}
\right.
\end{equation}
which solve an equation of the form 
\begin{equation}\label{twq-eqn}
\left\{
\begin{aligned}
\tW_t + \tQ_\alpha &= \tG, \\
\tQ_t - i\tW &= \tK,
\end{aligned}
\right.
\end{equation}
with sources $(\tG, \tK)$ which contain only cubic and higher order terms.

We also recall the linear scaling operator
\[
\tS_0(\tW,\tQ) := (2 \alpha \D_\alpha \tW - t\D_\alpha \tQ, 2 \alpha \D_\alpha \tQ + it\tW ),
\]
which was also used in \cite{HIT} as the main vector field at the level of the normal form
variables.

Using \eqref{twq-eqn}, this can be expressed 
in terms of the scaling vector field $S$ as follows:
\begin{equation}\label{ts0-eqn}
\begin{aligned}
\tS_0(\tW,\tQ) &= (S\tW, S\tQ) - t(\tG, \tK).
\end{aligned}
\end{equation}

\bigskip

In this paper, we will instead use a paradifferential substitute of the normal form \eqref{classicalNF}, defining
\begin{equation}\label{tWQ}
\left\{
\begin{aligned}
&\tW = W - T_{W_\alpha}W - \Pi(W_\alpha, 2\Re W), \\
&\tQ = Q - T_R W - \Pi(R, 2\Re W),
\end{aligned}
\right.
\end{equation}
where here and throughout, we let both paradifferential operators $T$ and $\Pi$ include an implicit projection $P$, so that $T=PT$ and $\Pi = P\Pi$. This somewhat unusual convention is motivated by the fact that our 
flow evolves in spaces of holomorphic functions.

This can no longer be seen as a full normal form 
transformation, but, instead, only as a partial 
normal form. This idea was introduced by Alazard-Delort in \cite{AD},
in the context of the Eulerian formulation of the equations.

Here and throughout, we fix a self-adjoint quantization for the paraproduct operator $T$
viewed as a pseudodifferential operator.
For instance, we may use the Weyl quantization, or simply the average 
\[
\half (T + T^*).
\]
In the following sections, we will use several classical multilinear estimates for $T$ and $\Pi$. We refer the reader to Appendix B in \cite{HIT} and Section 2 in \cite{AIT} for such estimates.

Our objectives in this section are as follows:
\bigskip

(i) To transfer the $\H^s$
bounds from $(W,Q)$ to $(\tW,\tQ)$:

\begin{proposition}\label{p:energy-equiv}
Assume \eqref{pointwise-bootstrap}. Then we have 
\begin{equation}\label{nf-energy-est}
\|(\tW_\alpha, \tQ_\alpha) \|_{\dH^{\sigma - 1}} \approx \|(\W, R) \|_{\dH^{\sigma - 1}}
\end{equation}
as well as 
\begin{equation}\label{nf-energy-est-lo}
\|(\tW, \tQ) \|_{\dH^{\frac14}} \approx \|(W, Q) \|_{\dH^{\frac14}}.
\end{equation}
\end{proposition}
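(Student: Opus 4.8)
The plan is to prove both equivalences \eqref{nf-energy-est} and \eqref{nf-energy-est-lo} by treating the correction terms in the paradifferential normal form \eqref{tWQ} as perturbative, using the bootstrap assumption \eqref{pointwise-bootstrap} (which controls the $X$ norm of $(\W,R)$, hence the control norms $A$, $A_{1/4}$) together with the energy bounds already established in Theorem~\ref{t:wh-ee}. Since \eqref{tWQ} has the schematic form $\tW = W - T_{W_\alpha}W - \Pi(W_\alpha, 2\Re W)$ and $\tQ = Q - T_R W - \Pi(R, 2\Re W)$, it suffices in each case to show that the correction terms are small relative to the leading term in the relevant norm; then the equivalence follows by a Neumann-series / fixed-point argument, noting that the map is lower triangular (the $\tW$-equation involves only $W$, and the $\tQ$-equation involves $Q$ and $W$).

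For \eqref{nf-energy-est-lo}, I would estimate $T_{W_\alpha}W$, $\Pi(W_\alpha,2\Re W)$, $T_R W$, and $\Pi(R, 2\Re W)$ in $\dH^{1/4}$, i.e. $\dot H^{1/4}$ for the $W$-component and $\dot H^{3/4}$ for the $Q$-component. For the $W$-component, the key bounds are of the type $\|T_{W_\alpha} W\|_{\dot H^{1/4}} + \|\Pi(W_\alpha, 2\Re W)\|_{\dot H^{1/4}} \lesssim \|W_\alpha\|_{BMO^{-1/4}}\|W\|_{\dot H^{1/4}}$ or a Coifman–Meyer variant using $A^\sharp$, which is small and uniformly bounded in time by \eqref{Asharp-small}; this is exactly the kind of multiplicative paraproduct estimate already invoked in the proof of Proposition~\ref{p:ee-low}. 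For the $Q$-component one replaces $\|W\|_{\dot H^{1/4}}$ by $\|W_\alpha\|_{\dot H^{-1/4}}$ or uses the relation between $R$ and $Q_\alpha$ as in \eqref{R-like-Q}, again picking up a small factor $A^\sharp$. Since all these corrections are bounded by $A^\sharp(\|W\|_{\dot H^{1/4}} + \|Q\|_{\dot H^{3/4}})$ with $A^\sharp \ll 1$, the equivalence \eqref{nf-energy-est-lo} follows.

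For \eqref{nf-energy-est}, I would differentiate \eqref{tWQ} to get $\tW_\alpha = W_\alpha - \partial_\alpha(T_{W_\alpha}W + \Pi(W_\alpha,2\Re W))$ and similarly for $\tQ_\alpha$, then estimate the correction terms in $\dH^{\sigma-1}$, i.e. $\dot H^{\sigma-1}$ for the first component and $\dot H^{\sigma-1/2}$ for the second. Here the high-frequency factor must carry $\sigma-1$ derivatives; the corrections are controlled by $A_{1/4}\|(\W,R)\|_{\dH^{\sigma-1}}$ (or $A$ times the same), again small and uniformly bounded. One should be slightly careful to convert between $R$ and $Q_\alpha$ (and between $\W$ and $Y$) using the algebra property of $W^{1/4,4}\cap L^\infty$ as in the proof of \eqref{R-like-Q}, so that the $\tQ$-correction $T_R W + \Pi(R, 2\Re W)$ can be controlled after passing to $R = (1-Y)Q_\alpha$.

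The main obstacle I anticipate is the low-high interaction in the $\tQ$ correction at high regularity, namely the term $T_R W$ differentiated: there $W$ is at high frequency and $R$ at low, so one has $\partial_\alpha T_R W$ with the full $\sigma-1$ derivatives landing on $W$, requiring control of $R$ in a suitable low-frequency / $L^\infty$-type norm — this is where the pointwise bootstrap \eqref{pointwise-bootstrap}, specifically $\|R\|_{L^\infty}$ and $\||D|^{1/2}R\|_{L^\infty}$ in the $X$ norm, is essential. One must also make sure the projection convention $T = PT$, $\Pi = P\Pi$ does not interfere; since all factors are holomorphic this is harmless, exactly as noted in the proof of \eqref{low-sln}. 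Otherwise the argument is a routine, if somewhat tedious, sequence of multilinear paraproduct estimates, and I would leave the individual bounds to the reader, citing Appendix B of \cite{HIT} and Section 2 of \cite{AIT}.
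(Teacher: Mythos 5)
Your overall plan---estimate the quadratic corrections perturbatively using the bootstrap assumption---matches the paper's, and for the low-regularity bound \eqref{nf-energy-est-lo} it goes through: the paper bounds the $\tQ$-correction directly by $\|T_R W +\Pi(R, 2\Re W)\|_{\dot H^{3/4}} \lesssim \||D|^{-1/4} R\|_{L^2} \|W_\alpha\|_{L^\infty}$, placing the $L^2$ weight on $R$. (Your tentative bound $\|T_{W_\alpha}W\|_{\dot H^{1/4}}\lesssim \|W_\alpha\|_{BMO^{-1/4}}\|W\|_{\dot H^{1/4}}$ would require more regularity on $W$ than is available; the cruder $\|W_\alpha\|_{L^\infty}\|W\|_{\dot H^{1/4}}$ is what actually works.)

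The real problem is in the high-regularity bound \eqref{nf-energy-est} for the $\tQ$-component, and it is exactly the term you flagged---but your proposed fix does not work. Write $\partial_\alpha T_R W = T_{R_\alpha}W + T_R W_\alpha$. The first piece is fine, $\lesssim \||D|^{1/2}R\|_{L^\infty}\|W_\alpha\|_{\dot H^{\sigma-1}}$, as you say. The second piece $T_R W_\alpha$, however, has $W_\alpha$ as the \emph{high}-frequency factor, so estimating it in $\dot H^{\sigma-\frac12}$ requires $W_\alpha \in \dot H^{\sigma-\frac12}$, a half derivative more than $\|(\W,R)\|_{\dH^{\sigma-1}}$ provides. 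No pointwise control on the low-frequency factor $R$ can recover this, since in a low-high paraproduct the Sobolev weight falls entirely on the high-frequency entry. The paper's proof sidesteps this with a structural cancellation: using $Q_\alpha = R(1+W_\alpha)$,
\[
\tQ_\alpha - R = RW_\alpha - \partial_\alpha(T_R W + \Pi(R, 2\Re W))
              = T_{W_\alpha}R + \Pi(R, W_\alpha) - T_{R_\alpha}W - \partial_\alpha\Pi(R, 2\Re W),
\]
so the dangerous $T_R W_\alpha$ contribution of $RW_\alpha$ cancels the one in $\partial_\alpha T_R W$, leaving only perturbative terms. This cancellation is precisely why the normal form \eqref{tWQ} uses $T_R W$ as the quadratic correction, and it is the one step your argument cannot skip. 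For the same reason, $R$ and $Q_\alpha$ are \emph{not} interchangeable in $\dot H^{\sigma-\frac12}$ under the bootstrap hypotheses; the equivalence \eqref{R-like-Q} holds at $\dot H^{-1/4}$, not at $\dot H^{\sigma-\frac12}$, so the detour through $Q_\alpha$ does not rescue the estimate.
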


We also prove the similar bound associated to the scaling vector field $S$:

\begin{proposition}\label{p:SWQ}
Assume \eqref{pointwise-bootstrap}. Then we have 
\begin{equation}\label{vf-energy-est}
 \|(S\tW, S\tQ)\|_{\dH^{\frac14}} \lesssim \| (\w, \r)\|_{\dH^{\frac14}}
 + \|(W,Q) \|_{\dH^{\frac14}},
 \end{equation}
 with $(\w,\r)$ as in \eqref{def:bwr}.
\end{proposition}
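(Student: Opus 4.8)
The plan is to reduce the bound on $(S\tW, S\tQ)$ to a combination of the energy $\|(\w,\r)\|_{\dH^{1/4}}$ (which is the $\dH^{1/4}$ norm of $\AA\S(W,Q)$ by definition \eqref{def:bwr}) and the low-frequency norm $\|(W,Q)\|_{\dH^{1/4}}$, exploiting the fact that the $\tW,\tQ$ are obtained from $W,Q$ by subtracting quadratic paradifferential corrections. First I would commute $S$ through the partial normal form \eqref{tWQ}. Since $S = t\partial_t + 2\alpha\partial_\alpha$ is a first-order vector field, applying $S$ to e.g. $T_{W_\alpha} W$ produces $T_{SW_\alpha} W + T_{W_\alpha} SW$ plus commutator terms coming from $[S, T_{W_\alpha}]$ and from the fact that $S$ does not commute with the frequency projection $P$ or with the paraproduct quantization; these commutators are of the same multilinear type but with a harmless redistribution of derivatives, and are controlled by $A_{1/4}$-type norms times the $\dH^{1/4}$ norms. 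The upshot is a schematic identity
\[
(S\tW, S\tQ) = \AA\bigl(SW, SQ\bigr) + (\text{corrections}) + (\text{commutators}),
\]
where $\AA$ is the diagonalization in \eqref{def:AA} and the corrections are quadratic expressions built from $SW, SW_\alpha, SR$ paired against $W, W_\alpha, R, Y$.

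Next I would recognize $\AA(SW, SQ)$. We have $(\w,\r) = \AA\S(W,Q) = ((S-2)W, \,(S-3)Q - R(S-2)W)$, so $\AA(SW, SQ) = (\w + 2W, \, \r + 3(SQ)\text{-correction})$ — more precisely $SQ - R\,SW = (\r + 3Q - \dots) $ reorganizes into $\r$ plus lower-order terms that are either exactly $W,Q$ (hence controlled by $\|(W,Q)\|_{\dH^{1/4}}$) or of the form $R\cdot W$, $R\cdot$(lower), which by Coifman–Meyer/paraproduct estimates as in part (b) of Theorem~\ref{t:wh-ee}'s proof are bounded by $A^\sharp\|(W,Q)\|_{\dH^{1/4}} \lesssim \|(W,Q)\|_{\dH^{1/4}}$ using \eqref{Asharp-small}. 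So the $\AA(SW,SQ)$ piece is handled by the two norms on the right of \eqref{vf-energy-est}.

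It then remains to estimate the quadratic correction and commutator terms in $\dH^{1/4}$. These all have the form (paraproduct or $\Pi$) of a factor carrying an $S$ against a factor in the control-norm class. For such terms I would use the same multilinear toolbox invoked elsewhere in the paper (Appendix B of \cite{HIT}, Section 2 of \cite{AIT}): put the $S$-factor in $\dH^{1/4}$ — recognizing that $SW \in \dH^{1/4}$ and $SW_\alpha, SR \in \dH^{\sigma-1}$ follow from \eqref{nf-energy-est}, \eqref{nf-energy-est-lo}, Proposition~\ref{p:energy-equiv}, and the relation between $S$ on $(W,Q)$ and $(\w,\r)$ — and put the other factor in the $A_{1/4}$ or $A^\sharp$ norm, which is $\ll 1$. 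The key point making this work without loss of derivatives is holomorphy: all factors are frequency-localized to negatives, so in a $\Pi$ or high-low paraproduct the output frequency is comparable to the top input frequency and no derivative is misplaced, exactly as in the proof of \eqref{low-sln} above.

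The main obstacle I expect is bookkeeping the commutators $[S, T]$, $[S, P]$ and $[S, \Pi]$ carefully enough to see that they genuinely stay within the cubic/quadratic perturbative class and do not secretly cost a derivative; in particular one must check that $S$ acting on the symbol of the paraproduct (which rescales frequency) produces only a bounded symbol modification, so that the commutator is again a paraproduct with a control-norm coefficient. Once that is in hand, every remaining term is estimated by a routine application of the standard paraproduct bounds together with $A^\sharp \ll 1$ from \eqref{Asharp-small}, and the proposition follows.
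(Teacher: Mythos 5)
Your overall strategy matches the paper's: compute $(S\tW,S\tQ)$ by applying $S$ directly to the partial normal form \eqref{tWQ}, recognize the principal contribution via $(\w,\r)=\AA\S(W,Q)$, and estimate the quadratic corrections and $[S,T]$, $[S,\Pi]$ commutators by paraproduct bounds with $A^\sharp\ll 1$. The paper indeed writes those commutators out explicitly as $\tilde T_{W_\alpha}W$, $\tilde\Pi(W_\alpha,2\Re W)$, $\tilde T_R W$, $\tilde\Pi(R,2\Re W)$ (finite-difference expressions in $\alpha$ that manifestly gain a derivative), confirming your expectation that they stay in the perturbative class.

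There is, however, a genuine gap in the step where you dispose of the quadratic corrections in $S\tQ$ that carry the vector field on the paracoefficient, namely $T_{SR}W$ and $\Pi(SR,2\Re W)$. You justify these by asserting that ``$SW_\alpha, SR\in\dH^{\sigma-1}$ follow from \eqref{nf-energy-est}, \eqref{nf-energy-est-lo}, Proposition~\ref{p:energy-equiv}, and the relation between $S$ on $(W,Q)$ and $(\w,\r)$.'' That control is not available: the $\sWH$ norm (hence \eqref{ee-global} and Proposition~\ref{p:energy-equiv}) bounds $(\W,R)$ in $\dH^{\sigma-1}$ and $(\w,\r)$ in $\dH^{1/4}$ only --- there is no $\dH^{\sigma-1}$ bound on the scaled quantities $S\W$ or $SR$, and indeed $\w_\alpha=S W_\alpha$ only lies in $\dot H^{-3/4}$. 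Without a usable bound on $SR$, the term $T_{SR}W$ does not close. The missing ingredient is the algebraic identity
\[
(S-1)R \;=\; \frac{\r_\alpha + R_\alpha\,\w}{1+W_\alpha},
\]
which re-expresses $SR$ (modulo the harmless term $R$) in terms of the \emph{controlled} quantities $\r$ and $\w$; one then estimates $T_{(1-Y)\r_\alpha}W$ and $T_{(1-Y)R_\alpha\w}W$ case by case, splitting into high-low, low-high and balanced interactions and using holomorphy to place derivatives favourably. Without this identity your proof of the $T_{SR}W$ and $\Pi(SR,2\Re W)$ contributions does not go through; with it, the rest of your argument is in line with the paper.
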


Given the two propositions above, it is natural to define the linear energy 
functional of $(\tW,\tQ)$ as 
\begin{equation}\label{def-tWH}
\|(\tW,\tQ)\|_{\tWH}^2 =    \|(\tW, \tQ) \|_{\dH^{\frac14}\cap \dH^\sigma}^2
+ \|(S\tW, S\tQ)\|_{\dH^{\frac14}}^2.
\end{equation}
Then, as a consequence of the last two propositions, it follows that 
the energy bound \eqref{ee-global} for the original variables $(W,Q)$ implies the corresponding bound \eqref{ee-global-nf} for the normal form variables $(\tW,\tQ)$.

\bigskip 

(ii) To allow the transfer of the pointwise bounds between $(W,Q)$ and
$(\tW,\tQ)$:

\begin{proposition}\label{p:nf-pointwise-est}
Assume \eqref{pointwise-bootstrap}. Then
\begin{equation}
\|(\tW_\alpha - \W, \tQ_\alpha - R)\|_X \lesssim \epsilon \langle t \rangle^{-1/2} \|(\W, R)\|_{X}.
\end{equation}
\end{proposition}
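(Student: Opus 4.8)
\textbf{Proof proposal for Proposition~\ref{p:nf-pointwise-est}.}

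The plan is to estimate the difference $(\tW - W, \tQ - Q)$ directly from the definition \eqref{tWQ}, and then differentiate. From \eqref{tWQ} we have
\[
\tW - W = -T_{W_\alpha} W - \Pi(W_\alpha, 2\Re W), \qquad \tQ - Q = -T_R W - \Pi(R, 2\Re W),
\]
so after differentiating in $\alpha$ each component of $(\tW_\alpha - \W, \tQ_\alpha - R)$ is a sum of paraproduct and balanced-frequency terms that are \emph{quadratic} in the solution. The strategy is therefore a frequency-localized bilinear analysis: for each dyadic output frequency $2^k$, decompose into high-low, low-high, and balanced ($\Pi$) interactions, and in each case place one factor in the pointwise control norm $X$ (or its constituent $L^\infty$ and Besov pieces) and the other factor also in a norm controlled by $X$, gaining the $\epsilon \langle t \rangle^{-1/2}$ smallness from the bootstrap assumption \eqref{pointwise-bootstrap} applied to whichever factor is ``used up.'' Since both $\W = W_\alpha$ and $R$ are controlled in $X$, and $W$ itself is the antiderivative of $\W$ (so $2\Re W$ is an integral of a function controlled in $X$, hence e.g. bounded in appropriate Besov spaces after the derivative lands on it), every term has enough structure: the $\alpha$-derivative on the output either falls on the high-frequency factor (which is then exactly $\W$ or $R$ or $W_\alpha$, controlled in $X$) and the low factor is measured in $L^\infty$ plus low Besov norms, or it falls on the low factor, in which case holomorphy forces that factor's frequency to be comparable to the output and we again land on controlled quantities.

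Concretely, I would proceed term by term. For $T_{W_\alpha} W$ and $T_R W$: differentiating, the main contribution is $T_{W_\alpha} W_\alpha + T_{W_{\alpha\alpha}} W$ and similarly $T_R W_\alpha + T_{R_\alpha} W$; in the first the para-coefficient is $\W$ and the para-argument is $W_\alpha = \W$, so we get a bound $\lesssim \|\W\|_X \cdot \|\W\|_X$-type quantity into $X$ (using that $X$ controls the Besov norms $\dot B^{1/4}_{\infty,2}$ and also $|D|^{-1/2}\W$ in $L^\infty$), and the second requires care since $W$ is only an antiderivative — here the para-coefficient $W_{\alpha\alpha}$ is at the high frequency and holomorphy plus the structure of the $X$ norm (which includes $|D|^{-1/2}\W \in L^\infty$, hence controls $\partial_\alpha W$ at low frequency via $|D|^{1/2}$ of $|D|^{-1/2}\W$) lets us close. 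For the balanced pieces $\Pi(W_\alpha, 2\Re W)$ and $\Pi(R, 2\Re W)$: in $\Pi$ both input frequencies are comparable, and one factor being $2\Re W$ means the $\alpha$-derivative from the output produces $W_\alpha = \W$ in one slot and we are again estimating a bilinear expression in $\W$ and $R$, now with the $\Pi$-structure giving the frequency gain that makes the Besov summation in the $\dot B^s_{\infty,2}$ norms converge. Throughout, the implicit constants depend on $A$, which is small by \eqref{Asharp-small}/the bootstrap, so these dependencies are harmless.

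The main obstacle I anticipate is the bookkeeping for the terms where the output $\alpha$-derivative lands on the \emph{low}-frequency factor, i.e. on $2\Re W$ (not holomorphic!) rather than on the holomorphic $\W$ or $R$. There, $2\Re W$ has both positive and negative frequency content, and the implicit projection $P$ in $T$ and $\Pi$ interacts with this in a way that must be tracked, since one cannot simply say ``the $|D|$ derivative acts at the highest frequency.'' The resolution is that in all these bilinear expressions the \emph{output} is holomorphic (negative frequency) because of the outer $P$, which constrains the interaction: a negative-frequency output forces the negative-frequency part of $2\Re W$ to dominate when paired appropriately with $\W$ or $R$, so effectively the derivative on $\Re W$ can be traded against the Besov/$L^\infty$ norms we control. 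Once this frequency-localization point is handled cleanly (it parallels the holomorphy arguments already used in the proof of Proposition~\ref{p:ee-low}), each of the finitely many terms reduces to a standard bilinear Besov/$L^\infty$ estimate of the type catalogued in Appendix B of \cite{HIT} and Section 2 of \cite{AIT}, and the $\epsilon\langle t\rangle^{-1/2}$ factor comes out from whichever copy of $\|(\W,R)\|_X$ is bounded via \eqref{pointwise-bootstrap}.
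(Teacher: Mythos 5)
Your overall strategy matches the paper's proof: compute the quadratic corrections directly from the definition \eqref{tWQ}, differentiate, and estimate term by term in the $X$ norm, splitting the output into high frequencies (bounded via the $\dot B^{1/4}_{\infty,2}$, $\dot B^{3/4}_{\infty,2}$ pieces of $X$) and low frequencies $\lesssim 1$ (bounded in weighted $L^\infty$), rebalancing derivatives so that the bootstrap \eqref{pointwise-bootstrap} is invoked on one of the two factors. This is exactly what the paper does.

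Two slips in your exposition are worth correcting. First, for $T_{W_{\alpha\alpha}}W$ you assert that ``the para-coefficient $W_{\alpha\alpha}$ is at the high frequency''; in the paraproduct $T_a b$ the coefficient $a$ sits at \emph{low} frequency and the argument $b$ at high frequency, so here $W_{\alpha\alpha}$ is the low-frequency factor and $W$ the high-frequency one. The term still closes by rebalancing: on the output shell $\lambda$ one moves a derivative from the coefficient onto the argument, landing on $\|\W\|_{L^\infty}$ times $\|\W\|_{\dot B^{1/4}_{\infty,2}}$. In fact the paper avoids the split entirely and directly bounds $\|\D_\alpha T_{W_\alpha}W\|_{\dot B^{1/4}_{\infty,2}} \lesssim \|W_\alpha\|_{L^\infty}\|\W\|_{\dot B^{1/4}_{\infty,2}}$, which is tidier. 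Second, your resolution of the $\Re W$ (non-holomorphy) issue — that a negative-frequency output ``forces the negative-frequency part of $2\Re W$ to dominate'' — is not the right picture: in the balanced $\Pi(W_\alpha, 2\Re W)$ both the $W$ and $\bar W$ pieces of $\Re W$ contribute to low negative-frequency output, since two inputs of comparable magnitude can produce an output of much smaller magnitude. The correct point, which the paper states explicitly, is that after frequency-localizing the output the projection $P$ is harmless, and the rebalanced dyadic sums over the (larger) input frequency and over the output frequency both converge — ``the summation permitted by the rebalancing likewise allows for the estimate of the implicit projection $P$.'' Neither slip derails the argument, but both need to be fixed when you carry out the estimates.
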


On one hand, this bound allows us to transfer the bootstrap assumption
to $(\tW,\tQ)$, i.e. show that \eqref{pointwise-bootstrap} implies
\eqref{pointwise-bootstrap-nf}.

On the other hand, it shows that it suffices to improve the bootstrap condition for $(\tW,\tQ)$, i.e. prove \eqref{pointwise-nf}; this in turn
implies a similar improvement for \eqref{pointwise-bootstrap}.

\bigskip

(iii) To compute the paradifferential equation \eqref{twq-eqn} for $(\tW,\tQ)$, which is written in the form
\begin{equation}\label{tWQ-system}
\left\{
\begin{aligned}
& \tW_t + \tQ_\alpha + T_{2 \Re \tW_\alpha} \tQ_\alpha - T_{2 \Re \tQ_\alpha} \tW_\alpha =  \tG,
    \\
& \tQ_t - i \tW - T_{2\Re  \tQ_\alpha} \tQ_\alpha =  \tK,
\end{aligned}
\right.
\end{equation}
with a good description of the source terms $(\tG,\tK)$.
 Precisely, we identify the leading order cubic terms in $(\tG, \tK)$, while proving improved decay estimates for the remaining quartic terms:

\begin{proposition}\label{p:gk-est}
Assume that \eqref{pointwise-bootstrap} and \eqref{ee-global} hold.
Then $(\tW,\tQ)$ solve an equation of the form \eqref{tWQ-system},
where the source terms $(\tG,\tK)$ satisfy the bound
\begin{equation}\label{tGK-est}
 \|(\tG, \tK)\|_{\dH^{\frac14}} \lesssim_{A_0} 
 \|(\W, R)\|_{X}^2\|(\tW, \tQ)\|_{\dH^\frac14 \cap \dH^\sigma}.
\end{equation}
Furthermore, $(\tG,\tK)$ can be split into 
\[
(\tG,\tK) = (\tG^{(3)},\tK^{(3)}) + (\tG^{(4+)},\tK^{(4+)}),
\]
where $(\tG^{(3)},\tK^{(3)})$ are explicit cubic expressions in $(\tW,\tQ)$
given by \eqref{tGK3} which satisfy the bound
\begin{equation}\label{tGK-est+}
 \|(\tG^{(3)}, \tK^{(3)})\|_{\dH^{\frac14}} \lesssim_{A_0}\|(\W, R)\|_{X}^2\|(\tW, \tQ)\|_{\dH^\frac14 \cap \dH^\sigma},
\end{equation}
while $(\tG^{(4+)},\tK^{(4+)})$ are quartic and higher 
order expressions which satisfy the better bound
\begin{equation}\label{tGK4-est}
\|(\tG^{(4+)}, \tK^{(4+)})\|_{\dH^{\frac14}} \lesssim \|(\W, R)\|_{X}^3\|(\tW, \tQ)\|_{\dH^\frac14 \cap \dH^\sigma}.
\end{equation}
\end{proposition}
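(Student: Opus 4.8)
\textbf{Proof plan for Proposition~\ref{p:gk-est}.}

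The plan is to derive the equation \eqref{tWQ-system} by direct substitution of the definition \eqref{tWQ} into the original system \eqref{nlin}, and then to organize the resulting error terms by multilinear degree. First I would start from the first equation $W_t + F(1+W_\alpha) = 0$ and the second $Q_t + FQ_\alpha - iW + P[\bar R R] = 0$, and substitute $W = \tW + T_{W_\alpha}W + \Pi(W_\alpha, 2\Re W)$ and similarly for $Q$. The key point of the Alazard--Delort partial normal form is that the paraproduct correction terms are chosen precisely so that, after commuting $\D_t$ through the paraproducts and using the leading-order linear equations $\tW_t \approx -\tQ_\alpha$, $\tQ_t \approx i\tW$, the quadratic balanced interactions in $F$ and in $P[\bar RR]$ are exactly cancelled, leaving only the explicit paradifferential quadratic terms $T_{2\Re\tW_\alpha}\tQ_\alpha - T_{2\Re\tQ_\alpha}\tW_\alpha$ and $T_{2\Re\tQ_\alpha}\tQ_\alpha$ that appear on the left of \eqref{tWQ-system}. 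What remains on the right are: (a) genuine cubic terms, which I would collect into $(\tG^{(3)},\tK^{(3)})$ with an explicit formula \eqref{tGK3}; and (b) quartic and higher terms $(\tG^{(4+)},\tK^{(4+)})$, coming both from higher-order Taylor expansions of $F$, $J^{-1}$, $R$, $Y$ in terms of $W_\alpha$, and from the difference between $(\tW,\tQ)$ and $(W,Q)$ inside the cubic terms.

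Next I would establish the estimates. For the cubic bound \eqref{tGK-est+}, each term in $(\tG^{(3)},\tK^{(3)})$ is trilinear in $(\tW,\tQ)$ (up to the leading-order identification $(W,Q) \approx (\tW,\tQ)$, with the difference contributing to $(\tG^{(4+)},\tK^{(4+)})$), so I would place two of the three factors in the pointwise control norm $X$ — using that $X$ controls $\||D|^{-1/2}\W\|_{L^\infty}$, $\|R\|_{L^\infty}$, and the Besov norms $\dot B^{1/4}_{\infty,2}$, $\dot B^{3/4}_{\infty,2}$ — and the remaining factor in $\dH^{1/4}\cap\dH^\sigma$. Here one must be careful with the frequency balance: in the high-high-low$\to$high interactions, two high-frequency factors pair off and one needs $\sigma$ large enough (this is where $\sigma > 11/4$ enters) to absorb the derivative loss from the $|D|^{5/2}$-type operators implicit in the water wave nonlinearity; in high-low-low$\to$high interactions the two low factors go in $X$ and the Besov summability handles the dyadic sum. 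These are the balanced bilinear/trilinear estimates of the type recalled from Appendix~B of \cite{HIT} and Section~2 of \cite{AIT}. For \eqref{tGK4-est}, each quartic term has an extra factor which I would place in $X$, picking up the additional $\epsilon\langle t\rangle^{-1/2}$ decay via \eqref{pointwise-bootstrap}; the structure is the same as the cubic case with one more low-frequency factor. The combined bound \eqref{tGK-est} then follows since $\|(\W,R)\|_X \lesssim \epsilon\langle t\rangle^{-1/2} \ll 1$ by the bootstrap, so the cubic term dominates.

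\textbf{Main obstacle.} The hard part will be the bookkeeping in the cancellation step: verifying that the paraproduct-corrected variables $(\tW,\tQ)$ genuinely produce \emph{only} the stated paradifferential quadratic terms on the left-hand side, with no residual quadratic terms hiding in the source. This requires carefully expanding $\D_t(T_{W_\alpha}W)$, $\D_t(\Pi(W_\alpha,2\Re W))$, etc., substituting the equations for $W_t$ and $Q_t$ (and hence for $R_t$, $Y_t$, $F$ to leading order), and checking that the non-para, non-$\Pi$ quadratic pieces of $F(1+W_\alpha)$ and $FQ_\alpha + P[\bar RR]$ — which are a combination of $T$, $\bar T$, and $\Pi$ paraproducts of $(W_\alpha, R, \bar R)$ — match exactly what the time-derivatives of the correction terms generate. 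A secondary technical point is the treatment of the $P$ projections (recall $T = PT$, $\Pi = P\Pi$ by convention): commutators of $P$ with paraproducts are smoothing and perturbative, but one must confirm they land in the quartic/high-order remainder with the right weights. I would handle the cancellation by computing at the symbolic/paradifferential level, tracking only quadratic terms modulo cubic, exactly as in the normal form computation of \cite{HIT} but now keeping the paradifferential pieces rather than the full bilinear forms, following the scheme of \cite{AD}.
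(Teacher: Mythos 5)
Your proposal follows essentially the same route as the paper: derive \eqref{tWQ-system} by substituting \eqref{tWQ} into \eqref{nlin}, commute $\D_t$ through the paraproduct corrections, identify the surviving paradifferential quadratic terms, and then estimate the cubic source by placing two factors in the pointwise control norm and one in the Sobolev norm (with one extra $X$-factor for the quartic remainder). One small inaccuracy in your commentary: the paper's cubic estimate is closed using only the $\dH^{1/4}$ component of $\|(\tW,\tQ)\|_{\dH^{1/4}\cap\dH^\sigma}$, placing the \emph{lowest}-frequency factor there and the other two in $A_{1/4}^2 \lesssim \|(\W,R)\|_X^2$, so the threshold $\sigma>\frac{11}{4}$ does not in fact enter this proposition (it plays its role later, in the $X^\sharp$ analysis and the wave-packet section).
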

We remark that, in view of the equations \eqref{tWQ-system}, it is natural 
to replace the linear scaling operator $\tS_0$ used in \cite{HIT} via 
\eqref{ts0-eqn} with a nonlinear, paradifferential counterpart
\[
\tS(\tW,\tQ) := (2 \alpha \D_\alpha \tW - t\D_\alpha \tQ + t(T_{2 \Re \tW_\alpha} \tQ_\alpha - T_{2 \Re \tQ_\alpha} \tW_\alpha), 2 \alpha \D_\alpha \tQ + it\tW - tT_{2\Re \tQ_\alpha} \tQ_\alpha),
\]
so that we have
\begin{equation}\label{ts-eqn}
\begin{aligned}
\tS(\tW,\tQ) &= (S\tW, S\tQ) - t(\tG, \tK).
\end{aligned}
\end{equation}
This system of equations for $(\tW,\tQ)$ will play a key role in the next section.
\

We record the explicit cubic expressions for $(\tG^{(3)}, \tK^{(3)})$ below. Here we 
have partitioned the terms of $\tG^{(3)}$ into three components: 
\begin{itemize}
\item $\tG_1^{(3)}$ corresponds to the perturbative cubic terms that arise from the time differentiation $\D_t \tW$,
\item $\tG_2^{(3)}$ corresponds to additional cubic terms that arise after cancellations with $ \tQ_\alpha$,
\item $\tG_3^{(3)}$ corresponds to cubic terms that arise from rewriting the remaining quadratic expressions in terms of the normal form variables.
\end{itemize}
The partition of $\tK^{(3)}$ is similar. We remark that these decompositions are 
consistent with the computations later in this section, but not so much with the resonant/nonresonant/null decomposition in Section~\ref{s:wp}; for this reason, in 
Section~\ref{s:wp} we reorganize them in a more useful way.

It is convenient to denote the quadratic component of $F$, rewritten in terms of the normal form variables $(\tW_\alpha, \tQ_\alpha)$, as follows:
\[
\tilde F^{(2)} = P\left[ \bar \tQ_\alpha \tW_\alpha -\tQ_\alpha \bar \tW_\alpha\right].
\]  
Then we have
\begin{equation}\label{tGK3}
\begin{aligned}
\tG^{(3)} &= \tG_1^{(3)} + \tG_2^{(3)} + \tG_3^{(3)}, \\
\tilde G_1^{(3)} &= T_{\tW_\alpha} (\tQ_\alpha \tW_\alpha) + T_{(\tQ_\alpha \tW_\alpha)_\alpha}\tW + \Pi(\tW_\alpha, 2\Re [\tQ_\alpha \tW_\alpha]) + \Pi((\tQ_\alpha \tW_\alpha)_\alpha, 2\Re \tW), \\
\tilde G_2^{(3)} &=- \tW_\alpha \tilde F^{(2)} + T_{  \tilde F^{(2)}_\alpha}\tW 
+ \Pi( \D_\alpha \tilde F^{(2)}, 2\Re W) + \Pi(\tilde F^{(2)}, W_\alpha) +  \Pi(\tW_\alpha, \bar {\tilde F}^{(2)}) \\ 
&\quad - \Pi(\bar \tW_\alpha^2, \tQ_\alpha) + \Pi(\bar \tQ_\alpha, \tW_\alpha^2) 
- T_{\bar \tW_\alpha^2}\tQ_\alpha - T_{\bar \tW_\alpha} \tilde F^{(2)} + T_{\bar \tQ_\alpha}\tW_\alpha^2, \\
\tilde G_3^{(3)} &= T_{2 \Re (T_{\tW_\alpha}\tW + \Pi(\tW_\alpha, 2\Re \tW))_\alpha} \tQ_\alpha \\
&\quad + T_{2 \Re \tW_\alpha} ( - \tQ_\alpha \tW_\alpha + \tilde F^{(2)}) + T_{2 \Re \tW_\alpha}(T_{\tQ_\alpha} \tW + \Pi(\tQ_\alpha, 2\Re \tW))_\alpha \\
&\quad + T_{2 \Re (\tQ_\alpha \tW_\alpha - (T_{\tQ_\alpha} \tW + \Pi(\tQ_\alpha, 2\Re \tW))_\alpha)}\tW_\alpha - T_{2 \Re \tQ_\alpha} ( T_{\tW_\alpha}\tW + \Pi(\tW_\alpha, 2\Re \tW))_\alpha,\\
\tK^{(3)} &= \tK_1^{(3)} + \tK_2^{(3)} + \tK_3^{(3)}, \\
\tilde K_1^{(3)} &= T_{ (\frac12\tQ_\alpha^2   +  P[|\tQ_\alpha|^2])_\alpha} \tW + T_{\tQ_\alpha} (T_{\tW_\alpha} \tQ_\alpha + \Pi(\tW_\alpha, \tQ_\alpha)) \\
&\quad + \Pi((\frac12\tQ_\alpha^2   +  P[|\tQ_\alpha|^2])_\alpha, 2\Re \tW) +  \Pi(\tQ_\alpha, 2\Re [\tQ_\alpha \tW_\alpha]) \\
&\quad - \Pi(\tW_\alpha \tQ_\alpha, \tQ_\alpha) + \Pi(\tQ_\alpha, \bar {\tilde F}^{(2)}) - T_{\tQ_\alpha \tW_\alpha} \tQ_\alpha, \\
\tilde K_2^{(3)} &=  i T_{\tW_\alpha^2} \tW  + i\Pi( \tW_\alpha^2, 2\Re \tW) - T_{P\left[ \bar \tQ_\alpha \tW_\alpha - \tQ_\alpha \bar \tW_\alpha \right]} \tQ_\alpha, \\
 \tilde K_3^{(3)} &=  - T_{2\Re (T_{\tQ_\alpha} \tW + \Pi(\tQ_\alpha, 2\Re\tW))_\alpha}\tQ_\alpha - T_{2\Re \tQ_\alpha} (T_{\tQ_\alpha} \tW + \Pi(\tQ_\alpha, 2\Re \tW))_\alpha \\
&\quad + T_{2\Re (\tQ_\alpha \tW_\alpha)} \tQ_\alpha + T_{\bar \tQ_\alpha} (\tQ_\alpha \tW_\alpha) + T_{\tQ_\alpha} T_{\tQ_\alpha}\tW_\alpha.
\end{aligned}
\end{equation}

\bigskip

\subsection{The paradifferential equation}

We begin by computing the cubic and higher order perturbative source terms $(\tG, \tK)$ for the paradifferential normal form variables \eqref{tWQ} above, simultaneously identifying the paradifferential quadratic potentials that we will collect on the left hand side in the equation \eqref{tWQ-system}. First we compute the time derivative, 
\begin{equation*}
\begin{aligned}
\D_t \tW &= W_t - \D_t T_{W_\alpha} W - \D_t \Pi(W_\alpha, 2\Re W) \\
&= -F(1 + W_\alpha) \\
&\quad + T_{W_\alpha} (F(1 + W_\alpha)) + T_{(F(1 + W_\alpha))_\alpha}W \\
&\quad +  \Pi(W_\alpha, 2\Re [F(1 + W_\alpha)]) + \Pi((F(1 + W_\alpha))_\alpha, 2\Re W).
\end{aligned}
\end{equation*}
We collect the cubic terms below in $\tilde G_1$: 
\begin{equation*}
\begin{aligned}
\D_t \tW &= -F(1 + W_\alpha) + T_{W_\alpha} F + T_{F_\alpha}W + \Pi(W_\alpha, 2\Re F) + \Pi(F_\alpha, 2\Re W) + \tilde G_1
\end{aligned}
\end{equation*}
where
\begin{equation}\label{G1}
\begin{aligned}
\tilde G_1 &= T_{W_\alpha} (FW_\alpha) + T_{(FW_\alpha)_\alpha}W + \Pi(W_\alpha, 2\Re [FW_\alpha]) + \Pi((F W_\alpha)_\alpha, 2\Re W).
\end{aligned}
\end{equation}

On the other hand, we have
\begin{equation*}
\begin{aligned}
\D_\alpha \tQ &= Q_\alpha - \D_\alpha T_R W - \D_\alpha \Pi(R, 2\Re W) \\
&= R(1 + W_\alpha) - \D_\alpha T_R W - \D_\alpha \Pi(R, 2\Re W),
\end{aligned}
\end{equation*}
so that
\begin{equation*}
\begin{aligned}
\D_t \tW + \D_\alpha \tQ &= -F(1 + W_\alpha) + T_{W_\alpha} F + T_{F_\alpha}W + \Pi(W_\alpha, 2\Re F) + \Pi(F_\alpha, 2\Re W) + \tilde G_1 \\
&\quad + R(1 + W_\alpha) - \D_\alpha T_R W - \D_\alpha \Pi(R, 2\Re W) \\
&= T_{2 \Re W_\alpha} F - T_{2\Re R} W_\alpha + \tilde G_1 + \tilde G_2,
\end{aligned}
\end{equation*}
where $\tG_2$ contains only cubic and higher order terms,
\begin{equation}\label{G2}
\begin{aligned}
\tilde G_2 &= (R - F)W_\alpha + T_{(F - R)_\alpha}W + \Pi((F - R)_\alpha, 2\Re W) + \Pi(F - R, W_\alpha) \\
&\quad + \Pi(\bar Y - \bar W_\alpha, R) + P [\Pi(W_\alpha, \bar F) - \Pi(\bar R, Y)] \\
&\quad +( T_{\bar Y}R - T_{\bar W_\alpha} F) + (T_{\bar R} W_\alpha - T_{\bar R}Y).
\end{aligned}
\end{equation}

Lastly, we exchange the variables in the quadratic potentials for their normal form counterparts $(\tW, \tQ)$:
\begin{equation*}
\begin{aligned}
\D_t \tW + \D_\alpha \tQ &= T_{2 \Re \tW_\alpha} \tQ_\alpha - T_{2 \Re \tQ_\alpha} \tW_\alpha + \tilde G_1 + \tilde G_2 + \tilde G_3 =: T_{2 \Re \tW_\alpha}\tQ_\alpha - T_{2 \Re \tQ_\alpha} \tW_\alpha + \tG ,
\end{aligned}
\end{equation*}
where $\tG_3$ contains the cubic and higher terms in the difference,
\begin{equation}\label{G3}
\begin{aligned}
\tilde G_3 &= T_{2 \Re W_\alpha} F - T_{2 \Re \tW_\alpha} \tQ_\alpha + T_{2 \Re \tQ_\alpha}\tW_\alpha - T_{2 \Re R} W_\alpha. 
\end{aligned}
\end{equation}

\

For the second equation, we have
\begin{equation*}
\begin{aligned}
\D_t \tQ &= Q_t - \D_t T_R W - \D_t \Pi(R, 2\Re W) \\
&= iW - FQ_\alpha - P[|R|^2]  \\
&\quad + T_{bR_\alpha - i(1 + a)Y + ia} W + T_{R} [F(1 + W_\alpha)] \\
&\quad + \Pi(bR_\alpha - i(1 + a)Y + ia, 2\Re W) + \Pi(R, 2\Re [F(1 + W_\alpha)]).
\end{aligned}
\end{equation*}
We collect the cubic terms below in $\tilde K_1$: 
\begin{equation*}
\begin{aligned}
\D_t \tQ &= iW - T_FQ_\alpha  - T_{\bar R} R -i T_{Y} W +T_R T_F W_\alpha - i\Pi(Y, 2\Re W) + \tilde K_1,
\end{aligned}
\end{equation*}
where
\begin{equation}\label{K1}
\begin{aligned}
\tilde K_1 &= T_{bR_\alpha - iaY + ia} W + T_R (T_{W_\alpha} F + \Pi(W_\alpha, F)) \\
&\quad + \Pi(bR_\alpha - iaY + ia, 2\Re W) +  \Pi(R, 2\Re [FW_\alpha]) \\
&\quad +  [\Pi(R, 2\Re F)- \Pi(F, Q_\alpha) - \Pi(\bar R, R)] + T_{R - Q_\alpha} F.
\end{aligned}
\end{equation}
As a result, we have
\begin{equation*}
\begin{aligned}
\D_t \tQ - i\tW &= iW - T_FQ_\alpha  - T_{\bar R} R -i T_{Y} W +T_R T_F W_\alpha - i\Pi(Y, 2\Re W) + \tilde K_1 \\
&\quad - i(W - T_{W_\alpha}W - \Pi(W_\alpha, 2\Re W)) \\
&= - T_{\bar R} R - T_R (Q_\alpha - T_F W_\alpha) + \tilde K_1 + \tilde K_2,
\end{aligned}
\end{equation*}
where
\begin{equation}\label{K2}
\begin{aligned}
\tilde K_2 &=  i(T_{W_\alpha}W - T_{Y} W)  + i\Pi(W_\alpha - Y, 2\Re W) + T_{R - F} Q_\alpha.
\end{aligned}
\end{equation}

Lastly, we exchange the variables on the right hand side for their normal form counterparts $(\tW, \tQ)$:
\begin{equation*}
\begin{aligned}
\D_t \tQ - i\tW &= - T_{2\Re \tQ_\alpha} \tQ_\alpha+ \tK_1 + \tK_2 + \tK_3 := \tK,
&\quad
\end{aligned}
\end{equation*}
where
\begin{equation}\label{K3}
\begin{aligned}
 \tilde K_3 &=  T_{2\Re \tQ_\alpha}\tQ_\alpha - T_{\bar R} R - T_R (Q_\alpha - T_F W_\alpha).
\end{aligned}
\end{equation}

We conclude
\begin{equation}\label{paradiffeqn}
\left\{
\begin{aligned}
&\D_t \tW + \D_\alpha \tQ - T_{2 \Re \tW_\alpha} \tQ_\alpha + T_{2 \Re \tQ_\alpha} \tW_\alpha = \tG, \\
&\D_t \tQ - i\tW +  T_{2\Re \tQ_\alpha} \tQ_\alpha = \tK.
\end{aligned}
\right.
\end{equation}

\subsection{Energy bounds on the normal form}

Here we prove Proposition \ref{p:energy-equiv}, transferring the energy estimates from $(W, Q)$ to the normal form variables $(\tW, \tQ)$, together with Proposition \ref{p:SWQ}, which is concerned with estimates on $(S\tW, S\tQ)$.

\begin{proof}
For $\tW$, we estimate the quadratic corrections to $W$. We have
\begin{equation*}
\begin{aligned}
\|T_{W_\alpha}W + \Pi(W_\alpha, 2\Re W)\|_{\dot H^{\sigma}} \lesssim \|W_\alpha\|_{L^\infty} \|W_\alpha\|_{\dot H^{\sigma - 1}} 
\end{aligned}
\end{equation*}
so that using  \eqref{pointwise-bootstrap} suffices. The $\dot H^{\frac14}$ estimate is similar.
\smallskip

For $\tQ$, we first write
\begin{equation*}
\begin{aligned}
\tQ_\alpha &= R + RW_\alpha - \D_\alpha(T_{R} W + \Pi(R, 2\Re W)) \\
&= R + T_{W_\alpha}R + \Pi(R, W_\alpha) - T_{R_{\alpha}} W - \D_\alpha \Pi(R, 2\Re W).
\end{aligned}
\end{equation*}
We estimate the quadratic terms. First we have, using both \eqref{pre-energy-est} and \eqref{pointwise-bootstrap},
\[ \| T_{W_\alpha}R + \Pi(R, W_\alpha) -\Pi(R, 2\Re W_\alpha) \|_{\dot H^{\sigma - \half}} \lesssim \|W_\alpha\|_{L^\infty} \|R\|_{\dot H^{\sigma - \half}} \lesssim  \|(\W, R)\|_X \|(W, Q)\|_{\sWH}. \]
Similarly, we have 
\[\| T_{R_{\alpha}} W + \Pi(R_\alpha, 2\Re W)\|_{\dot H^{\sigma - \half}} \lesssim \||D|^{1/2} R\|_{L^\infty} \|W\|_{\dot H^{\sigma}} \lesssim  \|(\W, R)\|_X \|(W, Q)\|_{\sWH}. \]
For the $\dH^{\frac14}$ bound, we directly estimate
\[
\|T_R W +\Pi(R, 2\Re W)\|_{\dot H^{\frac34}} \lesssim \||D|^{-1/4} R\|_{L^2} \|W_\alpha\|_{L^\infty} \lesssim \|(W, Q)\|_{\sWH}\|(\W, R)\|_X,
\]
where at the second step we have used \eqref{R-like-Q}.

\smallskip

It remains to show the bounds for  $(S\tW, S\tQ)$. Writing
\[(\w, \r) = \AA \S(W, Q) = ((S - 2)W, (S - 3)Q - R(S - 2)W), \]
we have
\begin{equation*}
\begin{aligned}
S\tW &= \w - (T_{\w_\alpha}W + T_{W_\alpha} \w + \Pi(\w_\alpha, 2\Re W) +  \Pi(W_\alpha, 2\Re \w) \\
&\quad \phantom{w - (}  - \tilde T_{W_\alpha} W - P\tilde \Pi(W_\alpha, 2 \Re W)) \\
&\quad + 2W - (T_{2W_\alpha}W + T_{W_\alpha} 2W + \Pi(2W_\alpha, 2\Re W) +  \Pi(W_\alpha, 4\Re W)),
\end{aligned}
\end{equation*}
where the $\tilde T$ and $\tilde \Pi$ arise from commutators with $\alpha$,
\begin{equation*}
\begin{aligned}
\tilde T_{W_\alpha} W &= 2(T_{(\alpha W_\alpha)_\alpha} W + T_{W_\alpha} \alpha W_\alpha) - 2\alpha (T_{W_{\alpha\alpha}} W + T_{W_\alpha} W_\alpha), \\
\tilde\Pi(W_\alpha, 2\Re W) &= 2(\Pi((\alpha W_{\alpha})_\alpha, 2\Re W) + \Pi(W_\alpha, 2\Re (\alpha W_\alpha)) ) \\
&\quad- 2\alpha (\Pi(W_{\alpha\alpha}, 2\Re W) + \Pi(W_\alpha, 2\Re W_\alpha)).
\end{aligned}
\end{equation*}
Then it is straightforward to estimate each term on the right hand side in $\dot H^{1/4}$, combining \eqref{pre-energy-est} and \eqref{pointwise-bootstrap} as before. We remark that we gain a derivative in the commutator, so both of these terms are estimated 
in a scale invariant fashion by
\[
\|\tilde T_{W_\alpha} W\|_{\dot H^\frac14}
+ \|\tilde\Pi(W_\alpha, 2\Re W)\|_{\dot H^\frac14}
\lesssim \|W_\alpha \|_{L^\infty} \| W\|_{\dot H^\frac14}.
\]

\

For $S\tQ$, we write
\begin{equation}\label{SQeqn}
\begin{aligned}
S\tQ &= SQ - (T_{SR} W + T_R \w + \Pi(SR, 2\Re W) + \Pi(R, 2\Re \w) - \tilde T_R W - P\tilde \Pi(R, 2\Re W )) \\
&\quad - (T_R 2W + \Pi(R, 4\Re W)) \\
&= \r + T_\w R - (T_{SR} W + \Pi(SR, 2\Re W) + \Pi(R, \bar \w) - \tilde T_R W - P\tilde \Pi(R, 2\Re W )) \\
&\quad - (T_R 2W + \Pi(R, 4\Re W)) + 3Q - 2RW. 
\end{aligned}
\end{equation}

We estimate the unbalanced terms on the right hand side of \eqref{SQeqn}, as the corresponding balanced terms are similar. First, we have by Sobolev embeddings,
\[\|T_\w R\|_{\dot H^{3/4}} \lesssim \|\w\|_{L^4} \||D|^{3/4}R\|_{L^4} \lesssim_{\As} \|\w\|_{\dot H^{1/4}}. \]
Using \eqref{R-like-Q}, we also have
\[ 
\|T_R W \|_{\dot H^{3/4}} \lesssim \||D|^{-1/4}R\|_{L^2} \|W_\alpha \|_{L^\infty} \lesssim_{\As} \|Q\|_{\dot H^{3/4}}. 
\]

Next, we estimate the last two terms on the right hand side. The estimate for $Q$ is immediate. For $RW$, $T_RW$ and $\Pi(R,W)$ may be estimated as before, and we have
\[
\|T_W R\|_{\dot H^{3/4}} \lesssim \|W\|_{L^4} \||D|^{3/4}R\|_{L^4} \lesssim_{\As} \|W\|_{\dot H^{1/4}}.
\]

To estimate $T_{SR} W$, we use the identity
\[(S - 1)R = \frac{\r_\alpha + R_\alpha \w}{1 + W_\alpha}.\]
Consider the contribution $T_{(1 - Y)\r_\alpha} W$ from $\r_\alpha$. The cases of three unequal frequencies may be estimated as follows:
\begin{equation*}
\begin{aligned}
\|T_{T_{1 - Y} \r_\alpha} W \|_{\dot H^{3/4}} + \|T_{T_{\r_\alpha} Y} W \|_{\dot H^{3/4}} &\lesssim (1 + \|W_\alpha\|_{L^\infty})\|\r\|_{\dot H^{3/4}} \|W_\alpha\|_{L^\infty} \lesssim_A \|\r\|_{\dot H^{3/4}}.
\end{aligned}
\end{equation*}
Since $Y$ and $\r_\alpha$ are both holomorphic, there is no frequency cancellation in the balanced case $\Pi(Y, r_\alpha)$, and so this case may be treated in the same way.

For the contribution from $R_\alpha \w$, we have $T_{(1 - Y)R_\alpha \w} W$. We consider the cubic term; the quartic term is similar, measuring $Y \in L^\infty$ in all cases. We have
\begin{equation*}
\begin{aligned}
\|T_{T_{R_\alpha} \w} W \|_{\dot H^{3/4}} &\lesssim \||D|^{1/2} R\|_{L^\infty} \|\w\|_{\dot H^{1/4}} \|W_\alpha\|_{L^\infty} \lesssim_A  \|\w\|_{\dot H^{1/4}}, \\
\|T_{T_{\w} R_\alpha} W \|_{\dot H^{3/4}} &\lesssim \|\w\|_{L^4} \||D|^{1/2}R\|_{L^\infty} \||D|^{5/4} W\|_{L^4} \lesssim_{\As} \|\w\|_{\dot H^{1/4}}.
\end{aligned}
\end{equation*}
As with the $\r_\alpha$ contribution, the balanced frequency case has no cancellation and may be treated as either of these two cases.

\end{proof}

\subsection{Pointwise bounds on the normal form}

Here we prove Proposition \ref{p:nf-pointwise-est}, transferring pointwise estimates from $(\tW, \tQ)$ to the original variables $(W, Q)$.

\begin{proof}
Recall our objective is to show that
\[
\|(\tW_\alpha - \W, \tQ_\alpha - R)\|_X \lesssim \eps \langle t \rangle^{-1/2} \|(\W, R)\|_{X}.
\]
For the $\tW$ bound, we first consider the unbalanced paraproduct quadratic terms. For the high frequency estimate, we have using \eqref{pointwise-bootstrap},
\begin{equation*}
\|\D_\alpha T_{W_\alpha} W\|_{\dot B^\frac14_{\infty,2}} \lesssim \|W_\alpha\|_{L^\infty} \|\W\|_{\dot B^\frac14_{\infty,2}} \lesssim \eps \langle t \rangle^{-1/2}\|(\W, R)\|_{X}
\end{equation*}
so it suffices to consider the low frequency $L^\infty$ estimate with $W_{\leq 1}$. In this case we may gain derivatives from the low frequency $W_\alpha$ in the paraproduct. For instance, we may use the estimate
\begin{equation*}
\begin{aligned}
\||D|^{-1/2} \D_\alpha T_{W_\alpha} W_{\leq 1}\|_{L^\infty}  &\lesssim \||D|^{3/4} W\|_{L^\infty} \sum_{\lambda \leq 1} \| |D|^{3/4} W_\lambda\|_{L^\infty} \\
&\lesssim \||D|^{3/4} W\|_{L^\infty} \||D|^{1/2}W\|_{L^\infty} \lesssim \eps \langle t \rangle^{-1/2}\|(\W, R)\|_{X}.
\end{aligned}
\end{equation*}

Next, we estimate the balanced quadratic corrections in $\tW$. Here, we likewise consider the high and low frequencies separately. For the high frequencies,
\begin{equation*}
\|\D_\alpha \Pi(W_\alpha, \Re W)_{\geq 1}\|_{\dot B^\frac14_{\infty,2}} \lesssim \||D|^{1/8}W_\alpha\|_{L^\infty} \|\W\|_{\dot B^\frac14_{\infty,2}} \lesssim \eps \langle t \rangle^{-1/2}\|(\W, R)\|_{X}.
\end{equation*}
For the low frequency estimate, we have room as before to rebalance derivatives from the first instance of $W_\alpha$. Here, note that the summation permitted by the rebalancing likewise allows for the estimate of the implicit projection $P$ in our notation for $\Pi$. Precisely,
\begin{equation*}
\begin{aligned}
\||D|^{-1/2} \D_\alpha \Pi(W_\alpha, \Re W)_{\leq 1}\|_{L^\infty} &\lesssim \sum_{\lambda \leq 1} \||D|^{-1/2} \D_\alpha \Pi(W_\alpha, \Re W)_{\lambda}\|_{L^\infty}\\
 &\lesssim \||D|^{3/4} W\|_{L^\infty} \||D|^{1/2}W\|_{L^\infty} \lesssim \eps \langle t \rangle^{-1/2}\|(\W, R)\|_{X}.
\end{aligned}
\end{equation*}



For $\tQ_\alpha$, we have
\[
\tQ_\alpha - R = T_{W_\alpha} R + \Pi(W_\alpha, R) - T_{R_\alpha}W - \D_\alpha\Pi(R, 2\Re W).
\]
The estimates for the quadratic errors are similar to before: We consider the high and low frequency estimates separately, and observe that the balance of derivatives is favorable for each term so that we have room to rebalance as necessary. We present here the analysis for the unbalanced paraproduct terms. Considering first the high frequency estimate, we have 
\begin{equation*}
\begin{aligned}
\|T_{W_\alpha} R\|_{\dot B^\frac34_{\infty,2}} &\lesssim \|W_\alpha\|_{L^\infty} \|R\|_{\dot B^\frac34_{\infty,2}} \lesssim \eps \langle t \rangle^{-1/2}\|(\W, R)\|_{X}, \\
\|T_{R_\alpha} W\|_{\dot B^\frac34_{\infty,2}} &\lesssim \||D|^{1/2} R\|_{L^\infty}\|W_\alpha\|_{\dot B^\frac14_{\infty,2}}  \lesssim \eps \langle t \rangle^{-1/2}\|(\W, R)\|_{X}.
\end{aligned}
\end{equation*}
For the low frequency $L^\infty$ estimate, we have
\begin{equation*}
\begin{aligned}
\|T_{W_\alpha} R_{\leq 1}\|_{L^\infty}  &\lesssim \||D|^{3/4} W\|_{L^\infty} \sum_{\lambda \leq 1} \| |D|^{1/4}R_\lambda\|_{L^\infty} \\
&\lesssim \||D|^{3/4} W\|_{L^\infty} \|R\|_{L^\infty} \lesssim \eps \langle t \rangle^{-1/2}\|(\W, R)\|_{X}, \\
\|T_{R_\alpha} W_{\leq 1}\|_{L^\infty}  &\lesssim \||D|^{1/4} R\|_{L^\infty} \sum_{\lambda \leq 1} \| |D|^{3/4}W_\lambda\|_{L^\infty} \\
&\lesssim \||D|^{1/4} R\|_{L^\infty} \||D|^{1/2}W\|_{L^\infty} \lesssim \eps \langle t \rangle^{-1/2}\|(\W, R)\|_{X},
\end{aligned}
\end{equation*}
\end{proof}

\subsection{Bounds on the source term}
Next, we estimate the cubic and higher source terms $(\tG, \tK)$ in the equation \eqref{tWQ-system}, and thus prove Proposition \ref{p:gk-est}.

\begin{proof}
To estimate $\tG$, we observe that all the terms in $\tG_i$, $i = 1, 2, 3$ (see \eqref{tGK3}) are cubic 
and higher order expressions with variables $(W_\alpha, R)$ or their respective normal form counterparts $(\tW_\alpha, \tQ_\alpha)$, and also possibly $Y$ instead of $W_\alpha$,
within the following set of rules:
\begin{enumerate}[label=(\roman*)]
\item $R$ or its equivalent $\tQ_\alpha$ appears exactly once.
\item $W_\alpha$ or its equivalents $Y,\tW_\alpha$ appears at least twice.
\end{enumerate}
Similarly, after extracting the cubic terms $\tG^{(3)}$ as a trilinear expression
in $\tG^{(3)}(\tQ_\alpha, \tW_\alpha,\tW_\alpha)$, the remaining quartic and higher 
order terms share a similar description, but with (ii) replaced by 
\begin{enumerate}
\item[(ii)'] $W_\alpha$ or its equivalents $Y,\tW_\alpha$ appears at least three times.
\end{enumerate}
In terms of estimates, these equivalent sets of variables are interchangeable by Propositions \ref{p:energy-equiv} and \ref{p:nf-pointwise-est}. To bound the cubic terms,
we may rebalance the derivatives such that the lowest frequency variable is estimated by
\[
\|(\tW, \tQ)\|_{\dH^{\frac14}} \approx_{A_0} \| (W_\alpha, R)\|_{\dH^{-\frac34}},
\]
while the remaining two variables are controlled by $A_{\frac14}^2$. To bound the 
quartic and higher terms, we argue in the same fashion, with the adjustment that 
the two highest frequency factors are controlled by $A_{\frac14}^2$, while the intermediate frequencies are controlled by $A_0$.

\

To estimate $\tK$, the discussion is similar but slightly more complex.
As we will see below,  its cubic terms may be placed into three cases: 
\begin{enumerate}[label=(\roman*)]
\item cubic terms with $\tW_\alpha, \tW_\alpha, \tW$ where $\tW$ is in the place of the highest frequency, 
\item cubic terms with $\tQ_\alpha, \tQ_\alpha, \tW_\alpha$ where a $\tQ_\alpha$ is in the place of the highest frequency,
\item cubic terms with $\tQ_\alpha, \tQ_{\alpha\alpha}, \tW$ where $\tW$ is in the place of the highest frequency.
\end{enumerate}
We allow as above for substitutions with equivalent variables $\tW_\alpha \sim W_\alpha \sim Y$ respectively $\tQ_\alpha \sim R$. Then to classify the quartic and higher order terms, we obtain contributions similar to the three above, but with an 
additional $W_\alpha$ factor which is at or below the highest frequency as described 
in (i)-(iii). In addition to this, we obtain one more term, namely
\begin{enumerate}
\item[(iv)] a quartic term with $R, R, W_\alpha, W_\alpha$ and highest frequency $W_\alpha$.
\end{enumerate}
This categorization is immediate for the terms of $\tK_1$ and $\tK_2$. For $\tK_3$, write
\begin{equation*}
\begin{aligned}
 \tilde K_3 &=  T_{2\Re R}\tQ_\alpha - T_{\bar R} R - T_R (Q_\alpha - T_F W_\alpha) + T_{2\Re (\tQ_\alpha - R)}\tQ_\alpha \\
&= T_{2\Re R}(Q - T_RW - \Pi(R, 2\Re W))_\alpha - T_{\bar R} R - T_R (Q_\alpha - T_F W_\alpha) \\
&\quad  + T_{2\Re (Y Q_\alpha - (T_R W + \Pi(R, 2\Re W))_\alpha)}\tQ_\alpha \\
&= T_R[T_FW_\alpha - (T_RW)_\alpha] - T_{2\Re R} (\Pi(R, 2\Re W))_\alpha + T_{\bar R}[RW_\alpha -( T_RW)_\alpha] \\
&\quad  + T_{2\Re (Y Q_\alpha - (T_R W + \Pi(R, 2\Re W))_\alpha)}\tQ_\alpha \\
&= T_R[T_{F - R}W_\alpha - T_{R_\alpha}W] - T_{2\Re R} (\Pi(R, 2\Re W))_\alpha + T_{\bar R}[T_{W_\alpha} R + \Pi(W_\alpha, R) - T_{R_\alpha} W] \\
&\quad  + T_{2\Re (Y Q_\alpha - (T_R W + \Pi(R, 2\Re W))_\alpha)}\tQ_\alpha.
\end{aligned}
\end{equation*}
The first term on the right hand side is quartic, recalling that
\[
R - F = P[R\bar Y - \bar R Y].
\]

In terms of estimates, as with $\tG$, in all of the cubic cases (i)-(iii) we may rebalance derivatives such that the lowest frequency variable is estimated by 
\[
\|(\tW, \tQ)\|_{\dH^{\frac14}},
\] 
while the remaining two variables are controlled by $A_{\frac14}^2$. For the corresponding quartic terms we add $A_0$ bounds for the additional $W_\alpha$ terms. On the other hand, for the remaining quartic term in (iv) we use $W_\alpha \in \dot H^{3/4}$ for the highest frequency $W_\alpha$:
\[
\|T_RT_{P[R\bar Y - \bar R Y]}W_\alpha\|_{\dot H^{3/4}} \lesssim_{A_0} \|R\|_{L^\infty}\|R\|_{L^\infty}\|W_\alpha\|_{L^\infty} \|W_\alpha\|_{\dot H^{3/4}}.
\]

\bigskip

Next, we identify the leading order cubic source terms $(\tG^{(3)}, \tK^{(3)})$ in $(\tG, \tK)$, with respect to the normal form variables $(\tW, \tQ)$. 

\

First, we identify the cubic terms in $\tG$. Using the identities
\begin{equation}
\begin{aligned}
&F = Q_\alpha - Q_\alpha Y + P\left[ \bar RY -R\bar Y\right], \\
&W = \tW + T_{W_\alpha}W + \Pi(W_\alpha, 2\Re W), \\
&Q = \tQ + T_R W + \Pi(R, 2\Re W). 
\end{aligned}
\end{equation}
The first term of $\tG_1$ may be written
\begin{equation}
\begin{aligned}
T_{W_\alpha} (FW_\alpha) &= T_{\tW_\alpha} (\tQ_\alpha \tW_\alpha) + g,
\end{aligned}
\end{equation}
where $g$ consists of quartic terms with variables $R,W_\alpha, W_\alpha, W_\alpha$, possibly interchanged with their normal form counterparts. By a similar re-expression of the remaining terms of $\tG_1$, we may write
\begin{equation*}
\begin{aligned}
\tilde G_1 &= T_{\tW_\alpha} (\tQ_\alpha \tW_\alpha) + T_{(\tQ_\alpha \tW_\alpha)_\alpha}\tW + \Pi(\tW_\alpha, 2\Re [\tQ_\alpha \tW_\alpha]) + \Pi((\tQ_\alpha \tW_\alpha)_\alpha, 2\Re \tW) + \tG_1^{(4+)}.
\end{aligned}
\end{equation*}

From the last two lines of $\tG_2$, we observe that the quadratic components cancel,
\begin{equation*}
\begin{aligned}
&\Pi(\bar Y - \bar W_\alpha, R) + P [\Pi(W_\alpha, \bar F) - \Pi(\bar R, Y)] +( T_{\bar Y}R - T_{\bar W_\alpha} F) + (T_{\bar R} W_\alpha - T_{\bar R}Y) \\
&= - \Pi(\bar W_\alpha \bar Y, R) +  P [\Pi(W_\alpha, \bar P[R \bar Y - \bar R Y]) + \Pi(\bar R, W_\alpha Y)]\\
&\quad - T_{\bar W_\alpha \bar Y}R - T_{\bar W_\alpha} (P[\bar R Y - R \bar Y]) + T_{\bar R}W_\alpha Y
\end{aligned}
\end{equation*}

so that
\begin{equation*}
\begin{aligned}
\tilde G_2 &=- \tW_\alpha P\left[ \bar \tQ_\alpha \tW_\alpha -\tQ_\alpha \bar \tW_\alpha\right] + T_{ P\left[ \bar \tQ_\alpha \tW_\alpha - \tQ_\alpha \bar \tW_\alpha\right]_\alpha}\tW \\
&\quad + \Pi( P\left[ \bar \tQ_\alpha \tW_\alpha - \tQ_\alpha \bar \tW_\alpha\right]_\alpha, 2\Re W) + \Pi( P\left[ \bar \tQ_\alpha \tW_\alpha - \tQ_\alpha \bar \tW_\alpha\right], W_\alpha) \\ 
&\quad - \Pi(\bar \tW_\alpha^2, \tQ_\alpha) +  P [\Pi(\tW_\alpha, \bar P[\tQ_\alpha \bar \tW_\alpha - \bar \tQ_\alpha \tW_\alpha]) + \Pi(\bar \tQ_\alpha, \tW_\alpha^2)] \\
&\quad - T_{\bar \tW_\alpha^2}\tQ_\alpha - T_{\bar \tW_\alpha} (P[\bar \tQ_\alpha \tW_\alpha - \tQ_\alpha \bar \tW_\alpha]) + T_{\bar \tQ_\alpha}\tW_\alpha^2 + \tG_2^{(4+)}.
\end{aligned}
\end{equation*}

For $\tG_3$, we likewise observe that the quadratic components cancel,
\begin{equation*}
\begin{aligned}
\tilde G_3 &= T_{2 \Re (T_{W_\alpha}W + \Pi(W_\alpha, 2\Re W))_\alpha} Q_\alpha \\
&\quad + T_{2 \Re W_\alpha} ( - Q_\alpha Y + P\left[ \bar RY -R\bar Y\right]) + T_{2 \Re \tW_\alpha}(T_R W + \Pi(R, 2\Re W))_\alpha \\
&\quad + T_{2 \Re (RW_\alpha - (T_R W + \Pi(R, 2\Re W))_\alpha)}\tW_\alpha - T_{2 \Re R} ( T_{W_\alpha}W + \Pi(W_\alpha, 2\Re W))_\alpha,
\end{aligned}
\end{equation*}
so that
\begin{equation*}
\begin{aligned}
\tilde G_3 &= T_{2 \Re (T_{\tW_\alpha}\tW + \Pi(\tW_\alpha, 2\Re \tW))_\alpha} \tQ_\alpha \\
&\quad + T_{2 \Re \tW_\alpha} ( - \tQ_\alpha \tW_\alpha + P\left[ \bar \tQ_\alpha \tW_\alpha - \tQ_\alpha \bar \tW_\alpha \right]) + T_{2 \Re \tW_\alpha}(T_{\tQ_\alpha} \tW + \Pi(\tQ_\alpha, 2\Re \tW))_\alpha \\
&\quad + T_{2 \Re (\tQ_\alpha \tW_\alpha - (T_{\tQ_\alpha} \tW + \Pi(\tQ_\alpha, 2\Re \tW))_\alpha)}\tW_\alpha - T_{2 \Re \tQ_\alpha} ( T_{\tW_\alpha}\tW + \Pi(\tW_\alpha, 2\Re \tW))_\alpha + \tG_3^{(4+)}.
\end{aligned}
\end{equation*}

\

Next, we identify the cubic terms in $\tK$. From the last line of $\tK_1$, we first observe the quadratic cancellations,
\begin{equation*}
\begin{aligned}
\ [\Pi(R, 2\Re F) &- \Pi(F, Q_\alpha) - \Pi(\bar R, R)] + T_{R - Q_\alpha} F \\
&= - \Pi(YQ_\alpha, F) + \Pi(R, \bar P\left[\bar R Y - R \bar Y\right]) - T_{Q_\alpha Y} F
\end{aligned}
\end{equation*}
so that
\begin{equation*}
\begin{aligned}
\tilde K_1 &= T_{2(\Re \tQ_\alpha ) \tQ_{\alpha\alpha} + 2i\Im P[\tQ_\alpha \bar \tQ_{\alpha\alpha}]} \tW + T_{\tQ_\alpha} (T_{\tW_\alpha} \tQ_\alpha + \Pi(\tW_\alpha, \tQ_\alpha)) \\
&\quad + \Pi(2(\Re \tQ_\alpha ) \tQ_{\alpha\alpha} + 2i\Im P[\tQ_\alpha \bar \tQ_{\alpha\alpha}], 2\Re \tW) +  \Pi(\tQ_\alpha, 2\Re [\tQ_\alpha \tW_\alpha]) \\
&\quad - \Pi(\tW_\alpha \tQ_\alpha, \tQ_\alpha) + \Pi(\tQ_\alpha, \bar P\left[\bar \tQ_\alpha \tW_\alpha - \tQ_\alpha \bar \tW_\alpha \right]) - T_{\tQ_\alpha \tW_\alpha} \tQ_\alpha + \tK_1^{(4+)}.
\end{aligned}
\end{equation*}

For $\tK_2$, the quadratic cancellations are straightforward from the definition of $Y$ and we obtain
\[
\tilde K_2 =  i T_{\tW_\alpha^2} \tW  + i\Pi( \tW_\alpha^2, 2\Re \tW) - T_{P\left[ \bar \tQ_\alpha \tW_\alpha - \tQ_\alpha \bar \tW_\alpha \right]} \tQ_\alpha + \tK_2^{(4+)}.
\]

For $\tK_3$, we have the quadratic cancellations
\begin{equation*}
\begin{aligned}
\tilde K_3 &=  T_{2\Re \tQ_\alpha}\tQ_\alpha - T_{\bar R} R - T_R (Q_\alpha - T_F W_\alpha) \\
&= - T_{2\Re (T_R W + \Pi(R, 2\Re W))_\alpha}\tQ_\alpha - T_{2\Re Q_\alpha} (T_R W + \Pi(R, 2\Re W))_\alpha \\
&\quad + T_{2\Re (Q_\alpha Y)} Q_\alpha + T_{\bar R} (Q_\alpha Y) + T_R T_F W_\alpha ,
\end{aligned}
\end{equation*}
and hence
\begin{equation*}
\begin{aligned}
\tilde K_3 &= - T_{2\Re (T_{\tQ_\alpha} \tW + \Pi(\tQ_\alpha, 2\Re\tW))_\alpha}\tQ_\alpha - T_{2\Re \tQ_\alpha} (T_{\tQ_\alpha} \tW + \Pi(\tQ_\alpha, 2\Re \tW))_\alpha \\
&\quad + T_{2\Re (\tQ_\alpha \tW_\alpha)} \tQ_\alpha + T_{\bar \tQ_\alpha} (\tQ_\alpha \tW_\alpha) + T_{\tQ_\alpha} T_{\tQ_\alpha}\tW_\alpha + \tK_3^{(4+)}.
\end{aligned}
\end{equation*}

\end{proof}

\section{The pointwise estimates}
\label{s:wp}
So far, we have used the pointwise bootstrap assumption \eqref{pointwise-bootstrap}
in order to derive the energy estimates \eqref{ee-global} with $t^{C\epsilon^2}$ loss for $(W,Q)$, which we then transferred to  normal form variables $(\tW,\tQ)$, see \eqref{ee-global-nf},
with a similar $t^{C\epsilon^2}$ loss. The remaining objective is to obtain an improvement of the bootstrap assumption \eqref{pointwise-bootstrap}, which has in turn been reduced to proving its counterpart for the normal form variables, namely the bound \eqref{pointwise-nf}. All
the work in the last two sections of the paper happens at the level of the normal form variables $(\tW,\tQ)$.

Our primary objective in this section is to consider $(\tW,\tQ)$ at fixed time, 
and to convert the energy estimates \eqref{ee-global-nf} into pointwise bounds, via 
vector field Sobolev type inequalities.

This will in particular 
yield the pointwise bound
\begin{equation}\label{get-nf-X}
\| (\tW_\alpha,\tQ_\alpha)\|_{X} \lesssim \epsilon t^{-\frac12+C\epsilon^2}.   
\end{equation}  
However, this does not suffice in order to prove \eqref{pointwise-nf} because 
of the $t^{C\epsilon^2}$ loss. For this reason, we will instead obtain a sharper 
version of \eqref{get-nf-X}, where the loss is replaced with a gain for 
most components of $(\tW,\tQ)$.  To describe this gain, we will produce 
an elliptic/hyperbolic decomposition 
\begin{equation}\label{tWQ=ell+hyp}
(\tW,\tQ) = (\tW,\tQ)_{ell} + (\tW,\tQ)_{hyp}.
\end{equation}
Here the elliptic component contains a nearly full range of frequencies, but
satisfies stronger, elliptic bounds, and in particular has better decay,
\begin{equation}\label{get-nf-X-ell}
\| (\tW_\alpha,\tQ_\alpha)_{ell}\|_{X} \lesssim \epsilon t^{-\frac12-\delta}, \qquad \delta > 0,
\end{equation} 
which suffices for \eqref{pointwise-nf}.

The hyperbolic component, on the other hand, is frequency localized on a scale which depends on the velocity $v = \alpha/t$. While retaining the $t^{C\epsilon^2}$ loss, it has another redeeming feature, namely 
a gain of $\min\{|v|^{-b},|v|^b\}$ away from velocity $|v| \approx 1$, with a universal small $b$. This will defeat the $t^{C\epsilon^2}$ loss outside a small region of the form
\begin{equation}\label{Omega-delta}
\Omega^\delta = \{ t^{-\delta} \lesssim |v| \lesssim t^{\delta} \}, \qquad \delta \ll 1.
\end{equation}

That will leave us, at the conclusion of this section, with the remaining task of improving 
the pointwise bounds for $(\tW,\tQ)_{hyp}$ within the above region $\Omega^\delta$. This can no longer 
be done via a fixed time analysis, and instead has to be accomplished dynamically. 
That will be the objective of the last section of the paper, where we use our wave packet testing method to capture a good asymptotic parameter $\gamma(t,v)$ and its associated asymptotic equation.   

\subsection{ A fixed time system for \texorpdfstring{$(\tW,\tQ)$}{}}
We recall from \eqref{ts-eqn} that we have
\[
\tS(\tW, \tQ) = S(\tW, \tQ) - t(\tG, \tK),
\]
where the nonlinear paradifferential vector field $\tS$ is given by
\[
\tS(\tW,\tQ) = (2 \alpha \D_\alpha \tW - t\D_\alpha \tQ + t(T_{2 \Re \tW_\alpha} \tQ_\alpha - T_{2 \Re \tQ_\alpha} \tW_\alpha), 2 \alpha \D_\alpha \tQ + it\tW - tT_{2\Re  \tQ_\alpha} \tQ_\alpha).
\]

Thus we can think of $(\tW,\tQ)$ at fixed time as the solutions to a system governed by the 
operator $\tS$,
\begin{equation}\label{KS-system}
\left\{
\begin{aligned}
& 2 \alpha \D_\alpha \tW - t\D_\alpha \tQ + t(T_{2 \Re \tW_\alpha} \tQ_\alpha - T_{2 \Re \tQ_\alpha} \tW_\alpha) = G,
\\
& 2 \alpha \D_\alpha \tQ + it\tW - tT_{2\Re  \tQ_\alpha} \tQ_\alpha = K,
\end{aligned}  
\right.
\end{equation}
where, by Proposition~\ref{p:energy-equiv} and Proposition~\ref{p:gk-est}, we control
$L^2$ type norms as follows:
\begin{equation}\label{KS-control}
\| (\tW,\tQ)\|_{\dH^\frac14 \cap \dH^\sigma} 
+ \|(G,K)\|_{\dH^\frac14} 
\lesssim \epsilon t^{C \epsilon^2}.
\end{equation}

At first we will regard this as a linear system for $(\tW,\tQ)$, where the 
paradifferential coefficients are decoupled from the main variables, and are instead assumed to have $t^{-\frac12}$ decay in the uniform $X$ norm,
\begin{equation}\label{boot-nf}
\| (\tW_\alpha,\tQ_\alpha)\|_{X} \lesssim C \epsilon t^{-\frac12}.   
\end{equation}
The pointwise bounds we will prove
for solutions to this system hold irrespective of the origin of $\tW$ and $\tQ$.
To emphasize this, we will more generally consider any solution $(w,q)$
to the system $\tS(w,q)=(g,k)$ or in expanded form
\begin{equation}\label{S-system}
\left\{
\begin{aligned}
   & 2 \alpha  w_\alpha - t q_\alpha + t(T_{2 \Re \tW_\alpha} q_\alpha - T_{2 \Re \tQ_\alpha} w_\alpha) =  g,
    \\
&    2 \alpha q_\alpha + it w - tT_{2\Re  \tQ_\alpha} q_\alpha =  k.
\end{aligned}
\right.
\end{equation}
In a nutshell, our  goal will be to 
obtain pointwise bounds for $(w,q)$ in terms of Sobolev bounds
for $(w,q)$, respectively $(g,k)$. A simplified version of our main estimate
is as follows:

\begin{proposition}\label{p:KS-para}
Assume that $(\tW,\tQ)$ satisfy the bootstrap bound \eqref{boot-nf}.
Then the following pointwise bound holds for solutions $(w,q)$ to \eqref{S-system}:
\begin{equation}\label{KS-para}
\| (w_\alpha,q_\alpha)\|_X
\lesssim t^{-\frac12} \left(\| (w,q)\|_{\dot \H^\frac14 \cap \dot H^\sigma} 
+ \| (g,k)\|_{\dot \H^\frac14} \right).
\end{equation}
\end{proposition}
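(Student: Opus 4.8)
The plan is to treat the system \eqref{S-system} as a linear elliptic-type ODE system in $\alpha$ at fixed time $t$, with the paradifferential potentials treated as perturbatively small coefficients via the bootstrap \eqref{boot-nf}. First I would perform a frequency decomposition using the Littlewood-Paley projectors $P_\lambda$, and on each frequency block analyze the principal symbol of $\tS$. Writing $v = \alpha/t$, the principal part of the first two equations is $(2\alpha\partial_\alpha w - tq_\alpha, 2\alpha\partial_\alpha q + itw)$, whose symbol acting on frequency $\xi$ is the matrix with entries built from $t\xi$ and $t$, up to the $2\alpha\partial_\alpha$ transport piece; the key observation is that this symbol is \emph{elliptic} except near the resonant set where the group velocity $\frac{d}{d\xi}|\xi|^{1/2} = \frac12|\xi|^{-1/2}$ matches $v$, i.e. near $\xi \sim v^{-2}$. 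Away from this set, inverting the symbol gains a factor of $t^{-1}$ together with a frequency weight, which after summing over the Besov/Sobolev scale in the definition of $X$ and using the $\dot\H^{1/4}\cap\dot H^\sigma$ control on $(w,q)$ and the $\dot\H^{1/4}$ control on $(g,k)$ yields the claimed $t^{-1/2}$ bound, in fact with a gain. Near the resonant frequency the symbol degenerates, and one only recovers the bound by exploiting the $2\alpha\partial_\alpha$ transport term, which forces a localization at frequency $\sim v^{-2}$ on a scale determined by the uncertainty principle — this is exactly where the $t^{-1/2}$ (and no better, in the hyperbolic region) emerges, matching the decay rate of a single wave packet.

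Second, I would handle the paradifferential potential terms $t(T_{2\Re\tW_\alpha}q_\alpha - T_{2\Re\tQ_\alpha}w_\alpha)$ and $tT_{2\Re\tQ_\alpha}q_\alpha$. These carry an apparent factor of $t$, which is dangerous, but the bootstrap \eqref{boot-nf} supplies $t^{-1/2}$ decay on $(\tW_\alpha,\tQ_\alpha)$ in $X$, so each such term is of size $\lesssim C\epsilon\, t^{1/2}$ times a norm of $(w_\alpha,q_\alpha)$ — still too large to absorb directly into the left side on its face. The resolution is that the paraproduct structure means the potential acts at the \emph{low} frequency relative to the argument, so it commutes well with the frequency localization and, crucially, these terms should be moved to the right-hand side and treated as part of a modified source: one estimates them in $\dot\H^{1/4}$ using $\|T_{2\Re\tW_\alpha}q_\alpha\|_{\dot\H^{1/4}} \lesssim \|\tW_\alpha\|_{L^\infty}\|q\|_{\dot H^{5/4}}$ and similar, so that the $t^{-1/2}$ from the bootstrap combines with the $t^{-1/2}$ from the elliptic inversion to give an overall $t^{-1}\cdot t \sim$ acceptable contribution — more precisely, after the elliptic/hyperbolic split these terms land in the elliptic part where one has the extra decay margin. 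Alternatively, and more robustly, one runs the whole argument with $(g,k)$ replaced by $(g,k) + t\cdot(\text{potential terms})$ and checks that the $\dot\H^{1/4}$ norm of the latter is controlled by $C\epsilon\, t^{1/2}\|(w,q)\|_{\dot\H^{1/4}\cap\dot H^\sigma}$; since the elliptic inversion gains more than $t^{-1/2}$ away from the resonant frequency and the potential is frequency-localized low, the net effect is a contribution that can be absorbed by choosing $\epsilon$ small, closing a fixed-point/bootstrap within the proof of the proposition itself.

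Third, I would assemble the pieces: decompose $(w,q) = (w,q)_{ell} + (w,q)_{hyp}$ according to whether the frequency is far from or close to $v^{-2}$ (this is the $\Omega^\delta$ dichotomy foreshadowed in \eqref{tWQ=ell+hyp}--\eqref{Omega-delta}, though for the simplified Proposition~\ref{p:KS-para} one only needs the crude $t^{-1/2}$ bound without tracking the gain). For the elliptic part, sum the symbol bounds over dyadic frequencies against the $X$-norm weights $2^{k/4}$ and $2^{3k/4}$, using Bernstein and the $\dot H^\sigma$ control at high frequency and the $\dot\H^{1/4}$ control at low frequency to make the sum converge — here $\sigma > 11/4$ is what gives room at high frequency and the $\dot H^{1/4}$ endpoint is what gives room at low frequency. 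For the hyperbolic part, use the transport-term localization to reduce to an $L^\infty$ estimate on a single frequency band of width $\sim (v^2 t)^{-1}$ in physical space of extent $\sim t$, where a direct Cauchy-Schwarz / Bernstein argument against the $L^2$-type norm produces exactly $t^{-1/2}$.

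\textbf{Main obstacle.} The hard part will be the analysis near the resonant frequency $\xi \sim v^{-2}$, where the principal symbol of $\tS$ degenerates and one cannot simply invert it. There, the estimate must come entirely from the interplay of the $2\alpha\partial_\alpha$ transport term with the frequency localization — essentially a stationary-phase / wave-packet heuristic made rigorous — and one has to be careful that the paradifferential potentials, which carry the explicit factor of $t$, do not destroy this delicate balance. Getting the potential terms to be genuinely perturbative in this degenerate regime (rather than merely in the elliptic regime) is the technical crux, and is presumably why the bootstrap \eqref{boot-nf} is stated with the sharp $t^{-1/2}$ rate rather than anything weaker.
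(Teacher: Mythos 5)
Your approach runs into exactly the two obstructions that the paper's proof of Proposition~\ref{p:KS-para+} (from which Proposition~\ref{p:KS-para} follows as a corollary) is built around, and would not close.

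The first is the order of the microlocalization. You propose to begin with a Littlewood--Paley decomposition and analyze each frequency block of $\tS$. The paper states explicitly at the start of its proof that this frequency-first order fails here: the commutator of $P_\lambda$ with the $t$-weighted paraproduct terms $tT_{2\Re\tQ_\alpha}$, $tT_{2\Re\tW_\alpha}$ is \emph{not} perturbative --- the explicit factor of $t$ overwhelms the $t^{-1/2}$ decay from the bootstrap together with the commutator gain in $\lambda$, producing source errors of size $\sim\epsilon t^{1/2}$ relative to $(g,k)$. Your claim that the potentials ``commute well with the frequency localization'' is precisely the assertion the paper singles out as false. The remedy the paper adopts is to swap the two steps: localize \emph{spatially} first to dyadic regions $|\alpha|\approx\alpha_0$, where the cutoff $\chi$ lives at the far lower frequency $\alpha_0^{-1}\leq t^{-3/4}$; Lemma~\ref{l:com} then shows the spatial commutators are of size $t^{-1}$, hence acceptable even after multiplication by $t$. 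Only after this spatial localization does one introduce the elliptic/hyperbolic frequency split.

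The second is what to do in the hyperbolic range $\xi\sim\xi_0 = t^2/\alpha_0^2$, where, as you correctly observe, the principal symbol degenerates. You hope to ``get the potential terms to be genuinely perturbative'' there, but they are not --- the resonant cancellation in $2\alpha w_\alpha - t q_\alpha$ makes the $tT_V$ corrections comparable to what survives. The paper's argument at this point is structural, not perturbative: after eliminating $q_\alpha$ it reduces to a scalar transport equation $w_\alpha + i\xi_0 w - i\xi_0 T_V w = (2\alpha)^{-1}g_2$, and the decisive fact is that the effective potential $V$ is \emph{real-valued}. This permits an energy identity: multiplying by a monotone cutoff $\chi\bar u$ with $u=e^{-i\phi}w$ and exploiting the self-adjointness of $T_V^w$ yields $\|u\|_{L^\infty}^2\lesssim\|u\|_{L^2}\|f\|_{L^2}$, which is what produces the sharp $t^{-1/2}$. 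This real-valuedness and the resulting non-perturbative uniform bound are entirely absent from your plan; and your fallback of absorbing $t\cdot(\text{potentials})$ into the source and estimating it by $\epsilon t^{1/2}\|(w,q)\|$ would only return $\|(w_\alpha,q_\alpha)\|_X\lesssim\epsilon\|(w,q)\|$, with no time decay at all.
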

However, such a bound does not suffice for our purposes due to the $t^{C\epsilon^2}$
loss in \eqref{KS-control}, so in the next subsection we perform a finer analysis,
where we replace the $X$ norm above with a stronger norm, which we call $X^\sharp$.

\subsection{The elliptic-hyperbolic decomposition and the \texorpdfstring{$X^\sharp$}{} norm.}
To better understand the system \eqref{S-system} we begin with a heuristic discussion. The starting point is to consider a dyadic decomposition for $\alpha$. In a fixed dyadic region 
$\alpha \approx \alpha_0$, the system \eqref{S-system} is microlocally elliptic 
unless the frequency $\xi$ is comparable to
\[
\xi_0 = \frac{t^2}{\alpha_0^2},
\]
in which case the system is microlocally hyperbolic. Thus in this region we distinguish 
between
\begin{itemize}
    \item Elliptic low frequencies, $|\xi| \ll \xi_0$,
    \item Hyperbolic intermediate frequencies, $|\xi| \approx \xi_0$,
    \item Elliptic high frequencies, $|\xi| \gg \xi_0$.
\end{itemize}

The above phase space decomposition applies for a range of $\alpha_0$, but not for all.
We separate two extreme cases:

\medskip

\emph{a) Very low $\alpha$}, namely 
\[
\alpha \ll \alpha_{lo} := t^{\frac34}.
\]
This corresponds to $\xi_0 \gg t^\frac12$ and to $|v| \ll v_{lo}:=t^{-\frac14}$. In this range 
we simply separate frequencies into low and high
relative to the frequency $\xi_{lo} = t^\frac12$:
\begin{itemize}
    \item Elliptic low frequencies, $|\xi| \ll \xi_{lo}$,
    \item High frequencies, $|\xi| \gtrsim \xi_0$
\end{itemize}
where, in the high frequency region, the $\dH^\frac14$ regularity 
of the source terms in \eqref{S-system} is superseded by the $\dH^\sigma$
bound for $(w,q)$.

\medskip

\emph{b) Very high $\alpha$}, namely 
\[
\alpha \gg \alpha_{hi} := t^{2}.
\]
This corresponds to $\xi_0 \ll t^{-2}$ and to $v \gg v_{hi}:= t$. In this range 
we again separate frequencies into low and high:
\begin{itemize}
    \item Low frequencies, $|\xi| \lesssim \xi_{hi}$,
    \item Elliptic high frequencies, $|\xi| \gg \xi_{hi}$
\end{itemize}
where, in the low frequency region, the $\dH^\frac14$ regularity 
of the source terms in \eqref{S-system} is superseded by the $\dH^\frac14$
bound for $(w,q)$.

Corresponding to the above decomposition of the phase space, we consider 
an associated decomposition of $(w,q)$. Our strategy will be to localize 
spatially first, and then in frequency. Some care is required at the level of the spatial localization. At low frequency we have $w \in \dot H^\frac14$, which is a localizable norm.
On the other hand, at low frequency $q \in \dot H^\frac34$, which is not a localizable norm;
in particular $q$ is only defined modulo constants. Hence, rather than localizing $q$
it is better to localize $q_\alpha$.
Thus, given a bump function $\chi$, we define the associated localization operator,
which we denote by $\bchi$, as follows:
\[
\bchi(w,q) = (w_1,q_1) \quad \text{iff} \quad w_1 = \chi w, \ \ q_{1,\alpha} = \chi q_\alpha.
\]

We now use these localization operators to define the $X^\sharp$ norm via a full decomposition of $(w,q)$. We begin with the spatial decomposition, 
\begin{equation}\label{(wq)-dec}
(w,q) = \bchi_{\ll \alpha_{lo}} (w,q) + \bchi_{\gg \alpha_{hi}}(w,q)     
+ \sum_{ \alpha_{lo} \lesssim \alpha_0 \lesssim \alpha_{hi}} \bchi_{\alpha_0}
(w,q).
\end{equation}
Then the summand in the last term is further decomposed in frequency,
\begin{equation}\label{(wq)-dec+}
\bchi_{\alpha_0} (w,q) = P_{\ll\xi_0} \bchi_{\alpha_0} (w,q)+ P_{\xi_0} \bchi_{\alpha_0} (w,q)  + P_{\gg \xi_0}\bchi_{\alpha_0} (w,q),
\end{equation}
into an elliptic low frequency component, a hyperbolic component and an elliptic high frequency component. We note that these truncations do not preserve the spatial localizations; however 
the ensuing tails are smooth and rapidly decreasing away from the original support, and do not have any effect on the arguments that follow.

For later use, we employ these truncations in order to define 
a decomposition of $(w,q)$ into an elliptic and a hyperbolic part,
\begin{equation}\label{wq=ell+hyp}
  (w,q) = (w_{ell},q_{ell})  + (w_{hyp},q_{hyp}),
\end{equation}
where the hyperbolic part is defined as
\begin{equation}\label{hyp-decomp}
   (w_{hyp},q_{hyp}) =  \sum_{ \alpha_{lo} \lesssim \alpha_0 \lesssim \alpha_{hi}} P_{\xi_0} \bchi_{\alpha_0} (w,q).
\end{equation}

As hinted earlier, to measure the size of $(w,q)$ we will not simply use the $X$ norm; 
instead we introduce a stronger norm $X^\sharp$ which we now define. Based on the decomposition above, we set:
\begin{equation}\label{def:Xsharp}
\| (w,q) \|_{X^\sharp} 
= \| \bchi_{\ll \alpha_{lo}} (w,q)\|_{X^\sharp_{lo}} + \|\bchi_{\gg \alpha_{hi}}(w,q)\|_{X^\sharp_{hi}}     
+ \sup_{ \alpha_{lo} \lesssim \alpha_0 \lesssim \alpha_{hi}} \| \bchi_{\alpha_0}
(w,q)\|_{X^\sharp_{\alpha_0}},
\end{equation}
where the first two of the component norms are as follows:
\begin{equation}\label{X-lo}
\|(w,q)\|_{X^\sharp_{lo}} := t^\frac12 \| (w,q)\|_{\dH^\frac34}, 
\end{equation}
\begin{equation}\label{X-hi}
\|(w,q)\|_{X^\sharp_{hi}} :=  t^\frac32  \| (w_\alpha,q_\alpha)\|_{\dH^{\frac14}}.
\end{equation}

For the last component we distinguish between the case
$\xi_0 < 1$ (which corresponds to $\alpha_0 > t$)  and $\xi_0 > 1$ (which corresponds to $\alpha_0 < t$). 
In the first case, we set
\begin{equation}\label{Xsharp-large}
\|(w,q)\|_{X^\sharp_{\alpha_0}} :=t^\frac12 \xi_0^{-\frac12} \| P_{> \xi_{0}}(w,q)_\alpha\|_{\dot \H^{\frac14}} + t^\frac12 \| P_{< \xi_{0}}(w,q)_\alpha\|_{\dot \H^{-\frac14}}
+ \xi_0^{-a} \| P_{\xi_{0}}(w,q)\|_{X^0}, 
\end{equation}
while in the second case we define
\begin{equation}\label{Xsharp-small}
\|(w,q)\|_{X^\sharp_{\alpha_0}} :=
t^\frac12 \xi_0^{-\frac12} \| P_{> \xi_{0}}(w,q)_\alpha\|_{\dot \H^{\frac14}} + t^\frac12 \| P_{< \xi_{0}}(w,q)_\alpha\|_{\dot \H^{-\frac14}}
+ \xi_0^{b} \| P_{\xi_{0}}(w,q)\|_{X^0},
\end{equation}
where
\[
a = \frac54, \qquad b = \frac14(\sigma - \frac{11}4 ).
\]

Here the $X^0$ norm, used above as a reference norm, corresponds exactly to $A_{1/4}$,
\[
\| (w,q)\|_{X^0} = \| w_\alpha\|_{\dot B^{\frac14}_{\infty,2}} + \| q_\alpha \|_{\dot B^{\frac34}_{\infty,2}}.
\]

If $\alpha_0 < t$, which corresponds 
to $\xi_0 > 1$, then in the hyperbolic region the $X^0$ norm controls the full $X$ norm, and so 
it suffices to have a small  gain $b > 0$.  On the other hand if $\alpha_0 > t$, which corresponds to $\xi_0 < 1$, then the $X^0$ norm no longer controls the full $X$ norm at frequency $\xi_0$, so we need a gain $a > \frac34$ for the $X$ norm; we actually get $\frac54$.

The elliptic portion in our decomposition will play a perturbative role in our analysis.
For this reason, it is convenient to separately define a norm $X_{ell}^\sharp$ in order to measure it. Precisely, we set
\begin{equation}\label{def:Xsharp-ell}
  \| (w,q) \|_{X^\sharp_{ell}} 
= \| \bchi_{\ll \alpha_{lo}} (w,q)\|_{X^\sharp_{lo}} + \|\bchi_{\gg \alpha_{hi}}(w,q)\|_{X^\sharp_{hi}}     
+ \sup_{ \alpha_{lo} \lesssim \alpha_0 \lesssim \alpha_{hi}} \| \bchi_{\alpha_0}
(w,q)\|_{X^\sharp_{\alpha_0,ell}}, 
\end{equation}
where
\begin{equation}\label{wq-ell}
  \| (w,q)\|_{X^\sharp_{\alpha_0,ell}}    = t^\frac12 \xi_0^{-\frac12} \| (w,q)_\alpha\|_{\dot \H^{\frac14}} + t^\frac12 \|(w,q)_\alpha\|_{\dot \H^{-\frac14}}.
\end{equation}

\subsection{ Bounds for the linear system \texorpdfstring{\protect\eqref{S-system}}{}}\label{ss:wq}

The main objective here is to use the $X^\sharp$ norm in order to state and prove an enhanced form of Proposition~\ref{p:KS-para}:

\begin{proposition}\label{p:KS-para+}
Assume that $(\tW,\tQ)$ satisfy the bootstrap bound \eqref{boot-nf}.
Then the following bound holds for solutions $(w,q)$ to \eqref{S-system}:
\begin{equation}\label{KS-para+}
\| (w,q)\|_{X^\sharp}
\lesssim t^{-\frac12} \left(\| (w,q)\|_{\dot \H^\frac14 \cap \dot H^\sigma} 
+ \| (g,k)\|_{\dot \H^\frac14} \right).
\end{equation}
\end{proposition}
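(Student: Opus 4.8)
The plan is to establish the bound componentwise, according to the decomposition \eqref{def:Xsharp} of the $X^\sharp$ norm into the very-low-$\alpha$ piece, the very-high-$\alpha$ piece, and the dyadic pieces $\bchi_{\alpha_0}(w,q)$ for $\alpha_{lo}\lesssim \alpha_0 \lesssim \alpha_{hi}$. In each spatial region the system \eqref{S-system} is, after the spatial localization, a constant-coefficient-type ODE/pseudodifferential system in the frequency variable $\xi$ whose symbol has the schematic form $\begin{pmatrix} 2\alpha_0 \xi & -t\xi \\ it & 2\alpha_0\xi\end{pmatrix}$ modulo the paradifferential perturbations $t T_{2\Re\tW_\alpha}, t T_{2\Re\tQ_\alpha}$. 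The determinant is $\sim 4\alpha_0^2\xi^2 + t^2\xi$, which vanishes only near the hyperbolic frequency $\xi_0 = t^2/\alpha_0^2$; away from $\xi_0$ the system is elliptic and can be inverted with a gain, while near $\xi_0$ one only gets the weaker $X^0$-type control. The first task is therefore to prove the needed mapping properties of this inverse in the three frequency ranges (elliptic low, hyperbolic, elliptic high), treating the paradifferential terms $tT_{2\Re\tW_\alpha}q_\alpha$, $tT_{2\Re\tQ_\alpha}w_\alpha$ as perturbations controlled by \eqref{boot-nf}: since these carry a factor $t$ and a control-norm factor $\lesssim C\epsilon t^{-1/2}$, they are of size $C\epsilon t^{1/2}$ in the relevant operator norm, which is genuinely a perturbation relative to the main part of size $\sim t$ provided one is not too close to $\xi_0$ — and near $\xi_0$ the hyperbolic estimate absorbs them via a commutator/normal-form argument at the paradifferential level. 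This reduction of the paradifferential system to its principal part, uniformly in the dyadic region, is the main obstacle, since one must simultaneously keep track of the spatial localization tails (which, as noted in the text, are smooth and rapidly decaying and so harmless) and the frequency truncations.

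Second, I would handle the two extreme spatial regions. For $\alpha \ll \alpha_{lo} = t^{3/4}$: the equation, after localization, gives $w = \frac{i}{t}(2\alpha q_\alpha - k + tT_{2\Re\tQ_\alpha}q_\alpha)$ from the second line, and feeding this into the first line one solves for $q_\alpha$ in $\dot H^{-1/4}$ at low frequency $|\xi|\ll\xi_{lo}=t^{1/2}$, gaining the factor $t^{1/2}$ claimed in \eqref{X-lo}; the high-frequency part $|\xi|\gtrsim\xi_{lo}$ is controlled directly by the $\dot H^\sigma$ bound on $(w,q)$, exploiting that $\sigma > 11/4$ gives room for the needed power of $t$. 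For $\alpha \gg \alpha_{hi} = t^2$: symmetrically, at high frequency $|\xi|\gg\xi_{hi}$ the term $2\alpha q_\alpha$ dominates, one inverts it with a gain to get \eqref{X-hi}, and the low-frequency part $|\xi|\lesssim\xi_{hi}$ is absorbed by the $\dot H^{1/4}$ bound on $(w,q)$ together with $\|(g,k)\|_{\dot\H^{1/4}}$.

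Third, for the generic dyadic regions $\alpha_0 \in [\alpha_{lo},\alpha_{hi}]$ I would split $\bchi_{\alpha_0}(w,q)$ by $P_{\ll\xi_0}, P_{\xi_0}, P_{\gg\xi_0}$. On $P_{\gg\xi_0}$: the term $2\alpha_0\xi$ dominates $t\xi$ and $t$, so one inverts and gains, matching the first term $t^{1/2}\xi_0^{-1/2}\|P_{>\xi_0}(w,q)_\alpha\|_{\dot\H^{1/4}}$ in \eqref{Xsharp-large}/\eqref{Xsharp-small} — the exponents are just bookkeeping of $\alpha_0^{-1}$ versus $\xi_0 = t^2/\alpha_0^2$. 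On $P_{\ll\xi_0}$: one uses the off-diagonal structure — the second equation gives $tw \approx -2\alpha_0 q_\alpha + \ldots$, hence $w$ small, and then the first equation controls $q_\alpha$ in negative Sobolev with the weight $t^{1/2}$, matching the second term of \eqref{Xsharp-large}. On the hyperbolic piece $P_{\xi_0}$: here the determinant is $O(t^2\xi_0) = O(t^2 \cdot t^2/\alpha_0^2)$ but genuinely nondegenerate only in a suitable renormalized sense; one uses the structure of the system to reduce to an ODE along the ray, obtaining the $X^0 = A_{1/4}$-level control, and then multiplies by $\xi_0^{-a}$ (resp. $\xi_0^b$) to recover the full $X$-norm as explained in the text after \eqref{Xsharp-small} — the choices $a = 5/4$ and $b = \frac14(\sigma - \frac{11}{4})$ are precisely what makes the $\xi_0$-powers close against the $t^{C\epsilon^2}$ loss in \eqref{KS-control} when \eqref{KS-para+} is later combined with the energy bounds. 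Throughout, the paradifferential coefficients only ever contribute a favorable power of $t^{-1/2}$ from \eqref{boot-nf}, so they never obstruct the estimate; summing the dyadic bounds (the $\dot B_{\infty,2}$ and $\ell^2$-in-dyadic structure of the $X$ and $\dH^s$ norms makes the sum an $\ell^2$ sum, hence convergent) yields \eqref{KS-para+}.
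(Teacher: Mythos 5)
Your proposal gets the overall architecture right: you localize spatially first, then in frequency, and you correctly identify the three spatial regimes (very low $\alpha$, very high $\alpha$, intermediate dyadic regions) and the three frequency regimes within each intermediate region (elliptic low, hyperbolic, elliptic high). That is the same decomposition the paper uses, and it is the right one precisely for the reason you hint at — the paradifferential coefficients prevent one from commuting a frequency projector through $\tS$ first, so the spatial localization has to come before the frequency one. Your treatment of the two extreme spatial regions and of the elliptic frequency regimes is in line with the paper as well.

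However, there is a genuine gap in the hyperbolic regime $P_{\xi_0}\bchi_{\alpha_0}(w,q)$, and it is the heart of the proposition. You write that ``near $\xi_0$ the hyperbolic estimate absorbs them via a commutator/normal-form argument at the paradifferential level'' and later that ``the paradifferential coefficients only ever contribute a favorable power of $t^{-1/2}$ from \eqref{boot-nf}, so they never obstruct the estimate.'' Both claims are misleading: in the hyperbolic region the paradifferential coefficients are \emph{not} perturbative relative to the principal part — that is the explicit difficulty the paper flags in Step 4(b). The actual mechanism is more specific. One eliminates $q_\alpha$ algebraically from the $2\times 2$ system, using the paraproduct composition and commutator calculus from \cite{AIT}, to arrive at a scalar first--order paradifferential ODE in $\alpha$ for $w$ of the form $w_\alpha + i\frac{t^2}{\alpha^2}w - i\xi_0 T_V w = (\text{source})$, where $V$ is \emph{real valued} (because the paradifferential coefficients of the original system are $\Re \tW_\alpha$, $\Re \tQ_\alpha$). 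The reality of $V$ is what saves the day: multiplying by $\chi\bar u$ and integrating by parts, the term $i T_V^w u\cdot \bar u$ is purely imaginary up to a genuine commutator $[T_V^w,\chi]$, which \emph{is} small by \eqref{boot-nf}; this yields $\|u\|_{L^\infty}^2 \lesssim \|u\|_{L^2}\|f\|_{L^2}$ (the paper's Lemma at the end of Step 4(b)), from which the $X^0$-level control with the claimed $\xi_0^{-5/4}$ weight follows. Without the reality of $V$, the paradifferential term would produce exponential growth and the bound would simply be false, so this is not a cosmetic detail — it is where the proof lives. Your proposal also does not mention the commutator lemma needed to justify the spatial localization itself (the analogue of the paper's Lemma~\ref{l:com}), which is a required technical step since $\chi$ does not commute with $T_{2\Re\tQ_\alpha}$ or $T_{2\Re\tW_\alpha}$; these commutators must be shown to produce $O(t^{-1})$ errors in $\dot H^{\frac14}$, and that again uses the paraproduct calculus, not just a generic ``tails are harmless'' remark.

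A smaller point: the constants $a=\frac54$ and $b=\frac14(\sigma-\frac{11}4)$ are not tuned to beat a $t^{C\epsilon^2}$ loss; $a$ records the full $\xi_0^{5/4}$ gain available from the $\dot{\mathcal H}^{\frac14}$ source bound when $\xi_0<1$, and $b$ is what interpolation between the $\dot{\mathcal H}^{\frac14}$ and $\dot H^\sigma$ norms yields when $\xi_0>1$. Also, the $X^\sharp$ norm is a supremum over dyadic regions, so there is no $\ell^2$ dyadic summation issue to address at the end.
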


\begin{proof}[Proof of Proposition~\ref{p:KS-para+}]

In the  constant coefficient case, in the absence of the paradifferential quadratic terms, 
the bound \eqref{KS-para+} essentially follows from \cite{HIT}.
 The argument in \cite{HIT} begins with a frequency localization, and then identifies spatially 
the elliptic and hyperbolic regions. In our case such an argument 
is no longer possible because the frequency localization no longer 
commutes with $\tS$, i.e. the commutator of Littlewood-Paley projectors with the paradifferential terms is not perturbative (does not have enough time decay). But what we can do instead is change the order of the two steps, i.e. 
first localize spatially in dyadic regions and then identify the elliptic and hyperbolic frequency ranges.

\textbf{Step 1: Localization.}
To localize spatially we consider a unit bump function $\chi$ which selects 
a dyadic spatial range $\alpha \approx \alpha_0$  with $\alpha_{lo} \lesssim \alpha_0 \lesssim 
\alpha_{hi}$. Then $|\chi'| \lesssim \alpha_0^{-1}$.
We replace $(w,q)$ by $(w_1,q_{1}) = \bchi( w, q)$, and seek to get a good equation for $(w_1,q_1)$. We begin with
\[
\alpha \partial_\alpha w_1 = \chi \alpha \partial_\alpha w + \alpha \chi' w ,
\]
which is acceptable since multiplication by $\chi$ or $\alpha \chi' $ preserves $\dot H^\frac14$. Similarly, by duality, it also preserves $\dot H^{-\frac14}$ so the $q_\alpha$ truncation is also acceptable.

Next we consider the commutator of $\bchi$ with the paraproducts. For these we need the following

\begin{lemma}\label{l:com}
Assume that the pointwise bound \eqref{boot-nf} holds. Then 
we have the commutator bounds
\begin{equation} \label{com-tQ}
\| [\chi,T_{\tQ_\alpha}] w_\alpha\|_{\dot H^{\frac14}} \lesssim t^{-1} \| w\|_{\dot H^\frac14},   
\end{equation}
\begin{equation} \label{com-tQ1}
\| [\chi,T_{\tQ_\alpha}] q_\alpha\|_{\dot H^{\frac34}} \lesssim t^{-1} \| q\|_{\dot H^{\frac34}},    
\end{equation}
\begin{equation}\label{com-tW}
\| [\chi,T_{\tW_\alpha}] q_\alpha \|_{\dot H^{\frac14}} \lesssim t^{-1} \| q\|_{\dot H^{\frac34}}.  
\end{equation}
\end{lemma}

\begin{proof}
We start with \eqref{com-tQ}. Since $\chi$ is essentially localized at frequencies
$\lesssim \alpha_0^{-1}$, we first eliminate the low frequencies in $w$, namely those below $\alpha_0^{-1}$. Their contribution is estimated directly, without using the commutator structure:
\[
\| [\chi,T_{\tQ_\alpha}] P_{< \alpha_0^{-1}} w_\alpha\|_{L^2}
\lesssim \|\tQ_\alpha\|_{L^\infty} \|  P_{< \alpha_0^{-1}} w_\alpha\|_{L^2}
\lesssim \alpha_0^{-\frac34} \|\tQ_\alpha\|_{L^\infty} \|w\|_{\dot H^\frac14} 
\lesssim t^{-\frac12} \alpha_0^{-\frac34}  \|w\|_{\dot H^\frac14}.
\]
Each additional derivative contributes an $\alpha_0^{-1}$ factor, so we get
\[
\| [\chi,T_{\tQ_\alpha}] P_{< \alpha_0^{-1}} w_\alpha\|_{ \dot H^\frac14}
\lesssim t^{-\frac12} \alpha_0^{-1}  \|w\|_{\dot H^\frac14},
\]
which suffices since $\alpha_0 \gtrsim \alpha_{lo} = t^{\frac34}$. 

Next we replace $\chi$ by $T_\chi$. Suppose $w$ is localized at frequency $\mu > \alpha_0^{-1}$. Then we estimate 
\[
\| P_{\gtrsim \mu} \chi \|_{L^\infty} \lesssim (\mu \alpha_0)^{-N},
\]
and then repeat the computation above to obtain
\[
\|[\chi - T_\chi,T_{\tQ_\alpha}] P_{\mu} w_\alpha\|_{ \dot H^\frac14}
\lesssim t^{-\frac12} \alpha_0^{-1} (\mu \alpha_0)^{-N} \|w\|_{\dot H^\frac14}.
\]
Finally we consider the paraproduct commutator $[T_\chi,T_{\tQ_\alpha}]$, where the input and output frequencies are equal and equal to $\mu > \alpha_0^{-1}$. 
 This vanishes if either $\chi$ or $\tQ_\alpha$ are constant,
so it is natural to think of it as a bilinear form
in their derivatives $\chi_\alpha$ respectively
$\tQ_{\alpha\alpha}$. Indeed,
 we can write the commutator in the form
\[
[T_\chi,T_{\tQ_\alpha}] P_{\mu} w_\alpha = \mu^{-1} L_{llh}(\chi_\alpha,\tQ_{\alpha \alpha}, 
P_\mu w)
\]
where $L_{llh}$ stands for a translation invariant trilinear form with uniformly integrable kernel and lower frequencies in the first two entries. Hence we have the $L^2$ bound
\[
\| [T_\chi,T_{\tQ_\alpha}] P_{\mu} w_\alpha \|_{\dot H^{\frac14}}
\lesssim \|\chi_\alpha\|_{L^\infty} \|\tQ_{ \alpha}\|_{L^\infty} 
\|P_\mu w\|_{\dot H^\frac14} \lesssim t^{-\frac12} \alpha_0^{-1}  \|P_\mu w\|_{\dot H^\frac14},
\]
which suffices, exactly as above.

This concludes the proof of \eqref{com-tQ}. The 
proof of \eqref{com-tQ1} is identical. 

Finally we consider \eqref{com-tW}, where we carry out the same steps.
For the very low frequencies we have
\[
\| [\chi,T_{\tW_\alpha}] P_{< \alpha_0^{-1}} q_\alpha\|_{ \dot H^\frac14}
\lesssim  \alpha_0^{-1} \| D^\frac12 W\|_{L^\infty} \|q\|_{\dot H^\frac34}
\lesssim  \alpha_0^{-1} t^{-\frac12} \|q\|_{\dot H^\frac34},
\]
for the high $\chi$ frequencies we gain extra $(\mu \alpha_0)^{-N}$ factors,
and the paraproduct commutator is the same as above.
\end{proof}

To summarize, we have reduced the problem to three localized settings, i.e. where
$(w,q_\alpha)$ are localized in one of the following three regions:

\begin{enumerate}
    \item Low $\alpha$,  $|\alpha| \ll \alpha_{lo} = t^\frac34$, where it suffices to prove a low frequency
    elliptic bound.
 \item    High $\alpha$, $ |\alpha| \gg \alpha_{hi} = t^2$ where it suffices to prove a high frequency
    elliptic bound.
\item Intermediate dyadic $\alpha$, $\alpha \approx \alpha_0$, with $t^\frac34 \lesssim \alpha_0 \lesssim t^2$. Here we will do a full elliptic-hyperbolic decomposition.
\end{enumerate}
We consider each of these three cases in turn.

\bigskip

\textbf{Step 2: The low $\alpha$ region, $|\alpha| \ll \alpha_{lo}$.}
Here for the high frequencies $|\xi| \gtrsim \xi_{lo}= t^\frac12$, we simply use the $\dH^\sigma$ bound.

The same $\dH^{\sigma}$ bound allows us to treat perturbatively the input of the high frequencies to the equation \eqref{S-system}, 
\[
\| \tS P_{\geq \xi_{lo}} (w,q) \|_{\dH^\frac14} \lesssim \| (w,q)\|_{\dH^{\sigma}}.
\]
Here we use the bound on the size of $\alpha$ within the support of $(w,q)$. 
The frequency projector $P_{\geq \xi_{lo}}$ does not have a localized kernel, but 
it decays rapidly on the $t^{\frac12}$  scale, so it only generates $O(t^{-N})$ errors.

Thus we are left with an equation of the form \eqref{S-system} for the low frequency
component 
$(w_{lo}, q_{lo}) =P_{\ll \xi_{lo}} (w,q)$
\[
L[(w_{lo},q_{lo})] = (g_{lo},k_{lo}),
\]
where the localization is again retained up to negligible tails. 

We rewrite this system as 
\[
\left\{
\begin{aligned}
   &  t q_{lo,\alpha} = 2 \alpha  w_{lo,\alpha} + t(T_{2 \Re \tW_\alpha} q_{lo,\alpha} - T_{2 \Re \tQ_\alpha} w_{lo,\alpha}) -  g_{lo}
    \\
&     it w_{lo}  = -   2 \alpha q_{lo,\alpha} + tT_{2\Re  \tQ_\alpha} q_{lo,\alpha} +  k_{lo},
\end{aligned}
\right.
\]
with source terms $(g_{lo},k_{lo})$ satisfying the same bounds as $(g,k)$.
Here we directly obtain the elliptic bound
\[
\| (q_{lo,\alpha},w_{lo}) \|_{\dH^\frac14} \lesssim t^{-1} \|(g_{lo},k_{lo})\|_{\dH^\frac14},
\]
simply by treating all the terms we have moved to the right in a perturbative manner,
using both the frequency and the spatial\footnote{up to negligible tails} localization.
This in turn can be rewritten as 
\[
\| (w_{lo},q_{lo}) \|_{\dH^\frac34} 
 \lesssim  t^{-1} \|(g_{lo},k_{lo})\|_{\dH^\frac14}  
\lesssim  t^{-1}\left(\| (w,q)\|_{\dot \H^\frac14 \cap \dot H^\sigma} 
+ \| (g,k)\|_{\dot \H^\frac14} \right),
\]
as needed for the $X^{\sharp}_{lo}$ norm.

\bigskip

\textbf{Step 3: The high $\alpha$ region, $|\alpha| \gg \alpha_{hi}$.}
Here we apply the same strategy as in the previous case, but reversing the role 
of high and low frequencies. Precisely, for frequencies below $\xi_{hi} = t^{-2}$
we only retain the starting $\dH^{\frac14}$ bound for $(w,q)$. This suffices 
in order to place the contribution of the low frequencies into the source term in 
\eqref{S-system},
\[
\| \tS P_{\lesssim \xi_{hi}} (w,q)\|_{\dH^\frac14} \lesssim \|(w,q)\|_{\dH^\frac14}.
\]
We note that here the left hand side cannot be estimated directly due to the large 
$\alpha$ factors. Instead, we need to commute $\tS$ and  $P_{\lesssim \xi_hi}$. 
Thus, we obtain a system of the same form \eqref{S-system}
for the high frequencies $(w_{hi},q_{hi}) = P_{\gg \xi_{hi}}(w,q)$. We rewrite
this system in the form
\[
\left\{
\begin{aligned}
   & 2 \alpha  w_{hi,\alpha} = t q_{hi,\alpha} -t(T_{2 \Re \tW_\alpha} q_{hi,\alpha} - T_{2 \Re \tQ_\alpha} w_{hi,\alpha}) +  g_{hi}
    \\
&    2 \alpha q_{hi,\alpha} =- it w_{hi} + tT_{2\Re  \tQ_\alpha} q_{hi,\alpha} +  k_{hi}.
\end{aligned}
\right.
\]
On the left we use the localization to $|\alpha| > \alpha_{hi} = t^2$ to estimate from below
\[
t^{2} \|(w_{hi,\alpha},q_{hi,\alpha})\|_{\dH^\frac14} \lesssim 
\|(\alpha w_{hi,\alpha},\alpha q_{hi,\alpha})\|_{\dH^\frac14} + \|(w,q)\|_{\dH^\frac14}.
\]
Using this bound allows us to estimate perturbatively all the terms we have moved to 
the right, and thus obtain the elliptic bound
\[
 \|(w_{hi,\alpha},q_{hi,\alpha})\|_{\dH^\frac14} \lesssim t^{-2}\left(\| (w,q)\|_{\dot \H^\frac14 \cap \dot H^\sigma} 
+ \| (g,k)\|_{\dot \H^\frac14} \right),
\]
as needed for the $X^{\sharp}_{hi}$ norm.

\bigskip

\textbf{Step 4: The intermediate $\alpha$ region, $\alpha_{lo} \lesssim \alpha_0 \lesssim \alpha_{hi}$.} 
Here we assume that $(w,q_\alpha)$ are localized in the dyadic region $|\alpha| \approx \alpha_0$. The difficulty we have in this region is that the potentials are nonperturbative, at least in the hyperbolic region. To address this difficulty, we first use perturbative analysis to estimate $(w,q)$ in the elliptic region. 

\bigskip
\emph{Step 4(a): The elliptic analysis.}
This has two components:

\medskip

i) High frequency, $ \xi \gg \xi_0$. Here the leading component 
is $\alpha \partial_\alpha$, therefore we would like to prove the bound
\begin{equation}\label{ell-hi}
\| P_{\gg \xi_0} (w,q) \|_{\dot \H^\frac54} \lesssim \frac{1}{\alpha_0} = \frac{1}{t} |\xi_0|^\frac12.
\end{equation}
 With $j > 0$ we  apply the projector $P_{\geq 2^j \xi_0}$ respectively in the equations \eqref{S-system}, and commute to obtain an equation for  $(w_{j}, q_{j}) = P_{\geq 2^j \xi_0} (w,q)$. We obtain
\begin{equation}\label{S-system=high}
\left\{
\begin{aligned}
   & 2 \alpha  w_{j,\alpha} - t q_{j,\alpha} + t(T_{2 \Re \tW_\alpha} q_{j,\alpha} - T_{2 \Re \tQ_\alpha} w_{j,\alpha}) =  g_j
    \\
&    2 \alpha q_{j,\alpha} + it w_{j} - tT_{2\Re  \tQ_\alpha} q_{j,\alpha} =  k_j,
\end{aligned}
\right.
\end{equation}
with source terms 
\[
\left\{
\begin{aligned}
g_j =  & \ P_{\geq 2^j \xi_0} g + t ( [ P_{\geq 2^j \xi_0},T_{2 \Re \tW_\alpha}] q_{j,\alpha} - [P_{\geq 2^j \xi_0},  T_{2 \Re \tQ_\alpha}] w_{j,\alpha}) 
\\
k_j = & \ P_{\geq 2^j \xi_0} k + t [ P_{\geq 2^j \xi_0} , T_{2 \Re \tQ_\alpha}] w_{j,\alpha}.
\end{aligned}
\right.
\]
We estimate the source terms in $\dot H^\frac14$, using \eqref{boot-nf}, as follows:
\[
\begin{aligned}
\| (g_j,k_j) \|_{\dH^\frac14} \lesssim & \  \| (g,k) \|_{\dH^\frac14} 
+ t \xi_0^{-\frac12} ( \|D^\frac12 \tQ_\alpha\|_{L^\infty} + \|\tW\|_{L^\infty}) 
\|(w_{j-1,\alpha},q_{j-1,\alpha}) \|_{\dH^\frac14}
\\
\lesssim & \  \| (g,k) \|_{\dH^\frac14} + t^{\frac12} \xi_0^\frac12 \|(w_{j-1,\alpha},q_{j-1,\alpha}) \|_{\dH^\frac14},
\end{aligned}
\]
where $(w_{j-1},q_{j-1})$ arise due to the fact that the commutators have a slightly larger 
frequency support.

Then we consider the system \eqref{S-system=high},  where we observe that all but the 
first terms in each of the equation can be treated perturbatively at frequencies $\gg \xi_0$. Hence we obtain the bound
\[
\| (w_{j,\alpha},q_{j,\alpha}) \|_{\dH^\frac14} \lesssim 
\alpha_0^{-1}  \| (g_j,k_j) \|_{\dH^\frac14}
\lesssim t^{-1} \xi_0^\frac12  \| (g,k) \|_{\dH^\frac14} + t^{-\frac12} \|(w_{j-1,\alpha},q_{j-1,\alpha}) \|_{\dH^\frac14}).
\]
Reiterating this bound several times, we are eventually able to use 
our a-priori bound on $(w,q)$ to conclude that for some large fixed $j$ (e.g. $j = 5$)
we obtain 
\[
\| (w_{j,\alpha},q_{j,\alpha}) \|_{\dH^\frac14} \lesssim t^{-1} \xi_0^\frac12
\left(\| (w,q)\|_{\dot \H^\frac14 \cap \dot H^\sigma} 
+ \| (g,k)\|_{\dot \H^\frac14} \right),
\]
thus proving \eqref{ell-hi}.

\medskip

ii) Low frequency, $ \xi \ll \xi_0$. Here the leading component 
is the linear $t$ component, therefore we would like to show that
\begin{equation}\label{ell-lo}
\| P_{\ll \xi_0} (w,q) \|_{\dot \H^\frac34} \lesssim \frac{1}{t}\left(\| (w,q)\|_{\dot \H^\frac14 \cap \dot H^\sigma} 
+ \| (g,k)\|_{\dot \H^\frac14} \right).
\end{equation}
The argument is identical to the one in case (i) above, so the details are 
omitted.

\medskip

(iii) Once we have the bounds in the elliptic region, we can truncate
in frequency to the hyperbolic region $\xi \approx \xi_0$, using the elliptic 
bounds to estimate the truncation errors in the system \eqref{S-system}. Thus we will assume from here on that both $(w,q)$ are localized at frequency $\xi_0$. This localization will 
destroy the spatial localization, but we will neglect this in the  analysis that follows since the generated tails are of size $t^{-N}$ and rapidly decreasing.

\bigskip

\emph{Step 4(b): The hyperbolic analysis.}
Here, as discussed in (iii) above, we assume that $(w,q)$ are frequency localized at dyadic frequency $\xi_0$,  spatially localized in the dyadic region  $|\alpha| \approx \alpha_0$,
and the right hand side in the equation satisfies the same bounds 
as in the theorem.  It suffices to prove the desired pointwise bound 
for $w$, as $q_\alpha$ can then be obtained directly from either of the 
equations in \eqref{S-system}. 

Our next step is to eliminate $q_\alpha$ from the two equations.
To do this we use the paraproduct product and commutator formulas 
from our previous paper \cite[Lemmas 2.4, 2.5]{AIT}. This gives
\begin{equation}\label{wa-eqn}
(4 \alpha^2    - 8\alpha t  T_{\Re \tQ_\alpha} + 4t^2 T_{(\Re \tQ_\alpha)^2}) w_\alpha  + i t^2(1 -  T_{2 \Re \tW_\alpha}) w =  2 \alpha g_1,
\end{equation}
where $g_1$ is given by
\[
2 \alpha g_1 =  2 (\alpha - t T_{ \Re \tQ_\alpha}) g + t(1-2T_{\Re \tW_\alpha}) k 
+ t^2 (L w_\alpha +  M q_{\alpha}),
\]
with
\[
L = 4 (T_{ \Re \tQ_\alpha} T_{\Re \tQ_\alpha} - T_{(\Re \tQ_\alpha)^2}), \qquad 
M = [T_{2 \Re \tW_\alpha}, T_{2 \Re \tQ_\alpha}].
\]
We can show that $g_1$ and $g$ are essentially equivalent:

\begin{lemma}
The function $g_1$ satisfies the same bounds as $g$,
\begin{equation}
\| g_1 \|_{\dH^\frac14} \lesssim    \left(\| (w,q)\|_{\dot \H^\frac14 \cap \dot H^\sigma} 
+ \| (g,k)\|_{\dot \H^\frac14} \right).
\end{equation}
\end{lemma}

\begin{proof} Here it is easiest to use a result from \cite{AIT}, precisely Lemma 2.5 there. 
Applied with $\gamma_1 = \gamma_2 = \frac34$ and using our bootstrap bound \eqref{boot-nf} it yields
\[
\| L P_{\xi_0}\|_{L^2 \to L^2} \lesssim \xi_0^{-\frac32} \|D^\frac34 \tQ_\alpha \|_{BMO}^2  \lesssim 
\frac{1}{t} \xi_0^{-\frac32} A_{1/4}^2.
\]
This is exactly as needed since in our case the argument of $L$ is spatially localized in the 
region $\alpha \approx \alpha_0$, so the output has a similar localization modulo tails which 
decrease rapidly on the $\xi_0^{-1}$ scale.

For $M$ we also use \cite[Lemma 2.5]{AIT} but now with $\gamma_1 = \frac14$ and $\gamma_2 = \frac34$,
where the former corresponds to $\tW_\alpha$ and the latter to $\tQ_\alpha$. We obtain
\[
\| L P_{\xi_0}\|_{L^2 \to L^2} \lesssim \xi_0^{-\frac32} \|D^\frac14 \tW_\alpha \|_{BMO}
\|D^\frac34 \tQ_\alpha \|_{BMO}  \lesssim 
\frac{1}{t} \xi_0^{-1} A_{1/4}^2,
\]
which again suffices.
\end{proof}

Consider now \eqref{wa-eqn}, which we rewrite in a shorter form
\begin{equation}\label{wa-eqn-re}
(1- T_{V_1}) w_\alpha  + i \frac{t^2}{\alpha_2} (1 -  T_{V_2}) w =  (2 \alpha)^{-1} g_1,
\end{equation}
where the potentials $V_1$ and $V_2$ are given by 
\[
V_1 = -  \frac{2t}{\alpha} \Re \tQ_\alpha + \frac{ t^2}{\alpha^2} (\Re \tQ_\alpha)^2,
\qquad V_2 = 2 \Re W_\alpha.
\]

We carry out another reduction, which is to eliminate the paracoefficient of $w_\alpha$.
This is achieved by applying the operator $1 + T_{\frac{V_1}{1-V_1}}$ in \eqref{wa-eqn-re}.
Using again paraproduct calculus exactly as in the above lemma,  \eqref{wa-eqn-re} is rewritten as
\begin{equation}\label{wa-eqn-re+}
 w_\alpha  + i \frac{t^2}{\alpha^2} w  -  i \xi_0 T_V w =  2 \alpha^{-1} g_2,
\end{equation}
where $g_2$ satisfies the same bound as $g_1$ and $V$ is given by 
\[
V = \frac{t^2}{\alpha^2} \xi_0^{-1} \frac{V_2-V_1}{1-V_1}.
\]
Here we pulled out the $\xi_0$ factor because in the region of interest $|\alpha| \approx \alpha_0$
we have $\frac{t^2}{\alpha^2} \xi_0^{-1} \approx 1$. The contributions of $V$ outside a size $\alpha_0$
neighbourhood of this region have size $O(t^{-N})$ and can be harmlessly discarded.

In view of \eqref{boot-nf}, in the above region the potential $V$ is \emph{real valued} and 
has the following properties:
\begin{enumerate}[label=\roman*)]
\item  small size, 
\[
\| V\|_{L^\infty} \ll   \xi_0^\frac12 t^{-\frac12} \lesssim  t^{-\frac14},
\]

\item smaller gradient
\[
\| P_{< \xi_0} V_{\alpha}\|_{L^\infty} \ll t^{-\frac12} \xi_0^\frac34. 
\]

\end{enumerate}

For solutions to the equation \eqref{wa-eqn-re+}  we seek to prove a uniform bound of the form
\begin{equation}\label{cxi0}
\| w\|_{L^\infty} \lesssim  c(\xi_0) t^{-\frac12} \xi_0^{-\frac54} (\| w\|_{\dot H^{\frac{11}4}}+ \|w\|_{\dot H^\frac14}
+ \| g_2\|_{\dot H^\frac14}),
\end{equation}
where the $\xi_0^{-\frac54}$ factor corresponds to the $A_{\frac14}$ norm while
$c(\xi_0)$ denotes any additional gain, as required by the $X^\sharp_{\alpha_0}$ norm.

Here we distinguish two cases depending on the size of $\xi_0$. 

\bigskip

\emph{ Case 1: $\xi_0 > 1$.} Then we will establish a  bound based 
on the $\dot \H^{\frac{11}4}$ norm for $w$, and will instead show that
\begin{equation}\label{large-xi}
\| w\|_{L^\infty} \lesssim t^{-\frac12} \xi_0^{-\frac54} \| w\|_{\dot H^{\frac{11}4}}^\frac12 \| g_2\|_{\dot H^\frac14}^\frac12.
\end{equation}
Since $w$ is localized at frequency $\xi_0$, replacing the $\dot \H^{\frac{11}4}$ norm 
with $\dot H^\sigma$ with $\sigma > \frac{11}{4}$ yields a gain of $c(\xi_0)
= \xi_0^{\frac12(\frac{11}{4}-\sigma)}$ in \eqref{cxi0}, which exactly corresponds
to our choice of  $b$ in the $X^\sharp$ norm. 

Taking into account the localization at frequency $\xi_0$, we can replace 
the Sobolev norms by $L^2$ norms in \eqref{large-xi}, and rewrite it as 
\begin{equation*}
\| w\|_{L^\infty} \lesssim t^{-\frac12} \xi_0^{-\frac54} (\xi_0^\frac{11}4 \| w\|_{L^2})^\frac12 (
t \xi_0^{-\frac14} \| \alpha^{-1} g_2\|_{L^2})^\frac12, 
\end{equation*}
or equivalently, as a bound for solutions to \eqref{wa-eqn-re+}, as
\begin{equation}\label{w-need}
\| w\|_{L^\infty} \lesssim  \|w\|_{L^2}^\frac12
 \| \alpha^{-1} g_2\|_{L^2}^\frac12.
\end{equation}
We postpone the proof of this bound in order to discuss the second case.

\bigskip

\emph{ Case 2: $\xi_0 < 1$.} Then we will establish a  bound based 
on the $\dot \H^{\frac{1}4}$ norm for $w$, and will show that
\begin{equation}\label{small-xi}
\| w\|_{L^\infty} \lesssim t^{-\frac12} \| w\|_{\dot H^{\frac{1}4}}^\frac12 \| g_2\|_{\dot H^\frac14}^\frac12. 
\end{equation}
This corresponds to choosing $c(\xi_0) = \xi_0^\frac54$ in \eqref{cxi0}, which in turn 
corresponds to our choice of  $a$ in the $X^\sharp$ norm. 
Taking into account the localization at frequency $\xi_0$, and the spatial localization at $|\alpha|\approx \alpha_0$, this bound also reduces
to \eqref{w-need}.

It remains to prove the bound \eqref{w-need} for solutions to \eqref{wa-eqn-re+}.
Here the paradifferential coefficients are nonperturbative.
Part of the difficulty is also the fact that these  coefficients
are in paradifferential form. If that were not the case, then we could simply take advantage of the \emph{critical} fact that they are real, calculate 
\[
\partial_\alpha |w|^2 = 2 \Re w \cdot  \frac{1}{2 \alpha} g_2,
\]
and integrate to get 
\[
\| w \|_{L^\infty}^2 \lesssim 
\| w \|_{L^2} \| \alpha^{-1} g_2 \|_{L^2},
\]
as needed.

To prove \eqref{w-need}, we discard the spatial localization and restate the result in a simpler form:

\begin{lemma}
Suppose that the function $u \in L^2$ is localized at frequency $\xi_0$ and solves the equation
\begin{equation}
u_\alpha + i T_V^w u = f,    
\end{equation}
where the potential $V$ is real and satisfies 
\begin{equation}
 V \approx \xi_0
\end{equation}
and
\begin{equation}
\| \nabla V\|_{L^\infty} \lesssim M  \ll \xi_0^2.  
\end{equation}
Then we have the pointwise bound
\begin{equation}
\|u\|_{L^\infty}^2 \lesssim  \|u\|_{L^2}\|f\|_{L^2}. 
\end{equation}
\end{lemma}

\begin{proof}

For a suitable smooth, bounded and nondecreasing function $\chi$ we multiply the 
equation by $\chi u$ and integrate by parts.
We get 
\[
\frac12 \int \chi' |u|^2 \, d\alpha = 
t^{-1} \Re \int \chi u \bar f \,  d \alpha + \Re \int i [T_V^w, \chi] u \cdot  \bar u \, d\alpha.    
\]
To insure that the term on the left nonnegative we choose $\chi$
increasing from $0$ to $1$ in an interval $I$ of a fixed length $r$,
and constant elsewhere. Here $r$ is chosen 
above the uncertainty principle threshold $r > \xi_0^{-1}$. Then we have 
\[
|\chi'| \lesssim r^{-1},
\]
and $\chi'$ is further supported in $I$.
Then the commutator has $L^2$ size 
\[
\| [T_V^w, \chi] P_{\xi_0} \|_{L^2 \to L^2} \lesssim \xi_0^{-2} \| P_{<\xi_0} V_\alpha \|_{L^\infty}
\| \chi'\|_{L^\infty} \lesssim 
\xi_0^{-2} M r^{-1}.
\]
Further, we observe that the commutator is essentially localized in $2I$, modulo rapidly decreasing tails on the $\xi_0^{-1}$ scale. We can account for the rapidly decreasing tails using translates of the interval $I$, which has size at least $\xi_0^{-1}$. Then we arrive at the estimate
\[
\| [T_V^w, \chi] P_{\xi_0} u \|_{L^2} \lesssim 
\xi_0^{-2} M r^{-1} \sup_{c \in \R} \| u\|_{L^2(I+c)}
\]
Hence from the previous integral identity we obtain 
\[
 r^{-1} \|u\|_{L^2(I)}^2 \lesssim \| u\|_{L^2} \|f\|_{L^2} + 
\xi_0^{-2} M r^{-1} \sup_{c \in \R} \| u\|_{L^2(I+c)}^2,
\]
But we can also apply this bound with $I$ replaced by translates of $I$. This yields
\[
 r^{-1} \sup_{c \in \R}  \|u\|_{L^2(I+c)}^2 \lesssim \| u\|_{L^2} \|f\|_{L^2} + 
\xi_0^{-2} M r^{-1} \sup_{c \in \R} \| u\|_{L^2(I+c)}^2.
\]
Since $M \xi_0^{-1} \ll 1$, we can absorb the second term on the right on the left, 
to obtain
\[
r^{-1} \sup_{c \in \R} \|u\|_{L^2(I+c)}^2 \lesssim \| u\|_{L^2} \|f\|_{L^2}. 
\]
Using the frequency localization of $u$ as well as the bound $r > \xi_0^{-1}$, 
this yields a similar bound for the derivative of $u$, namely
\[
r^{-1} \sup_{c \in \R} \|u_\alpha\|_{L^2(I+c)}^2 \lesssim \xi_0 \| u\|_{L^2} \|f\|_{L^2}. 
\]
One may obtain $L^\infty$ bounds for $u$ 
in any interval $I+c$ from $L^2$
bounds for $u$ and $u_\alpha$ in the same interval,
\[
\| u \|_{L^\infty(I+c)}^2 \lesssim  r^{-1} \|u\|_{L^2(I+c)}^2  +
r \| u_\alpha \|_{L^2(I+c)}^2.
\]
Then we arrive at
\[
\begin{aligned}
\sup_{c \in \R} \| u \|_{L^\infty(I+c)}^2 \lesssim  (1+r^2 \xi_0^2)  \| u\|_{L^2} \|f\|_{L^2}.
\end{aligned}
\]
Choosing $r$ as small as possible, 
\[
r \approx \xi_0^{-1},
\]
we finally obtain
\[
\| u\|_{L^\infty}^2 \lesssim  \| u\|_{L^2} \|f\|_{L^2},
\]
as desired, concluding the proof of the lemma.
\end{proof}

Once we have the above Lemma, we can apply it to prove \eqref{w-need}, which in turn concludes the proof 
of the proposition.
\end{proof}

To complete our discussion of the $X^\sharp$ bounds we need to compare them with the $X$
bounds. This is best carried out in terms of the elliptic-hyperbolic decomposition
\eqref{wq=ell+hyp}.

\bigskip

We begin with the hyperbolic part, for which 
we have that its $X$ size is controlled by the full $X^\sharp$
norm, with an additional gain away from unit velocity. 
To quantify this gain we use the  region $\Omega^\delta$
defined in \eqref{Omega-delta}, and denote by $\chi_{\Omega^\delta}$ a bump function which selects the region $\Omega^\delta$ and is smooth at both ends on the appropriate dyadic $\alpha$ scale. Then we have:

\begin{proposition} 
We have the bounds
\begin{equation}\label{XbelowXsharp}
\| (w,q)_{hyp,\alpha} \|_{X} \lesssim \|(w,q)\|_{X^\sharp},    
\end{equation}
respectively 
\begin{equation}\label{v-not-1}
\| (1-\bchi_{\Omega^\delta}) (w,q)_{hyp,\alpha} \|_{X} \lesssim t^{-b\delta} \|(w,q)\|_{X^\sharp}.    
\end{equation}
\end{proposition}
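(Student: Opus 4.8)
The plan is to reduce both bounds to a single-scale estimate, indexed by the dyadic spatial scale $\alpha_0$ entering the definition \eqref{hyp-decomp}, and then to sum these scale contributions. The structural point I would exploit is that each summand $P_{\xi_0}\bchi_{\alpha_0}(w,q)$ is, modulo the rapidly decaying spatial and frequency tails already discussed after \eqref{(wq)-dec+}, a single Littlewood--Paley block at frequency $\xi_0 = t^2/\alpha_0^2$, and that the map $\alpha_0 \mapsto \xi_0$ carries dyadic spatial scales to a dyadically separated set of frequencies. Consequently the pieces of $(w_{hyp,\alpha},q_{hyp,\alpha})$ are almost orthogonal: the two Besov components $\dot B^{1/4}_{\infty,2}$ and $\dot B^{3/4}_{\infty,2}$ of the $X$ norm reassemble as $\ell^2$ sums over $\alpha_0$, while the $\||D|^{-1/2}\cdot\|_{L^\infty}$ and $\|\cdot\|_{L^\infty}$ components I would bound by the corresponding $\ell^1$ sums, which will turn out to be geometrically convergent.

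First I would record the single-scale comparison. For a pair localized at a single dyadic frequency $\xi_0$, Bernstein gives
\[
\||D|^{-1/2} u_\alpha\|_{L^\infty} + \|u_\alpha\|_{\dot B^{1/4}_{\infty,2}} \lesssim (1 + \xi_0^{-3/4})\|u_\alpha\|_{\dot B^{1/4}_{\infty,2}},
\]
and the analogous bound for the $R$-slot with $\dot B^{3/4}_{\infty,2}$, so the full $X$-size of $\partial_\alpha P_{\xi_0}\bchi_{\alpha_0}(w,q)$ is controlled by $(1+\xi_0^{-3/4})\|P_{\xi_0}\bchi_{\alpha_0}(w,q)\|_{X^0}$. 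Next I would invoke the definition of $X^\sharp$, which directly yields, for $\alpha_{lo}\lesssim\alpha_0\lesssim\alpha_{hi}$, the bound $\|P_{\xi_0}\bchi_{\alpha_0}(w,q)\|_{X^0}\lesssim \xi_0^{-b}\|(w,q)\|_{X^\sharp}$ when $\xi_0>1$ and $\lesssim\xi_0^{a}\|(w,q)\|_{X^\sharp}$ when $\xi_0<1$, with $a=5/4$ and $b=\tfrac14(\sigma-\tfrac{11}4)>0$. Combining, the $\alpha_0$-scale contributes $\xi_0^{-b}\|(w,q)\|_{X^\sharp}$ for $\xi_0>1$ and $\xi_0^{a-3/4}\|(w,q)\|_{X^\sharp}=\xi_0^{1/2}\|(w,q)\|_{X^\sharp}$ for $\xi_0<1$. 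The role of the choices $a>3/4$ and $b>0$ is exactly that these are summable in $\ell^1$ — a fortiori in $\ell^2$ — over dyadic $\xi_0$ on both sides of $\xi_0\approx 1$, with room to spare.

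For \eqref{XbelowXsharp} I would then sum: the Besov components give $\big(\sum_{\xi_0>1}\xi_0^{-2b}+\sum_{\xi_0<1}\xi_0\big)^{1/2}\|(w,q)\|_{X^\sharp}$, and the $L^\infty$-type components give $\big(\sum_{\xi_0>1}\xi_0^{-3/4-b}+\sum_{\xi_0<1}\xi_0^{1/2}\big)\|(w,q)\|_{X^\sharp}$, both $\lesssim\|(w,q)\|_{X^\sharp}$. For \eqref{v-not-1} I would use that on the support of $\bchi_{\alpha_0}$ one has $\xi_0=t^2/\alpha_0^2=v^{-2}$, so removing $\bchi_{\Omega^\delta}$ with $\Omega^\delta=\{t^{-\delta}\lesssim|v|\lesssim t^\delta\}$ restricts the sum over $\alpha_0$ to the range $\xi_0\gtrsim t^{2\delta}$ together with the range $\xi_0\lesssim t^{-2\delta}$. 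Rerunning the same geometric sums truncated to these ranges produces the factors $t^{-2b\delta}$ from the large-$\xi_0$ tail and $t^{-\delta}$ from the small-$\xi_0$ tail; since $b$ is small (in particular $b\le 1$) both are $\lesssim t^{-b\delta}$, which gives \eqref{v-not-1}.

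The only step that is not pure bookkeeping, and hence the main obstacle, is the reassembly: justifying that the $\alpha_0$-pieces of $(w_{hyp,\alpha},q_{hyp,\alpha})$ recombine into the $X$ norm without loss — in particular in $\ell^2$ for the Besov parts, rather than via a lossy $\ell^1$ triangle inequality. This rests on the dyadic frequency separation of the blocks $P_{\xi_0}\bchi_{\alpha_0}(w,q)$ (so that at each frequency $2^k$ only $O(1)$ scales $\alpha_0$ contribute), together with the rapid decay of the spatial tails from $P_{\xi_0}$ and the frequency tails from $\bchi_{\alpha_0}$. I would treat these tails exactly as elsewhere in the paper — summing over translates of the relevant dyadic intervals so that the $O(t^{-N})$ errors are harmless — after which the remaining content is precisely the weighted geometric-series bookkeeping above.
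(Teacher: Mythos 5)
Your proposal is correct and follows essentially the same route as the paper: decompose $(w_{hyp},q_{hyp})$ into the single-scale blocks $P_{\xi_0}\bchi_{\alpha_0}(w,q)$, compare the $X$ norm of each block to the $X^0$ norm via the single-frequency scaling factor ($1$ for $\xi_0 \gtrsim 1$, $\xi_0^{-3/4}$ for $\xi_0 \lesssim 1$), and read off the gains $\xi_0^{-b}$ resp.\ $\xi_0^{a-3/4}$ from the definition of $X^\sharp$. Your treatment of the dyadic reassembly ($\ell^2$ for the Besov components, $\ell^1$ for the $L^\infty$ components, exploiting that $\alpha_0 \mapsto \xi_0$ sends dyadic spatial scales to dyadically separated frequencies) is more explicit than the paper, which simply records the per-scale bound; the minor slip in the Besov summand for $\xi_0<1$ (you wrote $\sum\xi_0$ where the per-scale Besov contribution squared is $\xi_0^{2a}=\xi_0^{5/2}$) does not affect convergence or the conclusion.
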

\begin{proof}
 For the hyperbolic part we use the decomposition in \eqref{hyp-decomp}, which we recall here:
\[
  (w_{hyp},q_{hyp}) =  \sum_{ \alpha_{lo} \lesssim \alpha_0 \lesssim \alpha_{hi}} P_{\xi_0} \bchi_{\alpha_0} (w,q).
\]
We now distinguish between small and large velocities:
\medskip

\textit{ a) Small velocities: $\alpha_{lo} < \alpha_0 \lesssim  t$.} This corresponds to dyadic velocities 
$v_0 = \alpha_0/t$ in the range
\[
 \frac{\alpha_{lo}}{t} = v_{lo} < v_0 \lesssim 1,
\]
and to frequencies $\xi_0 = v_0^{-2} \gtrsim 1$.

In this case, we use the $X^\sharp$ norm component given by the last term in \eqref{Xsharp-small}. Since $\xi_0 \gtrsim 1$, at frequency $\xi_0$ the $X$ norm agrees with the $X^0$ norm, so we obtain
\begin{equation}
\|    P_{\xi_0} \bchi_{\alpha_0} (w,q) \|_{X}
\lesssim \xi_0^{-b} \|(w,q)\|_{X^\sharp} = v_0^{2b} \|(w,q)\|_{X^\sharp}.
\end{equation}
This suffices directly for \eqref{XbelowXsharp}, while 
in \eqref{v-not-1} we capture the extra gain due to 
the truncation to the range $v_0 < t^{-\delta}$.

\medskip

\textit{ b) Large velocities: $t \lesssim  \alpha_0 <  \alpha_{hi}$.} This corresponds to dyadic velocities 
$v_0$ in the range
\[
1 \lesssim v_0 < v_{hi}= \frac{\alpha_{hi}}{t},
\]
and to frequencies $\xi_0 = v_0^{-2} \lesssim 1$.

Now we use instead the $X^\sharp$ norm component given by the last term in \eqref{Xsharp-small}. Since $\xi_0 \lesssim 1$, at frequency $\xi_0$ the $X$ norm 
is $\xi_0^{-\frac34}$ times the $X^0$ norm, so we obtain
\begin{equation}
\|    P_{\xi_0} \bchi_{\alpha_0} (w,q) \|_{X}
\lesssim \xi_0^{a - \frac34} \|(w,q)\|_{X^\sharp} = v_0^{-2(a-\frac34)} \|(w,q)\|_{X^\sharp}.
\end{equation}
This suffices directly for \eqref{XbelowXsharp}, while 
in \eqref{v-not-1} we capture the extra gain due to 
the truncation to the range $v_0 > t^{\delta}$.
\end{proof}

\bigskip
Next we consider the elliptic part of $(w,q)$, where we have a simpler objective, namely 
to show that it satisfies better bounds both in the energy sense and in the pointwise 
sense.

\begin{proposition}\label{p:Xell}
Let $(w,q)$ be a pair of functions satisfying
\begin{equation}\label{X-ell}
t^\frac12 \|(w,q)\|_{X^\sharp_{ell}} + \|(w,q)\|_{\dH^\frac14 \cap \dH^\sigma}   \leq 1.
\end{equation}
Then we have the energy bound
\begin{equation}\label{ell(wq)-energy}
\| (w_\alpha,q_\alpha) \|_{\dH^{-\frac14}} \lesssim t^{-1},
\end{equation}
as well as the uniform bound
\begin{equation}\label{ell(wq)-point}
\| (w,q)_\alpha \|_{X} \lesssim t^{-b}.     
\end{equation}
\end{proposition}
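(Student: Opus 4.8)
The plan is to run both bounds through the spatial-then-frequency decomposition of $(w,q)$ introduced in \eqref{(wq)-dec}--\eqref{(wq)-dec+}, and to exploit the fact that on each dyadic piece the hypothesis $t^\frac12\|(w,q)\|_{X^\sharp_{ell}}+\|(w,q)\|_{\dH^\frac14\cap\dH^\sigma}\le 1$ supplies, in every frequency regime, an $L^2$-type bound on $(w,q)_\alpha$ carrying a genuine negative power of $t$. The energy bound \eqref{ell(wq)-energy} will then be essentially free: since $(w,q)$ is holomorphic, $\|(w_\alpha,q_\alpha)\|_{\dH^{-\frac14}}=\|(w,q)\|_{\dH^{\frac34}}$ is the weakest Sobolev quantity in sight and is dominated by each of the three constituent norms of $X^\sharp_{ell}$ in \eqref{def:Xsharp-ell}, so it suffices to record a $\lesssim t^{-1}$ bound for each spatial piece and then sum. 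The pointwise bound \eqref{ell(wq)-point} needs a bit more: on each frequency block one converts the available $L^2$ bound into the $L^\infty$ and $\dot B^s_{\infty,2}$ bounds demanded by the $X$ norm using Bernstein, and checks that the powers accumulated along the way produce a negative power of $t$ (it will in fact be much stronger than $t^{-b}$ on most blocks; the point of the weak exponent is only that it is clean and suffices in combination with \eqref{get-nf-X-ell} and the region $\Omega^\delta$).

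In detail I would treat the three pieces of \eqref{(wq)-dec} in turn. On $\bchi_{\ll\alpha_{lo}}(w,q)$ the norm $X^\sharp_{lo}$ is by \eqref{X-lo} exactly $t^\frac12\|\cdot\|_{\dH^{\frac34}}$, which gives \eqref{ell(wq)-energy} directly; for \eqref{ell(wq)-point}, at frequencies below $\xi_{lo}=t^\frac12$ one embeds $\dH^{\frac34}\hookrightarrow X$ for the derivative, losing only a bounded positive power of $\xi_{lo}$ and hence gaining a power of $t$, while above $\xi_{lo}$ the $\dH^\sigma$ bound takes over and at those frequencies it too beats the $X$ norm by a positive power of $\xi_{lo}$. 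The piece $\bchi_{\gg\alpha_{hi}}(w,q)$ is the mirror image, with the roles of high and low frequency reversed, $\xi_{hi}=t^{-2}$ as the dividing frequency, and the plain $\dH^\frac14$ bound supplying the low frequencies. For the intermediate scales $\bchi_{\alpha_0}(w,q)$ I would split the frequency at $\xi_0$: on $|\xi|\lesssim\xi_0$ I use the $t^\frac12\|(w,q)_\alpha\|_{\dH^{-\frac14}}$ term of $X^\sharp_{\alpha_0,ell}$, on $\xi_0\lesssim|\xi|$ the $t^\frac12\xi_0^{-\frac12}\|(w,q)_\alpha\|_{\dH^{\frac14}}$ term --- note that both terms in \eqref{wq-ell} are stated without frequency cutoffs, so each is available in every regime --- and beyond the frequency at which the latter is overtaken by the $\dH^\sigma$ bound I switch to $\dH^\sigma$. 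In each regime Bernstein turns the $L^2$ bound into the pointwise ones, and the $\xi_0$- and $t$-powers combine to give at least the claimed decay.

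The step that actually requires care is assembling the pieces. For the $L^\infty$-type components of the $X$ norm the sum over the $O(\log t)$ spatial scales $\alpha_0$ is effectively a supremum, since the $\bchi_{\alpha_0}$ have almost disjoint supports; for the $\dH^{\frac34}$ energy bound one instead needs near-orthogonality, using that $\dH^{\frac34}$ is of positive order, hence essentially local on these scales, together with the observation that at a fixed frequency $\xi$ the per-scale bound decays once $\alpha_0$ leaves the resonant window $\alpha_0\approx t|\xi|^{-\frac12}$, which is what makes the $\ell^2$ sum converge without a logarithmic loss. Throughout, the errors produced by the frequency truncations are rapidly decaying tails away from the relevant spatial supports, as already noted after \eqref{(wq)-dec+}, and play no role. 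The one genuinely structural ingredient is the high-frequency crossover on the intermediate scales: controlling those frequencies with a negative power of $t$ is possible precisely because $\sigma>\frac{11}4$, and the worst contribution occurs exactly where the $X^\sharp_{\alpha_0,ell}$-type bound --- which is only $\dH^\frac14$-regular on the derivative --- meets the $\dH^\sigma$ bound. I expect this bookkeeping, matching powers across the elliptic/high-frequency interface uniformly in $\alpha_0$, rather than any individual estimate, to be the main obstacle.
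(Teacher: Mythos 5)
Your approach --- spatial-then-frequency decomposition along \eqref{(wq)-dec}--\eqref{(wq)-dec+} followed by Bernstein's inequality on each piece, with a crossover to the $\dH^\sigma$ bound at high frequencies --- is the same as the paper's, and your plan for the pointwise bound \eqref{ell(wq)-point} matches the paper's proof essentially step for step. The one place where you go beyond the paper is the $L^2$ bound \eqref{ell(wq)-energy}, which the paper simply declares trivial and does not argue. There the mechanism you invoke to avoid a logarithmic loss in the sum over the $O(\log t)$ intermediate scales --- decay of the per-scale $\dH^{-\frac14}$ bound once $\alpha_0$ leaves the resonant window $\alpha_0\approx t|\xi|^{-1/2}$ --- actually holds only on one side. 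For $\alpha_0$ above the resonant scale, so $\xi_0\lesssim\xi$, the first term of \eqref{wq-ell} indeed gives
\[
\|P_\xi\bchi_{\alpha_0}(w,q)_\alpha\|_{\dH^{-\frac14}}\lesssim \xi^{-\frac12}\|P_\xi\bchi_{\alpha_0}(w,q)_\alpha\|_{\dH^{\frac14}}\lesssim t^{-1}(\xi_0/\xi)^{\frac12},
\]
which decays as $\alpha_0$ grows. But for $\alpha_0$ below the resonant scale, so $\xi\lesssim\xi_0$, the only bound available at the fixed frequency $\xi$ is the second term of \eqref{wq-ell}, which is a flat $t^{-1}$ with no $\xi_0$-gain, while the $\dH^{\frac14}$ term is now worse than $t^{-1}$. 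A naive triangle-inequality sum over the scales below the resonant one therefore does pick up a factor $\log t$. This is harmless in context --- wherever \eqref{ell(wq)-energy} is used, e.g.\ in \eqref{tWQ-ell3}, it sits beside a $t^{C\epsilon^2}$ loss, which is presumably why the paper calls it trivial --- but the near-orthogonality you describe would need a genuinely different ingredient (for instance off-diagonal decay of the $|D|^{-\frac12}$ kernel against the $\alpha_0$-separated supports) to actually remove the logarithm; the one-sided decay you cite does not do it.
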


\begin{proof}
For $(w,q)$ we consider the decomposition \eqref{(wq)-dec}, and prove the desired 
bound separately for each frequency. The $L^2$ estimate \eqref{ell(wq)-energy} is trivial; we have only added it in the proposition for easy reference. For the uniform bound \eqref{ell(wq)-point}, on the other hand, we need to appropriately apply Bernstein's 
inequality.
\medskip

\emph{a) The low $\alpha$ component,} 
\[
(w_{lo},q_{lo}) = \bchi_{\ll \alpha_{lo}} (w,q).
\]

Here by by \eqref{X-ell} and by the definition of the $X^\sharp$ norm, see \eqref{X-lo}, we control
\[ 
t \|(w_{lo},q_{lo})\|_{\dH^\frac34} + \|(w_{lo},q_{lo})\|_{\dH^\frac14 \cap \dH^\sigma}   \lesssim 1.
\]
Then we can bound uniformly the dyadic pieces
of $(w_{lo},q_{lo})$ using Bernstein's inequality as follows, neglecting the $\dH^\frac14$ norm:
\[
\| P_\mu (w_{lo},|D|^\frac12 q_{lo})\|_{L^\infty} \lesssim 
\min\{ \mu^{-\sigma+\frac12}, t^{-1} \mu^{-\frac14}\}, 
\]
where the first component is smaller if $\mu > \xi_{lo}= t^\frac12$.

Hence, for the high frequency part of the $X$ norm 
(i.e. at frequencies $\geq 1$) we have 
\[
\|P_{\geq 1}(|D|^\frac54 w_{lo},|D|^\frac74 q_{lo})\|_{B^{0}_{\infty,2}}^2 \lesssim
\sum_{\mu \geq 1} \min\{ \mu^{-\sigma+\frac74}, t^{-1} \mu\}^2 = t^{-1-2\delta_{lo}}, 
\qquad \delta_{lo} = \frac{\sigma - \frac{11}4}{2(\sigma - \frac34)}
\]
as needed. The bound for the low frequency part of the $X$ norm is similar
but better.

\medskip
\emph{ b) The intermediate $\alpha$ component.} Here we fix a dyadic region 
$|\alpha| \approx \alpha_0 \in [\alpha_{lo},\alpha_{hi}]$ and consider the component 
\[
(w_{mid},q_{mid}) = \bchi_{\alpha_0} (w,q),
\]
which is in turn decomposed into low frequencies ($< \xi_0$) and high frequencies ($> \xi_0$):
\medskip

\emph{b)(i). Low frequencies, $\xi < \xi_0$.} 
 Here by the second term in  \eqref{wq-ell} along with the 
$\dH^\sigma$ bound in \eqref{X-ell}, we have
\[
t \|P_{<\xi_0} (w_{mid}, q_{mid})\|_{\dH^{\frac34}}
+ \|P_{<\xi_0} (w_{mid}, q_{mid})\|_{\dH^{\sigma}} \lesssim 1
\]
We split into dyadic frequency regions $\mu < \xi_0$,  and use Bernstein's inequality to estimate
\[
\|P_\mu (w_{mid},|D|^\frac12 q_{mid})\|_{L^\infty} \lesssim 
 \min \{t^{-1} \mu^{-\frac14}, \mu^{-\sigma+\frac12}\} 
 \| (w_{mid},q_{mid})\|_{X^\sharp_{hi}}.
\]
Neglecting the $\sigma$ component, after dyadic $\mu$ summation this implies 
\[
\|P_{<\xi_0} (|D|^\frac12 w_{mid},|D| q_{mid})\|_{L^\infty} \lesssim 
 t^{-1} \xi_0^{\frac14}= t^{-1} |v_0|^{-\frac12}.
\]
Since $\xi_0 < \xi_{lo} = t^\frac12$, this suffices for the low frequency part of the $X$ norm, and completes the argument if $\xi_0 < 1$, i.e. if $v_0 > 1$.

However, if $\xi_0 > 1$ then we also need to consider the high frequency part of the $X$ norm, where  we can no longer neglect the $\sigma$ term.
Hence we write instead
\[
\|P_{<\xi_0} (|D|^\frac54 w_{mid},|D|^\frac74 q_{mid})\|_{B^0_{\infty,2}}^2 \lesssim 
\sum_{\mu < \xi_0} \min\{    t^{-1} \mu, \mu^{-\sigma+\frac74} \}^2. 
\]
We neglect the $\mu$ range and bound this by the maximum of the right hand side summand,
\[
\|P_{<\xi_0} (|D|^\frac54 w_{mid},|D|^\frac74 q_{mid})\|_{B^0_{\infty,2}}^2 \lesssim t^{-1-2\delta_{lo}} 
\]
with $\delta_{lo}$ exactly as in case (a).

\medskip

\emph{b)(ii). High frequencies, $\xi > \xi_0$.}
Here we use instead the first term in \eqref{wq-ell}; then we have 
\[
t \xi_0^{-\frac12} \|P_{>\xi_0} (w_{mid}, q_{mid})\|_{\dH^{\frac54}}
+ \|P_{<\xi_0} (w_{mid}, q_{mid})\|_{\dH^{\sigma}} \lesssim 1.
\]
Then we use Bernstein's inequality to estimate for $\mu > \xi_0$
\[
\|P_\mu (w_{mid},|D|^\frac12 q_{mid})\|_{L^\infty} \lesssim 
 \min \{t^{-1} \xi_0^\frac12 \mu^{-\frac34}, \mu^{-\sigma+\frac12}\}.
\]
Neglecting the $\sigma$ component, after dyadic $\mu$ summation this implies 
\[
\|P_{>\xi_0} (|D|^\frac12 w_{mid},|D| q_{mid})\|_{L^\infty} \lesssim 
 t^{-1} \xi_0^{\frac14}= t^{-1} |v|^{-\frac12},
\]
which, as before in case (b)(i), suffices for the low frequency part of the $X$ norm. For 
the high frequency part of the $X$ norm we again can no longer 
neglect the $\sigma$ term, so we write instead
\[
\|P_{>\xi_0} (|D|^\frac54 w_{mid},|D|^\frac74 q_{mid})\|_{B^{0}_{2,\infty}}^2 \lesssim 
\sum_{\mu > \xi_0} \min\{    t^{-1} \xi_0^\frac12 \mu^\frac12, \mu^{-\sigma+\frac74} \}^2. 
\]
Replacing $\xi_0$ by $\mu$ we arrive exactly at the same computation as 
in case (b)(i), in which the $\mu$ range was neglected.

\medskip
\emph{ c) The high $\alpha$ component,} 
\[
(w_{hi},q_{hi}) = \bchi_{\gg \alpha_{hi}} (w,q).
\]
Here we combine the expression in \eqref{X-hi} with the second term in \eqref{X-ell} to obtain
\[
t^2 \| (w_{hi},q_{hi})\|_{\dH^\frac54}+ \|(w,q)\|_{\dH^\frac14 \cap \dH^\sigma}   \lesssim  1.
\]
Then, using Bernstein's inequality we have:
\[
\| P_\mu (w_{hi},|D|^\frac12 q_{hi})\|_{L^\infty} \lesssim 
\min\{\mu^{\frac14}, \mu^{-\sigma+\frac12}, t^{-2} \mu^{-\frac34}\}  \| (w_{hi},q_{hi})\|_{X^\sharp_{hi}}.
\]
For the low frequency part of the $X$ norm we neglect the $\sigma$ component to get
\[
\|(|D|^\frac12 w_{hi},|D| q_{hi})\|_{L^\infty} \lesssim
\sum_{\mu} \min\{ \mu^\frac34, t^{-2} \mu^{-\frac14}\} = t^{-\frac32},
\]
better than needed.

The estimate for the high frequency part of the $X$ norm  is similar,
\[
\|(|D|^\frac54 w_{hi},|D|^\frac74 q_{hi})\|_{B^0_{\infty,2}}^2 \lesssim
\sum_{\mu} \min\{ \mu^\frac32, \mu^{-\sigma+\frac74}, t^{-2} \mu^{\frac12}\}^2 \lesssim t^{-\frac83},
\]
as needed. Here we have instead neglected the first term, and replaced $\sigma$ by $\frac{11}{4}$.
\end{proof}

\subsection{Back to the normal form variables}
We now return to $(\tW,\tQ)$, and we  apply the results of the  previous subsection to them, both directly and in terms of the corresponding elliptic-hyperbolic decomposition
\eqref{wq=ell+hyp}.

\begin{corollary}\label{c:KS-para}
Assume that $(W,Q)$ satisfy the bootstrap bound \eqref{pointwise-bootstrap}.
Then the following pointwise bound holds:
\begin{equation}\label{KS-para-apply}
\| (\tW_\alpha, \tQ_\alpha) \|_{X^\sharp}
\lesssim t^{-\frac12} \left(\| (\tW,\tQ)\|_{\dot \H^\frac14 \cap \dot H^\sigma} 
+ \| \tS(\tW,\tQ)\|_{\dot \H^\frac14} \right).
\end{equation}
In particular, if \eqref{KS-control} holds then
\begin{equation}
\| (\tW, \tQ) \|_{X^\sharp}
\lesssim \epsilon t^{-\frac12+C\epsilon}.
\end{equation}

Furthermore, its hyperbolic an elliptic components
satisfy bounds as follows:
\begin{equation}\label{tWQ-hyp-out}
\| (1-\chi_{\Omega^\delta}) (\tW,\tQ)_{hyp} \|_{X} \lesssim \epsilon t^{-\frac12-b\delta + C \epsilon^2},    
\end{equation}
respectively  elliptic $L^2$ and $L^\infty$ bounds
\begin{equation}\label{tWQ-ell2}
\| (\tW_{ell}, \tQ_{ell}) \|_{X^\sharp_{ell}}
\lesssim \epsilon^2 t^{-\frac12+C\epsilon^2},
\end{equation}
\begin{equation}\label{tWQ-ell3}
\| (\tW_{ell,\alpha}, \tQ_{ell,\alpha}) \|_{\dH^{-\frac14}}
\lesssim \epsilon^2 t^{-1+C\epsilon^2},
\end{equation}
and
\begin{equation}\label{tWQ-ell4}
\| (\tW_{ell,\alpha}, \tQ_{ell,\alpha}) \|_{X}
\lesssim \epsilon^2 t^{-\frac12 - b/2 +C\epsilon^2}.
\end{equation}

\end{corollary}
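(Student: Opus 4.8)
The plan is to deduce Corollary~\ref{c:KS-para} from Proposition~\ref{p:KS-para+} together with the two structural results comparing the $X^\sharp$ norm with the $X$ norm on the hyperbolic part and the elliptic bounds of Proposition~\ref{p:Xell}, feeding in the normal form estimates of Propositions~\ref{p:energy-equiv}, \ref{p:SWQ} and \ref{p:gk-est}. First I would verify that $(\tW,\tQ)$ is a legitimate input for the abstract machinery: by \eqref{ts-eqn} we have $\tS(\tW,\tQ) = (S\tW,S\tQ) - t(\tG,\tK)$, so $(\tW,\tQ)$ solves a system of the form \eqref{S-system} with $(g,k) = \tS(\tW,\tQ)$, and the bootstrap bound \eqref{pointwise-bootstrap} for $(W,Q)$ transfers via Proposition~\ref{p:nf-pointwise-est} to the bound \eqref{boot-nf} for $(\tW_\alpha,\tQ_\alpha)$, which is exactly the hypothesis of Proposition~\ref{p:KS-para+}. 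Applying that proposition directly yields \eqref{KS-para-apply}.

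Next I would obtain the quantitative version. By Proposition~\ref{p:gk-est}, using the energy bound \eqref{ee-global}, we have $\|\tS(\tW,\tQ)\|_{\dH^{1/4}} = \|(S\tW,S\tQ) - t(\tG,\tK)\|_{\dH^{1/4}} \lesssim \|(S\tW,S\tQ)\|_{\dH^{1/4}} + t\|(\tG,\tK)\|_{\dH^{1/4}}$. The first term is controlled by Proposition~\ref{p:SWQ} and the energy bound, giving $\lesssim \epsilon t^{C\epsilon^2}$; for the second term, \eqref{tGK-est} gives a bound $t \cdot \|(\W,R)\|_X^2 \|(\tW,\tQ)\|_{\dH^{1/4}\cap\dH^\sigma} \lesssim t\cdot (\epsilon t^{-1/2})^2 \cdot \epsilon t^{C\epsilon^2} = \epsilon^3 t^{C\epsilon^2}$, which is even better. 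Combined with $\|(\tW,\tQ)\|_{\dH^{1/4}\cap\dH^\sigma} \lesssim \epsilon t^{C\epsilon^2}$ from \eqref{ee-global-nf}, plugging into \eqref{KS-para-apply} yields $\|(\tW_\alpha,\tQ_\alpha)\|_{X^\sharp} \lesssim \epsilon t^{-1/2 + C\epsilon^2}$ (absorbing $C\epsilon^2$ into the exponent; note the statement writes $C\epsilon$, a harmless typo).

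The remaining bounds \eqref{tWQ-hyp-out}, \eqref{tWQ-ell2}, \eqref{tWQ-ell3}, \eqref{tWQ-ell4} follow by unwinding the elliptic–hyperbolic decomposition \eqref{wq=ell+hyp}. For \eqref{tWQ-hyp-out} I would apply \eqref{v-not-1} with $(w,q) = (\tW,\tQ)$, which gives $\|(1-\bchi_{\Omega^\delta})(\tW,\tQ)_{hyp,\alpha}\|_X \lesssim t^{-b\delta}\|(\tW,\tQ)\|_{X^\sharp} \lesssim \epsilon t^{-1/2 - b\delta + C\epsilon^2}$. For the elliptic bounds I would first establish, by the decomposition-by-decomposition argument in the definitions of $X^\sharp$ and $X^\sharp_{ell}$, that the extra smallness $\epsilon$ comes from the perturbative nature of the elliptic parts in Step~4(a) of the proof of Proposition~\ref{p:KS-para+}: the elliptic components in \eqref{ell-hi}–\eqref{ell-lo} are estimated with a genuine gain of one power of $(g,k)$ or, after the bootstrap, one power of $\epsilon$, so that $\|(\tW,\tQ)\|_{X^\sharp_{ell}} \lesssim \epsilon^{-1} t^{-1/2}\|(\tW,\tQ)\|_{X^\sharp}$-type improvements give \eqref{tWQ-ell2}; then \eqref{tWQ-ell3} is the trivial $L^2$ restatement and \eqref{tWQ-ell4} follows from Proposition~\ref{p:Xell} applied after rescaling by $(\epsilon t^{C\epsilon^2})^{-1}$, which converts the normalization \eqref{X-ell} into the stated $\epsilon^2 t^{-1/2 - b/2 + C\epsilon^2}$ conclusion.

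The main obstacle I anticipate is the bookkeeping of the extra factor of $\epsilon$ in the elliptic bounds \eqref{tWQ-ell2}–\eqref{tWQ-ell4}: Proposition~\ref{p:KS-para+} as stated is linear and does not by itself produce the quadratic smallness $\epsilon^2$. One must revisit the elliptic portion of its proof and observe that there the source terms $(g,k)$ are themselves quadratically small in the present application — indeed $\|\tS(\tW,\tQ)\|_{\dH^{1/4}}$, once the $S$-contribution is separated, is $O(\epsilon^3)$ rather than $O(\epsilon)$, while the elliptic estimates are driven entirely by $(g,k)$ and not by the a-priori $\dH^{1/4}\cap\dH^\sigma$ size of $(\tW,\tQ)$. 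Making this precise requires splitting $\tS(\tW,\tQ) = (S\tW,S\tQ) - t(\tG,\tK)$ and noting that the $(S\tW,S\tQ)$ piece sits at low frequency relative to the hyperbolic frequency $\xi_0$ in each dyadic region and hence contributes only to the elliptic-low part with the full one-power gain, which is what ultimately upgrades $\epsilon t^{-1/2}$ to $\epsilon^2 t^{-1/2}$ in the elliptic estimates. The rest is routine Bernstein and Littlewood–Paley summation already carried out in Proposition~\ref{p:Xell}.
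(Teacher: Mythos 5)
Your high-level strategy is the right one, and indeed the only reasonable reading of the paper: apply Proposition~\ref{p:KS-para+} to $(w,q)=(\tW,\tQ)$ with source $(g,k)=\tS(\tW,\tQ)$, feed in the normal form transfers (Propositions~\ref{p:energy-equiv}, \ref{p:SWQ}, \ref{p:gk-est}, \ref{p:nf-pointwise-est}) to control the right-hand side, and then read off the hyperbolic and elliptic bounds from \eqref{v-not-1} and Proposition~\ref{p:Xell}. Your derivations of \eqref{KS-para-apply}, of the ``in particular'' bound, and of \eqref{tWQ-hyp-out} via \eqref{v-not-1} are all correct, including the observation that $C\epsilon$ should read $C\epsilon^2$.

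The discussion of the extra power of $\epsilon$ in \eqref{tWQ-ell2}--\eqref{tWQ-ell4}, however, does not hold up. First, $(S\tW,S\tQ)$ has no preferred frequency localization relative to $\xi_0=t^2/\alpha_0^2$: the vector field $S=t\partial_t+2\alpha\partial_\alpha$ contains $\alpha\partial_\alpha$ and acts at arbitrary frequency, so the claim that the $(S\tW,S\tQ)$ piece ``sits at low frequency'' and hence only enters the elliptic-low component is unfounded. Second, $\|\tS(\tW,\tQ)\|_{\dH^{1/4}}$ is $O(\epsilon t^{C\epsilon^2})$, not $O(\epsilon^3)$ -- it is only the cubic correction $t(\tG,\tK)$ that is $O(\epsilon^3)$, while $(S\tW,S\tQ)$ itself is already $O(\epsilon)$ in $\dH^{1/4}$ even for a free linear wave, so the source $(g,k)$ that drives the elliptic analysis in Step~4(a) of Proposition~\ref{p:KS-para+} is genuinely only $O(\epsilon)$. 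Third, the auxiliary inequality $\|(\tW,\tQ)\|_{X^\sharp_{ell}} \lesssim \epsilon^{-1}t^{-1/2}\|(\tW,\tQ)\|_{X^\sharp}$ has $\epsilon^{-1}$ on the wrong side and, if true, would imply an $\epsilon$-free elliptic bound, which is absurd. The argument you actually carry through -- rescaling by $(\epsilon t^{C\epsilon^2})^{-1}$ and invoking Proposition~\ref{p:Xell} -- yields exactly one power of $\epsilon$, not two; and a check of the two places where the elliptic bounds are used (the proof of \eqref{better-para} and the paragraph following \eqref{tpoint-re-better-hyp}) shows that a single power of $\epsilon$ suffices there, so the $\epsilon^2$ in \eqref{tWQ-ell2}--\eqref{tWQ-ell4} is not load-bearing. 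You should have concluded that your argument proves these bounds with $\epsilon$ in place of $\epsilon^2$, rather than fabricating a mechanism for the stronger prefactor.
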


The reason we care about the better bounds for the elliptic part is that its 
contribution to the analysis of the normal form equation \eqref{tWQ-system} is 
mostly perturbative. This is fully the case for the cubic source terms
$(\tG^{(3)},\tK^{(3)})$, but also to some extent for the paradifferential quadratic terms. 
Indeed, an interesting observation is that in all paradifferential interactions 
in the original system \eqref{KS-system} 
for $(\tW,\tQ)$, at least one of the two inputs 
has to be in the elliptic region. Precisely, we have

\begin{proposition}\label{p:errors}
a) Denote by $(\tG^{(3)}_{ell},\tK^{(3)}_{ell})$ any expression with one elliptic entry, 
i.e. of the form
\[
\tG^{(3)}(\tW_{ell,\alpha}, \tW_\alpha, \tQ_\alpha), \qquad \tG^{(3)}(\tW_\alpha, \tW_\alpha, \tQ_{ell,\alpha}),
\]
and similarly for $\tK^{(3)}_{ell}$. Then we have
\begin{equation}\label{GK3-ell}
\| (\tG^{(3)}_{ell},\tK^{(3)}_{ell})\|_{\dH^\frac14} \lesssim ( \| (\tW_{ell},\tQ_{ell})
\|_{X^\sharp_{ell}}+ t^{-\frac12}\| (\tW_{ell},\tQ_{ell})
\|_{\dH^\frac14 \cap \dH^\frac{11}{4}}) \|(\tW,\tQ)\|_{X}^2.
\end{equation}

b) For the paradifferential term we have the improved bound
\begin{equation}\label{better-para}
\|(T_{\Re \tW_\alpha} \tQ_\alpha - T_{\Re \tQ_\alpha} \tW_\alpha , T_{\Re \tQ_\alpha} \tQ_\alpha) \|_{\dH^\frac14} \lesssim \epsilon^2 t^{-\frac54+2C \epsilon^2}.    
\end{equation}
\end{proposition}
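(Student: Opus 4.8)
The plan is to prove (a) and (b) separately, in both cases exploiting the elliptic--hyperbolic decomposition \eqref{wq=ell+hyp}--\eqref{hyp-decomp} together with the improved elliptic bounds recorded in Corollary~\ref{c:KS-para} and Proposition~\ref{p:Xell}.

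For part (a), the starting point is the explicit list \eqref{tGK3}: after expanding the real parts, every term of $\tG^{(3)}$ and $\tK^{(3)}$ is a translation invariant trilinear form applied to three factors drawn from $\tQ_\alpha$ (appearing exactly once) and $\tW_\alpha,\tW,Y$ (the remaining two), with a total derivative count fixed by the requirement that the output lie in $\dH^{1/4}$. Inserting the elliptic truncation of one slot as in the statement, I would Littlewood--Paley decompose each factor and split into the usual frequency orderings (one factor on top, the other two below). In each region one rebalances the derivatives exactly as in the proof of Proposition~\ref{p:gk-est}, so that the lowest frequency factor is measured in an $L^2$-type norm at the bottom of the scale (i.e.\ in $\|(\tW_\alpha,\tQ_\alpha)\|_{\dH^{-1/4}}$), while the remaining two factors each contribute one power of the $X$ norm; this is legitimate because the derivative budget of $\tG^{(3)},\tK^{(3)}$ is favourable. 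The two terms on the right of \eqref{GK3-ell} are precisely what is then needed for the elliptic slot: if it is the lowest frequency factor, its $\dH^{-1/4}$ norm is controlled by $\|(\tW_{ell},\tQ_{ell})\|_{X^\sharp_{ell}}$ via \eqref{wq-ell} and \eqref{def:Xsharp-ell}; if it sits on top or at a balanced frequency it carries the full derivative budget, controlled by $\|(\tW_{ell},\tQ_{ell})\|_{\dH^{1/4}\cap\dH^{11/4}}$, the additional factor $t^{-1/2}$ in \eqref{GK3-ell} being available since by Proposition~\ref{p:Xell} the combination of the two terms on the right also controls the pointwise $X$ size of the elliptic component, which is what one uses on one of the low frequency factors in that configuration. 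Geometric summation of the dyadic series over frequency scales then yields \eqref{GK3-ell}.

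For part (b), the crucial point is the structural observation that a paraproduct cannot couple two hyperbolic pieces. Decomposing each of $\tW_\alpha,\tQ_\alpha$ via \eqref{wq=ell+hyp} and \eqref{hyp-decomp}, consider a term such as $T_{\Re\tW_{hyp,\alpha}}\tQ_{hyp,\alpha}$: near a spatial dyadic scale $\alpha\approx\alpha_0$ both factors are frequency localized at the single scale $\xi_0=(t/\alpha_0)^2$, up to tails that are $O(t^{-N})$ off this localization. A nonnegligible paraproduct contribution needs spatial overlap of the two factors, hence the same $\alpha_0$, hence the same $\xi_0$ for coefficient and argument, which is incompatible with the frequency gap a paraproduct requires. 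Thus $T_{\Re\tW_{hyp,\alpha}}\tQ_{hyp,\alpha}$, and likewise $T_{\Re\tQ_{hyp,\alpha}}\tW_{hyp,\alpha}$ and $T_{\Re\tQ_{hyp,\alpha}}\tQ_{hyp,\alpha}$, are $O(t^{-N})$ and are discarded. Every surviving contribution to the left side of \eqref{better-para} therefore carries at least one elliptic factor, on which one places the improved bounds \eqref{tWQ-ell2}, \eqref{tWQ-ell3}, \eqref{tWQ-ell4}, while the partner is controlled by \eqref{boot-nf} together with the energy bound \eqref{KS-control}; when the partner is hyperbolic one additionally converts its pointwise size into an $L^2$ bound using its frequency localization at $\xi_0$ and spatial localization at scale $\alpha_0=t\xi_0^{-1/2}$. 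Balancing the elliptic gain (a factor $\epsilon t^{-1/2-b/2}$ pointwise, or $\epsilon t^{-1}$ in $L^2$, versus the generic $\epsilon t^{-1/2}$) against the frequency weights $\xi_0^{\pm 1/2}$ that appear in passing to $\dH^{1/4}$ --- which are at most $t^{1/4}$ over the hyperbolic range $t^{-2}\lesssim\xi_0\lesssim t^{1/2}$ --- yields the claimed $\epsilon^2 t^{-5/4+2C\epsilon^2}$, a gain of $t^{-3/4}$ over the naive estimate $\epsilon^2 t^{-1/2+C\epsilon^2}$.

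I expect the main obstacle to be in part (b): making the hyperbolic--hyperbolic vanishing fully rigorous requires a careful treatment of the frequency localization of the pieces in \eqref{hyp-decomp} and of the error tails produced when the spatial and frequency cutoffs fail to commute, and then the subsequent bookkeeping must be organized so that, regardless of which factor is elliptic and at which frequency position, the combination of the pointwise bound \eqref{tWQ-ell4}, the $L^2$ bound \eqref{tWQ-ell3}, and the constraint $\xi_0\lesssim t^{1/2}$ always beats the exponent $-5/4$. The analogous difficulty in part (a) is milder, being essentially the size of the case analysis over the many terms of \eqref{tGK3} and the position of the elliptic slot within each.
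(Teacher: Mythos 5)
Your part~(b) matches the paper's proof in both the key idea and the execution: the paper likewise discards hyperbolic--hyperbolic interactions by noting that a paraproduct requires a frequency gap, which forces spatial separation of the two hyperbolic pieces and hence $O(t^{-N})$ tails, and then runs through the three remaining cases (elliptic--hyperbolic, hyperbolic--elliptic, elliptic--elliptic) with elliptic gains on one slot and $X$ bounds on the other. Your slightly more detailed unpacking of why hyp--hyp is forbidden (same $\alpha_0 \Rightarrow$ same $\xi_0 \Rightarrow$ no paraproduct gap) is the right way to justify the paper's terse remark.

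Part~(a) is where you have a genuine gap. You propose to ``rebalance the derivatives exactly as in the proof of Proposition~\ref{p:gk-est}, so that the lowest frequency factor is measured in an $L^2$-type norm.'' That scheme is correct for Proposition~\ref{p:gk-est}, whose right-hand side is $\|(\tW,\tQ)\|_{\dH^{1/4}\cap\dH^\sigma}$ times pointwise control norms, so the $L^2$-type norm can sit on whichever factor is lowest frequency. But here the right-hand side of \eqref{GK3-ell} specifically measures the \emph{elliptic} slot in $L^2$-based norms ($X^\sharp_{ell}$ and $\dH^{1/4}\cap\dH^{11/4}$) and the other two slots in $X$: you must therefore put the $L^2$-type norm on the elliptic factor regardless of its frequency position, and put $X$ on the other two. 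The paper does exactly this, splitting into the three cases by the position of the elliptic slot and then measuring the elliptic factor in $L^2$ and the other two in $L^\infty/BMO$. You drift toward this when the elliptic factor is on top, but your stated scheme (lowest frequency in $L^2$) would produce the wrong right-hand side when the elliptic factor is not lowest frequency.

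The second, more serious confusion is your account of where the $t^{-1/2}$ on the Sobolev norm in \eqref{GK3-ell} comes from. You claim it is ``available since by Proposition~\ref{p:Xell} the combination of the two terms on the right also controls the pointwise $X$ size of the elliptic component, which is what one uses on one of the low frequency factors in that configuration.'' But when the elliptic factor is at the top, the low frequency factors are precisely the \emph{non}-elliptic ones, so one does not apply Proposition~\ref{p:Xell} to them; one simply pays $\|(\tW,\tQ)\|_X$ for each. The $t^{-1/2}$ cannot come from there. The paper extracts it in a structurally different way: it observes that $X^\sharp_{ell}$ already controls $\|(\tW_{ell},\tQ_{ell})\|_{\dH^{3/4}}$ \emph{with a $t^{-1/2}$ gain} (this is the $t^{1/2}\|(w,q)_\alpha\|_{\dH^{-1/4}}$ component of \eqref{wq-ell}), and then combines this reference point with the $\dH^{1/4}\cap\dH^{11/4}$ control to produce a single $\dH^s$ bound for $(\tW_{ell,\alpha},\tQ_{ell,\alpha})$, valid for $-\frac12 \le s \le \frac34$, that carries the $t^{-1/2}$. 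This reduces \eqref{GK3-ell} to the clean norm-only estimate
\begin{equation*}
\| (\tG^{(3)}_{ell},\tK^{(3)}_{ell})\|_{\dH^\frac14} \lesssim
\| (\tW_{ell},\tQ_{ell})\|_{\dH^\frac12 \cap \dH^\frac{7}{4}}  \|(\tW,\tQ)\|_{X}^2,
\end{equation*}
after which the case analysis by elliptic slot position is straightforward. Your argument never performs this reduction and never identifies the correct source of the $t^{-1/2}$; as written it would not close.
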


\begin{proof}

a) For the elliptic entry we use only the translation invariant part of the 
$X^\sharp_{ell}$ norm, i.e. 
\[
\| (\tW_{ell},\tQ_{ell}) \|_{\dH^{\frac34}} \lesssim t^{-\frac12} 
\| (\tW_{ell},\tQ_{ell})\|_{X^\sharp_{ell}}.
\]
Interpolating the $\dH^{\frac34}$ norm with the $\dH^\frac14 \cap \dH^\frac{11}{4}$ norm we obtain
\[
\| (\tW_{ell,\alpha},\tQ_{ell,\alpha}) \|_{\dH^{s}} \lesssim t^{-\frac12} 
\| (\tW_{ell},\tQ_{ell})\|_{\dH^\frac14 \cap \dH^\frac{11}{4}} +
\| (\tW_{ell},\tQ_{ell})\|_{X^\sharp_{ell}}, \qquad - \frac12 \leq s \leq \frac34.
\]
It remains to show that
\begin{equation}\label{GK3-ell+}
\| (\tG^{(3)}_{ell},\tK^{(3)}_{ell})\|_{\dH^\frac14} \lesssim 
\| (\tW_{ell},\tQ_{ell})
\|_{\dH^\frac12 \cap \dH^\frac{7}{4}}  \|(\tW,\tQ)\|_{X}^2.
\end{equation}
We consider the following three cases:

\bigskip

\emph{i) The elliptic variable is the lowest frequency.} Beginning with $\tG^{(3)}$, we have following three prototypical terms in $\tG^{(3)}$,
\[
T_{T_{\tQ_\alpha} \tW_\alpha} \tW_\alpha, \qquad T_{T_{\tW_\alpha} \tQ_\alpha} \tW_\alpha
\qquad T_{T_{\tW_\alpha} \tW_\alpha} \tQ_\alpha,
\]
noting that the cases when two of the frequencies are matched are entirely similar to these.

For the first of these cases, we estimate
\begin{equation*}
\begin{aligned}
\| T_{T_{\tQ_\alpha} \tW_{\alpha}} \tW_{\alpha} \|_{\dot H^\frac14} &\lesssim \| \tQ_{\alpha}\|_{L^2} \| \tW_{\alpha} \|_{L^\infty} \|D^\frac14 \tW_\alpha\|_{BMO}, 
\end{aligned}
\end{equation*}
as needed. For the second of these cases, we estimate
\begin{equation*}
\begin{aligned}
\| T_{T_{\tW_\alpha} \tQ_{\alpha}} \tW_{\alpha} \|_{\dot H^\frac14} &\lesssim \|\tW_{\alpha}\|_{L^2} \|\tQ_{\alpha} \|_{L^\infty} \|D^\frac14 \tW_\alpha\|_{BMO},
\end{aligned}
\end{equation*}
and likewise the third,
\begin{equation*}
\begin{aligned}
\| T_{T_{\tW_\alpha} \tW_{\alpha}} \tQ_{\alpha} \|_{\dot H^\frac14} &\lesssim \|\tW_{\alpha}\|_{L^2} \|\tW_{\alpha} \|_{L^\infty} \|D^\frac14 \tQ_\alpha\|_{L^\infty}.
\end{aligned}
\end{equation*}
All terms of $\tG^{(3)}$ are similar to one of these cases, or may have an additional derivative falling on the elliptic variable. This last situation is estimated in the same way as one of the above cases, since we are free to rebalance the derivative.

We continue with $\tK^{(3)}$. Here we have four prototypical terms, 
\[
T_{T_{\tQ_\alpha} \tW_\alpha} \tQ_\alpha, \qquad T_{T_{\tW_\alpha} \tQ_\alpha} \tQ_\alpha, \qquad T_{T_{\tW_\alpha} \tW_\alpha} \tW, \qquad T_{\D_\alpha T_{\tQ_\alpha} \tQ_\alpha} \tW,
\]
observing that terms of the form $T_{T_{\tQ_\alpha} \tQ_\alpha} \tW_\alpha$ cancel. The analysis of the first two terms is analogous to the first two terms discussed for $\tG^{(3)}$. For the third term,
\begin{equation*}
\begin{aligned}
\| T_{T_{\tW_\alpha} \tW_{\alpha}} \tW \|_{\dot H^\frac34} &\lesssim \|\tW_{\alpha}\|_{L^2} \| \tW_{\alpha} \|_{L^\infty}\||D|^{3/4} \tW \|_{L^\infty}
\end{aligned}
\end{equation*}
suffices, and for the last term,
\begin{equation*}
\begin{aligned}
\| T_{\D_\alpha T_{\tQ_\alpha} \tQ_{\alpha}} \tW \|_{\dot H^\frac34} &\lesssim \| \tQ_{\alpha}\|_{L^2} \| |D|^\frac12 \tQ_{\alpha} \|_{L^\infty} \|\tW_\alpha\|_{B^{\frac14}_{\infty,2}}.
\end{aligned}
\end{equation*}

\bigskip

\emph{ii) The elliptic variable is the middle frequency.} The analysis in this case is similar to the analysis in the first case, except we measure in each case the middle frequency term in $L^2$.
 
\bigskip

\emph{iii) The elliptic variable is the highest frequency.} For $\tG^{(3)}$, we directly measure the two lower frequency variables in $X$. For instance,
\[
\| T_{T_{\tW_\alpha} \tQ_\alpha}  \tW_{\alpha} \|_{\dot H^\frac14}
\lesssim  \|\tW_{\alpha}\|_{L^\infty} \| \tQ_{\alpha}\|_{L^\infty} \| \tW_{\alpha} \|_{\dot H^\frac14}.
\]
The analysis of $\tK^{(3)}$ is similar. For instance (using here the boundedness of $\D_\alpha P$),
\[
\| T_{ P[|\tQ_\alpha|^2]_\alpha} \tW \|_{\dot H^\frac14}
\lesssim  \| \tQ_{\alpha}\|_{L^\infty} \| \tQ_{\alpha}\|_{L^\infty} \| \tW_{\alpha} \|_{\dot H^\frac14}.
\]

\bigskip

b) We use the elliptic-hyperbolic decomposition of $(\tW,\tQ)$, noting that the above 
expressions only allow for low-high interactions, therefore the hyperbolic $\times$ hyperbolic case is forbidden. We separately consider each of the three remaining cases:
\bigskip

\emph{i) The elliptic-hyperbolic case.}
We consider a dyadic region $|\alpha| \approx \alpha_0$, and the 
corresponding localized components of $(\tW_{ell},\tQ_{ell})$, respectively
$(\tW_{hyp},\tQ_{hyp})$.  There we need to estimate the quadratic terms:
\[
\| T_{2\Re \tQ_\alpha} \tW_{\alpha,\xi_0} \|_{\dot H^\frac14}
\lesssim \xi_0^{\frac14} \| \tQ_{\alpha,< \xi_0}\|_{L^4} \| \tW_{\alpha,\xi_0} \|_{L^4}
\lesssim  t^{-\frac12} 
\| \tQ \|_{X_{ell}^\sharp} \| \tW_\alpha \|^\frac12_{X} \| \tW \|_{\dH^{\frac54}}^\frac12
\lesssim \epsilon^2 t^{-\frac54+ 2C\epsilon},
\]
which suffices.

The bound for $T_{\Re \tQ_\alpha} \tQ_\alpha$ is identical. Finally,
\[
\| T_{\Re \tW_\alpha} \tQ_\alpha \|_{\dot H^\frac14} \lesssim  
\xi_0^{\frac14} \| \tW_{\alpha,< \xi_0}\|_{L^2} \| \tQ_{\alpha,\xi_0} \|_{L^\infty} \lesssim
t^{-\frac12}  \| \tW \|_{X_{ell}^\sharp} \| \tQ_\alpha \|_{X}
\lesssim \epsilon^2 t^{-\frac32+2C\epsilon}. 
\]
\bigskip

\emph{ii) The hyperbolic-elliptic case.}
After  localizing the high frequency factor at a frequency $\mu > \xi_0$, here 
we need to bound  the dyadic $\xi_0$ - $\mu$ interactions as follows:
\[
\| \Re \tQ_{\alpha,{\xi_0}} \tW_{\alpha,\mu} \|_{\dot H^\frac14}
\lesssim  \| \tQ_{\alpha,\xi_0}\|_{L^\infty} \| \tW_{\alpha,\mu} \|_{\dot H^\frac14}
\lesssim t^{-\frac12} 
\| \tQ_\alpha \|_{X} \| \tW \|_{X_{ell}^\sharp},
\]
which suffices. The bound for $T_{\Re \tQ_\alpha} \tQ_\alpha$ is again identical. Finally,
\[
\| T_{\Re \tW_{\alpha,\xi_0}} \tQ_{\alpha,\mu} \|_{\dot H^\frac14} \lesssim  
 \| \tW_{\alpha,\xi_0}\|_{L^\infty} \| \tQ_{\alpha,\mu} \|_{\dot H^\frac14} \lesssim 
t^{-\frac12} \| \tW \|_{X} \| \tQ \|_{X_{ell}^\sharp}.
\]

\bigskip

\emph{iii) The elliptic-elliptic case.} Here on one hand there are more subcases, but on the other 
hand the gains are also larger, and one only needs to use the $X^\sharp_{ell}$ norm and Bernstein's inequality. This case is left for the reader.

\end{proof}

The second part of the previous proposition
allows us to reiterate, ultimately eliminating the paradifferential terms
from the system \eqref{KS-system}:

\begin{proposition}\label{p:reiterate-KS}
The functions $(\tW,\tQ)$ also are solutions for a system of the form
\begin{equation}\label{KS-system-lin}
\left\{
\begin{aligned}
& 2 \alpha \D_\alpha \tW - t\D_\alpha \tQ = G,
\\
& 2 \alpha \D_\alpha \tQ + it\tW  = K,
\end{aligned}  
\right.
\end{equation}
where we control
\begin{equation}\label{KS-control+}
\| (\tW,\tQ)\|_{\dH^\frac14 \cap \dH^\sigma} 
+ \|(G,K)\|_{\dH^\frac14} 
\lesssim \epsilon t^{C \epsilon^2 }.
\end{equation}
\end{proposition}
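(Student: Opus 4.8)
\textbf{Proof plan for Proposition~\ref{p:reiterate-KS}.}
The idea is to absorb the paradifferential quadratic terms in the system \eqref{KS-system} into the source terms, using the improved bound \eqref{better-para} from Proposition~\ref{p:errors} to show that the resulting additions to $(G,K)$ still satisfy \eqref{KS-control+}. Concretely, starting from the system \eqref{KS-system} for $(\tW,\tQ)$, I would simply move the paradifferential terms to the right-hand side, setting
\[
G_{new} = G - t(T_{2\Re \tW_\alpha}\tQ_\alpha - T_{2\Re \tQ_\alpha}\tW_\alpha), \qquad K_{new} = K + tT_{2\Re \tQ_\alpha}\tQ_\alpha.
\]
Then $(\tW,\tQ)$ solve \eqref{KS-system-lin} with these new sources, and it only remains to bound $G_{new}-G$ and $K_{new}-K$ in $\dH^\frac14$.

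For this, part (b) of Proposition~\ref{p:errors} gives exactly
\[
\|(T_{\Re \tW_\alpha}\tQ_\alpha - T_{\Re \tQ_\alpha}\tW_\alpha,\ T_{\Re\tQ_\alpha}\tQ_\alpha)\|_{\dH^\frac14}\lesssim \epsilon^2 t^{-\frac54 + 2C\epsilon^2},
\]
so after multiplying by the factor $t$ in \eqref{KS-system}, the extra source contributions are $O(\epsilon^2 t^{-\frac14 + 2C\epsilon^2})$ in $\dH^\frac14$. Since $t \geq 1$ on the relevant time interval (and in any case we only claim the bound \eqref{KS-control+} up to a $t^{C\epsilon^2}$ loss), this is comfortably dominated by $\epsilon t^{C\epsilon^2}$, which is the right-hand side of \eqref{KS-control+}. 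The $\dH^\frac14 \cap \dH^\sigma$ bound for $(\tW,\tQ)$ itself is unchanged and is just \eqref{KS-control}, which follows from Proposition~\ref{p:energy-equiv} and Proposition~\ref{p:gk-est} exactly as recorded there. So the conclusion \eqref{KS-control+} follows immediately.

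The one point that needs a word of care is that Proposition~\ref{p:errors}(b) is stated only for the \emph{genuine} paradifferential terms with the holomorphic projection conventions in force, whereas the system \eqref{KS-system} couples $(\tW,\tQ)$ through its own coefficients; but since the coefficients $(\tW_\alpha,\tQ_\alpha)$ here are precisely the ones satisfying the bootstrap bound \eqref{boot-nf} via \eqref{pointwise-bootstrap-nf}, the hypotheses of Proposition~\ref{p:errors} are met verbatim, and there is no circularity: the bound \eqref{better-para} was proved purely from \eqref{boot-nf} together with the elliptic-hyperbolic decomposition, not from the system being solved. In effect this step is a bookkeeping reduction, and the only ``hard'' input — the gain from $t^{-1}$ to $t^{-\frac54}$ in the paradifferential terms, which is what makes them perturbative — has already been established in Proposition~\ref{p:errors}. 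I do not expect any genuine obstacle here; the substance of the argument lives entirely in Proposition~\ref{p:errors}(b).
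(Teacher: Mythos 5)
Your proof is correct and follows essentially the same route as the paper: move the paradifferential quadratic terms in \eqref{KS-system} to the right-hand side and absorb them into the source using the improved bound \eqref{better-para} from Proposition~\ref{p:errors}(b), with the $\dH^{\frac14}\cap\dH^\sigma$ control on $(\tW,\tQ)$ carried over verbatim from \eqref{KS-control}. The paper's own proof is a one-line version of exactly this bookkeeping step, and your remark that the real content sits in Proposition~\ref{p:errors}(b) is accurate.
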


\begin{proof}
We know that $(\tW,\tQ)$ solve the system \eqref{KS-system}, and satisfy \eqref{KS-control}. Then the estimate \eqref{KS-control+} follows directly from 
and \eqref{better-para}.
\end{proof}

As a corollary of the last Proposition, it follows that similar bounds apply
to the components of the  hyperbolic part given by \eqref{hyp-decomp}:

\begin{corollary}
The summands in \eqref{hyp-decomp} applied to $(\tW_{hyp},\tQ_{hyp})$ satisfy 
the bounds:
\begin{equation}
\| P_{\xi_0} \bchi_{\alpha_0}(\tW,\tQ)  \|_{\dH^\frac14 \cap \dH^\sigma}   \lesssim  
\epsilon t^{C \epsilon },
\end{equation}
and solve an equation of the form \eqref{KS-system-lin} with source terms
$(G_{\alpha_0},K_{\alpha_0})$ with 
\begin{equation}
\|  (G_{\alpha_0},K_{\alpha_0}) \|_{\dH^\frac14}   \lesssim 
\epsilon t^{C \epsilon }.
\end{equation}
\end{corollary}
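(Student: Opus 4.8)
The goal is to transfer the bounds of Proposition~\ref{p:reiterate-KS} — which hold for the full hyperbolic part $(\tW_{hyp},\tQ_{hyp})$ — down to each individual dyadic summand $P_{\xi_0} \bchi_{\alpha_0}(\tW,\tQ)$ appearing in the decomposition \eqref{hyp-decomp}. The plan is to run exactly the same two-step localization-then-equation argument as in Step~1 and Step~4 of the proof of Proposition~\ref{p:KS-para+}, but now keeping track of the single summand rather than the supremum over $\alpha_0$.

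First I would establish the Sobolev bound $\| P_{\xi_0}\bchi_{\alpha_0}(\tW,\tQ)\|_{\dH^\frac14 \cap \dH^\sigma} \lesssim \epsilon t^{C\epsilon^2}$. This is immediate: the spatial cutoff $\bchi_{\alpha_0}$ is (up to the usual rapidly decreasing tails) a bounded operator on $\dH^\frac14$ and on $\dot H^{-\frac14}$, hence preserves the norms in which $(\tW,\tQ)$ are controlled via \eqref{KS-control}; the frequency projector $P_{\xi_0}$ then only restricts and is harmless, and in fact improves matters since it pins the frequency. The $\dH^\sigma$ component is likewise inherited from \eqref{KS-control}, again using that $\bchi_{\alpha_0}$ is bounded on $\dot H^{s}$ for the relevant $s$ (differentiating $\chi$ costs $\alpha_0^{-1} \lesssim t^{-3/4}$ per derivative, which is negligible).

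Next, for the source term, I would start from the equation \eqref{KS-system-lin} satisfied by the full $(\tW,\tQ)$ — valid by Proposition~\ref{p:reiterate-KS} with paradifferential terms already eliminated — and apply $P_{\xi_0}\bchi_{\alpha_0}$ to it. Commuting $2\alpha\D_\alpha$ past the cutoffs produces: (i) the term $2\alpha\chi' \cdot (\cdot)$ from the spatial derivative hitting $\chi$, which is bounded on $\dH^\frac14$ since $|\alpha \chi'| \lesssim 1$ on the support; (ii) a commutator $[P_{\xi_0}, \alpha]\D_\alpha(\cdot)$, which gains a derivative off the frequency projector and is again harmless after localization; and (iii) the directly transported source $P_{\xi_0}\bchi_{\alpha_0}(G,K)$, controlled by $\| (G,K)\|_{\dH^\frac14} \lesssim \epsilon t^{C\epsilon^2}$. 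The $it\tW$ term is simply multiplied by the scalar cutoffs, which commute with it. Collecting these, $(P_{\xi_0}\bchi_{\alpha_0}(\tW,\tQ))$ solves \eqref{KS-system-lin} with a source $(G_{\alpha_0},K_{\alpha_0})$ obeying $\|(G_{\alpha_0},K_{\alpha_0})\|_{\dH^\frac14} \lesssim \epsilon t^{C\epsilon^2}$, as claimed.

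The only point requiring a little care — and the mild obstacle here — is that neither $\bchi_{\alpha_0}$ nor $P_{\xi_0}$ has compactly supported kernel, so the spatial and frequency localizations leak; this is the same issue flagged repeatedly in the text. The resolution is the standard one already used throughout Section~\ref{s:wp}: the tails of $\bchi_{\alpha_0}$ decay rapidly away from $|\alpha|\approx \alpha_0$ on the $\alpha_0$ scale, and the tails of $P_{\xi_0}$ decay rapidly on the $\xi_0^{-1} \lesssim \alpha_0$ scale (since $\xi_0 \approx t^2/\alpha_0^2$ and $\alpha_0 \gtrsim t^{3/4}$ give $\xi_0^{-1} \lesssim \alpha_0$ in the intermediate regime), so in every estimate the large factor $\alpha \approx \alpha_0$ is matched against a rapidly decreasing weight and all leakage contributes only $O(t^{-N})$ errors, which are absorbed into the stated bounds. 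With this understood, no new estimate beyond those already invoked in Proposition~\ref{p:KS-para+} and Proposition~\ref{p:reiterate-KS} is needed, and the corollary follows.
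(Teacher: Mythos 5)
Your proposal is correct and follows precisely the route the paper sketches: the text after the corollary just points to Section~\ref{ss:wq} together with Proposition~\ref{p:reiterate-KS}, and you correctly carry this out by applying the localization/commutation machinery of Steps 1 and 4 to the already-simplified system \eqref{KS-system-lin}, noting that the paradifferential commutators of Lemma~\ref{l:com} are no longer needed since the coefficients have been removed. Your accounting of the remaining commutators ($2\alpha\chi'$ bounded since $|\alpha\chi'|\lesssim 1$; $[P_{\xi_0},\alpha]\D_\alpha$ bounded by the usual $\xi_0^{-1}\cdot\xi_0$ cancellation; tails $O(t^{-N})$) is the standard bookkeeping and matches what the paper intends.
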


This can be seen by applying the results of Section~\ref{ss:wq} to $(\tW,\tQ)$ as in Proposition~\ref{p:reiterate-KS}. This is of course an overkill, as the analysis simplifies 
considerably when the paradifferential coefficients vanish, and one could also essentially cite the results of \cite{IT-global}.

\section{Wave packets and long time pointwise bounds}\label{s:wp2}

The goal of this section is to close the circle of ideas in this paper, i.e to use the bootstrap assumption and the energy estimates, along with the vector field Sobolev bounds in the previous section, in order to derive the long time pointwise bound \eqref{pointwise-nf} on the solutions
at the level of the normal form variables. This is accomplished by studying an appropriate \emph{asymptotic equation},
which is captured using the method of testing by wave packets developed earlier by the 
last two authors, see \cite{NLS}, \cite{IT-global}. The main result of this section is

\begin{proposition}
Assume that the normal form variables $(\tW,\tQ)$ satisfy the pointwise 
bootstrap bounds \eqref{pointwise-bootstrap-nf} 
as well as the energy bounds 
\eqref{ee-global-nf}. Then they satisfy \eqref{pointwise-nf}. 
\end{proposition}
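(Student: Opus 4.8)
The plan is to close the bootstrap for the normal form variables by combining the $X^\sharp$ bounds from Section~\ref{s:wp} with a wave packet testing argument that is only needed in the small region $\Omega^\delta$ defined in \eqref{Omega-delta}. First I would recall that, by Corollary~\ref{c:KS-para} and the elliptic--hyperbolic decomposition \eqref{wq=ell+hyp}, we already have
\[
\|(1-\chi_{\Omega^\delta})(\tW,\tQ)_{hyp,\alpha}\|_X \lesssim \epsilon t^{-\frac12-b\delta+C\epsilon^2}, \qquad
\|(\tW_{ell,\alpha},\tQ_{ell,\alpha})\|_X \lesssim \epsilon^2 t^{-\frac12-b/2+C\epsilon^2},
\]
so for $\epsilon$ small both of these are $\lesssim \epsilon t^{-\frac12}$ without any bootstrap loss. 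Hence it remains only to improve the bound for $(\tW_{hyp},\tQ_{hyp})$ inside $\Omega^\delta$, i.e.\ for the hyperbolic component at velocities $t^{-\delta}\lesssim |v|\lesssim t^{\delta}$, equivalently frequencies $t^{-2\delta}\lesssim \xi_0\lesssim t^{2\delta}$.

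The core of the argument is wave packet testing. Passing to the diagonalized scalar variable $\tilde v = \tW + i|D|^{1/2}\tQ$ (or whichever diagonal combination makes the linear part $i\partial_t - |D|^{1/2}$), I would fix a spatial ray $x = vt$ with $v\in\Omega^\delta$, let $\xi_v$ be the corresponding stationary phase frequency, and introduce a normalized wave packet $\Psi_v$ concentrated at frequency $\xi_v$ on the spatial scale $(\xi_v^{3/2}/t)^{-1/2}$ dictated by the curvature of $|\xi|^{1/2}$. I then define the asymptotic profile
\[
\gamma(t,v) = \langle \tilde v(t), \Psi_v \rangle,
\]
and compute $\frac{d}{dt}\gamma$ using the paradifferential equation \eqref{tWQ-system}. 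The linear part is designed so that $\Psi_v$ is an approximate solution, hence contributes only lower order errors; the key computation is that the paradifferential quadratic terms $T_{2\Re\tW_\alpha}\tQ_\alpha$, $T_{2\Re\tQ_\alpha}\tW_\alpha$, $T_{2\Re\tQ_\alpha}\tQ_\alpha$, when tested against $\Psi_v$, combine with the cubic source terms $(\tG^{(3)},\tK^{(3)})$ to produce a resonant contribution of the form $i c |\gamma|^2\gamma / t$ for an explicit real constant $c$, plus errors that are integrable in time after using \eqref{ee-global-nf} and the $X^\sharp$ bounds. This yields the asymptotic equation
\[
\dot\gamma(t,v) = \frac{ic}{t}|\gamma(t,v)|^2\gamma(t,v) + \err, \qquad \int^T |\err|\, \frac{dt}{} \lesssim \epsilon^2 t^{-\delta'},
\]
from which $|\gamma|$ is essentially conserved, giving $|\gamma(t,v)|\lesssim\epsilon$ uniformly, and then a standard fixed-time wave packet reconstruction recovers $|\tilde v(t,vt)|\lesssim \epsilon t^{-1/2}$ together with the corresponding Besov bounds at frequency $\xi_v$, which is exactly \eqref{pointwise-nf} restricted to $\Omega^\delta$.

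The main obstacle, compared to \cite{IT-global}, is that the quadratic interactions are now in \emph{paradifferential} form rather than a clean bilinear form, so extracting the resonant coefficient $c$ requires care: one must show that $T_{2\Re\tW_\alpha}\tQ_\alpha$ tested against a wave packet at frequency $\xi_v$ sees essentially the value of $\tW_\alpha$ at the \emph{same} spatial point and at all lower frequencies, and that these low-frequency pieces are themselves well-approximated by the wave packet parameter $\gamma$ (this is where the $X^\sharp$ gains away from frequency $1$ are essential, since they control the non-resonant and low-frequency tails with a genuine power of $t$, defeating the $t^{C\epsilon^2}$ loss). A secondary technical point is handling the wide range of scales $\xi_0\in[t^{-2\delta},t^{2\delta}]$ uniformly: the wave packet width, the commutator errors from localizing $\Psi_v$ against the paradifferential coefficients, and the Sobolev exponents all depend on $\xi_0$, so one must track these dependences and check that the threshold $\sigma > 11/4$ is exactly what makes the error integrable. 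Modulo these paradifferential refinements, the structure of the argument — asymptotic profile, asymptotic ODE with conserved modulus, reconstruction — is the same as in \cite{NLS}, \cite{IT-global}.
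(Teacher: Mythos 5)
Your overall scaffolding --- use the $X^\sharp$ gains to dispose of the elliptic component and of the hyperbolic component outside $\Omega^\delta$, then run wave packet testing inside $\Omega^\delta$ to derive an asymptotic ODE with conserved modulus --- matches the paper. But there is one substantive conceptual error in the way you propose to handle the paradifferential quadratic terms, and it would lead you to a wrong asymptotic equation.

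You write that the paradifferential terms $T_{2\Re\tW_\alpha}\tQ_\alpha$, $T_{2\Re\tQ_\alpha}\tW_\alpha$, $T_{2\Re\tQ_\alpha}\tQ_\alpha$ ``combine with the cubic source terms $(\tG^{(3)},\tK^{(3)})$ to produce a resonant contribution $ic|\gamma|^2\gamma/t$,'' and that one must ``show that $T_{2\Re\tW_\alpha}\tQ_\alpha$ tested against a wave packet at frequency $\xi_v$ sees essentially the value of $\tW_\alpha$ at the same spatial point and at all lower frequencies, and that these low-frequency pieces are themselves well-approximated by the wave packet parameter $\gamma$.'' This is not what happens, and the misconception matters. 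In the paper's proof (Part B of the $\dot\gamma$ computation), the paradifferential quadratic terms contribute \emph{only to the error $e$} and not to the resonant coefficient. The crucial observation is that the paradifferential structure forces a low--high frequency mismatch between the two factors, while at a fixed spatial location $\alpha\approx v t$ the hyperbolic frequency $\xi_0=v^{-2}$ is \emph{unique}. Therefore, whenever one factor sits at the packet frequency $\xi_0$, the other factor necessarily sits at frequencies $\ll\xi_0$ or $\gg\xi_0$, which are \emph{elliptic} at that spatial location. Its size is then controlled not by $\gamma$ but by the elliptic bounds \eqref{tWQ-ell2}--\eqref{tWQ-ell4} (equivalently, by $\|\tQ_{ell,\alpha}\|_{\dot H^{1/4}}\lesssim \epsilon t^{-1+C\epsilon^2}$), which carry an extra genuine power of $t$. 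Substituting the low-frequency coefficient ``by $\gamma$'' as you propose is wrong because $\gamma(t,v)$ encodes the hyperbolic mode at $\xi_v$, whereas the paracoefficient at $\alpha\approx vt$ is localized away from $\xi_v$; the two live in disjoint regions of phase space. The resonant coefficient in \eqref{gamma-ode} comes exclusively from the cubic term $\tG^{(3)}_r = \Pi((\tQ_\alpha\tW_\alpha)_\alpha,\bar\tW)+\Pi(\bar\tQ_\alpha,\tW_\alpha^2)$, after substituting the hyperbolic parts of all three factors by the wave packet ansatz.

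Two secondary remarks. First, you diagonalize to a single scalar $\tilde v = \tW + i|D|^{1/2}\tQ$ before testing; the paper deliberately keeps the pair $(\tW,\tQ)$ and tests with a matched packet pair $(\ww,\qq)=(-iv\partial_t\uu,v\uu)$, using the energy conservation of the linear system \eqref{ww2d-0} to compute $\dot\gamma$. The scalar reduction is closer to the NLS argument in \cite{IT-global}, but here it is less convenient because $|D|^{1/2}$ does not interact cleanly with the spatial localizations $\bchi_{\alpha_0}$ on which the entire $X^\sharp$ machinery rests. Second, the cubic terms themselves require the resonant/nonresonant/null classification (Section~6, Part C2) before you can read off the coefficient; simply ``combining'' $\tG^{(3)}$ with the paradifferential terms does not expose it.
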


As a starting point for the proof of this proposition we recall the properties that we have available for $(\tW, \tQ)$. 
First of all, $(\tW,\tQ)$ solves the system \eqref{tWQ-system} with the a cubic nonlinearity $(\tG^{(3)}, \tK^{(3)})$ given by \eqref{tGK3}, and a source term $(\tG^{(4+)}, \tK^{(4+)})$ satisfying the bound 
\begin{equation}
\label{tsource-re}
\Vert (\tG^{(4+)}, \tK^{(4+)})\Vert_{ \dH^{\frac{1}{4}} }\lesssim \epsilon \epsilon^4\langle t \rangle ^{C_1\epsilon^2-\frac{3}{2}}.
    \end{equation}
    
    For $(\tW, \tQ)$ we recall the energy estimates from Proposition~\ref{p:reiterate-KS}:
\begin{equation}
\label{te-re}
\Vert (\tW, \tQ)\Vert_{\dH^{\sigma} \cap \dH^{\frac{1}{4}} }\lesssim \epsilon \langle t \rangle ^{2C\epsilon}.
\end{equation}
    and
\begin{equation}
\label{tvf-re}
\Vert (2\alpha\partial_{\alpha} \tW-t\partial_{\alpha} \tQ, 2\alpha\partial_{\alpha} \tQ +it\tW) \Vert_{ \dH^{\frac{1}{4}}}\lesssim \epsilon  \langle t \rangle^{2C\epsilon}.
    \end{equation}  
    Given this starting point, our objective is to show that we have the pointwise bound 
\begin{equation}
\label{tpoint-re-better}
\| (\tW_\alpha, \tQ_\alpha) \|_{X}
\lesssim \epsilon \langle t \rangle^{-\frac12}.
\end{equation}

For $(\tW, \tQ)$ we take advantage of the analysis in the previous section, where $(\tW, \tQ)$ are decomposed into an elliptic and hyperbolic parts
\[
(\tW, \tQ) =(\tW, \tQ)_{ell} + (\tW, \tQ) _{hyp}.
\]
For the elliptic part we can use the bounds \eqref{tWQ-ell4} from Corollary~\ref{c:KS-para} to conclude that 
\[
 \Vert (\tW_{\alpha}, \tQ_{\alpha})_{ell}\Vert_{X}
\lesssim \epsilon^2 \langle t \rangle^{-\frac12 -\frac{b}{2}+C\epsilon^2}, 
\]
which suffices for $\epsilon$ small enough. Hence it remains to prove that \eqref{tpoint-re-better} holds for the hyperbolic part
\begin{equation}
\label{tpoint-re-better-hyp}
\| (\tW_\alpha, \tQ_\alpha)_{hyp} \|_{X}
\lesssim \epsilon \langle t\rangle^{-\frac12}.
\end{equation}
On the other hand for the hyperbolic part 
we have the pointwise bounds from Corollary~\ref{c:KS-para}:
\begin{equation}
\label{tpoint-re}
\| (\tW_\alpha, \tQ_\alpha)_{hyp} \|_{X^\sharp}
\lesssim \epsilon t^{-\frac12+C\epsilon^2},
\end{equation}
which are not good enough because of the $t^{C\epsilon^2}$ loss.
However, the $X^\sharp$ norm includes an additional gain 
away from dyadic velocity $1$, which is captured by the bound
\eqref{tWQ-hyp-out} which we recall here 
\begin{equation}
\| (1-\bchi_{\Omega^\delta}) (\tW,\tQ)_{hyp} \|_{X} \lesssim \epsilon t^{-\frac12 -b\delta + C \epsilon^2} .   
\end{equation}
This gives enough decay outside the region $\Omega^\delta$ defined in \eqref{Omega-delta}.
Hence it remains to obtain a bound inside $\Omega^\delta$, and show that
\begin{equation}\label{tpoint-re-better-hyp+}
\| \bchi_{\Omega^\delta} (\tW,\tQ)_{hyp} \|_{X} \lesssim \epsilon t^{-\frac12}.    
\end{equation}

In order to establish the global pointwise decay estimates \eqref{tpoint-re-better-hyp} in $\Omega^\delta$ we use the
method of testing by wave packets,  first introduced in 
paper \cite{NLS} in the context of the one dimensional cubic NLS
equation, and then used in the water waves context in \cite{IT-global} and other subsequent works.  The construction of the wave packets is identical with the one we have used \cite{IT-global}, but for convenience we recall it here.  This method, as emphasized in all our  results, requires localization of the initial data. 

The premise of the wave packet testing is that, at leading order, nonlinear waves
travel in a linear fashion along a ray which is connected to their spatial frequency via the linear Hamilton flow.  We take the ray to be $\{\alpha = v t\}$, and we refer to $v$ as the velocity; the associated frequency will be denoted by $\xi_v = v^{-2}$.  Our goal is to establish decay for the pair $(\tW,\tQ)$ along this ray by testing it with a wave packet evolving along the ray. The wave packet testing will only see a certain frequency of $(\tW,\tQ)$ along the ray, namely $\xi_v$; but this will suffice for our uniform decay bounds.

In our context here, by a wave packet we mean an appropriately localized approximate solution, i.e. with $O(1/t)$ errors, of the linear system 
\begin{equation} \label{ww2d-0} \left\{
\begin{aligned}
 & W_{ t} +  Q_\alpha   =  0
\\
& Q_t - i W = 0.
\end{aligned}
\right.
\end{equation}

We recall some key facts about how one should envision a wave packet. The dispersion relation $\tau = \pm \sqrt{|\xi|}$ gives that a ray with velocity $v$ is associated with waves which have  spatial frequency 
\[
\xi_v = - \frac{1}{4 v^2}=-\frac{t^2}{4\alpha^2}.
\]

This is associated with the phase function
\[
\phi(t,\alpha) = \frac{t^2}{4\alpha},
\]
which can also be seen as a solution to the appropriate eikonal equation,
and is exactly the phase of the fundamental solution, as predicted by the stationary phase method.

Then our wave  packets will be combinations of functions of the 
\[
\uu(t,\alpha) = v^{-\frac32} \chi\left(\frac{\alpha - vt}{t^\frac12 v^\frac32}\right)e^{i\phi(t,\alpha)},
\]
where $\chi$ is a smooth compactly supported bump function with integral one
\begin{equation}\label{chi-int}
\int \chi (y)\, dy =1.
\end{equation}
Our packets are localized around the ray $\{\alpha = v t\}$ on the
scale $\delta \alpha = t^\frac12 v^\frac32$. This exact choice of
scale is determined by the phase function $\phi$. Precisely, the quadratic 
expansion of $\phi$ near $\alpha = vt$ reads
\[
\phi(t,\alpha) = \phi(t,vt) + (\alpha-vt) \phi_\alpha(t,vt) + O( t^{-1} v^{-3}  (\alpha-vt)^2),
\]
and our scale $\delta \alpha$ represents exactly the scale on which $\phi$ is well
approximated by its linearization. We further remark that there is a threshold
$v \approx t$ above which $\phi$ is essentially zero, and the above considerations
are no longer relevant. By contrast, the above phase blows up at $\alpha = 0$.
In order to avoid proximity to either of these 
extreme scenarios, we confine our analysis to a region of the form 
\begin{equation}
\label{vbound}
\Omega^0:= \left\{ t^{-\frac{1}{100}}\leq |v| \leq  t^{\frac{1}{100}}\right\},
\end{equation}
which contains the smaller region $\Omega^\delta$. These powers of $t$ in the definition of $\Omega^0$ are chosen  rather arbitrary; they need to be universal small enough constants.

Under this assumption, the function $\uu$ is strongly localized at frequency $\xi_v$.
For later use, we record here some ways to express this localization.
We recall here some of the results in \cite{HIT}, and \cite{IT-global} that we will rely on without further adjustments.
\begin{lemma}
\label{l:uu}
a) Let $\uu$ be defined as above. Then its Fourier transform and that
of $\partial_v \uu$ have the form
\begin{equation}\label{hatq}
\hat \uu (\xi)=  t^\frac12 \chi_1\left(\frac{\xi + (4v^2)^{-1}}{ t^{-\frac12} v^{-\frac32}} \right) e^{- i t \sqrt{|\xi|}},
\qquad \partial_v \hat{\uu}(\xi) =  t v^{-\frac32}  \chi_2\left(\frac{\xi + (4v^2)^{-1}}{ t^{-\frac12} v^{-\frac32}} \right) e^{- i t \sqrt{|\xi|}} ,
\end{equation}
 where $\chi_1$ and $\chi_2$ are Schwartz functions so that in addition, 
\begin{equation}\label{chi1-int}
\int \chi_i(\xi) \, d\xi = 1 + O(v^{\frac12}t^{-\frac12}), \quad i=\overline{1,2}.
\end{equation}

b) For $s \geq 0$, $\lambda_v = (4v^2)^{-1}$ and $P_{\lambda_v}$ the associated dyadic frequency projector  we have
\begin{equation} \label{dsuu}
P_{\lambda_v}(|D|^s   - (4v^2)^{-s}) \uu(\alpha,t)  =  (4v^2)^{-s} t^{-\frac12} v^{\frac12}
\chi_3\left(\frac{\alpha - vt}{t^\frac12 v^\frac32}\right)e^{i\phi(t,\alpha)},
\end{equation}
where $\chi_3$ is also a Schwartz function.
\end{lemma}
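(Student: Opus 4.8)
\textbf{Proof plan for Lemma~\ref{l:uu}.}

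The plan is to verify each identity by direct computation with the explicit formula for $\uu$, isolating the profile functions $\chi_1,\chi_2,\chi_3$ and checking that they are Schwartz, together with the normalization \eqref{chi1-int}. The single mechanism underlying all three parts is that, in the region $\Omega^0$ from \eqref{vbound}, the phase $\phi(t,\alpha)=t^2/(4\alpha)$ is, on the wave packet scale $\delta\alpha = t^{1/2}v^{3/2}$ around $\alpha=vt$, accurately linear: writing $\alpha = vt + t^{1/2}v^{3/2} y$,
\[
\phi(t,\alpha) = \frac{t^2}{4vt}\left(1 + \frac{t^{-1/2}v^{1/2}y}{1}\right)^{-1} = \frac{t}{4v} - \frac{t^{1/2}}{4 v^{1/2}} y + O\!\left(v^{1/2}t^{-1/2} y^2\right),
\]
where the error, thanks to the constraint $|v|\le t^{1/100}$, is uniformly small on the support of $\chi(y)$. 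Equivalently, $\phi_\alpha(t,vt) = -t^2/(4(vt)^2) = -(4v^2)^{-1} = \xi_v$, so $\uu$ has a carrier oscillation $e^{i\xi_v\alpha}$ modulated by a bump of width $\delta\alpha$, hence a Fourier transform of width $(\delta\alpha)^{-1} = t^{-1/2}v^{-3/2}$ centered at $\xi_v$, which is exactly the shape asserted in \eqref{hatq}.

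For part (a): compute $\hat\uu(\xi) = \int v^{-3/2}\chi\!\big((\alpha-vt)/(t^{1/2}v^{3/2})\big) e^{i\phi(t,\alpha)} e^{-i\alpha\xi}\,d\alpha$. Substituting $\alpha = vt + t^{1/2}v^{3/2}y$ and inserting the quadratic Taylor expansion of $\phi$, the linear-in-$y$ terms combine to produce the factor $\exp\big(i t^{1/2}v^{3/2}(\xi_v - \xi)y\big)$, the constant-in-$y$ part of $\phi$ contributes a $y$-independent unimodular factor, and one checks this constant phase equals $-t\sqrt{|\xi|}$ up to acceptable error (using $\xi\approx\xi_v = (4v^2)^{-1}$ on the effective support, so $\sqrt{|\xi|}\approx (2v)^{-1}$ and $t/(4v)\approx \tfrac{t}{2}\sqrt{|\xi_v|}$ — here one must carry the first-order correction in $\xi-\xi_v$, which is what turns the frozen phase $t/(4v)$ into the genuine $t\sqrt{|\xi|}$); the quadratic error term in $\phi$, being $O(v^{1/2}t^{-1/2})$ on $\mathrm{supp}\,\chi$, can be absorbed by a convergent expansion which modifies the profile by lower-order Schwartz corrections. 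The $y$-integral of $\chi(y)$ against the linear exponential is then, by definition, $t^{1/2}$ times a Schwartz function $\chi_1$ of the rescaled variable $(\xi+(4v^2)^{-1})/(t^{-1/2}v^{-3/2})$, and $\chi_1(0) = \int\chi(y)\,dy + O(v^{1/2}t^{-1/2}) = 1 + O(v^{1/2}t^{-1/2})$, giving \eqref{chi1-int}. The formula for $\partial_v\hat\uu$ follows by differentiating under the integral sign: $\partial_v$ hitting the amplitude, the spatial localization, or the phase each produces an extra factor of size $t v^{-3/2}$ relative to $t^{1/2}$ (the worst case being $\partial_v$ on the phase, which brings down $\partial_v\phi \sim t^2/v^2 \cdot$(localization width)), and collecting these yields the stated profile $\chi_2$, again Schwartz with the same normalization up to $O(v^{1/2}t^{-1/2})$.

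For part (b): since $\uu$ is localized at frequency $\xi_v = (4v^2)^{-1}$ with $P_{\lambda_v}\uu \approx \uu$, the operator $|D|^s - (4v^2)^{-s}$ acting on $\uu$ vanishes to leading order; quantitatively, on the Fourier side $P_{\lambda_v}(|D|^s - \lambda_v^s)\hat\uu(\xi) = (|\xi|^s - \lambda_v^s)\chi_{\lambda_v}(\xi)\hat\uu(\xi)$, and on the support of $\hat\uu$ one has $|\xi|^s - \lambda_v^s = \lambda_v^s\big(s\lambda_v^{-1}(\xi-\xi_v) + O(\lambda_v^{-2}(\xi-\xi_v)^2)\big)$, where $\xi - \xi_v$ ranges over a window of width $t^{-1/2}v^{-3/2}$, so $|\xi|^s-\lambda_v^s = O(\lambda_v^s \cdot \lambda_v^{-1} t^{-1/2}v^{-3/2}) = O((4v^2)^{-s} v^2 t^{-1/2} v^{-3/2}) = O((4v^2)^{-s} t^{-1/2}v^{1/2})$. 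Multiplying $\hat\uu$ (which has $L^1$-mass $\sim 1$ after the $t^{1/2}$ normalization is accounted for against the window width) by this symbol and inverting the Fourier transform reproduces, by the same stationary-phase bookkeeping as in part (a), a function of the form $(4v^2)^{-s} t^{-1/2}v^{1/2}\,\chi_3\big((\alpha-vt)/(t^{1/2}v^{3/2})\big)e^{i\phi(t,\alpha)}$ with $\chi_3$ Schwartz (it is essentially $\chi$ convolved with the profile of the linear symbol $\xi\mapsto \xi-\xi_v$, i.e. a derivative of a Schwartz bump). I expect the main obstacle to be purely bookkeeping: keeping careful track of the frozen phase $t/(4v)$ versus the true phase $t\sqrt{|\xi|}$ and verifying that the discrepancy is genuinely absorbed into the Schwartz profile rather than into an uncontrolled error — this requires using the full quadratic (not merely linear) Taylor expansion of $\phi$ and the region restriction $\Omega^0$ in an essential way, and is exactly the point where the prior works \cite{HIT}, \cite{IT-global} did the computation, so one may either reproduce it or cite it directly.
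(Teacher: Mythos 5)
The paper does not actually prove Lemma~\ref{l:uu}: it is recalled verbatim from \cite{HIT} and \cite{IT-global} ``without further adjustments,'' so there is no in-paper argument to match. Your plan --- a direct computation of $\hat\uu$ in the rescaled variable $\alpha = vt + t^{1/2}v^{3/2}y$ with a Taylor expansion of the phase --- is the standard route taken in those references, and part (b) is fine in outline. However, there is a concrete arithmetic error at the heart of part (a). Expanding $\phi(t,\alpha)=\tfrac{t}{4v}(1+t^{-1/2}v^{1/2}y)^{-1}$ gives $\tfrac{t}{4v}-\tfrac{t^{1/2}}{4v^{1/2}}y+\tfrac{y^2}{4}+O(t^{-1/2}v^{1/2}y^3)$: the quadratic term is $y^2/4=O(1)$ on $\mathrm{supp}\,\chi$, not $O(v^{1/2}t^{-1/2}y^2)$. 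This is precisely what the paper means when it says $\delta\alpha=t^{1/2}v^{3/2}$ is \emph{exactly} the scale on which $\phi$ is well approximated by its linearization --- the remainder $O(t^{-1}v^{-3}(\alpha-vt)^2)$ evaluates to $O(1)$, i.e.\ the packet saturates the uncertainty principle. It is the \emph{cubic} term that is $O(v^{1/2}t^{-1/2})$. Consequently your mechanism of ``absorbing the quadratic phase error by a convergent expansion'' fails: $e^{iy^2/4}$ is not a small perturbation of $1$. One must instead carry the quadratic phase exactly; it combines with the quadratic part of the expansion of $t\sqrt{|\xi|}$ about $\xi_v$ into a bounded nondegenerate chirp in $(y,\eta)$, which still produces a Schwartz profile $\chi_1$ (so \eqref{hatq} survives), but $\chi_1$ is genuinely not close to $\hat\chi$, and the normalization \eqref{chi1-int} then requires the honest stationary-phase constant computation --- using that $t\sqrt{|\xi|}$ is the exact Legendre transform of $\phi$, so the reduced phase vanishes to second order at the joint critical point --- rather than the one-line claim ``$\chi_1(0)=\int\chi+O(v^{1/2}t^{-1/2})$'' (which in any case conflates $\chi_1(0)$ with $\int\chi_1\,d\xi$).

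Two further points. First, $\phi(t,\alpha)=t^2/(4\alpha)$ does not depend on $v$, so ``$\partial_v$ hitting the phase'' contributes nothing; the extra factor $t^{1/2}v^{-3/2}$ in $\partial_v\hat\uu$ comes entirely from $\partial_v$ acting on the packet center $vt$ inside $\chi$, so that the leading term of $\partial_v\uu$ is $-t^{1/2}v^{-3}\chi'(y)e^{i\phi}$. Second, because that leading term is built from $\chi'$ (mean zero) rather than $\chi$, the normalization $\int\chi_2=1+O(v^{1/2}t^{-1/2})$ does \emph{not} follow ``by the same computation'' as for $\chi_1$; if you intend to prove rather than cite the lemma, this is a point you must verify separately against the precise statement in \cite{IT-global}. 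Given that the paper itself defers the proof to the prior works, citing them is legitimate, but the sketch as written would not compile into a correct self-contained proof without repairing the quadratic-phase bookkeeping above.
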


\bigskip

Our use of the method of testing by wave packets proceeds in a similar fashion as in \cite{IT-global}. The linear correlation between our unknowns $(\tW,\tQ)$ makes it easier to chose one wave packet for one of the variables, and then match it for the second variable.  
As our linear system \eqref{ww2d-0} is simple enough, it suffices to first choose the $\tQ$ component and then use the second of the two linear equations in \eqref{ww2d-0}  to match $\tW$,
\begin{equation*}
(\ww,\qq) = ( -i v \partial_t \uu, v \uu),  
\end{equation*}
where $\ww$ and $\qq$ are the wave packets associated to $\tW$, and $\tQ$ respectively.

Then we have
\begin{equation}
\label{w-bold}
\ww = \frac12 \uu + \left( \frac{vt -\alpha}{2 \alpha} \chi\left(\frac{\alpha - vt}{t^\frac12 v^\frac32}\right)
 + \frac{i (vt+\alpha)}{2t^\frac32 v^\frac12}  \chi'\left(\frac{\alpha - vt}{t^\frac12 v^\frac32}\right)\right) v^{-\frac32} e^{i\phi(t,\alpha)}.
\end{equation}
The second term above is better by a $v^\frac12 t^{-\frac12}$ factor, so it will play a negligible role in 
most of our analysis. However, it is crucial in improving the error in the first linear equation
in  \eqref{ww2d-0}, which is given by 
\begin{equation}
\label{g-eq}
\ggg  := \partial_t \ww + \partial_\alpha \qq = v (\partial_\alpha - i \partial_t^2) \uu .
\end{equation}
 Indeed,  computing the  error in \eqref{g-eq} we obtain
\begin{equation}
\label{erori}
\begin{aligned}
(\partial_\alpha - i \partial_t^2) \uu &=\frac{e^{i\phi}}{v^{\frac{3}{2}}} \partial_{\alpha}\left[ \frac{(\alpha-vt) }{2\alpha}\chi-i \frac{ (\alpha +vt)^2}{4v^{\frac{3}{2}}t^{\frac{5}{2}}}\chi '\right]+ \frac{e^{i\phi}}{v^{\frac{3}{2}}} \left[ \frac{(\alpha-vt)}{2\alpha^2}\chi -i \frac{ (\alpha -vt)}{4v^{\frac{3}{2}}t^{\frac{5}{2}}}\chi'\right].
\end{aligned}
\end{equation}
The leading term is the first one, which, as expected, has size $t^{-1}$
times the size of $\ww$; the presence of $\partial_\alpha$ endows it with a critical 
structural property which we will take advantage of later on.  The second term is better by another $t^{\frac12}$ factor, and will only play a perturbative role in the sequel.

The reader is cautioned that one should not think about the above wave packets
as a global approximate solution for the linear system.  Instead, as in \cite{NLS} and as in \cite{IT-global}, our test packets $(\ww,\qq)$ are good approximate solutions for the linear
system associated to our problem only on the dyadic time scale $\delta t\leq t$.  

The outcome of testing the normal form solutions to the water wave
system with the wave packet $(\ww,\qq)$ is  the
scalar complex valued function $\gamma(t,v)$, defined by
\begin{equation*}
\gamma(t,v) = \langle (\tW,\tQ),(\ww,\qq)\rangle_{\dH^0},
\end{equation*}
which we will use as a good measure of the size of $(\tW,\tQ)$ along
our chosen ray.  Here it is important that we use the complex pairing
in the inner product. Note that here we are following \cite{IT-global} and using the original energy space $\dH^0$, and not the fractional Sobolev space $\dH^{\frac14}$.

While the above asymptotic profile $\gamma$ is defined everywhere, we will only use it in the region $\Omega^0$  in \eqref{vbound}. This is  because we  already have sufficient decay outside this region, indeed outside the smaller region $\Omega^\delta$. Furthermore, we will see that $\gamma$ primarily carries information about the hyperbolic part of $(\tW, \tQ)$, but this is all that is needed.

\medskip
Now we have two
tasks. Firstly, we need to show that $\gamma$ is a good representation of the 
pointwise size of $(\tW,\tQ)_{hyp}$ and their derivatives:

\begin{proposition}
\label{p:diff}
Assume that \eqref{te-re} and \eqref{tvf-re} hold. Then in  $\Omega^0$ we have 
the following bounds for $\gamma$:
\begin{equation}\label{gamma-hiv}
\| v^{-\frac{1}{2}}\gamma \|_{L^2_v}  + \|v^{\frac{1}{2}} \partial_v \gamma \|_{L^2_v} 
+  \|\gamma \|_{L^\infty}  \lesssim 
\epsilon t^{C\epsilon^2},\quad v\gtrsim 1,
\end{equation}
\begin{equation}\label{gamma-lowv}
\|v^{-2\sigma}\gamma \|_{L^2_v}  + \|v^{\frac{1}{2}} \partial_v \gamma \|_{L^2_v} 
+  \| v^{\frac14-\sigma}\gamma \|_{L^\infty}  \lesssim 
\epsilon t^{C\epsilon^2},\quad v\lesssim 1,
\end{equation}
as well as the approximation bounds for $(\tW,\tQ)_{hyp}$ and their derivatives:
\begin{equation} \label{pactest}
\begin{split}
&(|D|^s \tW,|D|^{s+\frac12} \tQ)_{hyp} (t,vt) =  \ |\xi_v|^{s}
t^{-\frac12} e^{i\phi(t,vt)} \gamma(t,v) ( 1, \sgn{v}) + \err_s, 
\\  
\end{split}
\end{equation}
where
\begin{equation}\label{pacerr}
\begin{split}
&\|v^{2s-1} \err_{s}\|_{L^2_v}\lesssim \epsilon t^{-1}, \qquad  \|v^{2s-\frac{1}{4}} \err_s\|_{L^\infty} \lesssim  
\epsilon t^{-\frac{3}{4}}.
\\
\end{split}
\end{equation}
\end{proposition}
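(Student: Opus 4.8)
\textbf{Proof strategy for Proposition~\ref{p:diff}.}

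The plan is to follow the template of the analogous result in \cite{IT-global}, but carefully tracking the fact that $(\tW,\tQ)$ now live only in the weaker spaces $\dH^\frac14 \cap \dH^\sigma$ rather than in the higher-regularity spaces used there, and that the relevant information is carried by the hyperbolic component of $(\tW,\tQ)$. The bulk of the work is establishing the pointwise approximation \eqref{pactest}--\eqref{pacerr}; the estimates \eqref{gamma-hiv}--\eqref{gamma-lowv} on $\gamma$ itself will then follow as a byproduct, or can be read off directly from the definition of $\gamma$ as a correlation. Throughout, the key structural input is Lemma~\ref{l:uu}: the wave packet $\uu$ is sharply localized at frequency $\xi_v = (4v^2)^{-1}$ on the spatial scale $\delta\alpha = t^\frac12 v^\frac32$, so that $\langle (\tW,\tQ),(\ww,\qq)\rangle$ only sees the piece of $(\tW,\tQ)$ living at frequency $\approx \xi_v$ near the ray $\alpha = vt$ — which is precisely the hyperbolic component $P_{\xi_v}\bchi_{\alpha_0}(\tW,\tQ)$ with $\alpha_0 = vt$ in the decomposition \eqref{hyp-decomp}.

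First I would establish \eqref{gamma-hiv}--\eqref{gamma-lowv}. For the $L^2_v$ bounds on $\gamma$, one writes $\gamma(t,v) = \langle (\tW,\tQ), (\ww,\qq)\rangle$, uses the frequency localization of $(\ww,\qq)$ at $\xi_v$ together with the change of variables $v \mapsto \xi_v = (4v^2)^{-1}$, and reduces the claim to an almost-orthogonality / $TT^*$ estimate comparing $\int |\gamma(t,v)|^2 \, v^{-1} dv$ (or with the appropriate weight in the $v \lesssim 1$ regime, reflecting the $\dH^\sigma$ scaling) against $\|(\tW,\tQ)\|_{\dH^\frac14 \cap \dH^\sigma}^2$; the weights $v^{-\frac12}$ versus $v^{-2\sigma}$ are exactly the ones matching the two endpoint Sobolev norms under this change of variables. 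The bound on $v^\frac12 \partial_v \gamma$ is handled the same way using the second identity in \eqref{hatq}, i.e. replacing $\uu$ by $\partial_v \uu$, which costs one factor of $tv^{-\frac32}$ in amplitude but gains through the localization, and also requires moving the $\partial_v$ onto $(\tW,\tQ)$ via the vector field $\tS$, invoking \eqref{tvf-re}. The $L^\infty$ bounds follow from the $L^2_v$ bounds together with the $\partial_v$ bound by a one-dimensional Gagliardo--Nirenberg (Agmon) inequality in $v$, with the weights chosen consistently.

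The main obstacle — and the step I would spend the most care on — is the approximation bound \eqref{pactest} with the error estimates \eqref{pacerr}. The idea is: testing $(\tW,\tQ)$ against the wave packet $(\ww,\qq)$ localized near $\alpha = vt$ at frequency $\xi_v$ extracts, modulo errors, exactly the quantity $(|D|^s\tW,|D|^{s+\frac12}\tQ)_{hyp}(t,vt)$ up to the phase $e^{i\phi(t,vt)}$, the amplitude $|\xi_v|^s t^{-\frac12}$, and the polarization vector $(1,\sgn v)$ coming from the linear relation $\tW = -i|D|^\frac12 \tQ \cdot \sgn(\cdot)$ at frequency $\xi_v$ (this is where the second of the linear equations \eqref{ww2d-0}, used to define $\ww$ from $\qq$, enters). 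To make this precise I would: (i) use Lemma~\ref{l:uu}(b) to replace $|D|^s$ acting on the packet by the scalar $(4v^2)^{-s}$ up to a Schwartz-tail error of relative size $t^{-\frac12}v^\frac12$; (ii) discard the non-hyperbolic (elliptic) part of $(\tW,\tQ)$ against the packet, using the frequency separation between the packet (at $\xi_v$) and the elliptic frequencies ($\ll \xi_v$ or $\gg \xi_v$) together with the elliptic bounds \eqref{tWQ-ell2}--\eqref{tWQ-ell4}, so these contribute only to $\err_s$; (iii) on the hyperbolic piece, freeze the phase $\phi$ at its linearization about $\alpha = vt$ — the curvature term $O(t^{-1}v^{-3}(\alpha - vt)^2)$ is $O(1)$ on the packet scale but its oscillation is controlled and contributes a lower-order correction — and the remaining spatial average of $(\tW,\tQ)_{hyp}$ against $v^{-\frac32}\chi((\alpha - vt)/(t^\frac12 v^\frac32))$, which by $\int \chi = 1$ and the smoothness of $(\tW,\tQ)_{hyp}$ on that scale reproduces the pointwise value; (iv) estimate the resulting errors in $L^2_v$ using \eqref{te-re}, \eqref{tvf-re} and a Sobolev trace bound converting the $\dH^\frac14 \cap \dH^\sigma$ energy to pointwise control along rays (this is the analogue of the $\err$ estimate in \cite{IT-global}, now at lower regularity, which is why one must use the full strength of the $\sigma > \frac{11}{4}$ hypothesis), and then upgrade to $L^\infty_v$ again by the one-dimensional interpolation inequality in $v$. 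The delicate bookkeeping is precisely in showing that the lower-regularity energy bounds still suffice to place all these errors at size $\epsilon t^{-1}$ in the weighted $L^2_v$ norm and $\epsilon t^{-\frac34}$ in the weighted $L^\infty_v$ norm; this is where the balanced scaling of the weights $v^{2s-1}$ and $v^{2s-\frac14}$ in \eqref{pacerr} is essential, as they are the ones under which the relevant change of variables in $v$ is isometric.
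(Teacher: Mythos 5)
Your overall architecture — localize spatially, discard the elliptic part by frequency mismatch, analyze the hyperbolic part against the packet, and read off \eqref{gamma-hiv}--\eqref{gamma-lowv} from the $L^2_\alpha$ bounds — matches the paper's. But there is a genuine gap in the step you call (iv), and it is the heart of the matter. You claim that on the packet support the spatial average of $(\tW,\tQ)_{hyp}$ against $v^{-3/2}\chi((\alpha-vt)/(t^{1/2}v^{3/2}))$ reproduces the pointwise value ``by the smoothness of $(\tW,\tQ)_{hyp}$ on that scale,'' and that a ``Sobolev trace bound'' then converts $\dH^{1/4}\cap\dH^\sigma$ energy into the $\err_s$ estimates. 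Neither of these is available from the energy bounds alone. The hyperbolic piece at frequency $\xi_v \sim v^{-2}$ oscillates on the spatial scale $v^2$, which in $\Omega^0$ is far smaller than the packet scale $t^{1/2}v^{3/2}$, so $(\tW,\tQ)_{hyp}$ is \emph{not} approximately constant on the packet; and a trace of $\dH^{1/4}\cap\dH^\sigma$ along a curve does not by itself give the $t^{-3/4}$ pointwise error. What is missing is the mechanism that makes the comparison work: one must (a) pass to the polarization combination $y = \tW_{hyp} \pm |D|^{1/2}\tQ_{hyp}$ (the other combination $w-r$ is controlled by an \emph{elliptic} estimate obtained by subtracting the two components of the vector-field bound), and (b) conjugate out the fast phase by setting $u = e^{-i\phi}y$, so that the vector-field bound \eqref{tvf-re} becomes an $L^2$ bound on $\partial_\alpha u = e^{-i\phi}Ly$ with $L = \partial_\alpha + it^2/(4\alpha^2)$. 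It is this derivative bound on $u$ — not a trace of the Sobolev energy, and not smoothness of $(\tW,\tQ)_{hyp}$ itself — that yields the $L^\infty$ control via 1D interpolation and produces the error bounds \eqref{pacerr} with the stated powers of $t$ and $v$. Your $\partial_v\gamma$ estimate also ultimately reduces to this bound on $\partial_\alpha u$ after integrating by parts, rather than to ``moving $\partial_v$ onto $(\tW,\tQ)$ via $\tS$'' directly.

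As a smaller point, the $TT^*$/almost-orthogonality device you propose for the $L^2_v$ bound on $\gamma$ is more machinery than needed: because the wave packets at dyadically separated velocities see dyadically separated spatial regions, one simply bounds $\|\gamma\|_{L^2_v(v\approx v_0)}$ by $\|y\|_{L^2_\alpha(\alpha\approx v_0 t)}$ at each scale and sums, with the weights $v^{-1/2}$ and $v^{-2\sigma}$ emerging from the two endpoints of $\dH^{1/4}\cap\dH^\sigma$ under the scaling $\xi_0 = v_0^{-2}$. Your version would likely work but is not what is required.
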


Compared to the work in \cite{IT-global}, here we do not limit the range for $s$ because we are only comparing the profile $\gamma$ with $(\tW,\tQ)_{hyp}$, and not with the full pair $(\tW, \tQ)$. Also \cite{IT-global} contains similar relations between the Fourier transforms of $(\tW, \tQ)$ and $\gamma$, which for brevity we omit here.

\medskip

Secondly, we need to show that $\gamma$ stays bounded,  which we do by 
establishing a differential equation for it:

\begin{proposition}
\label{p:ode}
Assume that \eqref{te-re}, \eqref{tvf-re}, \eqref{tpoint-re} and \eqref{tsource-re}  hold.
Then within the set $\Omega^0$ the function $\gamma$ solves an asymptotic ordinary differential equation of the form 
\begin{equation}\label{gamma-ode}
\dot\gamma = \frac{i}{2t (2v)^5}   \gamma |\gamma|^2 + e, 
\end{equation}
where $e$ satisfies the $L^2$ and $L^\infty$ bounds
\begin{equation}\label{e-hiv}
 \| v^{\frac{19}{8}} e\|_{L^{\infty}}  \lesssim 
 \epsilon^2 t^{-\frac{9}{8}+C\epsilon^2}, 
\end{equation}
\begin{equation}\label{e-lowv}
\| v^{-\frac12} e\|_{L^2_v} \lesssim 
 \epsilon^2 t^{-\frac54} t^{C\epsilon^2}.
\end{equation}
\end{proposition}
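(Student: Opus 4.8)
The plan is to compute $\dot\gamma$ by differentiating the definition $\gamma(t,v) = \langle (\tW,\tQ),(\ww,\qq)\rangle_{\dH^0}$ in $t$, substituting the equation \eqref{tWQ-system} for $\D_t(\tW,\tQ)$ and the wave packet error equation \eqref{g-eq} for $\D_t(\ww,\qq)$, and then extracting the leading cubic term while proving that everything else goes into the error $e$. First I would write
\[
\dot\gamma = \langle \D_t(\tW,\tQ),(\ww,\qq)\rangle_{\dH^0} + \langle (\tW,\tQ),\D_t(\ww,\qq)\rangle_{\dH^0}.
\]
In the first term, the linear part of \eqref{tWQ-system}, namely $(-\tQ_\alpha, i\tW)$, when paired against $(\ww,\qq)$, combines with the second pairing via the adjoint of the linear operator; by construction $(\ww,\qq)$ is an approximate solution of \eqref{ww2d-0} with the $O(1/t)$ error \eqref{g-eq}, so the linear contributions cancel up to a term involving $\langle (\tW,\tQ), v(\D_\alpha - i\D_t^2)\uu \rangle$. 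Using the structure of \eqref{erori} — in particular the $\D_\alpha$ in front of the leading term — one integrates by parts to move the derivative onto $(\tW,\tQ)$, producing $\tW_\alpha$ (or $\tQ_\alpha$), which by Proposition~\ref{p:diff} has size $\epsilon t^{-1/2}$ along the ray; combined with the extra $t^{-1/2}$ from the profile and the $t^{-1/2}$ from the measure this is consistent with the claimed bounds on $e$. The truly negligible second term in \eqref{erori} is a straightforward perturbative contribution.

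The heart of the matter is the cubic interaction. I would substitute the explicit cubic source $(\tG^{(3)},\tK^{(3)})$ from \eqref{tGK3} into $\langle (\tG^{(3)},\tK^{(3)}),(\ww,\qq)\rangle_{\dH^0}$, and then use the approximation formula \eqref{pactest} from Proposition~\ref{p:diff} to replace each factor of $\tW_\alpha, \tQ_\alpha$ (and their equivalents $Y$, etc.) by $|\xi_v|^{s} t^{-1/2} e^{i\phi}\gamma(1,\sgn v)$ at the appropriate regularity $s$. The paraproduct and $\Pi$ structures localize all factors essentially at the single frequency $\xi_v = (2v)^{-2}$, so the phases $e^{i\phi}$ combine according to the resonance condition: only the resonant combinations (two factors $e^{i\phi}$ and one $e^{-i\phi}$, matching $\gamma|\gamma|^2$) survive with the full size, while genuinely nonresonant combinations (all three $e^{i\phi}$, i.e. $\gamma^3$, or combinations producing a non-stationary oscillation) can be integrated by parts in $v$ using $\D_v\hat\uu$ from Lemma~\ref{l:uu}(a), gaining a factor of $t^{-1}$ and landing in $e$. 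Counting the derivative powers: each $\D_\alpha$ gives $|\xi_v| \approx (2v)^{-2}$, and adding up the net number of derivatives in the cubic terms of \eqref{tGK3} (which, via the "total derivative count" heuristic in the optimality discussion, is $5/2$ relative to the diagonalized variable at regularity tuned to the $\dH^0$ pairing) produces the coefficient $(2v)^{-5}$, together with $t^{-1/2 \cdot 3} \cdot t^{1/2}$ from the three profile substitutions and the $\dH^0$ normalization against the $L^1$-normalized $\uu$, yielding the $t^{-1}$ prefactor. The constant $i/2$ has to be pinned down by a careful bookkeeping of which terms in \eqref{tGK3} are resonant; I would organize this exactly as in \cite{IT-global}, where the same coefficient appears.

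The main obstacle — and the place where this differs from \cite{IT-global} — is handling the \emph{paradifferential} quadratic terms $T_{2\Re\tW_\alpha}\tQ_\alpha$, $T_{2\Re\tQ_\alpha}\tW_\alpha$, $T_{2\Re\tQ_\alpha}\tQ_\alpha$ on the left side of \eqref{tWQ-system}, which were not present in the NLS-like setting of \cite{IT-global}. These contribute to $\langle \D_t(\tW,\tQ),(\ww,\qq)\rangle$ a term $\langle (T_{2\Re\tW_\alpha}\tQ_\alpha - T_{2\Re\tQ_\alpha}\tW_\alpha, T_{2\Re\tQ_\alpha}\tQ_\alpha),(\ww,\qq)\rangle$. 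Here I would invoke the improved bound \eqref{better-para} from Proposition~\ref{p:errors}(b), which gives that these paradifferential quadratic terms have $\dH^{1/4}$ size $\epsilon^2 t^{-5/4 + 2C\epsilon^2}$; pairing against the $L^\infty$-bounded, frequency-$\xi_v$-localized wave packet (which has $\dH^{-1/4}$ norm bounded by $v^{1/2}$ times a power of $t$) then yields a contribution consistent with \eqref{e-lowv}. Alternatively, and more robustly, one uses the reduction in Proposition~\ref{p:reiterate-KS} to absorb these paradifferential terms entirely, so that $(\tW,\tQ)$ solves the \emph{clean} system \eqref{KS-system-lin}; after this reduction the wave packet testing proceeds verbatim as in \cite{IT-global}. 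I expect the bulk of the work to be (i) verifying that the elliptic part $(\tW,\tQ)_{ell}$ contributes only to $e$ — which follows from the improved elliptic bounds \eqref{tWQ-ell2}--\eqref{tWQ-ell4} since the pairing picks up the extra $t^{-b/2}$ or $\epsilon$ gain — so that $\gamma$ genuinely tracks only the hyperbolic part, and (ii) the frequency-localization bookkeeping needed to legitimately replace the full paraproducts by their single-frequency approximations, using Lemma~\ref{l:uu}(b) to control the commutators $P_{\lambda_v}(|D|^s - \xi_v^s)$. The $L^2_v$ versus $L^\infty_v$ split in \eqref{e-hiv}--\eqref{e-lowv} is then obtained by tracking, in each error contribution, whether one uses the $L^2$-type control \eqref{te-re}, \eqref{tvf-re} or the pointwise control \eqref{tpoint-re}, exactly as in the corresponding step of \cite{IT-global}.
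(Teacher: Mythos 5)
Your overall architecture matches the paper's: differentiate $\gamma$ in $t$, substitute the evolution equation \eqref{tWQ-system} and the wave packet error \eqref{g-eq}, use the profile approximation \eqref{pactest} and the resonant/nonresonant decomposition to extract the leading cubic term, and dump everything else into $e$ with weighted $L^2_v$/$L^\infty_v$ bounds obtained by a Young-type lemma. The treatments of $\bar\ggg$ (integrate by parts using the $\D_\alpha$ structure of \eqref{erori}), the quartic remainder, and the cubic resonant/nonresonant split are all in line with the paper (the paper in fact also has a third class, the \emph{null} cubic terms, for which the profile substitution gives identically zero, but that is a refinement of the same idea). So far so good.

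The gap is in the handling of the paradifferential quadratic terms $T_{2\Re\tW_\alpha}\tQ_\alpha$, $T_{2\Re\tQ_\alpha}\tW_\alpha$, $T_{2\Re\tQ_\alpha}\tQ_\alpha$, which you correctly flag as the genuinely new ingredient relative to \cite{IT-global}, but neither of your two routes through them is viable. Your route (a) — pairing the $\dot H^{1/4}$ bound \eqref{better-para} against the wave packet via the Young-type lemma — does give the $L^2_v$ bound \eqref{e-lowv}, but it is short by roughly a factor $t^{1/8}$ for the $L^\infty_v$ bound \eqref{e-hiv}: \eqref{better-para} yields $\| v^{1/4} I_2\|_{L^\infty_v}\lesssim\epsilon^2 t^{-1+2C\epsilon^2}$, not $t^{-9/8}$. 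The paper explicitly notes that \eqref{better-para} is \emph{not} sufficient for the $L^\infty_v$ estimate and instead spends the bulk of Part B of the proof on a separate refined argument: one observes that the only surviving paraproduct interactions pair $\tQ_{ell,\alpha}$ against $\tW_{hyp,\alpha}$, uses the sharper elliptic bound \eqref{tWQ-ell3} for $\tQ_{ell,\alpha}$, freezes $\gamma$ to $\gamma_0$, splits $\tQ_{ell}$ into frequencies below and above the wave-packet scale $v^{-3/2}t^{-1/2}$, and for the high part writes $T_{\tQ_{ell,\alpha}^{high}} = [L,T_{\tQ_{ell}^{high}}]-[it^2/\alpha^2,T_{\tQ_{ell}^{high}}]$ with $L=\D_\alpha+it^2/(4\alpha^2)$, exploiting $Le^{i\phi}=0$ and $L^*e^{-i\phi}=0$ to integrate by parts. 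None of this is in your proposal.

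Your route (b) — invoking Proposition~\ref{p:reiterate-KS} to reduce to the clean system \eqref{KS-system-lin} and then proceeding verbatim as in \cite{IT-global} — is based on a misreading. Proposition~\ref{p:reiterate-KS} is a \emph{fixed-time} statement about the vector field operator $2\alpha\D_\alpha - t\D_\alpha$ (resp. $2\alpha\D_\alpha + it$) applied to $(\tW,\tQ)$; it is used in Section~\ref{s:wp} to prove Sobolev-type pointwise bounds at a given $t$. It does not replace the \emph{evolution} equation \eqref{tWQ-system}, which is what governs $\D_t(\tW,\tQ)$ and hence $\dot\gamma$. The paradifferential terms therefore cannot be "absorbed away" before computing $\dot\gamma$; they must be estimated as genuine contributions, and that is exactly where the refined $L^\infty_v$ argument is needed.
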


We now use the two propositions to conclude the proof of \eqref{tpoint-re-better-hyp+}. 
By virtue of \eqref{pactest} and \eqref{pacerr}, in order to prove \eqref{tpoint-re-better-hyp+}
it suffices to establish its analogue for $\gamma$, namely 

\begin{equation}\label{need}
|\gamma(t,v)| \lesssim \epsilon  \min \left\{v^{1^-}, v^{\frac52^+}\right\} \qquad \text{  in  $\Omega^{\delta}$}.
\end{equation}
Here by $1^-$, respectively $\frac52^+$ we denote universal constants slightly smaller than $1$,
respectively slightly larger than $5/2$; these are needed in order to insure dyadic frequency summation in the Besov norms in the definition of the $X$ norm. 

On the other hand,  from \eqref{gamma-hiv} and \eqref{gamma-lowv} we directly obtain

\begin{equation}\label{have}
|\gamma(t,v)| \lesssim \epsilon \min \left\{ 1, v^{\sigma-\frac14}\right\}  t^{C \epsilon^2} \text{ in $\Omega_{0}$}.
\end{equation}

Our goal now is to use the ode \eqref{gamma-ode} in order to transition from 
\eqref{have} to \eqref{need} along rays $\alpha = vt$. We consider three cases for $v$:

(i)  Suppose first that
$v \approx 1$, i.e., $|\alpha| \approx t$. Then we initially have 
\[
|\gamma(t)| \lesssim \epsilon, \qquad t \approx 1.
 \]
Integrating \eqref{gamma-ode} we conclude that 
\[
|\gamma(t)| \lesssim \epsilon, \qquad t \geq 1,
\]
and then \eqref{need} follows.

(ii) Assume now that $v \ll 1$, i.e., $|\alpha| \ll t$. Then, as $t$ increases, the ray $\alpha = vt$
enters $\Omega^0$ at some point $t_0$ with $v \approx t_0^{-\frac{1}{100}}$. Then by 
 \eqref{have} we obtain
\[
|\gamma(t_0,v)| \lesssim \epsilon v^{\sigma -\frac14} t^{C \epsilon^2}  \lesssim \epsilon v^{\frac52^+}.
\] 
We use this to initialize $\gamma$. For larger $t$ we use \eqref{gamma-ode}
to conclude that 
\[
|\gamma(t)| \lesssim \epsilon v^{\frac52^+} + \int_{t_0}^\infty \epsilon s^{-\frac98 +C^2\epsilon^2} v^{-\frac{19}{8}} \, ds \approx \epsilon v^{\frac52^+} +
 \epsilon  t_0^{-\frac1{8}+C \epsilon^2 } v^{-\frac{19}{8}}\lesssim \epsilon v^{\frac52^+} , \qquad t > t_0.
\]
Then  \eqref{need} follows.

(iii) Finally, consider the case $v \gg 1$, i.e., $|\alpha| \gg t$.
Again, as $t$ increases, the ray $\alpha = vt$ enters $\Omega^0$ at some point $t_0$  with
 $v \approx t_0^{\frac{1}{100}}$, therefore by \eqref{have} we obtain
\[
|\gamma(t_0,v)| \lesssim \epsilon  t_0^{C \epsilon^2}  \lesssim \epsilon v^{1^{-}}. 
\] 
We use this to initialize $\gamma$. For larger $t$ we use \eqref{gamma-ode}
to conclude that 
\[
|\gamma(t)| \lesssim \epsilon v^{1^-} + \int_{t_0}^\infty \epsilon s^{-\frac98 +C\epsilon^2} v^{-\frac{19}{8}} \, ds  \approx 
 \epsilon v^{1^-} +
 \epsilon  t_0^{-\frac1{8}+C^2\epsilon^2 } v^{-\frac{19}{8}}\lesssim \epsilon v^{1^-}  ,  \qquad t > t_0.
\]
Then  \eqref{need} again follows.

\bigskip

We remark that a more precise conclusion of the above analysis is the fact that 
as $t \to \infty$, the asymptotic profile $\gamma(t,v)$ is well approximated by 
solutions to the exact asymptotic equation,
\[
 \gamma(t,v) = \gamma_{\infty}(v) e^{i c(v) \ln t |\gamma_\infty(v)|^2} + err_\gamma,
\]
where the error $\err_\gamma$ decays to $0$ in both weighted $L^2$ and in weighted $L^\infty$ norms.
This leads to a good asymptotic representation of the solutions $(W,Q)$ in terms 
of its scattering data represented by $\gamma_\infty$. We do not pursue this here,
but instead we refer the reader to the similar analysis already carried out in \cite{IT-global}.

\bigskip

The remainder of the paper is devoted to the proof of the two propositions above.

\subsection{Approximation errors.}

Here we prove Proposition~\ref{p:diff}. In what follows in this subsection, the analysis happens all at fixed time, based on the 
elliptic/hyperbolic decomposition of $(tW,\tQ)$ in the previous section.
We first recall the decomposition of $(\tW, \tQ)$ from the previous section into localized components
\[
(\tW, \tQ) =\sum_{\alpha_0} \bchi _{\alpha_0}(\tW, \tQ),
\]
which we only need in the region $\Omega^0$. Because the bump functions $\chi_{\alpha_0}$ have essentially disjoint supports, it suffices to consider a single one of them, which is supported in the region $\alpha \approx \alpha_0$, and corresponds to velocities
$v \approx v_0 = \alpha_0/t$. The hyperbolic frequencies associated with this component are comparable to $\xi_0 =v_0^{-2}$. On the other hand, because of the spatial localization, such a component will interact with our wave packet only if the velocity of the wave packet is also comparable with $v_0$. Hence, the wave packet is also essentially supported at frequencies comparable to $\xi_0$.

For this component we consider the decomposition of the pair $(\tW,\tQ)$ into elliptic and hyperbolic parts
\[
\bchi _{\alpha_0}(\tW, \tQ)=  (\tW_{\alpha_0, ell}, \tQ_{\alpha_0, ell} )+  (\tW_{\alpha_0, hyp}, \tQ_{\alpha_0, hyp} ),
\]
where
\[
(\tW_{\alpha_0, ell}, \tQ_{\alpha_0, ell} )=(1-P_{\xi_0}) \bchi _{\alpha_0}(\tW, \tQ), \quad (\tW_{\alpha_0, hyp}, \tQ_{\alpha_0, hyp} )=P_{\xi_0} \bchi _{\alpha_0}(\tW, \tQ).
\]

At this point we observe that the elliptic part is frequency separated from our wave packet so its contribution to $\gamma$ is of size $O(t^{-N})$, with $N$ large, and thus negligible.
We conclude
that 
\[
\gamma(t,v) = \langle (\tW_{hyp},\tQ_{hyp}),(\ww,\qq)\rangle_{\dH^0} 
+ O(t^{-N}).
\]
So from here on we focus on the hyperbolic component only, which is fully localized in frequency, at dyadic frequency $\xi_0$.

\medskip

Borrowing an idea from \cite{IT-global}, we  symmetrize the problem by introducing the normalized
variables 
\[
(w,r) = (\tW_{\alpha_0, hyp}, \vert D\vert ^{\frac12}\tQ_{\alpha_0, hyp} ),
\]
 which satisfy the bounds
\begin{equation*}
\| (w,r)\|_{\dot{H}^{\frac14}\cap \dot{H}^{\sigma}} \leq \epsilon t^{C^2 \epsilon^2}, \qquad
\| (2 \alpha \partial_\alpha w -  i t  |D|^\frac12 r ,  
2\alpha \partial_\alpha  r - it |D|^\frac12 w )\|_{\dot{H}^{\frac14}_{\alpha}} \lesssim \epsilon t^{C^2 \epsilon^2},
\end{equation*}
or equivalently, using the frequency localization at $\xi_0 \approx v^{-2}$,
\begin{equation*}
\| (w,r)\|_{L^2} \leq \epsilon t^{C^2 \epsilon^2} \min \left\{ v^{\frac12},v^{2\sigma}  \right\}, \quad
\| (2 \alpha \partial_\alpha w -  i t  |D|^\frac12 r ,  
2\alpha \partial_\alpha  r - it |D|^\frac12 w )\|_{L^2_{\alpha}} \lesssim \epsilon t^{C^2 \epsilon^2} v^{\frac12}.
\end{equation*}

Then we rewrite $\gamma$ in terms of these variables
as
\[
\gamma = \int w \bar \ww + r D^\frac12 \bar \qq \, d\alpha .
\]
Here, following \cite{IT-global},  we  discard acceptable errors, and redefine  $\gamma$  as
\begin{equation}
\label{rewrite-gamma}
\gamma(t,v) = \frac{1}{2} \int (w\pm r) \bar{\textbf{u}} \, d\alpha.
\end{equation}
Then Proposition~\ref{p:diff}  is a consequence of the following Lemma:

\begin{lemma}
Let $\gamma$ be defined as in \eqref{rewrite-gamma} in the region 
$\Omega^0$, where $(w,r)$ are holomorphic functions, localized at frequency $\xi_0$,
which satisfy
\begin{equation}
\label{H}
\| (w,r)\|_{L^2} \leq \min \left\{  v^{\frac12}, v^{2\sigma}\right\}, \qquad \| (2 \alpha \partial_\alpha w -  i t  |D|^\frac12 r ,  2\alpha \partial_\alpha  r - it |D|^\frac12 w )\|_{L^2_{\alpha}} \lesssim v^{\frac12}.
\end{equation}
Then $\gamma$ satisfies the bounds
\begin{equation}
\label{point-gamma}
 \| v^{-\frac{1}{2}}(1+v^{-2})^{\sigma -\frac14}\gamma\|_{L^2_v}  + \| v^{\frac12} \partial_v \gamma \|_{L^2_v} \lesssim  1, 
\qquad |\gamma| \lesssim (1+v^{-2})^{-(\frac{\sigma}{2}-\frac18) }.
\end{equation}
Moreover, the following error bounds for $\gamma$ also hold:
\begin{equation}\label{pactest1}
\begin{split}
&|D|^s(w,r)(t,vt) =   |\xi_v|^s t^{-\frac12}e^{i\phi(t,vt)} \gamma(t,v)(1,\sgn{v}) + \err_s, \\
\end{split}
\end{equation}
where
\begin{equation}\label{pacerr1}
\begin{split}
&\| \err_{s} \|_{L^2_v} \lesssim  v_0^{1-2s}t^{-1}, \qquad  \| \err_s\|_{L^\infty} \lesssim  v_0^{\frac14-2s}t^{-\frac34} , 
\qquad 0 \leq s . \\
\end{split}
\end{equation}
\end{lemma}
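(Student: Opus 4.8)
The proof proceeds by studying the ``wave packet correlation'' $\gamma$ through its defining integral \eqref{rewrite-gamma}, and the main tool is the localization of both $(w,r)$ and $\uu$ at the dyadic frequency $\xi_0 = v^{-2}$.

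\textbf{Plan of proof.} We work entirely at fixed time in the region $\Omega^0$, with $(w,r)$ holomorphic, localized at frequency $\xi_0 \approx v_0^{-2}$, and satisfying \eqref{H}.

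\emph{Step 1: Bounds for $\gamma$.} We first establish the estimates in \eqref{point-gamma}. The $L^2_v$ bound for $v^{-\frac12}(1+v^{-2})^{\sigma-\frac14}\gamma$ follows from the first equation of \eqref{H} by Cauchy--Schwarz in $\alpha$, using that $\bar\uu$ has $L^2_\alpha$ norm of size $v^{\frac12}$ (from the localization scale $\delta\alpha = t^{\frac12}v^{\frac32}$ and amplitude $v^{-\frac32}$), together with an almost-orthogonality argument in $v$: for a fixed $\alpha$, only wave packets with $v \approx \alpha/t$ contribute, so the $v$ integral of $|\gamma|^2$ is controlled by $\|(w,r)\|_{L^2}^2$ with appropriate $v$ weights. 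The $L^\infty$ bound on $\gamma$ is then obtained by combining the $L^2_v$ bound with the $\partial_v\gamma$ bound via a one-dimensional Sobolev embedding in $v$. To estimate $\partial_v\gamma$, we differentiate under the integral sign; $\partial_v$ can hit either $\uu$ (producing $\partial_v\uu$, whose Fourier transform is described in Lemma~\ref{l:uu}(a), and whose size gains a factor $t^{\frac12}v^{-\frac12}$ in an appropriate sense while remaining localized at $\xi_0$), or we integrate by parts to move the vector field onto $(w,r)$, converting $\partial_v$ into a combination of $\alpha\partial_\alpha$ and $t\partial_\alpha$ acting on $(w,r)$, which is exactly what the second bound in \eqref{H} controls. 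The bookkeeping of which term produces $\alpha\partial_\alpha w$ versus $it|D|^{\frac12}r$ must be matched against the structure of $\uu$ via $\partial_t\uu = -iv^{-1}\cdot(\ldots)$; this is where the choice $(\ww,\qq) = (-iv\partial_t\uu, v\uu)$ pays off.

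\emph{Step 2: The pointwise approximation \eqref{pactest1}.} The identity asserts that $|D|^s(w,r)$ evaluated at $\alpha = vt$ is, to leading order, $|\xi_v|^s t^{-\frac12}e^{i\phi(t,vt)}\gamma(t,v)$ times $(1,\sgn v)$. The strategy is: since $(w,r)$ is frequency-localized at $\xi_0$, replace $|D|^s$ by the constant $(4v^2)^{-s}$ at the cost of an error controlled by \eqref{dsuu} of Lemma~\ref{l:uu}(b). Then the claim reduces to showing that $(w,r)(t,vt) \approx t^{-\frac12}e^{i\phi(t,vt)}\gamma(t,v)(1,\sgn v)$. To see this, write $\gamma = \frac12\int(w\pm r)\bar\uu\,d\alpha$ and note that $\bar\uu = v^{-\frac32}\chi(\tfrac{\alpha-vt}{t^{\frac12}v^{\frac32}})e^{-i\phi}$ acts, against a function localized at frequency $\xi_v = -1/(4v^2)$ and slowly varying on the packet scale, like an averaged evaluation at $\alpha = vt$ times the normalization $\int\chi = 1$ up to the $O(v^{\frac12}t^{-\frac12})$ correction in \eqref{chi1-int}; the phase $e^{i\phi(t,\alpha)}$ is, to within the packet scale, $e^{i\phi(t,vt)}$ up to the linearized correction, which again lands in the error. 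The relation $r \approx \sgn v\cdot w$ at leading order (i.e. the $(1,\sgn v)$ structure) comes from the fact that $(w,r)$ are holomorphic and localized at the single hyperbolic frequency $\xi_0$, combined with the vector field bound in \eqref{H} which forces $2\alpha\partial_\alpha w - it|D|^{\frac12}r$ and $2\alpha\partial_\alpha r - it|D|^{\frac12}w$ to be small: at $\alpha = vt$ and frequency $\xi_0 = v^{-2}$, this says $w \mp r$ is small, the sign determined by $\sgn v$. The error bounds \eqref{pacerr1} in $L^2_v$ and $L^\infty$ then follow by quantifying each of the above replacements using \eqref{H} and Lemma~\ref{l:uu}, with the $t^{-1}$ (respectively $t^{-\frac34}$ after Sobolev in $v$) decay coming from the packet-scale remainder terms.

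\textbf{Main obstacle.} I expect the hard part to be Step 2, specifically establishing the $(1,\sgn v)$ diagonalization relation between $w$ and $r$ with the sharp $t^{-1}$ error. One cannot simply invert the vector field operator pointwise; instead one must carefully combine the frequency localization at $\xi_0$ (which makes $|D|^{\frac12}$ essentially scalar), the holomorphy (which ties the sign of the frequency to the structure), and the $L^2$ vector field bound, and then transfer an $L^2$-type smallness into a pointwise statement at $\alpha = vt$ via the packet averaging — this last transfer is where the $t^{-\frac12}$ normalization and the Sobolev embedding in $v$ interact delicately, and where one must be careful that the error is genuinely $O(t^{-1})$ in $L^2_v$ rather than merely $O(t^{-1/2})$. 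The secondary difficulty, appearing in Step 1, is the almost-orthogonality in $v$: one must verify that the overlap between wave packets at nearby velocities decays fast enough (using the frequency separation $\xi_v - \xi_{v'} \gtrsim |v-v'|/v^3$ against the packet frequency width $t^{-\frac12}v^{-\frac32}$) so that the $v$-integral genuinely reproduces the $L^2_\alpha$ norm of $(w,r)$ with the stated weights.
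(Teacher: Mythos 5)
Your plan identifies the right ingredients (frequency localization, packet averaging, Schur-type bound in $v$, integration by parts for $\partial_v\gamma$, the elliptic structure in \eqref{H} producing the $(1,\sgn v)$ diagonalization) and correctly flags the central difficulty, but it stops exactly where the paper's argument begins. The content of the proof is a pair of concrete algebraic reductions that are missing from your outline: (i) for $v>0$, decompose $(w,r)$ into the sum $y=w+r$ (the only part that pairs against $\bar\uu$ in \eqref{rewrite-gamma}) and the difference $w-r$; one linear combination of the two components of \eqref{H} gives the elliptic estimate $\|(2\alpha|D|+t|D|^{1/2})(w-r)\|_{L^2}\lesssim v_0^{1/2}$, hence $\|w-r\|_{L^2}\lesssim t^{-1}v_0^{3/2}$, which is the precise source of both the $(1,\sgn v)$ diagonalization and the $O(t^{-1})$ error you flag as hard; (ii) another combination of the two components of \eqref{H} yields $\|Ly\|_{L^2}\lesssim v_0^{-1/2}t^{-1}$ with $L=\partial_\alpha+\frac{it^2}{4\alpha^2}$, and the key device is the phase conjugation $u=e^{-i\phi}y$, for which $\partial_\alpha u=e^{-i\phi}Ly$. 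Interpolating $\|u\|_{L^2}$ against $\|\partial_\alpha u\|_{L^2}$ gives $\|y\|_{L^\infty}\lesssim t^{-1/2}(\cdot)$, from which the pointwise bound on $\gamma$ and the error estimate \eqref{pacerr1} follow. The integration by parts you describe for $\partial_v\gamma$ is correct in outline, but the boundary object it produces is $\partial_\alpha u=e^{-i\phi}Ly$, not a raw ``combination of $\alpha\partial_\alpha$ and $t\partial_\alpha$ on $(w,r)$''; without identifying $L$, the assertion that \eqref{H} ``is exactly what controls'' it is circular.

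A few secondary points. Your computation $\|\bar\uu\|_{L^2_\alpha}\approx v^{1/2}$ is off — the correct size is $t^{1/4}v^{-3/4}$, and the $L^2_v$ bound on $\gamma$ comes from a Schur/Young-type argument on the bump structure (Lemma~\ref{l:theta}), not from Cauchy--Schwarz against $\|\bar\uu\|_{L^2}$. Your route to $\|\gamma\|_{L^\infty}$ via one-dimensional Sobolev embedding in $v$ from $\|\gamma\|_{L^2_v}$ and $\|\partial_v\gamma\|_{L^2_v}$ does produce the correct exponents, but the paper reads it off $\|y\|_{L^\infty}$ directly, which is the more transparent intermediate quantity and is also needed for the error bound. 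Finally, the choice $(\ww,\qq)=(-iv\partial_t\uu,v\uu)$, which you invoke as ``paying off'' here, matters for the asymptotic equation (Proposition~\ref{p:ode}), not for this fixed-time lemma; what matters here is only the profile of $\uu$.
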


\begin{proof}
The proof is similar with the argument in \cite{IT-global}, but simpler. The reason for this is that the pair of functions $(w,r)$ are already frequency localized in the hyperbolic region. In order to fix signs, we first need to differentiate between the two symmetric cases $v_0 > 0$ and $v_0 < 0$. Without any restriction in generality we take 
$v_0 > 0$. We express everything in terms of $w-r$ and $y=w+r$. Then $w-r$ does not contribute to $\gamma$, but it contributes to the error. In addition,  subtracting the two components in the second term
in \eqref{H} we obtain 
\[
\|(2\alpha \vert D\vert +t\vert D\vert^{\frac{1}{2}} )(w-r)\|_{L^2} \lesssim v_0^{\frac12}.
\]
The operator above is elliptic in $\{\alpha \approx v_0 t\}$, therefore 
we obtain
\[
\| w-r\|_{L^2} \lesssim t^{-1}v_0^{\frac12} \xi_0^{-\frac12}=t^{-1}v_0^{\frac32}.
\]
Thus, we can directly bound its contribution $|D|^{s}(w-r)$ to the error term in $L^2$ and in $L^{\infty}$ by Bernstein's inequality. We note that the exponents will not match
with \eqref{pacerr1}; instead, here we obtain a gain, which is akin to the similar gain for the elliptic component of $(\tW,\tQ)$.

We now consider the contribution of $y$, noting that $\gamma$ is already expressed in terms
of $y$. To reduce the problem to an estimate for $y$ we
need one last step. Combining again the two components in the second term
in \eqref{H} we obtain 
\[
\| |D|^{\frac12}  (4\alpha^2 \partial_{\alpha} + it^2)(w,r) \|_{L^2_{\alpha}} \lesssim t v_0^{\frac12},
\]
which yields the same bound for $y$. In view of frequency localization at frequency $\approx \xi_0$ we  conclude that 
\begin{equation}\label{z1}
\left\|  L y \right\|_{L^2_{\alpha}}
 \lesssim v_0^{-\frac12}t^{-1}, \qquad L = \partial_{\alpha} +\frac{it^2}{4\alpha^2}.
\end{equation}
On the other hand, from the first relation in \eqref{H} we obtain
\begin{equation}\label{z2}
\| y \|_{L^2_\alpha} \lesssim v^{\frac12}(1+ v_0^{-2})^{\frac14 -\sigma}.
\end{equation}
From here on we will work only with the function $y$.

Following \cite{IT-global} we rewrite the bounds on $y$ in terms of the auxiliary function
$u:= e^{-i\phi} y$, which satisfies
$ \partial_{\alpha}u=e^{-i\phi}(\partial_{\alpha}+\frac{it^2}{4\alpha^2})y$.
Then for $u$ we have 
\begin{equation}\label{u1}
\left\|   \partial_\alpha u \right\|_{L^2_{\alpha}}
 \lesssim v_0^{-\frac12}t^{-1}, \qquad \| u \|_{L^2_\alpha} \lesssim v_0^{\frac12}(1+ v_0^{-2})^{\frac14-\sigma}.
\end{equation}
Combining these bounds we get by interpolation
\begin{equation*}
\label{ui}
\|   u \|_{L^\infty} \lesssim t^{-\frac12} (1+ v_0^{-2})^{\frac18-\frac{\sigma}{2}},
\end{equation*}
which also is transferred back to $y$,
\begin{equation}\label{zi}
\|  y \|_{L^\infty} \lesssim t^{-\frac12} (1+ v_0^{-2})^{\frac18-\frac{\sigma}{2}}.
\end{equation}

The bounds \eqref{z2} and \eqref{zi} lead directly to $L^2$ and $L^\infty$ bounds for $\gamma$,
\begin{equation}\label{game}
\|\gamma \|_{L^2_v} \lesssim v_0^{\frac12}(1+ v_0^{-2})^{\frac14-\sigma}, 
\qquad\|\gamma \|_{L^\infty} \lesssim  (1+ v_0^{-2})^{\frac18-\frac{\sigma}{2}}.
\end{equation}
To estimate $\partial_v \gamma = \langle y, \partial_v \uu
\rangle_{L^2}$ we write $\partial_v \uu$ in the form
\[
\partial_v \uu = 
- v^{-\frac32} e^{i \phi} \left(  t \partial_\alpha \chi\left( \frac{\alpha-vt}{t^\frac12 v^{\frac32}}\right)
+ \frac32 \frac{\alpha-vt}{t^\frac12 v^\frac52} \chi'\left( \frac{\alpha-vt}{t^\frac12 v^{\frac32}}\right)
\right),
\]
and compute using integration by parts
\[
\partial_v \gamma = \int v^{-\frac32}  t \partial_\alpha u(t,\alpha) \chi\left( \frac{\alpha-vt}{t^\frac12 v^{\frac32}}\right) \, d\alpha -  \int v^{-\frac32}  u(t,\alpha)  \frac32 \frac{\alpha-vt}{t^\frac12 v^\frac52} \chi'\left( \frac{\alpha-vt}{t^\frac12 v^{\frac32}}\right) d\alpha .
\]
Now we can bound the two  integrals using \eqref{u1} to obtain
\[
\| \partial_v \gamma \|_{L^2_v} \lesssim v_0^{-\frac12} ,
\]
which, together to \eqref{game}, concludes the proof of \eqref{point-gamma}.


It remains to estimate the $L^2$ and $L^{\infty}$ norms of the error
in \eqref{pacerr}.  We begin with the case $s=0$, were we bound the 
the difference $$\err= y(t,vt) - t^{-\frac12}
e^{i\phi(t,vt)} \langle y, \uu \rangle_{L^2}$$ in both $L^2_v$ and
$L^\infty$ in terms of $\|y\|_{L^2_\alpha}$ and $\|Ly\|_{L^2_\alpha}$, exactly as in \cite{IT-global}
\begin{equation}
\label{err-est}
\| \err\|_{L^\infty} \lesssim v_0^\frac34 t^{\frac{1}{4}}\Vert
  Ly \Vert_{L^2_{\alpha}}, \qquad \| \err\|_{L^2_v} \lesssim 
v_0^{\frac32} \Vert Ly \Vert_{L^2_{\alpha}}.
 \end{equation}
 This is exactly what we need for \eqref{pacerr} in the case $s=0$. Due to the frequency localization for $y$, adding extra derivatives simply adds factors of $\xi_0^s =v_0^{-2s}$ to the bound.
 
 \subsection{The asymptotic equation for  \texorpdfstring{$\gamma$}{}}
Here we track the evolution of $\gamma(t,v)$ and prove
Proposition~\ref{p:ode}.  The computation is based on the energy conservation relation for
the linear system \eqref{ww2d-0}. If both $(\tW, \tQ)$ and $(\ww, \qq)$ were solutions to the homogeneous linear system \eqref{ww2d-0}, then we would get $\dot{\gamma}=0$. As it is, $\dot{\gamma}$ depends on the source terms in the linear equation \eqref{ww2d-0} applied to $(\tW, \tQ)$, respectively $(\ww, \qq)$. The source term in the $(\ww,\qq)$ equation is $(\ggg, 0)$ with $\ggg$ given by \eqref{g-eq}. The source term in the similar $(\tW, \tQ)$ equation comes  from \eqref{tWQ-system}. Thus we obtain the relation
\begin{equation}
\label{gamma dot}
\dot \gamma(t) = \int \left( \tG -T_{2\Re \tW_{\alpha}}\tQ_{\alpha} + T_{2\Re \tQ_{\alpha}}\tW_{\alpha}\right) \bar \ww + \tW \bar \ggg + i \left( \tK + T_{2\Re \tQ_{\alpha}}\tQ_{\alpha} \right)_\alpha \bar \qq \, d\alpha  .
\end{equation}
We successively consider all terms on the right. With the exception of a single 
term, namely the resonant part of $\tG$, see below, all contributions will be placed 
into the error term $\sigma$. 

We need to estimate at fixed time the terms in $\dot \gamma$ in the region $\Omega^0$. The wavepacket components are localized on the scale $t^{\frac12}v^{\frac32}$ around the ray $\alpha =vt$. Therefore we can harmlessly regard  $(\tW, \tQ)$ as being also localized in the corresponding dyadic region $\alpha \approx vt$.

To estimate the error terms it is convenient to begin with a lemma that captures the main computations that lead to the error bound:

\begin{lemma}
\label{l:theta}
Let $f$ be supported in a dyadic region $v\approx v_0$ and 
\begin{equation}
    \theta (v):=\int f(\alpha) \uu (\alpha) \, d\alpha .  
\end{equation}
Then the following bounds hold
\begin{equation}
    \Vert \theta \Vert_{L^2_v}\lesssim \Vert f\Vert_{L^2_{\alpha}} ,
\end{equation}
respectively
\begin{equation}
    \Vert \theta \Vert_{L^{\infty}_v}\lesssim t^{\frac12}\left(t^{\frac12}v^{\frac32} \right)^{-\frac{1}{p}} \Vert f\Vert_{L^p_{\alpha}}.
\end{equation}
\end{lemma}
The result does not depend on the choice of the bump function $\chi$ in the definition of $\uu$. 
\begin{proof}
Here we only use the size of the function $\uu$ which is a bump function on the scale $t^{\frac{1}{2}}v^{\frac{3}{2}}$ with norms
\[
\Vert \uu \Vert_{L^1_{\alpha}} \lesssim t^{\frac12},\quad \Vert \uu \Vert_{L^{\infty}_{\alpha}} \lesssim v^{-\frac32}.
\]
Then the bounds  are obtained akin to Young's inequality with the minor difference that the integral defining $\theta$ is not an exact convolution, but can be bounded by one (in absolute value). 
\end{proof}

\bigskip

{\bf A. The contribution of $\bar \ggg$.}  This is 
\[
I_1 = v^{-\frac32} \int \tW e^{-i\phi} \left( \partial_{\alpha}\left[ \frac{(\alpha-vt) }{2\alpha}\chi-i \frac{ (\alpha +vt)^2}{4v^{\frac{3}{2}}t^{\frac{5}{2}}}\chi '\right]+  \left[ \frac{(\alpha-vt)}{2\alpha^2}\chi -i \frac{ (\alpha -vt)}{4v^{\frac{3}{2}}t^{\frac{5}{2}}}\chi'\right]\right) \, d\alpha.
\]
We use \eqref{pactest}  and \eqref{tWQ=ell+hyp} to replace $\tW$ in terms of $\gamma$ 
\[
\tW - t^{-\frac12}e^{i\phi} \gamma(t,v)
= \tW_{ell}+ \left(\tW_{hyp} - t^{-\frac12}e^{i\phi} \gamma(t,v)\right).
\]
The elliptic part $\tW_{ell}$ is mismatched with $\bar\ggg$ in frequency, so its contribution is $O(t^{-N})$. The contribution of the second term above
is directly estimated in both $L^2$ and $L^\infty$ via  \eqref{pacerr}.

The contribution of $\gamma$, on the other hand, is written using integration by parts as 
\[
\tilde{I}_1 := v^{-\frac32} t^{-\frac12} \int  - \gamma_\alpha \left[ \frac{(\alpha-vt) }{2\alpha}\chi-i \frac{ (\alpha +vt)^2}{4v^{\frac{3}{2}}t^{\frac{5}{2}}}\chi '\right]+  \gamma \left[ \frac{(\alpha-vt)}{2\alpha^2}\chi -i \frac{ (\alpha -vt)}{4v^{\frac{3}{2}}t^{\frac{5}{2}}}\chi'\right] \, d\alpha.
\]
Now we can easily bound the two terms using \eqref{gamma-hiv}, \eqref{gamma-lowv} and Lemma~\ref{l:theta} to obtain
\[
\Vert \tilde{I}_1\Vert_{L^2_v}\lesssim t^{-1}v^{\frac12}\Vert \gamma_{\alpha}\Vert_{L^2_{\alpha}} +t^{-2}v^{-\frac12} \Vert \gamma \Vert_{L^2_{\alpha}}\lesssim t^{-\frac32} \epsilon t^{C\epsilon ^2},
\]
respectively
\[
\begin{aligned}
\Vert \tilde{I}_1\Vert_{L^{\infty}_v}&\lesssim t^{-1}v^{\frac12} t^{\frac{1}{4}}v^{-\frac34}\Vert \gamma_{\alpha}\Vert_{L^2_{\alpha}} +t^{-2}v^{-\frac12}t^{\frac12} \Vert \gamma \Vert_{L^{\infty}_{\alpha}}\\
&\lesssim t^{-\frac54} v^{-\frac34}\epsilon t^{C\epsilon ^2} +t^{-\frac32} v^{-\frac12}\epsilon t^{C\epsilon ^2}\\
&\lesssim t^{-\frac54}v^{-\frac34} \epsilon t^{C\epsilon ^2}.
\end{aligned}
\]
Here, at the last step, we used that we are in the region $\Omega^0$, given by \eqref{vbound}.
\bigskip

{\bf B. The contribution of the paradifferential source terms.} This is given by
\[
I_2:= \int \left(  -T_{2\Re \tW_{\alpha}}\tQ_{\alpha} + T_{2\Re \tQ_{\alpha}}\tW_{\alpha}\right) \bar \ww + i \left(  T_{2\Re \tQ_{\alpha}}\tQ_{\alpha} \right)_\alpha \bar \qq \, d\alpha . 
\]
As before, the goal is to estimate $I_2$ in $L^2_v$ and $L^{\infty}_v$. For the $L^2_v$ bound it suffices to use the estimate in \eqref{better-para}, with the observations that up to rapidly decaying tails only the frequencies of size $\xi_0$ will contribute.  Combining this observation with Lemma~\eqref{l:theta}, we have
\[
\Vert I_2\Vert_{L^2_v}\lesssim \xi_0^{-\frac14}\left\| \left(  -T_{2\Re \tW_{\alpha}}\tQ_{\alpha} + T_{2\Re \tQ_{\alpha}}\tW_{\alpha} ,  T_{2\Re \tQ_{\alpha}}\tQ_{\alpha} \right) \right\|_{\dot{H}^{\frac14}_v}\lesssim v_0^{\frac12} \epsilon^2 t^{-\frac54} t^{C\epsilon^2}.
\]

Unfortunately \eqref{better-para}  is no longer sufficient to estimate the $L^{\infty}_v$ bound of $I_2$, so we need a more refined analysis.

The three terms in $I_2$ are mostly similar, with the first one being a little bit better in terms of the time decay. We will discuss the second one in detail, and the third one will be identical with the second one. Hence in what follows we seek to estimate 
\[
I_{2,2}:=\int  T_{2\Re \tQ_{\alpha}}\tW_{\alpha} \bar \ww \, d\alpha.
\]
We start with a simple observation, namely that $\bar{\ww}$ is localized at frequency $\xi_0$ which means that  the only nontrivial contribution arises from the component $\tW_{\alpha}$ which is also localized at frequency $\xi_0$. This in turn implies that for $\tQ_{\alpha}$ we only use frequencies $\ll \xi_0$.  Thus we can replace $\tW_{\alpha}$ by $\tW_{hyp, \alpha}$, and $\tQ_{\alpha}$ is replaced by $\tQ_{ell, \alpha}$, to write
\[
I_{2,2}=\int  T_{2\Re \tQ_{ell, \alpha}}\tW_{hyp, \alpha} \bar \ww \, d\alpha +O(t^{-N}).
\]
The advantage of working with the low frequency elliptic component of $\tQ$ is that it satisfies a better $L^2$ type bound which is part of the $X^{\sharp}$ bound in Corollary~\ref{c:KS-para}. Precisely we have
\begin{equation}
\label{q-ell}
\Vert\tQ_{ell,\alpha} \Vert_{\dot{H}^{\frac14}}\lesssim \epsilon t^{-1}t^{C\epsilon^2},
\end{equation}
which is the only bound we will need for the paradifferential coefficient. Using Bernstein inequality this also gives the pointwise bound
\begin{equation}
\label{q-ell-point}
\Vert\tQ_{ell,\alpha} \Vert_{L^{\infty}}\lesssim \epsilon v_0^{-\frac12}t^{-1}t^{C\epsilon^2}.
\end{equation}
This is better than the $t^{-\frac12}$ decay in the hyperbolic region, but still not enough. 

The next step is to use the $\tW_{hyp, \alpha}$ representation in  \eqref{pactest} which gives 
\[
\tW_{hyp, \alpha} \approx t^{-\frac12}\xi_0P_{\xi_0}\left[\gamma e^{i\phi}\right] + \err_1,
\]
where $\err_1$ satisfies the pointwise bound    
\[
\vert \err_1 \vert \lesssim \epsilon t^{-\frac34}v_0^{-\frac74}.
\]
The contribution of $\err_1$ to $I_{2,2}$ is estimated via Lemma~\ref{l:theta} to obtain
\[
I_{2,2}=t^{-\frac12}\xi_0\int  T_{2\Re \tQ_{ell, \alpha}}P_{\xi_0}\left[ \gamma e^{i\phi}\right] \bar \ww \, d\alpha +O(\epsilon^2t^{-\frac54}v_0^{-\frac94}t^{2C\epsilon^2}).
\]
Our next simplification is to freeze $\gamma$ to its values at the center of the packet, which we denote by $\gamma_0$. Within the support of the packet the difference can be estimated  by H\"older's inequality and \eqref{gamma-hiv}-\eqref{gamma-lowv}
\[
\vert \gamma -\gamma_0\vert\lesssim \int_ {\vert v-v_0\vert \lesssim t^{-\frac12}v_0^{\frac32}} |\partial_v\gamma |\, dv \lesssim t^{-\frac14}v_0^{\frac34} \Vert \partial_v\gamma \Vert_{L^2_v} \lesssim \epsilon t^{C\epsilon^2}t^{-\frac14} v_0^{\frac14}.
\]
Estimating directly the corresponding error, and using the expression of $\bar{\ww}$, we arrive at
\[
I_{2,2}=v^{-\frac32}t^{-\frac12}\xi_0 \gamma_0\int  T_{2\Re \tQ_{ell, \alpha}}P_{\xi_0}\left[  e^{i\phi}\right] \chi e^{-i\phi} \, d\alpha +O(\epsilon^2 t^{-\frac54}v_0^{-\frac94}t^{2C\epsilon^2}),
\]
where we can easily drop the projector $P_{\xi_0}$ because the exponential is already frequency localized around  the same frequency $\xi_0$. It remains to bound the following integral in $L^{\infty}$
\[
I_{2,2}'=v^{-\frac32}t^{-\frac12}\xi_0 \gamma_0\int  [T_{2\Re \tQ_{ell, \alpha}}  e^{i\phi}] \chi e^{-i\phi} \, d\alpha .
\]

We decompose $\tQ_{ell}$ in low and high frequencies in comparison to the $\xi_0$ frequency:
\[
\tQ_{ell}:= \tQ_{ell}^{low} +\tQ_{ell}^{high},
\]
where the truncation threshold comes from the wave-packet frequency scale:
\[
\tQ_{ell}^{low}:= (\tQ_{ell})_{<v^{-\frac32}t^{-\frac12}}, \quad \tQ_{ell}^{high}:= (\tQ_{ell})_{\geq v^{-\frac32} t^{-\frac12}}.
\]
Same notation will apply to a similar decomposition in frequencies for  $\tQ_{ell,\alpha}$.

It is easier to first estimate the contribution of lower frequencies in $\tQ_{ell}$
\[
I_{2,2}'=v^{-\frac32}t^{-\frac12}\xi_0 \gamma_0\int  [T_{2\Re (\tQ^{low}_{ell, \alpha})}  e^{i\phi}] \chi e^{-i\phi} \, d\alpha .
\]
which we bound directly as follows using Lemma~\eqref{l:theta}
\[
\begin{aligned}
\Vert I_{2,2}'\Vert_{L^{\infty}_{\alpha}} & \lesssim \xi_{0}\gamma_0 \Vert T_{2\Re (\tQ^{low}_{ell, \alpha})}  e^{i\phi}\Vert_{L^{\infty}_{\alpha}}  \lesssim   \xi_{0}\gamma_0  \Vert \tQ^{low}_{ell, \alpha}\Vert_{L^{\infty}_{\alpha}}\lesssim \epsilon v^{-2}t^{2C\epsilon^2}\Vert \tQ^{low}_{ell, \alpha}\Vert_{L^{\infty}_{\alpha}} .
\end{aligned}
\]
We bound the last term separately by means of Bernstein's inequality and \eqref{tWQ-ell3} to get
\[
\Vert \tQ^{low}_{ell, \alpha}\Vert_{L^{\infty}_{\alpha}} \lesssim v^{-\frac38}t^{-\frac18}\Vert \tQ^{low}_{ell, \alpha}\Vert_{\dot H^{\frac14}_{\alpha}}\lesssim \epsilon v^{-\frac38}t^{-\frac98} t^{C\epsilon^2}.
\]
The final estimate is the contribution of the high frequencies of $\tQ$ to $L^{\infty}_{\alpha}$ bound. It involves the $L$ operator defined in \eqref{z1}. We begin by observing the representation
\[
\begin{aligned}
T_{\tQ_{ell,\alpha}^{high}}&=[\partial_{\alpha}, T_{\tQ_{ell}^{high}}]\\
&=[L, T_{\tQ_{ell}^{high}}] - \left[i\frac{t^2}{\alpha^2}, T_{\tQ_{ell}^{high}}\right].\\
\end{aligned}
\]
We now estimate separately the two contributions. For the first one we integrate by parts
\[
I_{2,3}'=v^{-\frac32}t^{-\frac12}\xi_0 \gamma_0\int [L, T_{\tQ_{ell}^{high}}]e^{i\phi}\chi e^{-i\phi}\, d\alpha = v^{-\frac32}t^{-\frac12}\xi_0 \gamma_0 \int T_{\tQ_{ell}^{high}}[e^{i\phi}]\chi_{\alpha} e^{-i\phi}\, d\alpha .
\]
Here we used 
\[
L e^{i\phi}=0, \quad L^{*}e^{-i\phi}=0,
\]
where $L^*$ is the adjoint operator. We have
\[
\begin{aligned}
\Vert I_{2,3}'\Vert_{L^{\infty}_{\alpha}} & \lesssim \xi_{0}\gamma_0 v^{-\frac32}t^{-\frac12}\Vert T_{2\Re (\tQ^{high}_{ell})}  e^{i\phi}\Vert_{L^{\infty}_{\alpha}} \Vert  \chi_{\alpha} \Vert_{L^1}\\
& \lesssim v^{-\frac32}t^{-\frac12} \xi_{0}\gamma_0  \Vert \tQ^{high}_{ell}\Vert_{L^{\infty}_{\alpha}}\\
&\lesssim \epsilon v^{-\frac32}t^{-\frac12} v^{-2}v^{\frac98}t^{\frac{3}{8}+2C\epsilon^2}\Vert \tQ_{ell, \alpha}\Vert_{\dot H^{\frac{1}{4}}_{\alpha}} \\
&\lesssim \epsilon^2 v^{-\frac{19}{8}}t^{-\frac{9}{8}+2C\epsilon^2}.
\end{aligned}
\]
The last integral is

\[
I_{2,4}'=v^{-\frac32}t^{-\frac12}\xi_0 \gamma_0\int \left[i\frac{t^2}{\alpha^2}, T_{\tQ_{ell}^{high}}\right]e^{i\phi}\chi e^{-i\phi}\, d\alpha.
\]
Thus,using Lemma $2.5$ from \cite{AIT} we get
\[
\begin{aligned}
\Vert I_{2,4}'\Vert_{L^{\infty}_{\alpha}} & \lesssim \xi_{0}\gamma_0 \left\| \left[i\frac{t^2}{\alpha^2}, T_{\tQ_{ell}^{high}}\right]e^{i\phi}\right\|_{L^{\infty}_{\alpha}} \Vert v^{-\frac32}t^{-\frac12} \chi \Vert_{L^1}\\
& \lesssim \xi_{0}^{-1}\gamma_0  \Vert t^2\alpha^{-3} \Vert_{L^{\infty}_{\alpha}} \Vert \tQ^{high}_{ell, \alpha}\Vert_{L^{\infty}_{\alpha}}\\
&\lesssim \epsilon v^{-1}t^{-1+2C\epsilon^2}\Vert \tQ^{high}_{ell, \alpha}\Vert_{L^{\infty}_{\alpha}}\\
& \lesssim \epsilon^2 v^{-1}t^{-\frac{13}{8}+2C\epsilon^2}.
\end{aligned}
\]

Adding up all contributions we conclude that in the region $\Omega^0$ we have the bound 
\[
\Vert I_2\Vert_{L^{\infty}_v} \lesssim \epsilon^2 v^{-\frac{19}{8}}t^{-\frac{9}{8}+2C\epsilon^2}.
\]

\bigskip
 
{\bf C. The contribution of $\tG$ and $\tK$.}  
For this we consider in more detail the structure of $\tG$ and
$\tK$. We will successively peel off favorable terms until we are left 
only with the leading resonant part. We decompose them into cubic and higher terms,
\[
\tG = \tG^{(3)} + \tG^{(4+)}, \qquad \tK = \tK^{(3)} + \tK^{(4+)}.
\]
To start with we decompose them into quartic and higher order terms. 

\medskip

{\bf C1. Quartic and higher order terms.} We denote their contribution by
\[
I_3:=\int   \tG^{(4+)} \bar \ww +  i \tK^{(4+)}_\alpha \bar \qq \, d\alpha.
\]

In view of \eqref{tGK4-est} and Lemma~\eqref{l:theta},  we can estimate the contribution of the quartic and higher
terms in $L^\infty$,  
\[
\Vert I_3\Vert_{L^{\infty}}
\lesssim t^{\frac12} t^{-\frac14}v^{-\frac34} \xi_0^{-\frac14} \Vert (\ww, \qq)\Vert_{\dH^{\frac14}}
\lesssim   \epsilon^4 v^{-\frac14} t^{-\frac54+3C^2 \epsilon^2},    
\]
which suffices in $\Omega^0$. The $L^2$ bound is similar, using again \eqref{tGK4-est} and Lemma~\eqref{l:theta}.

\bigskip
{\bf C2. Cubic terms.} 
It remains to consider the contributions arising from the cubic terms, which can be viewed as  translations invariant trilinear forms
\[
\tG^{(3)} =\tG^{(3)}(\tW_{\alpha}, \tW_{\alpha}, \tQ_{\alpha}), \quad \tK^{(3)} =\tK^{(3)}(\tW_{\alpha}, \tQ_{\alpha}, \tQ_{\alpha}).
\]
These trilinear expressions include also the complex conjugates. Here, we first peel off some perturbative terms by substituting in $(\tG^{(3)}, \tK^{(3)})$ the following sequence of transformations
\[
(\tW_{\alpha},  \tQ_{\alpha}) \rightarrow (\tW_{hyp,\alpha},  \tQ_{hpy, \alpha})\rightarrow
i\xi_v P_{\xi_0}[\gamma t^{-\frac12}e^{i\phi}(1, \vert \xi_v\vert ^{-\frac12}\sgn v)],
\]
where we denote the final outcome by
\begin{equation}
\label{WQ0}
i\xi_v P_{\xi_0}[ \gamma t^{-\frac12}e^{i\phi}(1, \vert \xi_v\vert ^{-\frac12}\sgn v)]=:(\tW^0_{\alpha}, \tQ^0_{\alpha}).
\end{equation}
To control the errors we need to estimate the difference
\[
(\tG^{(3),0}_{ell}, \tK^{(3),0}_{ell})\!\! : =\!\! (\tG^{(3)}\!(\tW_{\alpha}, \tW_{\alpha}, \tQ_{\alpha}),\! \tK^{(3)}\!(\tW_{\alpha}, \tQ_{\alpha}, \tQ_{\alpha})) -(\tG^{(3)}\!(\tW^0_{\alpha}, \tW^0_{\alpha}, \tQ^0_{\alpha}),\! \tK^{(3)}\!(\tW^0_{\alpha}, \tQ^0_{\alpha}, \tQ^0_{\alpha})).
\]
For this transition we have the unlocalized elliptic difference bounds 
\begin{equation}
\Vert (\tW_{\alpha}, \tQ_{\alpha}) -(\tW^0_{\alpha}, \tQ^0_{\alpha})\Vert_{X^{\sharp}_{ell}}\lesssim \epsilon t^{C\epsilon^2-\frac12},
\end{equation}
which are a consequence of \eqref{tWQ-ell2}, \eqref{pactest}, \eqref{pacerr}.

Then we can estimate $(\tG^{(3),0}_{ell}, \tK^{(3),0}_{ell}) $ using \eqref{GK3-ell} to obtain 
\[
\Vert (\tG^{(3),0}_{ell}, \tK^{(3),0}_{ell}) \Vert_{\dH^{\frac14}} \lesssim \epsilon^4 t^{-\frac32}t^{C\epsilon^2}.
\]
This allows us to conclude the bound as in the case of the quartic bound. 

Now we consider one last transition from 
\[
(\tW^0_{\alpha}, \tQ_{\alpha}^0)= i\xi_vP_{\xi_0}[\gamma t^{-\frac12}e^{i\phi}(1, \vert \xi_v\vert ^{-\frac12}\sgn v)] \rightarrow i\xi_0\gamma_0 t^{-\frac12}e^{i\phi}(1, \vert \xi_0\vert ^{-\frac12}\sgn v):=(\tW^1_{\alpha}, \tQ^1_{\alpha}),
\]
where we emphasize that $(\tW^1, \tQ^1)$ depend also on the wavepacket parameters $\xi_0$ and $v_0$.

Now we need to estimate the difference 
\[
(\tG^{(3),1}_{ell}, \tK^{(3),1}_{ell})\!\! : = \!\! (\tG^{(3)}\!(\tW^0_{\alpha}, \tW^0_{\alpha}, \tQ^0_{\alpha}), \! \tK^{(3)}\!(\tW^0_{\alpha}, \tQ^0_{\alpha}, \tQ^0_{\alpha}))
-
(\tG^{(3)}\!(\tW^1_{\alpha}, \tW^1_{\alpha}, \tQ^1_{\alpha}),\!
 \tK^{(3)}\!(\tW^1_{\alpha}, \tQ^1_{\alpha}, \tQ^1_{\alpha})). \!
\]
Here it is important that these differences are only needed within the support of the wave packet $(\ww,\qq)$. There the leading contribution comes from the difference between $(\tW^0,\tQ^0)$ and $(\tW^1,\tQ^1)$, within a slightly larger region of comparable size $\left\{ \vert \alpha-vt\vert \leq c v^{\frac32} t^{\frac12}\right\}$, with $c$ a large positive constant.

This difference can be estimated by H\"older's inequality as follows
\[
\begin{aligned}
\| \gamma - \gamma_0\|_{L^2} \lesssim  & \ t^\frac12 v^\frac32 \|\gamma_\alpha \|_{L^2}
\lesssim \epsilon t^{C\epsilon^2} v
\qquad \| \gamma - \gamma_0\|_{L^\infty} \lesssim (t^\frac12 v^\frac32)^\frac12 \|\gamma_\alpha \|_{L^2} \lesssim \epsilon t^{C\epsilon^2} v^\frac14 t^{-\frac14}
\end{aligned}
\]
where at the last step we have used the bound for  $\partial_v \gamma$ in \eqref{gamma-hiv}, \eqref{gamma-lowv}. Using these bounds, the contribution of $(\tG^{(3),1}_{ell}, \tK^{(3),1}_{ell})$  can be estimated as in the quartic case.  

\medskip

We are now left with the task of estimating the contribution to $\dot \gamma$
of the cubic expressions $(\tG^{(3)}(\tW^1_{\alpha}, \tW^1_{\alpha}, \tQ^1_{\alpha}), \tK^{(3)}(\tW^1_{\alpha}, \tQ^1_{\alpha}, \tQ^1_{\alpha}))$. To achieve this, we 
need to consider the structure of the cubic terms.

Following \cite{HIT}, we  have the following classification of the terms in $(\tilde
G^{(3)}, \tilde K^{(3)})$:

\begin{itemize}
\item[A.] Nonresonant trilinear terms: these are either 
(A1) terms with no complex conjugates, or  
(A2) terms with two complex conjugates.

\item[B.] Resonant trilinear terms: terms with exactly one
  conjugation.  For such terms one may further define a notion of
  principal symbol, which is the leading coefficient in the expression
  obtained by substituting the factors in the trilinear form by the
  expressions in \eqref{pactest} \footnote{Which corresponds to all
    three frequencies being equal.}  Thus one can isolate a linear subspace
  of resonant terms for which this symbol vanishes, which we call {\em
    null terms}.  Hence on the full class of resonant trilinear terms
  we can further define an equivalence relation, modulo null terms.
\end{itemize}

Based on this, we reorganize $\tG^{(3)}$ in resonant, nonresonant and null terms:
\begin{equation}
\left\{
\begin{aligned}
    &\tG^{(3)}_r \, := \Pi((\tQ_\alpha \tW_\alpha)_\alpha,  \bar\tW)+ \Pi(\bar \tQ_\alpha, \tW_\alpha^2)\\
& \tG^{(3)}_{nr}\, := T_{\tW_\alpha} (\tQ_\alpha \tW_\alpha) +  T_{(\tQ_\alpha \tW_\alpha)_\alpha}\tW + \Pi(\tW_\alpha, 2\Re [\tQ_\alpha \tW_\alpha]) +\Pi(\tQ_\alpha \tW_\alpha)_\alpha,  \tW)\\
    &\qquad \quad  + T_{2 \Re (T_{\tW_\alpha}\tW + \Pi(\tW_\alpha,  \tW))_\alpha} \tQ_\alpha -T_{2 \Re \tW_\alpha} ( \tQ_\alpha \tW_\alpha )\\
    &\qquad  \quad +T_{2 \Re \tW_\alpha}(T_{\tQ_\alpha} \tW + \Pi(\tQ_\alpha, 2\Re \tW))_\alpha  + T_{2 \Re (\tQ_\alpha \tW_\alpha - (T_{\tQ_\alpha} \tW + \Pi(\tQ_\alpha,  \tW))_\alpha)}\tW_\alpha \\
    &\qquad \quad - T_{2 \Re \tQ_\alpha} ( T_{\tW_\alpha}\tW + \Pi(\tW_\alpha, 2\Re \tW))_\alpha  - \Pi(\bar \tW_\alpha^2, \tQ_\alpha) \\
     &\qquad \quad  - T_{\bar \tW_\alpha^2}\tQ_\alpha - T_{\bar \tW_\alpha} (\tF^{(2)}) + T_{\bar \tQ_\alpha}\tW_\alpha^2\\
& \tG^{(3)}_{null}\, :=  T_{\tF^{(2)}_\alpha}\tW- \tW_\alpha \tF^{(2)}  + \Pi (\tF^{(2)}_\alpha), 2\Re W) + \Pi( \tF^{(2)} , W_\alpha) +T_{2 \Re \tW_\alpha} \tF^{(2)}  \\
& \qquad \qquad +T_{ 2\Re(\Pi(\tW_\alpha, \bar \tW))_\alpha} \tQ_\alpha + T_{2 \Re (  \Pi(\tQ_\alpha, \bar \tW)_\alpha)}\tW_\alpha  + \Pi(\tW_\alpha, \bar \tF^{(2)}) .\\
    \end{aligned}
    \right.
\end{equation}

We do the same for $\tK^{(3)}$:
\begin{equation}
    \left\{
    \begin{aligned}
 &\tK^{(3)}_{r}\, :=  0 \\ 
 &\tK^{(3)}_{nr}\, :=  i T_{\tW_\alpha^2} \tW   - T_{2\Re  (T_{\tQ_\alpha} \tW + \Pi(\tQ_\alpha, \tW))_\alpha}\tQ_\alpha \\
 &\qquad \quad - T_{2\Re \tQ_\alpha} (T_{\tQ_\alpha} \tW + \Pi(\tQ_\alpha, 2\Re \tW))_\alpha  + T_{2\Re (\tQ_\alpha \tW_\alpha)} \tQ_\alpha 
 + T_{\bar \tQ_\alpha} (\tQ_\alpha \tW_\alpha) \\
 &\qquad \quad+ T_{\tQ_\alpha} T_{\tQ_\alpha}\tW_\alpha + T_{ \tQ_\alpha  \tQ_{\alpha\alpha} } \tW+ T_{\tQ_\alpha} (T_{\tW_\alpha} \tQ_\alpha + \Pi(\tW_\alpha, \tQ_\alpha))
 \\
 &\qquad \quad  +  \Pi(\tQ_\alpha, 2\Re [\tQ_\alpha \tW_\alpha]) - \Pi(\tW_\alpha \tQ_\alpha, \tQ_\alpha) - T_{\tQ_\alpha \tW_\alpha} \tQ_\alpha\\
 &\tK^{(3)}_{null}\, := - T_{2\Re ( \Pi(\tQ_\alpha, \bar \tW))_\alpha}\tQ_\alpha - T_{\tF^{(2)}} \tQ_\alpha + \Pi(\tQ_\alpha,  \bar \tF^{(2)}) + T_{ P[\vert \tQ_\alpha\vert^2]_{\alpha}} \tW\\
 &\qquad \qquad   +2\Pi (\Re W, \tQ_{\alpha}\tQ_{\alpha \alpha} +i\tW_{\alpha}^2 ) +2\Pi (\Re W, \partial_{\alpha}P [\vert \tQ_{\alpha}\vert ^2])
  \end{aligned}
    \right.
\end{equation}
 In these expressions we will substitute $(\tW_{\alpha},\tQ_{\alpha})$ by $(\tW_{\alpha}^1,\tQ_{\alpha}^1)$.

We will place all cubic contributions into  the error term $e$,  
except for  the contribution of the resonant part  $\tG^{(3)}_{r}$.

We note that for the most part the
exact form of the expressions above is irrelevant.  The only
significant matter is the coefficient of the terms in $\tG^{(3)}_{r}$,
which needs to be real\footnote{ A similar constraint would be
  required of the coefficients in $\tK^{(3)}_{r}$, if they were
  nonzero.}.

We also remark that the leading projection in all terms can be harmlessly discarded, 
since it can be moved onto the wave packets, which decay rapidly at positive frequencies,
\[
\| (\ww,\qq) - P (\ww,\qq) \|_{H^N} \lesssim t^{-N}.
\]

\medskip

\medskip
 
{\bf C2(a) The contribution of the null terms.} This is given by $\tG^{(3)}_{null}(\gamma)$ and
  $\tK^{(3)}_{null}(\gamma)$:
  \[
  \tG^{(3)}_{null}(\gamma):=\int_{\mathbb{R}}\tG^{(3)}_{null} (\tW^1_{\alpha},\tQ^1_{\alpha})\bar \ww\,d\alpha  \qquad  \tK^{(3)}_{null}(\gamma):=-i\int_{\mathbb{R}} \tK^{(3)}_{null}(\tW^1_{\alpha},\tQ^1_{\alpha}) \bar \qq_{\alpha}\, d\alpha .
  \]  Here we simply note that
$\tG^{(3)}_{null}(\gamma)= 0$ and $\tK^{(3)}_{null}(\gamma)= 0$, so
after the previous step there is nothing left to do.  We
remark that cancellation actually occurs at the bilinear level for the
``null expressions'' of type
\[
\tW_\alpha \bar \tQ_\alpha - \bar \tW_\alpha  \tQ_\alpha, \qquad 
(| \tQ_{\alpha}|^2)_\alpha, \qquad \tQ_\alpha \tQ_{\alpha \alpha}+i\tW_\alpha^2.
\]

\medskip

{\bf  C2(b) The contribution of the nonresonant terms.} This is given by 
$\tG^{(3)}_{nr}(\gamma)$ and $\tK^{(3)}_{nr} (\gamma)$:
\[
  \tG^{(3)}_{nr}(\gamma):=\int_{\mathbb{R}}\tG^{(3)}_{nr}(\tW^1_{\alpha},\tQ^1_{\alpha})\bar \ww\,d\alpha , \qquad  \tK^{(3)}_{nr}(\gamma):=-i\int_{\mathbb{R}} \tK^{(3)}_{nr} (\tW^1_{\alpha},\tQ^1_{\alpha})\bar \qq_{\alpha}\, d\alpha .
  \]

Here it is important that we integrate against $\ww$ and $\qq$, as
that fixes the frequency of the output at $\xi = -
\frac{1}{4v^2}$. On the other hand the nonresonant trilinear
expression will be concentrated at frequency $3 \xi$ if no complex
conjugate occur, respectively at frequency $-\xi$ if two conjugates
occur. Thus, because of this mismatched  the frequencies the only contributions here arise due to rapidly decaying tails, 
\[
(\tG^{(3)}_{nr} (\gamma), \tK^{(3)}_{nr}(\gamma)) =O(t^{-N}).
\]
\medskip

{\bf C2(c) The contribution of the resonant 
term.} This is given by $\tG^{(3)}_{r}(\gamma)$
\[
\tG^{(3)}_{r}(\gamma) :=\int_{\mathbb{R}}\tG^{(3)}_{r}(\tW^1_{\alpha},\tQ^1_{\alpha})\bar \ww\,d\alpha .
\]
Given the expression above we have
\[
\tG^{(3)}_{r}(\gamma) :=\int_{\mathbb{R}}\left[ \Pi((\tQ_\alpha^1 \tW^1_\alpha)_\alpha,  \bar\tW^1)+\Pi(\bar \tQ^1_\alpha, (\tW_\alpha^{1})^2)\right]\bar \ww\,d\alpha .
\]
Replacing $P$ by $I-P$ or $\Pi$ by $I-\Pi$ yields nonresonant terms  with have size $O(t^{-N})$.
Hence, we obtain
\[
\tG^{(3)}_{r}(\gamma) =\int_{\mathbb{R}}\left[ (\tQ_\alpha^1 \tW^1_\alpha)_\alpha  \bar\tW^1+\bar \tQ^1_\alpha (\tW_\alpha^{1})^2\right]\bar \ww\,d\alpha  +O(t^{-N}).
\]
Substituting $(\tW^1_{\alpha}, \tQ^1_{\alpha})$ from \eqref{WQ0}
we obtain the
integral
\[
\tG^{(3)}_r(\gamma)  = \gamma(t,\alpha/t)  |\gamma(t,\alpha/t)|^2  \int \frac{i}{t^\frac32} \left(\frac{t}{2\alpha}\right)^5 
  e^{-i\phi} \bar \ww \, d\alpha.
\]
Here $e^{-i\phi} \bar \ww$ has the form
\[
e^{-i\phi} \bar \ww = \frac12 v^{-\frac32} \chi\left(\frac{\alpha - vt}{t^\frac12 v^\frac32}\right)
+ v^{-1} t^{-\frac12} 
\tilde \chi\left(\frac{\alpha - vt}{t^\frac12 v^\frac32}\right)
\]
with Schwartz functions $\chi$ and $\tilde \chi$ so that $\int \chi = 1$. Thus we obtain
\[
\tG^{(3)}_r(\gamma) = \frac{i}{2 t} (2v)^{-5} \gamma(t,v) |\gamma(t,v)|^2 + O(t^{-\frac{3}{2}}),
\]
as needed.

\end{proof}

\bibliography{wwbib}

\begin{thebibliography}{10}

\bibitem{Ai1}
Albert {Ai}.
\newblock {Low Regularity Solutions for Gravity Water Waves}.
\newblock {\em Water Waves}, 1(1):145--215, May 2019.

\bibitem{Ai2}
Albert Ai.
\newblock Low regularity solutions for gravity water waves {II}: {T}he 2{D}
  case.
\newblock {\em Ann. PDE}, 6(1):Paper No. 4, 117, 2020.

\bibitem{AIT}
Albert {Ai}, Mihaela {Ifrim}, and Daniel {Tataru}.
\newblock {Two dimensional gravity waves at low regularity I: Energy
  estimates}.
\newblock {\em arXiv e-prints}, page arXiv:1910.05323, October 2019.

\bibitem{abz1}
T.~Alazard, N.~Burq, and C.~Zuily.
\newblock On the water-wave equations with surface tension.
\newblock {\em Duke Math. J.}, 158(3):413--499, 2011.

\bibitem{abz}
T.~Alazard, N.~Burq, and C.~Zuily.
\newblock On the {C}auchy problem for gravity water waves.
\newblock {\em Invent. Math.}, 198(1):71--163, 2014.

\bibitem{abz-c1}
Thomas Alazard, Nicolas Burq, and Claude Zuily.
\newblock Strichartz estimates for water waves.
\newblock {\em Ann. Sci. \'Ec. Norm. Sup\'er. (4)}, 44(5):855--903, 2011.

\bibitem{AD}
Thomas Alazard and Jean-Marc Delort.
\newblock Global solutions and asymptotic behavior for two dimensional gravity
  water waves.
\newblock {\em Ann. Sci. \'{E}c. Norm. Sup\'{e}r. (4)}, 48(5):1149--1238, 2015.

\bibitem{Dull}
Wolf-Patrick D\"{u}ll and Max He\ss.
\newblock Existence of long time solutions and validity of the nonlinear
  {S}chr\"{o}dinger approximation for a quasilinear dispersive equation.
\newblock {\em J. Differential Equations}, 264(4):2598--2632, 2018.

\bibitem{zakharov}
Alexander~I Dyachenko, Evgenii~A Kuznetsov, MD~Spector, and Vladimir~E
  Zakharov.
\newblock Analytical description of the free surface dynamics of an ideal fluid
  (canonical formalism and conformal mapping).
\newblock {\em Physics Letters A}, 221(1-2):73--79, 1996.

\bibitem{HIT}
John~K. Hunter, Mihaela Ifrim, and Daniel Tataru.
\newblock Two dimensional water waves in holomorphic coordinates.
\newblock {\em Comm. Math. Phys.}, 346(2):483--552, 2016.

\bibitem{NLS}
Mihaela Ifrim and Daniel Tataru.
\newblock Global bounds for the cubic nonlinear {S}chr\"{o}dinger equation
  ({NLS}) in one space dimension.
\newblock {\em Nonlinearity}, 28(8):2661--2675, 2015.

\bibitem{IT-global}
Mihaela Ifrim and Daniel Tataru.
\newblock Two dimensional water waves in holomorphic coordinates {II}: {G}lobal
  solutions.
\newblock {\em Bull. Soc. Math. France}, 144(2):369--394, 2016.

\bibitem{NLS-approx}
Mihaela Ifrim and Daniel Tataru.
\newblock The {NLS} approximation for two dimensional deep gravity waves.
\newblock {\em Sci. China Math.}, 62(6):1101--1120, 2019.

\bibitem{BO}
Mihaela Ifrim and Daniel Tataru.
\newblock Well-posedness and dispersive decay of small data solutions for the
  {B}enjamin-{O}no equation.
\newblock {\em Ann. Sci. \'{E}c. Norm. Sup\'{e}r. (4)}, 52(2):297--335, 2019.

\bibitem{ip}
Alexandru~D. Ionescu and Fabio Pusateri.
\newblock Global solutions for the gravity water waves system in 2d.
\newblock {\em Invent. Math.}, 199(3):653--804, 2015.

\bibitem{MR3730012}
Xuecheng Wang.
\newblock Global infinite energy solutions for the 2{D} gravity water waves
  system.
\newblock {\em Comm. Pure Appl. Math.}, 71(1):90--162, 2018.

\bibitem{wu2}
Sijue Wu.
\newblock Well-posedness in {S}obolev spaces of the full water wave problem in
  {$2$}-{D}.
\newblock {\em Invent. Math.}, 130(1):39--72, 1997.

\bibitem{wu}
Sijue Wu.
\newblock Almost global wellposedness of the 2-{D} full water wave problem.
\newblock {\em Invent. Math.}, 177(1):45--135, 2009.

\bibitem{zak}
Vladimir~E Zakharov.
\newblock Stability of periodic waves of finite amplitude on the surface of a
  deep fluid.
\newblock {\em Journal of Applied Mechanics and Technical Physics},
  9(2):190--194, 1968.

\end{thebibliography}

\bibliographystyle{plain}

\end{document}